      \newtheorem{thm}{Theorem}
      \newtheorem{coro}[thm]{Corollary}
      \newtheorem{prop}[thm]{Proposition}
      \newtheorem{lem}[thm]{Lemma}
      \newtheorem{defi}{Definition}
      \newtheorem{ex}{Example}
      \newtheorem{rem}{Remark}
      \newtheorem{app_thm}{Theorem}[section]
      \newtheorem{app_prop}[app_thm]{Proposition}
      \newtheorem{app_lem}[app_thm]{Lemma}
      \newtheorem{app_defi}{Definition}[section]
      \newtheorem{app_ex}{Example}[section]
      \newtheorem{app_rem}{Remark}[section]
      \newcommand{\eg}{\textit{e.g.}}
      \newcommand{\ie}{\textit{i.e.}}
      \newcommand{\set}[1]{\mathcal{#1}}
      \newcommand{\dom}{\mathop{\rm{dom}}}
      \newcommand{\epi}{\mathop{\rm{epi}}}
      \newcommand{\R}{\mathbb R}
      \newcommand{\subj}{\rm{s.t.}}
      \newcommand{\supp}{\mathcal Z}
      \newcommand{\amb}[1]{\mathscr{#1}}
      \newcommand{\EP}[1]{{\mathbb{E}}_{\mathbb{P}}}
      \newcommand{\EPS}[1]{{\mathbb{E}}_{\mathbb{P}^{\rm S}}}
      \newcommand{\EPSi}[1]{{\mathbb{E}}_{\mathbb{P}^{\rm S}_i}}
      \newcommand{\bP}{{\mathbb{P}}}
      \newcommand{\bmt}[1]{\tilde{\bm{#1}}}
      \newcommand{\rev}[1]{{\color{black} #1}}
      \title{ A Unified Theory of Robust and Distributionally Robust Optimization via the Primal-Worst-Equals-Dual-Best Principle}
      \author[1]{{Jianzhe Zhen}
      }
      \author[2]{Daniel Kuhn}
      \author[3]{Wolfram Wiesemann}
      \affil[1]{\small \textit{School of Economics and Management, University of Chinese Academy of Sciences, China}}
      \affil[2]{\small \textit{College of Management of Technology, École Polytechnique Fédérale de Lausanne, Switzerland}}
      \affil[3]{\small \textit{Imperial College Business School, Imperial College London, United Kingdom}}
\begin{document}

      \maketitle

      \begin{abstract}
      Robust and distributionally robust optimization are modeling paradigms for decision-making under uncertainty where the uncertain parameters are only known to reside in an uncertainty set or are governed by any probability distribution from within an ambiguity set, respectively, and a decision is sought that minimizes a cost function under the most adverse outcome of the uncertainty. In this paper, we develop a rigorous and general theory of robust and distributionally robust nonlinear optimization using the language of convex analysis. Our framework is based on a generalized `primal-worst-equals-dual-best' principle that establishes strong duality between a semi-infinite primal worst and a non-convex dual best formulation, both of which admit finite convex reformulations.
      This principle offers an alternative formulation for robust optimization problems that obviates the need to mobilize the machinery of abstract semi-infinite duality theory to prove strong duality in distributionally robust optimization. We illustrate the modeling power of our approach through convex reformulations for distributionally robust optimization problems whose ambiguity sets are defined through general optimal transport distances, which generalize earlier results for Wasserstein ambiguity sets.

      \textbf{Keywords:} (Distributionally) Robust Optimization, Convex Analysis, Optimal Transport.
      \end{abstract}

      \section{Introduction}

      Mathematical optimization problems frequently require decisions to be taken under partial or complete lack of information about key problem parameters: the topology of a truss needs to be designed before the magnitudes and directions of the external forces acting upon it are known, a portfolio of financial assets needs to be built without knowledge of the future asset price movements, the energy production of a power plant needs to be fixed several hours before the demands, outputs of intermittent generators and plant/line failures are known, and the classifying hyperplane of a support vector machine needs to be selected under incomplete knowledge of the data generating distribution. These (and many other) applications have in common that key problem parameters are not only to be considered random, but they are also governed by probability distributions that are at least partially unknown.

      In the last 25 years, robust and distributionally robust optimization have emerged as promising techniques to model, analyze and optimize decisions under risk (where some problem parameters constitute random variables) and ambiguity (where the underlying distributions are only partially known). Robust optimization assumes that the uncertain problem parameters can take on any value from within a pre-specified \emph{uncertainty set}, whereas distributionally robust optimization models the uncertain parameters as random variables whose underlying probability distribution can be any distribution from within a pre-specified \emph{ambiguity set}. In both cases, the decision maker seeks to determine the best decision in view of the worst realization of the uncertainty; this is often depicted as a game \mbox{between the decision maker and an adversary nature that is `in charge' of the uncertainty.}

      The vast majority of research in robust and distributionally robust optimization focuses on well-structured conic optimization problems such as linear, second-order cone and semi-definite programs. While the presence of structure simplifies the exposition and ensures computational tractability, it requires similar arguments to be redeveloped for different problem classes, and it obfuscates the view on the underlying principles in their full generality. This concern has been noted by several researchers, and various attempts have been made to extend the theory of robust and distributionally robust optimization to general convex optimization problems. Since the main focus of these works is a computational one, however, mathematical subtleties that emerge from this generalization are often either incorrectly addressed or disregarded altogether.

      \rev{
      In this paper, we develop a general theory of robust and distributionally robust optimization from first principles, using the language of convex analysis. Section~\ref{sec:co} first revisits classical duality results for convex optimization and derives explicit dual optimization problems involving the conjugates of the objective and constraint functions. Our analysis allows the objective and constraint functions of the primal problem to be arbitrary extended real-valued proper, closed and convex functions. This generality is crucial since we will dualize problems that involve implicit constraints or conjugates, but it significantly complicates our analysis. We then leverage the results of Section~\ref{sec:co} to build a unified theory of robust (Section~\ref{sec:rco}) and distributionally robust (Sections~\ref{sec:drco} and~\ref{sec:extensions}) optimization problems as well as modern data-driven optimization problems (Section~\ref{sec:dot}), where the objective function, the constraints as well as the uncertainty or ambiguity set are described in terms of generic convex functions. At the heart of our framework lies a generalized `primal-worst-equals-dual-best' principle that establishes strong duality between a semi-infinite primal worst and a non-convex dual best formulation, \mbox{both of which can be reformulated as finite convex optimization problems.}

      Our key contributions may be summarized as follows.
      \begin{enumerate}
        \item[\emph{(i)}] We propose a unified theory of robust (Section~\ref{sec:rco}) and distributionally robust optimization with moment-based (Sections~\ref{sec:drco} and~\ref{sec:extensions}) and optimal transport-based (Section~\ref{sec:dot}) ambiguity sets. In particular, we derive easily verifiable conditions for strong duality in distributionally robust optimization from first principles of finite-dimensional convex analysis, as opposed to the abstract moment conditions traditionally required by semi-infinite duality theory.
        \item[\emph{(ii)}] Classical texts on robust optimization either study robust programs with linear (or quadratic) constraint functions and conic inequalities \citep{ben09,bn98} or they study robust programs with nonlinear (convex-concave) constraint functions but classical inequalities \citep{bgnv05}. By catering both for nonlinear functions and conic inequalities, we significantly enlarge the pool of robust and distributionally robust programs that admit finite convex reformulations. We also derive convex reformulations of distributionally robust programs with general optimal transport-based ambiguity sets. The flexibility to shape the transportation cost function allows modelers to control the likelihood that the uncertain parameters will fall into particular regions of the sample space.
        \item[\emph{(iii)}] We carefully account for subtle technical issues that have often been neglected in the related literature but are crucial for a rigorous treatment of extended real-valued functions. 
      \end{enumerate}}

      Robust optimization problems are traditionally solved by dualizing the embedded maximization over all possible uncertainty realizations in the primal worst problem \citep{el97, eol98, bn98, bn99, bn00, bn02, bs04}. If the embedded maximization problems represent linear conic optimization problems, then the duals can be constructed explicitly in terms of the original problem data \citep{ben09}. If the embedded maximization problems constitute generic convex optimization problems, on the other hand, they may not admit explicit duals. Instead, the dual objective function is only implicitly defined as the infimum of the Lagrangian function with respect to the uncertain parameters. Using Fenchel duality, \cite{bdv15} show that the optimal values of the embedded maximization problems can be expressed as differences between the support function of the uncertainty set and the partial conjugates of the constraint (or objective) functions with respect to the uncertain parameters. Instead of the support function of the uncertainty set, our reformulation explicitly involves the conjugates of the functions characterizing the uncertainty set and may therefore be easier to implement and automate. We refer to \citet{ben09}, \citet{bbc11} and \citet{gmt14} for comprehensive reviews of robust optimization and its manifold applications.

      Alternatively, robust optimization problems can be studied from the perspective of the dual best problem. For bounded uncertainty sets, \cite{bbt09} show that the optimal value of the primal worst problem coincides with that of the dual best problem if a Slater condition holds. \citet{gbbd14} show how the non-convexities of the dual best problem can be eliminated in robust optimization problems with linear objective and linear conic constraint functions over uncertainty sets described by general convex functions. This result was later extended by \citet{gd15} to general robust convex optimization problems. Our results extend the works of \citet{bbt09} and \citet{gd15} to robust optimization problems with unbounded uncertainty sets, which will prove essential when we apply our results to uncertainty quantification and distributionally robust optimization problems.

      Tractable reformulations for distributionally robust optimization problems can be derived in different ways. \cite{bn00}, \cite{bs04}, \cite{ce06}, \cite{Nemirovski_Shapiro_2006}, \cite{css07}, \cite{xu2012optimization} and \cite{bgk18a} rely on classical probability bounds (such as Hoeffding's inequality or Bernstein bounds) or statistical hypothesis tests to derive tractable reformulations. In contrast, \cite{ElGhaoui_Oks_Oustry_2003}, \cite{bp05}, \cite{Delage_Ye_2010}, \cite{xm12} and \cite{wks14} dualize the uncertainty quantification problem embedded in the distributionally robust optimization problem and apply techniques from standard robust optimization to replace the semi-infinite dual with a finite reformulation. To ensure strong duality between the primal uncertainty quantification problem and its semi-infinite dual, this literature stream usually relies on results from semi-infinite duality theory that are very general but can be tedious to verify and that are prone to misinterpretations \citep{Isii59, Isii62, s01}. A popular condition is to check whether the bounds on the generalized moments imposed by the ambiguity set belong to the interior of some moment cone generated by all non-negative measures (not only the probability measures) on the prescribed support set; \rev{see, {\em e.g.}, \citet[Propostion~3.4]{s01}}. Despite being convex, this moment cone usually lacks an explicit description in terms of simple convex constraints. In contrast, our conditions for strong duality, which are based on our generalized primal-worst-equals-dual-best principle, are typically easy to verify, both theoretically and algorithmically (\emph{e.g.}, via the solution of a convex optimization problem).

      Using the primal-worst-equals-dual-best principle to construct finite reductions of uncertainty quantification problems
      was first proposed for the subclass of chance constrained programs over restricted classes of ambiguity sets by \citet{JIDRCO, Hanasusanto_Roitch_Kuhn_Wiesemann_2017}. These papers, however, still rely on semi-infinite duality  theory (and its aforementioned shortcomings) to ensure strong duality.
      \citet{httom15} derive a finite reduction similar to ours by applying the Richter-Rogosinsky theorem \citep[Theorem~7.37]{shapiro2009lectures} and a subsequent induction argument directly to the primal uncertainty quantification problem. Since the focus of that work is on uncertainty quantification, however, it does not study the dual of the uncertainty quantification problem, which is essential to obtain tractable reformulations for distributionally robust optimization problems.

      \paragraph*{Notation.}
      We set $\overline{\mathbb R}=[-\infty,+\infty]$. \rev{The calligraphic letters~$\set I$, $\set J$, $\set K$, $\set L$ and the corresponding capital Roman letters $I$, $J$, $K$, $L$ are reserved for finite index sets and their respective cardinalities, \ie, $\set{I}= \{1,\dots, I\}$ etc. The subscript~$0$ indicates that the index set additionally includes $0$, \ie, $\set{I}_0 =   \{0, \ldots , I\}$ etc. We use ri$(\set{X})$ to denote the relative interior of a set $\set{X} \subseteq \R^{d_{\bm x}}$. }
      
      \section{Convex Optimization} \label{sec:co}

      In this section we adapt existing duality results to generic convex optimization problems with extended real-valued objective and constraint functions. This flexibility allows us to work with optimization problems whose objective and constraints involve conjugates. We also discuss regularity conditions under which the primal and the dual problems are solvable. As we will see later on, solvability is essential for the existence of worst-case scenarios and worst-case distributions in robust and distributionally robust optimization, respectively. By themselves, the results in this section are not new, however they are dispersed throughout the literature, and they often miss subtle but---in view of our applications in later sections---crucial regularity conditions.

	\rev{Throughout the paper we use the language convex analysis. Thus, we adopt the usual definitions of the domain $\dom(f)$, the epigraph $\epi(f)$, the conjugate~$f^*$ and the biconjugate~$f^{**}$ of an extended real-valued function~$f:\mathbb R^{d_{\bm x}}\rightarrow \overline \R$. As usual, we call $f$ proper and closed if its epigraph is a nonempty closed set that contains no vertical line. In addition, we use~$\delta_{\set X}$ and~$\delta^*_{\set X}$ to denote the indicator function and the support function of a set~$\set X\subseteq \R^{d_{\bm x}}$. If $f$ is  proper, closed and convex, then we define its convex perspective~$\underline f:\mathbb R^{d_{\bm x}}\times \overline \R\rightarrow \overline \R $ through $\underline f(\bm x,t)=tf(\bm x/t)$ if~$t>0$; $=\delta^* _{\dom(f^*)}(\bm x)$ if~$t=0$. This definition ensures that~$\underline f$ is proper, closed and convex. To avoid clutter, we henceforth write somewhat informally $tf(\bm x/t)$ instead of~$\underline f(\bm x,t)$ even if~$t=0$. Rigorous definitions of the above key concepts of convex analysis and a nuanced discussion of the inherent subtleties are provided in Appendix~\ref{app_before_A} in the Electronic Companion. Next, we} introduce Slater conditions for both sets and optimization problems. This distinction will enable us to characterize the uncertainty sets whose associated robust optimization problems are amenable to finite convex reformulations using the machinery of strong convex duality.

      \begin{defi}[Slater Condition for Sets]\label{def:Slater_sets}
      The vector $\bm{x}^{\textnormal{S}}$ is a Slater point of the set $\mathcal{X}$ represented by $\mathcal{X} = \{ \bm x \in \R^{d_{\bm x}} \ | \ f_i (\bm x) \le 0 \;\; \forall i \in \mathcal{I}, \ h_j (\bm x) = 0 \;\; \forall j \in \mathcal{J} \}$ if
      \emph{\emph{(i)}} $\bm{x}^{\textnormal{S}} \in {\rm ri} (\dom (f_i))$ as well as~$\bm{x}^{\textnormal{S}} \in {\rm ri} (\dom (h_j))$ for all $i\in\set I$ and $j\in\set J$;
      \emph{(ii)} $\bm{x}^{\textnormal{S}} \in \mathcal{X}$;
      and \emph{(iii)} $f_i (\bm{x}^{\textnormal{S}}) < 0$ for all $i \in \mathcal{I}$ such that $f_i$ is nonlinear. The Slater point $\bm{x}^{\textnormal{S}}$ is strict if $f_i (\bm{x}^{\textnormal{S}}) < 0$ for all $i \in \mathcal{I}$.
      \end{defi}

      Note that the definition of a Slater point depends on the representation of the set $\mathcal{X}$. In fact, $\mathcal{X} = \{ 0 \}$ has a strict Slater point if represented as $\mathcal{X} = \{ x \in \R \ | \ x = 0 \}$, whereas the alternative representation $\mathcal{X} = \{ x \in \R \ | \ x^2 \le 0 \}$ does not admit a Slater point.

      \begin{defi}[Slater Condition for Optimization Problems]\label{def:Slater_problems}
      The vector $\bm{x}^{\textnormal{S}}$ is a Slater point of the minimization problem $\inf \{ f_0 (\bm{x}) \ | \ \bm{x} \in \mathcal{X} \}$, where $\mathcal{X}$ is represented as in Definition~\ref{def:Slater_sets}, if $\bm{x}^{\textnormal{S}}$ is a Slater point of $\mathcal{X}$ and $\bm{x}^{\textnormal{S}} \in  {\rm ri} (\dom (f_0))$. \mbox{The Slater point $\bm{x}^{\textnormal{S}}$ is strict if it is a strict Slater point of $\mathcal{X}$.}
      \end{defi}

      For a maximization problem, we replace the requirement $\bm{x}^{\textnormal{S}} \in  {\rm ri} (\dom (f_0))$ in Definition~\ref{def:Slater_problems} with $\bm{x}^{\textnormal{S}} \in  {\rm ri} (\dom (-f_0))$. Consider now a generic nonlinear optimization problem of the following form.

      \begin{samepage}
      \begin{empheq}[box=\fbox]{equation}\label{eq:p-co}
      \begin{array}{l@{\quad}l@{\qquad}l}
      \displaystyle \inf & \displaystyle f_0(\bm x) \\
      \displaystyle \subj & \displaystyle f_i(\bm x) \le 0 & \displaystyle \forall i \in \set{I} \\
      & \displaystyle \bm x \text{ free}
      \end{array}
      \tag{P}
      \end{empheq}
      $\mspace{310mu}$ \textbf{Primal Problem}
      \end{samepage}

      \noindent Here, the objective and constraint functions are extended real-valued functions $f_i:\R^{d_{\bm x}}\rightarrow  \overline{\R}$ for $i\in \set I_0$. In the remainder of this section, we assume that problem~\eqref{eq:p-co} is convex, that is, we assume that its objective and constraint functions satisfy the following regularity condition.

      \begin{itemize}
      \item[\textbf{(F)}] The function $f_i$ is proper, closed and convex for each $i\in \set{I}_0$.
      \end{itemize}

      We now introduce the problem dual to~\eqref{eq:p-co}.

      \begin{samepage}
      \begin{empheq}[box=\fbox]{equation}\label{eq:d-co}
      \begin{array}{l@{\quad}l@{\qquad}l}
      \displaystyle \sup & \multicolumn{2}{l}{\displaystyle \mspace{-8mu} -f_0^* \left( \bm w_0 \right)- \sum_{i\in \set{I}}  \lambda_i f_i^* \left( \bm w_i /  \lambda_i \right)} \\
      \displaystyle \subj & \displaystyle \sum_{i\in \set{I}_0} \bm w_i = \bm 0 \\
      & \displaystyle \bm w_i \textnormal{ free}, \; i\in \set{I}_0, \;\;  \bm \lambda \ge \bm 0 \\
      \end{array}
      \tag{D}
      \end{empheq}
      $\mspace{320mu}$ \textbf{Dual Problem}
      \end{samepage}

      \noindent The dual problem~\eqref{eq:d-co} maximizes the (negative) infimal convolution of the conjugate objective function as well as the perspectives of the conjugate constraint functions. 

      \begin{thm}[Weak Duality] \label{prop:weak-duality}
      The infimum of~\eqref{eq:p-co} is larger or equal to the supremum of~\eqref{eq:d-co}.
      \end{thm}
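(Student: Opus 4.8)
The plan is to prove the inequality pointwise. I would fix an arbitrary primal-feasible $\bm x$ and an arbitrary dual-feasible tuple $(\bm w_0, \{\bm w_i\}_{i\in\set I}, \bm\lambda)$ and show that the dual objective evaluated at the latter never exceeds $f_0(\bm x)$. Taking the supremum over all dual-feasible tuples and then the infimum over all primal-feasible $\bm x$ would yield the claim. The degenerate cases in which~\eqref{eq:p-co} is infeasible (so its infimum is $+\infty$) or~\eqref{eq:d-co} is infeasible (so its supremum is $-\infty$) are trivial, so I may assume throughout that both problems are feasible.

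The engine of the argument is the Fenchel--Young inequality $f(\bm x) + f^*(\bm w) \ge \bm w^\top \bm x$, which holds for every function by the very definition of the conjugate in Definition~\ref{def:conjugate}. Applied to the objective, it gives $-f_0^*(\bm w_0) \le f_0(\bm x) - \bm w_0^\top \bm x$; here $f_0(\bm x)$ is a finite real number because $f_0$ is proper (hence $>-\infty$ everywhere) and $\bm x$ is primal-feasible (hence $f_0(\bm x) < +\infty$). For each constraint $i\in\set I$ I would then establish the bound
\[
-\lambda_i f_i^*(\bm w_i/\lambda_i) \le -\bm w_i^\top \bm x.
\]
Summing this bound over $i\in\set I$, adding the objective bound, and invoking the dual-feasibility constraint $\sum_{i\in\set I_0}\bm w_i = \bm 0$ collapses the cross terms into $\bm x^\top\big(\sum_{i\in\set I_0}\bm w_i\big) = 0$, leaving exactly $f_0(\bm x)$ on the right-hand side, as desired.

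Establishing the per-constraint bound requires a case distinction on the sign of $\lambda_i$, and this is where the analysis is most delicate. When $\lambda_i > 0$, Fenchel--Young applied to $f_i$ at the point $\bm w_i/\lambda_i$ yields $f_i^*(\bm w_i/\lambda_i) \ge \bm w_i^\top \bm x/\lambda_i - f_i(\bm x)$; multiplying through by $\lambda_i > 0$ and using the primal feasibility $f_i(\bm x)\le 0$ together with $\lambda_i \ge 0$ gives the claimed inequality, all quantities being finite since $f_i$ is proper and $f_i(\bm x)\in(-\infty,0]$. When $\lambda_i = 0$, the term $\lambda_i f_i^*(\bm w_i/\lambda_i)$ must be read, per the convention following~\eqref{eq:convex-perspective}, as the convex perspective $\underline{f_i^*}(\bm w_i,0)$, which by Definition~\ref{def:convex-perspective} equals the support function $\delta^*_{\dom((f_i^*)^*)}(\bm w_i)$. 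The crucial observation is that $f_i$ is proper, closed and convex, so the Fenchel--Moreau biconjugate theorem gives $f_i^{**} = f_i$ and hence $\dom((f_i^*)^*) = \dom(f_i)$; since primal feasibility forces $f_i(\bm x)\le 0 < +\infty$, we have $\bm x \in \dom(f_i)$, so that $\underline{f_i^*}(\bm w_i,0) = \sup_{\bm y\in\dom(f_i)}\bm w_i^\top\bm y \ge \bm w_i^\top\bm x$, which is exactly the required bound.

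The main obstacle I anticipate is not any single computation but the bookkeeping needed to keep every expression well-defined in the extended reals and to rule out indeterminate $(+\infty)-(+\infty)$ forms. Properness of each $f_i$ keeps $f_i^*>-\infty$ and the support functions $>-\infty$, so every summand in the dual objective lies in $[-\infty,+\infty)$ and the sum is unambiguous; closedness of $f_i$ is what licenses the identity $f_i^{**}=f_i$ underpinning the $\lambda_i=0$ case. It is precisely this boundary case, where the perspective convention of Definition~\ref{def:convex-perspective} is invoked, that makes the careful statement of that definition genuinely necessary.
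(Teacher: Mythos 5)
Your proof is correct, but it takes a genuinely different route from the paper's. You argue pointwise: fix a primal-feasible $\bm x$ and a dual-feasible $(\{\bm w_i\}_i,\bm\lambda)$, apply Fenchel--Young to $f_0$ and to each $f_i$ with $\lambda_i>0$, handle $\lambda_i=0$ via the perspective convention of Definition~\ref{def:convex-perspective} together with $f_i^{**}=f_i$, and let the constraint $\sum_{i\in\set I_0}\bm w_i=\bm 0$ annihilate the cross terms. The paper instead routes the argument through the Lagrangian: it writes~\eqref{eq:p-co} as $\inf_{\bm x}\sup_{\bm\lambda\ge\bm 0}\mathscr L(\bm x,\bm\lambda)$, shows that~\eqref{eq:d-co} is bounded above by the max-min problem $\sup_{\bm\lambda\ge\bm 0}\inf_{\bm x}\mathscr L(\bm x,\bm\lambda)$ using the infimal-convolution inequality of Proposition~\ref{prop:inf-conv} and Theorem~16.1 of \citet{Rockafellar1970}, and then invokes the min-max inequality, yielding the chain~\eqref{eq:min-max}. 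Your approach is more elementary and self-contained: it needs only the definition of the conjugate, biconjugation for proper closed convex functions, and the perspective convention, and your careful tracking of which summands can be $-\infty$ (none can be $+\infty$) is exactly the bookkeeping required. What the paper's detour buys is reusable infrastructure: the objects $\mathscr L$, $f$, $g$ and the two inequalities in~\eqref{eq:min-max} are precisely what the proof of Theorem~\ref{prop:strong-duality} later tightens under Slater or compactness conditions (via Proposition~5.3.6 of Bertsekas and the equality case of Proposition~\ref{prop:inf-conv}), and the Lagrangian formulation feeds directly into Lemma~\ref{lem:dual_of_d_is_p}. The paper's proof also exposes that~\eqref{eq:d-co} may be strictly weaker than the abstract Lagrangian dual $\sup_{\bm\lambda\ge\bm 0} g(\bm\lambda)$ when no Slater-type condition holds, a structural fact your pointwise argument does not reveal; conversely, your argument makes transparent exactly where primal feasibility ($f_i(\bm x)\le 0$ and $\bm x\in\dom(f_i)$) and the dual linear constraint enter, one constraint at a time.
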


      Theorem~\ref{prop:weak-duality} implies that~\eqref{eq:d-co} is necessarily infeasible whenever~\eqref{eq:p-co} is unbounded, and that~\eqref{eq:p-co} is necessarily infeasible whenever~\eqref{eq:d-co} is unbounded.

      \begin{thm}[Strong Duality] \label{prop:strong-duality}
      The following statements hold.
      \begin{enumerate}
      \item[(i)] If~\eqref{eq:p-co} or~\eqref{eq:d-co} admits a Slater point, then the infimum of~\eqref{eq:p-co} coincides with the supremum of~\eqref{eq:d-co}, and~\eqref{eq:d-co} or~\eqref{eq:p-co} is solvable, respectively.
      \item[(ii)] If the feasible region of~\eqref{eq:p-co} or~\eqref{eq:d-co} is nonempty and bounded, then the infimum of~\eqref{eq:p-co} coincides with the supremum of~\eqref{eq:d-co}, and~\eqref{eq:p-co} or~\eqref{eq:d-co} is solvable, respectively.
      \end{enumerate}
      \end{thm}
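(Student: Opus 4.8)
The plan is to recognize problem~\eqref{eq:d-co} as the conjugate form of the Lagrangian dual of~\eqref{eq:p-co} and then to feed the two Slater conditions and the two boundedness conditions into the classical strong-duality machinery of convex analysis. For $\bm\lambda\ge\bm 0$ introduce the Lagrangian $L(\bm x,\bm\lambda)=f_0(\bm x)+\sum_{i\in\set{I}}\lambda_i f_i(\bm x)$ and the dual function $g(\bm\lambda)=\inf_{\bm x}L(\bm x,\bm\lambda)$. Writing $h_0=f_0$ and $h_i=\lambda_i f_i$ for $i\in\set{I}$, we have $g(\bm\lambda)=-(\sum_{i\in\set{I}_0}h_i)^{*}(\bm 0)$, and the infimal-convolution theorem for conjugates \citep[Theorem~16.4]{Rockafellar1970} yields
\[
g(\bm\lambda)=\sup\Big\{-f_0^*(\bm w_0)-\sum_{i\in\set{I}}\lambda_i f_i^*(\bm w_i/\lambda_i)\ \Big|\ \sum_{i\in\set{I}_0}\bm w_i=\bm 0\Big\},
\]
so that the supremum of~\eqref{eq:d-co} equals $\sup_{\bm\lambda\ge\bm 0}g(\bm\lambda)$, the ordinary Lagrangian dual value. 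The weak inequality $\text{(inner sup)}\le g(\bm\lambda)$ holds unconditionally, while equality \emph{with the supremum attained} holds as soon as the $h_i$ share a common point in the relative interiors of their domains; by condition~\textbf{(F)} and Definition~\ref{def:Slater_problems}, such a point is exactly a Slater point of~\eqref{eq:p-co}. Since~\eqref{eq:p-co} and~\eqref{eq:d-co} are not symmetric, I would treat the primal-side and dual-side hypotheses separately.

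For part~(i), suppose first that~\eqref{eq:p-co} admits a Slater point $\bm x^{\textnormal S}$, so that $\bm x^{\textnormal S}\in{\rm ri}(\dom f_0)\cap\bigcap_{i\in\set{I}}{\rm ri}(\dom f_i)$. This single point does double duty: it validates the classical Lagrangian strong-duality theorem \citep{Rockafellar1970}, giving $\inf\eqref{eq:p-co}=\sup_{\bm\lambda\ge\bm 0}g(\bm\lambda)$ with the outer supremum attained at some $\bm\lambda^\star$, and it activates the relative-interior qualification above at the active terms of $\bm\lambda^\star$, so the inner supremum over the $\bm w_i$ is attained there as well. Combined with Theorem~\ref{prop:weak-duality}, the two attainments show that~\eqref{eq:d-co} is solvable with optimal value $\inf\eqref{eq:p-co}$. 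Suppose instead that~\eqref{eq:d-co} admits a Slater point; this is the symmetric statement. Here I would dualize~\eqref{eq:d-co} directly, using $f_i^{**}=f_i$ (valid by~\textbf{(F)}) together with the fact that the conjugate of the perspective $(\bm w_i,\lambda_i)\mapsto\lambda_i f_i^*(\bm w_i/\lambda_i)$ is the indicator of $\{(\bm y_i,s_i)\mid s_i+f_i(\bm y_i)\le 0\}$, to verify that the problem dual to~\eqref{eq:d-co} is precisely~\eqref{eq:p-co}. The Slater point of~\eqref{eq:d-co} then supplies the relative-interior condition on the conjugate domains, and the same strong-duality theorem applied to~\eqref{eq:d-co} yields that~\eqref{eq:p-co} is solvable and the two values coincide.

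For part~(ii), I would obtain solvability by compactness and the absence of a gap by a recession-cone argument. Under~\textbf{(F)} the sublevel sets $\{\bm x\mid f_i(\bm x)\le 0\}$ are closed and convex, so a nonempty bounded feasible region of~\eqref{eq:p-co} has compact closure; since $f_0$ is lower semicontinuous, any minimizing sequence has a convergent subsequence whose limit is feasible and attains the infimum, so~\eqref{eq:p-co} is solvable, and the identical reasoning applies to~\eqref{eq:d-co}. To exclude a duality gap I would pass to the perturbation function $p(\bm u)=\inf\{f_0(\bm x)\mid f_i(\bm x)\le u_i\ \forall i\in\set{I}\}$, which is convex with $p(\bm 0)=\inf\eqref{eq:p-co}$ and $p^{**}(\bm 0)=\sup\eqref{eq:d-co}$; boundedness of the feasible region forces the recession cone of the feasible set to be trivial, which (via a stability argument in the spirit of \citet{Rockafellar1970}) guarantees that $p$ is lower semicontinuous at $\bm 0$ and hence $p(\bm 0)=p^{**}(\bm 0)$, i.e.\ strong duality. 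The dual-side statement follows from the same analysis applied to~\eqref{eq:d-co}; alternatively, one may show that boundedness of one problem's feasible region furnishes exactly the regularity required to invoke part~(i) on the other problem.

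The step I expect to be the main obstacle is the asymmetric dualization of~\eqref{eq:d-co} in the second case of part~(i): verifying that the dual of~\eqref{eq:d-co} is \emph{precisely}~\eqref{eq:p-co} rests on computing the conjugates of the perspective functions and, above all, on the correct treatment of the boundary $\lambda_i=0$, where the term $0\,f_i^*(\bm w_i/0)$ must be read through the closedness-preserving convention of Definition~\ref{def:convex-perspective} (namely as $\delta^*_{\dom(f_i)}(\bm w_i)$) rather than as a naive limit. A secondary difficulty lies in part~(ii): extracting strong duality from mere boundedness, as opposed to a Slater point, requires the recession-cone analysis of $p$ together with a careful check that the relevant infima and conjugates remain proper, so that Theorem~\ref{prop:weak-duality} can be upgraded to an equality.
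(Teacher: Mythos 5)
Your part (i) is essentially the paper's own proof: the primal-side case combines Lagrangian strong duality (dual attainment under the Slater condition) with the infimal-convolution equality and attainment of Proposition~\ref{prop:inf-conv} (Rockafellar's Theorem~16.4), both activated by the same Slater point, and the dual-side case rests on verifying that the Lagrangian dual of~\eqref{eq:d-co} is~\eqref{eq:p-co}, which is exactly the paper's Lemma~\ref{lem:dual_of_d_is_p}. That part of your plan is sound, including your (correct) warning about the $\lambda_i=0$ convention.

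Part (ii), however, contains a genuine gap: your perturbation argument proves equality with the wrong dual. Writing $g(\bm\lambda)=\inf_{\bm x}L(\bm x,\bm\lambda)$, your perturbation function $p(\bm u)$ satisfies $p^*(-\bm\lambda)=-g(\bm\lambda)$ for $\bm\lambda\ge\bm 0$ and $p^*(\bm w)=+\infty$ otherwise, so $p^{**}(\bm 0)=\sup_{\bm\lambda\ge\bm 0}g(\bm\lambda)$ is the value of the \emph{Lagrangian} dual, not of~\eqref{eq:d-co}. The two are related by $\sup\eqref{eq:d-co}\le\sup_{\bm\lambda\ge\bm 0}g(\bm\lambda)$, and---as you note yourself in part (i)---this inequality collapses to an equality only under the relative-interior qualification $\bigcap_{i\in\set I_0}{\rm ri}(\dom(f_i))\neq\emptyset$ of Proposition~\ref{prop:inf-conv}. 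Boundedness of the feasible region of~\eqref{eq:p-co} does not imply that qualification: take $f_0\equiv 0$, $f_1=\delta_{[0,1]}$, $f_2=\delta_{[1,2]}$, whose feasible region $\{1\}$ is nonempty and bounded while ${\rm ri}(\dom(f_1))\cap{\rm ri}(\dom(f_2))=\emptyset$. So even after you establish lower semicontinuity of $p$ at $\bm 0$ via recession cones, your chain of equalities ends at $\inf\eqref{eq:p-co}=\sup_{\bm\lambda\ge\bm 0}g(\bm\lambda)$ and never reaches $\sup\eqref{eq:d-co}$; the missing identity $\sup_{\bm\lambda\ge\bm 0}g(\bm\lambda)=\sup\eqref{eq:d-co}$ is precisely the hard part of the statement. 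The paper closes it by a different mechanism: via Lemma~\ref{lem:dual_of_d_is_p}, $\sup\eqref{eq:d-co}$ is the sup-inf value of a concave-convex function whose inf-sup value is $\inf\eqref{eq:p-co}$, and a minimax theorem under compactness (Proposition~5.5.4 of \citet{b09}), applied using the compact sublevel sets of the essential objective $f$ in~\eqref{eq:primal-l}, yields the minimax equality---hence strong duality and solvability at one stroke---with a symmetric argument for the dual-side case. Your fallback, deducing (ii) from (i) because boundedness of one feasible region supplies a Slater point for the other problem, does not repair this: the implication from primal boundedness to a dual Slater point is the paper's Proposition~\ref{prop:bounded-slater}, whose proof itself invokes the bounded-case strong duality (applied to tilted problems), so that route is circular; and no implication from dual boundedness to a primal Slater point is available at all.
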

\rev{A discussion of the explicit convex duality theory presented here is provided in Appendix~\ref{app:om} in the Electronic Companion.}

      \section{Robust Convex Optimization} \label{sec:rco}

      Consider now the parametric optimization problem
      \begin{samepage}
      \begin{empheq}[box=\fbox]{equation} \label{eq:ps-ro}
      \begin{array}{l@{\quad}l@{\qquad}l}
      \inf &  f_0(\bm x, \bm z_0) \\
      \subj &   f_i  (\bm x, \bm z_i) \le 0  & \displaystyle \forall i\in \set{I}\\
      &  \bm x \text{ free}
      \end{array}
      \tag{P-S}
      \end{empheq}
      $\mspace{265mu}$ \textbf{Primal Scenario Problem}
      \end{samepage}
      \newline
      whose objective and constraint functions $f_i:\R^{d_{\bm x}} \times \R^{d_{\bm z}} \rightarrow \overline{\R}$ \rev{depend on} uncertain parameters $\bm z_i$, $i\in \set{I}_0$. \rev{As this optimization problem is parameterized by the joint scenario $(\bm z_0,  \dots, \bm z_I)$ of all uncertain parameters, we henceforth refer to it as the (primal) {\em scenario problem}. In the remainder of this section, we assume that~\eqref{eq:ps-ro} is convex. Even more, we assume that its objective and constraint functions display a saddle structure in the sense of the following regularity condition.}
      \begin{itemize}
      \item[\textbf{(RF)}] The function $f_i(\bm x, \bm z_i)$ is proper, closed and convex in  $\bm x$ for every fixed $\bm z_i$, and $-f_i(\bm x, \bm z_i)$ is proper, closed and convex in $\bm z_i$ for every fixed $\bm x$ across all $i\in\mathcal I_0$.
      \end{itemize}

      Assumption \textbf{(RF)}\rev{, which is a \underline{r}obust pendant of assumption~\textbf{(F)} from Section~\ref{sec:co},} implies that $f_i$ is real-valued for every $i \in \set I_0$. Indeed, as $f_i(\bm x, \bm z_i)$ is proper in $\bm x$ for every~$\bm z_i$, we have $f_i(\bm x, \bm z_i) > -\infty$ for every $\bm x$ and $\bm z_i$. Similarly, as  $- f_i(\bm x, \bm z_i) $ is proper in~$\bm z_i$ for every $\bm x$, we have $f_i(\bm x, \bm z_i) < +\infty$ for every $\bm x$ and $\bm z_i$.  In particular, the assumption \textbf{(RF)} thus implies that ${\rm ri}(\dom (f_i)) = {\rm ri}(\dom (-f_i))= \R^{d_{\bm x}} \times \R^{d_{\bm z}}$ for all $i \in \mathcal{I}_0$. As any convex function is continuous on the relative interior of its domain, this implies that $f_i(\bm x,\bm z_i)$ is continuous in $\bm x$ for all fixed $\bm z_i$ and continuous in $\bm z_i$ for all fixed $\bm x$.

\rev{In the remainder of the paper we will sometimes have to evaluate conjugates and perspectives of a bivariate function $f_i(\bm x, \bm z_i)$ with respect to only one of its two arguments. Specifically, the partial conjugates of $f_i$ with respect to its first argument~$\bm x$ and with respect to its second argument~$\bm z_i$ will henceforth be denoted by~$f_i^{*1}$ and~$f_i^{*2}$, respectively. Similarly, the partial perspectives will be denoted by~$tf_i(\bm x/t,\bm z_i)$ and~$tf_i(\bm x,\bm z_i/t)$, respectively. For details we refer to Definitions~\ref{def:partial-conjugate} and~\ref{def:partial-perspective} in Appendix~\ref{app_before_A} in the Electronic Companion.
}

      In analogy to Section~\ref{sec:co}, we \rev{can now construct} the problem dual to~\eqref{eq:ps-ro}.

      \begin{samepage}
      \begin{empheq}[box=\fbox]{equation*} \label{eq:ds-ro} \tag{D-S}
      \begin{array}{l@{\quad}l@{\qquad}l}
      \sup  &  \multicolumn{2}{l}{\displaystyle  \mspace{-8mu} - f_0^{*1} \left( \bm w_0, \bm z_0 \right) - \sum_{i\in\mathcal I}  \lambda_i f_i^{*1} (\bm w_i/\lambda_i, \bm z_i )} 	\\
      \subj &    \displaystyle \sum_{i\in \set{I}_0} \bm w_i = \bm 0 \\
      &  \bm w_i \textnormal{ free}, \; i\in \set{I}_0, \;\; \bm \lambda \ge \bm{0}
      \end{array}
      \end{empheq}
      $\mspace{275mu}$ \textbf{Dual Scenario Problem}
      \end{samepage}

      \noindent Since condition \textbf{(F)} from Section~\ref{sec:co} is satisfied, Theorem~\ref{prop:weak-duality} implies that the problems~\eqref{eq:ps-ro} and~\eqref{eq:ds-ro} satisfy a weak duality relationship. In addition, strong duality between~\eqref{eq:ps-ro} and~\eqref{eq:ds-ro} as well as primal and dual solvability hold under the relevant conditions of Theorem~\ref{prop:strong-duality}.

      The scenario problem~\eqref{eq:ps-ro} may have different \rev{minimizers} for different scenarios. Before the uncertainty is revealed, it is therefore unclear which of these \rev{minimizers} should be implemented. \rev{From now on} we assume that all uncertain parameters $\bm z_i$, $i\in \set{I}_0$, reside in the nonempty uncertainty~set
      \begin{equation}
      \label{eq:uncertainty-set-ro}
      \supp=\left\{ \bm z \in\R^{d_{\bm z}} \mid c_\ell(\bm z) \le 0 \;\; \forall \ell \in\mathcal L \right\}
      \end{equation}
      described by the constraint functions $c_\ell:\R^{d_{\bm z}}  \rightarrow  \overline{\R}$,  $\ell \in\mathcal L$. In the remainder of this section, we assume that these constraint functions obey the following regularity condition.

      \begin{itemize}
      \item[\textbf{(C)}] The function $c_\ell$ is proper, closed and convex for each $\ell \in\mathcal L$. 
      \end{itemize}
      Assumption~\textbf{(C)} immediately implies that the uncertainty set $\supp$ is closed.

      A popular approach to disambiguate the uncertain convex program~\eqref{eq:ps-ro} is to solve its robust counterpart, which seeks a decision that minimizes the worst-case objective across all $\bm z_0 \in \set{Z}$ and is feasible for all possible $\bm z_i \in \set{Z}$, $i\in \set{I}$. Formally, the robust counterpart can be expressed as follows.

      \begin{samepage}
      \begin{empheq}[box=\fbox]{equation*} \label{eq:pw-ro} \tag{P-W}
      \begin{array}{l@{\quad}l@{\qquad}l}
      \inf  &  \displaystyle \sup_{\bm z_0 \in \supp}f_0(\bm x, \bm z_0) \\
      \subj &  \displaystyle  \sup_{\bm z_i \in \supp}  f_i  (\bm x, \bm z_i) \le 0  & \displaystyle \forall i \in \set{I}\\
      &  \bm{x} \text{ free}
      \end{array}
      \end{empheq}
      $\mspace{275mu}$ \textbf{Primal Worst Problem}
      \end{samepage}

      Note that $\sup_{\bm z_i \in \supp}  f_i  (\bm x, \bm z_i) \le 0$ if and only if $f_i  (\bm x, \bm z_i) \le 0$ for all $\bm z_i \in \set{Z}$, $i\in \set{I}$. As it solves the uncertain primal problem~\eqref{eq:ps-ro} under the most pessimistic uncertainty realizations, the robust counterpart~\eqref{eq:pw-ro} is sometimes referred to as the {\em primal worst problem}.  Closely related to the primal worst is the {\it dual best problem}, which solves the dual scenario problem~\eqref{eq:ds-ro} under the most optimistic uncertainty realizations.

      \begin{samepage}
      \begin{empheq}[box=\fbox]{equation*} \label{eq:db-ro} \tag{D-B}
      \begin{array}{l@{\quad}l@{\qquad}l}
      \sup & \multicolumn{2}{l}{\displaystyle \mspace{-8mu} - f_0^{*1} \left(\bm w_0, \bm z_0 \right) -  \sum_{i\in\mathcal I}  \lambda_i f_i^{*1}   (\bm w_i/\lambda_i ,\bm z_i)} \\
      \subj  & \displaystyle   \sum_{i\in \set{I}_0} \bm w_i = \bm 0 \\
      &   \bm w_i \textnormal{ free}, \;\; \bm z_i \in \supp, \; i \in \set{I}_0, \;\;\bm \lambda \ge \bm 0
      \end{array}
      \end{empheq}
      $\mspace{290mu}$ \textbf{Dual Best Problem}
      \end{samepage}

      \noindent Note that in~\eqref{eq:db-ro} the uncertainty realizations $\bm z_i$, $i \in \set{I}_0$, are decision variables that can be chosen freely within the uncertainty set $\set Z$. As~\eqref{eq:db-ro} accommodates only finitely many decision variables and constraints, it is at least principally amenable to numerical solution. \rev{However,} \eqref{eq:db-ro} is generically non-convex as it involves partial perspectives of (jointly) convex functions\rev{; see Example~\ref{example:nonconvex} in Appendix~\ref{app:om} in the Electronic Companion}.

      The primal worst and dual best problems satisfy a weak duality relationship.

      \begin{thm}[Weak Duality for~\eqref{eq:pw-ro} and~\eqref{eq:db-ro}]
      \label{thm:primal-worst>dual-best-ro}
      The infimum of the primal worst problem~\eqref{eq:pw-ro} is larger or equal to the supremum of the dual best problem~\eqref{eq:db-ro}.
      \end{thm}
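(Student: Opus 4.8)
The plan is to reduce the statement to the scenario-wise weak duality already available for the pair~\eqref{eq:ps-ro}--\eqref{eq:ds-ro}. The key structural observation is that, once a scenario $(\bm z_0, \dots, \bm z_I)$ with each $\bm z_i \in \supp$ is frozen, the dual best problem~\eqref{eq:db-ro} collapses exactly onto the dual scenario problem~\eqref{eq:ds-ro}: the remaining constraints $\sum_{i\in\set I_0} \bm w_i = \bm 0$ and $\bm \lambda \ge \bm 0$ and the objective function are identical. Hence every feasible tuple of~\eqref{eq:db-ro} yields, upon freezing its $\bm z_i$-components, a feasible point of~\eqref{eq:ds-ro} carrying the same objective value.

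First I would dispose of the degenerate cases: if~\eqref{eq:pw-ro} is infeasible its infimum is $+\infty$, and if~\eqref{eq:db-ro} is infeasible its supremum is $-\infty$, so in either case the inequality holds vacuously. It then suffices to fix an arbitrary feasible $\bm x$ of~\eqref{eq:pw-ro} together with an arbitrary feasible tuple $(\bm w_0, \dots, \bm w_I, \bm z_0, \dots, \bm z_I, \bm \lambda)$ of~\eqref{eq:db-ro} and to bound the former's objective value below by the latter's. I would next check that $\bm x$ is feasible in the scenario problem~\eqref{eq:ps-ro} for the frozen scenario: since each $\bm z_i \in \supp$, the primal worst feasibility of $\bm x$ forces $f_i(\bm x, \bm z_i) \le 0$ for all $i \in \set I$, and assumption~\textbf{(RF)} makes every $f_i$ real-valued so that $f_0(\bm x, \bm z_0)$ is finite. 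Assumption~\textbf{(RF)} also renders each $f_i(\cdot, \bm z_i)$ proper, closed and convex, so condition~\textbf{(F)} holds and Theorem~\ref{prop:weak-duality} applies to the scenario problem and its dual. Combining feasibility of $\bm x$ in~\eqref{eq:ps-ro}, feasibility of the frozen tuple in~\eqref{eq:ds-ro}, and the scenario weak duality inequality gives
\[
-f_0^{*1}(\bm w_0, \bm z_0) - \sum_{i\in\set I} \lambda_i f_i^{*1}(\bm w_i/\lambda_i, \bm z_i) \ \le\ f_0(\bm x, \bm z_0) \ \le\ \sup_{\bm z_0 \in \supp} f_0(\bm x, \bm z_0),
\]
where the rightmost quantity is precisely the objective value of $\bm x$ in~\eqref{eq:pw-ro} because $\bm z_0 \in \supp$. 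Taking the infimum over feasible $\bm x$ and the supremum over feasible dual tuples then yields the claim.

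The argument is largely bookkeeping once the scenario reduction is identified, so I do not anticipate a genuine obstacle. The only point that warrants care is the interpretation of the perspective terms $\lambda_i f_i^{*1}(\bm w_i/\lambda_i, \bm z_i)$ when some $\lambda_i$ vanishes; however, this is governed by the convention in Definition~\ref{def:partial-perspective} and is already accounted for in the weak duality relation between~\eqref{eq:ps-ro} and~\eqref{eq:ds-ro}, so it requires no separate treatment here.
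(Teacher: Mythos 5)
Your proposal is correct and follows essentially the same route as the paper: both reduce the claim to the scenario-wise weak duality of Theorem~\ref{prop:weak-duality} (justified via assumption \textbf{(RF)} implying condition \textbf{(F)}), combined with the observation that a robustly feasible $\bm x$ is feasible in the scenario problem for every frozen scenario. The paper packages this last step as a min-max inequality after taking suprema over scenarios, whereas you argue pointwise with a fixed primal-feasible $\bm x$ and a fixed dual-feasible tuple, but this is merely a presentational difference in the same argument.
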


      As~\eqref{eq:pw-ro} involves embedded maximization problems and as~\eqref{eq:db-ro} is generally non-convex, both problems appear to be difficult to solve. In the following we will demonstrate, however, that under mild conditions both~\eqref{eq:pw-ro} and~\eqref{eq:db-ro} can be reformulated as polynomial-size convex programs~\eqref{eq:pw-cvx-ro} and~\eqref{eq:db-cvx-ro} that are amenable to solution with off-the-shelf solvers. While useful for computation, these reformulations will also allow us to prove strong duality between~\eqref{eq:pw-ro} and~\eqref{eq:db-ro}. To this end, we first summarize the relationships among the problems \eqref{eq:pw-ro},~\eqref{eq:pw-cvx-ro}, \eqref{eq:db-ro} and \eqref{eq:db-cvx-ro} in Figure~\ref{fig:weak-relations}.

	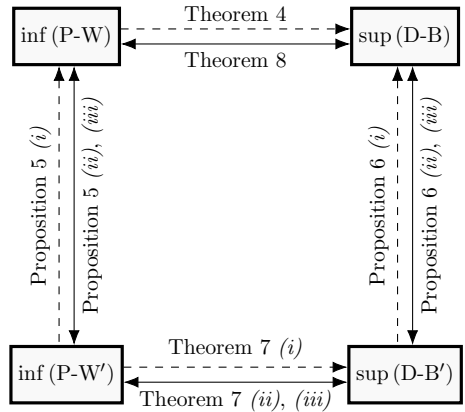
\begin{figure}[ht]
		\begin{center}
			\begin{tikzpicture}[scale=0.9, every node/.style={scale=0.7}, squarednode/.style={rectangle, draw=black, fill=gray!5, very thick, minimum width=15mm, minimum height=10mm}, node distance=3.05cm]
				\node[squarednode] (I1) {$\inf\eqref{eq:pw-ro}$ };
				\node (I2) [right of=I1] {};
				\node (I3) [right of=I2] {};
				\node[squarednode] (I4) [right of=I3] {$\sup \eqref{eq:db-ro}$};
				
				\node[squarednode] (C1) [below of=I1] {$\inf\eqref{eq:pw-cvx-ro}$};
				\node (C2) [right of=C1] {};
				\node (C3) [right of=C2] {};
				\node[squarednode] (C4) [below of=I4] {$\sup \eqref{eq:db-cvx-ro}$};


      \draw[-{Latex[length=2mm]},dashed] ($(I1.east)+(0mm,1mm)$) --  node[above]{\text{Theorem \ref{thm:primal-worst>dual-best-ro}}} ($(I4.west)+(0mm,1mm)$);
      \draw[{Latex[length=2mm]}-{Latex[length=2mm]}] ($(I1.east)+(0mm,-1mm)$) -- node[below]{\text{Theorem \ref{thm:p-w=d-b-ro}}} ($(I4.west)+(0mm,-1mm)$);

      \draw[{Latex[length=2mm]}-{Latex[length=2mm]}] ($(I1.south) +(1mm,0mm)$) -- node[right]{ \rotatebox{0}{\text{Proposition \ref{prop:p-w=p-w-cvx-ro}}~\emph{(ii)},~\emph{(iii)}}} ($(C1.north)+(1mm,0mm)$);
      \draw[{Latex[length=2mm]}-, dashed] ($(I1.south)+(-1mm,0mm)$) -- node[left]{ \rotatebox{0}{\text{Proposition \ref{prop:p-w=p-w-cvx-ro}~\emph{(i)}}}} ($(C1.north)+(-1mm,0mm)$);

      \draw[{Latex[length=2mm]}-, dashed] ($(I4.south)+(-1mm,0mm)$) -- node[left]{\rotatebox{0}{\text{Proposition \ref{prop:d-b=d-b-cvx-ro}~\emph{(i)}}}} ($(C4.north)+(-1mm,0mm)$);
      \draw[{Latex[length=2mm]}-{Latex[length=2mm]}] ($(I4.south)+(1mm,0mm)$) -- node[right]{\rotatebox{0}{\text{Proposition \ref{prop:d-b=d-b-cvx-ro}~\emph{(ii)},~\emph{(iii)}}}} ($(C4.north)+(1mm,0mm)$);

      \draw[{Latex[length=2mm]}-{Latex[length=2mm]}] ($(C1.east)+(0mm,-1mm)$) --  node[below]{\text{Theorem \ref{thm:p-w-cvx=d-b-cvx-ro}~\emph{(ii)},~\emph{(iii)}}} ($(C4.west)+(0mm,-1mm)$);
      \draw[-{Latex[length=2mm]},dashed] ($(C1.east)+(0mm,1mm)$) --   node[above]{\text{Theorem \ref{thm:p-w-cvx=d-b-cvx-ro}~\emph{(i)}}} ($(C4.west)+(0mm,1mm)$);

      \end{tikzpicture}
      \end{center}
      \vspace{-0.5cm}
      \caption{Illustration of the relationships among~\eqref{eq:pw-ro},~\eqref{eq:pw-cvx-ro}, \eqref{eq:db-ro} and \eqref{eq:db-cvx-ro}. The optimal values of these problems are non-increasing in the directions of the arcs. Dashed arcs represent universal inequalities, while solid arcs represent inequalities that hold under regularity conditions.}
      \label{fig:weak-relations}
      \end{figure}

      %

      We now show that~\eqref{eq:pw-ro} can be reduced to the following problem by dualizing the maximization problems that are embedded in~\eqref{eq:pw-ro}.

      \begin{samepage}
      \begin{empheq}[box=\fbox]{equation*}  \label{eq:pw-cvx-ro} \tag{P-W$'$}
      \begin{array}{l@{\quad}l@{\qquad}l}
      \inf  &   \displaystyle 	(- f_0)^{*2}\left(\bm x, \bm y_{00}  \right)  +   \sum_{\ell \in \set{L}}   \nu_{0\ell}c_\ell^*   \left(\bm y_{0\ell} /  \nu_{0\ell}\right)  \\
      \subj &   \displaystyle  (- f_i)^{*2}\left(\bm x,  \bm   y_{i0}  \right) +   \sum_{\ell \in \set{L}}  \nu_{i\ell} c_\ell^*  \left(\bm y_{i\ell} / \nu_{i\ell} \right) \le 0   & \displaystyle \forall i \in \set{I}\\
      & \displaystyle   \sum_{\ell \in \set{L}_0} \bm   y_{i\ell} = \bm 0 & \displaystyle \forall i \in \set{I}_0 \\
      & \multicolumn{2}{l}{\mspace{-10mu} \bm x \textnormal{ free}, \;\; \bm y_{i\ell} \textnormal{ free}, \; i \in \set{I}_0, \, \ell \in\mathcal L_0, \;\; \nu_{i \ell} \ge 0, \; i \in \set{I}_0, \, \ell \in \mathcal L}
      \end{array}
      \end{empheq}
      $\mspace{210mu}$ \textbf{Reformulated Primal Worst Problem}
      \end{samepage}

      Problem~\eqref{eq:pw-cvx-ro} is convex since its objective function and its constraints involve infimal convolutions of the conjugate objective and constraint functions $f_i$ as well as the perspectives of the conjugate constraint functions $c_\ell$ describing the uncertainty set (\emph{cf.}~Proposition~\ref{prop:partial-conjugate-dual}).

      %
      %
      %

      \begin{prop}[Convex Reformulation of~\eqref{eq:pw-ro}] \label{prop:p-w=p-w-cvx-ro}
      The following statements hold.
      \begin{enumerate}[label=(\roman*)]
      \item The infimum of~\eqref{eq:pw-ro} is smaller or equal to that of~\eqref{eq:pw-cvx-ro}.
      \item If $\supp$ admits a Slater point, then the infima of~\eqref{eq:pw-ro} and~\eqref{eq:pw-cvx-ro} coincide, and~\eqref{eq:pw-ro} is solvable if and only if~\eqref{eq:pw-cvx-ro} is solvable.
      \item If $\supp$ is compact and~\eqref{eq:pw-ro}  admits a strict Slater point, then the infima of~\eqref{eq:pw-ro} and~\eqref{eq:pw-cvx-ro} coincide, and~\eqref{eq:pw-ro} is solvable whenever~\eqref{eq:pw-cvx-ro} is solvable.
      \end{enumerate}
      \end{prop}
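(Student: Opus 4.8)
The plan is to view each embedded maximization in~\eqref{eq:pw-ro} as the negative of an instance of the primal problem~\eqref{eq:p-co} and to recognize the inner minimization appearing in~\eqref{eq:pw-cvx-ro} as its dual~\eqref{eq:d-co}. Fix $\bm x$ and $i\in\set I_0$. By assumption~\textbf{(RF)} the map $\bm z_i\mapsto -f_i(\bm x,\bm z_i)$ is proper, closed and convex, so
\[
\sup_{\bm z_i\in\supp} f_i(\bm x,\bm z_i)=-\inf\bigl\{-f_i(\bm x,\bm z_i)\ \big|\ c_\ell(\bm z_i)\le 0\ \forall \ell\in\set L\bigr\}
\]
is the negative of an instance of~\eqref{eq:p-co} in the variable $\bm z_i$, whose objective conjugate is $(-f_i)^{*2}(\bm x,\cdot)$ and whose constraint conjugates are the $c_\ell^*$. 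Its dual~\eqref{eq:d-co} is precisely the minimization over $(\bm y_{i\ell})_{\ell\in\set L_0}$ and $(\nu_{i\ell})_{\ell\in\set L}$ that figures in the objective (for $i=0$) and in the $i$-th constraint (for $i\in\set I$) of~\eqref{eq:pw-cvx-ro}. All three statements then follow by transferring the duality results of Section~\ref{sec:co} through this identification.

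For part~\emph{(i)} I would take any feasible point $(\bm x,\{\bm y_{i\ell}\},\{\nu_{i\ell}\})$ of~\eqref{eq:pw-cvx-ro}. Weak duality (Theorem~\ref{prop:weak-duality}) applied to each embedded instance gives
\[
\sup_{\bm z_i\in\supp} f_i(\bm x,\bm z_i)\;\le\;(-f_i)^{*2}(\bm x,\bm y_{i0})+\sum_{\ell\in\set L}\nu_{i\ell}c_\ell^*(\bm y_{i\ell}/\nu_{i\ell})\qquad\forall i\in\set I_0 .
\]
For $i\in\set I$ the right-hand side is nonpositive, so $\bm x$ is feasible in~\eqref{eq:pw-ro}; for $i=0$ it is the objective of~\eqref{eq:pw-cvx-ro}, which therefore dominates the objective $\sup_{\bm z_0\in\supp}f_0(\bm x,\bm z_0)$ of~\eqref{eq:pw-ro} at $\bm x$. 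Hence $\inf\eqref{eq:pw-ro}\le\inf\eqref{eq:pw-cvx-ro}$. For part~\emph{(ii)}, since $-f_i(\bm x,\cdot)$ is real-valued by~\textbf{(RF)} its domain is all of $\R^{d_{\bm z}}$, so any Slater point of $\supp$ (Definition~\ref{def:Slater_sets}) is automatically a Slater point of each embedded minimization (Definition~\ref{def:Slater_problems}). Theorem~\ref{prop:strong-duality}\emph{(i)} then guarantees, for every $\bm x$, that strong duality holds and that the embedded dual is \emph{solvable}. Consequently the inequality above becomes an equality that is attained: $\bm x$ is feasible in~\eqref{eq:pw-ro} if and only if some $(\bm y_{i\ell},\nu_{i\ell})$ renders the $i$-th constraint of~\eqref{eq:pw-cvx-ro} nonpositive, and $\sup_{\bm z_0\in\supp}f_0(\bm x,\bm z_0)$ equals the minimal value of the objective of~\eqref{eq:pw-cvx-ro} over $(\bm y_{0\ell},\nu_{0\ell})$. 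Matching the two problems pointwise in $\bm x$ yields $\inf\eqref{eq:pw-ro}=\inf\eqref{eq:pw-cvx-ro}$, and transferring optimizers in either direction, using the attainment just noted, gives the claimed equivalence of solvability.

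Part~\emph{(iii)} is the delicate one. Compactness of $\supp$ (nonempty, and bounded and closed by~\textbf{(C)}) lets me invoke Theorem~\ref{prop:strong-duality}\emph{(ii)} for each embedded instance, which again delivers the value equality $\sup_{\bm z_i\in\supp}f_i(\bm x,\bm z_i)=\inf_{(\bm y_{i\ell},\nu_{i\ell})}[\,\cdots]$, \emph{but only solvability of the embedded primal, not of the dual}. The infimum over $(\bm y_{i\ell},\nu_{i\ell})$ may thus fail to be attained, so a merely feasible $\bm x$ of~\eqref{eq:pw-ro} with $\sup_{\bm z_i}f_i=0$ need not extend to a feasible point of~\eqref{eq:pw-cvx-ro}. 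This is exactly where the strict Slater point $\bm x^{\rm S}$ of~\eqref{eq:pw-ro} enters. Writing $F_i(\bm x)=\sup_{\bm z_i\in\supp}f_i(\bm x,\bm z_i)$, which is convex in $\bm x$ as a supremum of the convex functions $f_i(\cdot,\bm z_i)$, one has $F_i(\bm x^{\rm S})<0$ for all $i\in\set I$; by the value equality the embedded dual infimum is then strictly negative, so some dual point has objective value below $0$. For an arbitrary feasible $\bm x$ the convex combinations $\bm x_n=\tfrac1n\bm x^{\rm S}+(1-\tfrac1n)\bm x$ satisfy $F_i(\bm x_n)<0$ and hence extend to feasible points of~\eqref{eq:pw-cvx-ro}, while the objective is controlled through $\limsup_{n\to\infty}F_0(\bm x_n)\le F_0(\bm x)$ (convexity, exactly as in Remark~\ref{rem:strict-weak-inequalities}) together with near-optimal dual choices for $i=0$ afforded by the value equality. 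Letting $n\to\infty$ yields $\inf\eqref{eq:pw-cvx-ro}\le F_0(\bm x)$ for every feasible $\bm x$, hence $\inf\eqref{eq:pw-cvx-ro}\le\inf\eqref{eq:pw-ro}$, giving equality with part~\emph{(i)}; the remaining solvability implication (that~\eqref{eq:pw-ro} is solvable whenever~\eqref{eq:pw-cvx-ro} is) follows from the weak-duality transfer of part~\emph{(i)} as in part~\emph{(ii)}. The main obstacle throughout is precisely the possible non-attainment of the embedded dual under mere compactness, which is what forces the strict-Slater perturbation and the limiting argument in place of the direct pointwise matching available in part~\emph{(ii)}.
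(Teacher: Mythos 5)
Your proof is correct, and its skeleton---dualizing each embedded maximization as an instance of~\eqref{eq:p-co}/\eqref{eq:d-co} and transferring feasibility and optimality between~\eqref{eq:pw-ro} and~\eqref{eq:pw-cvx-ro}---is the paper's; in particular, your parts~\emph{(i)} and~\emph{(ii)} coincide with the paper's argument (subproblem-wise weak duality via Theorem~\ref{prop:weak-duality}, then Theorem~\ref{prop:strong-duality}~\emph{(i)} with dual attainment, since assumption \textbf{(RF)} makes any Slater point of $\supp$ a Slater point of each embedded minimization). Where you genuinely diverge is part~\emph{(iii)}, and your route is more elementary. The paper handles the possible non-attainment of the embedded duals by a formal detour: it first shows that~\eqref{eq:pw-ro} is equivalent to its strict-inequality variant (using continuity and convexity of the functions $F_i(\bm x)=\sup_{\bm z_i\in\supp}f_i(\bm x,\bm z_i)$), dualizes there (strictness permits dropping the inner infima without attainment), and then relaxes back to weak inequalities via Remark~\ref{rem:strict-weak-inequalities}, which requires constructing a strict Slater point of~\eqref{eq:pw-cvx-ro} by combining $\bm x^{\rm S}$ with strict Slater points of the dual subproblems supplied by Proposition~\ref{prop:bounded-slater}. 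You instead stay with the weak-inequality problem throughout: perturbing a feasible $\bm x$ toward $\bm x^{\rm S}$ forces $F_i(\bm x_n)<0$ strictly for all $i\in\set I$, so the value equality from Theorem~\ref{prop:strong-duality}~\emph{(ii)} yields dual certificates of strictly negative value for the constraints and $\epsilon$-optimal dual points for the objective, and a limit in $n$ and $\epsilon$ gives $\inf\eqref{eq:pw-cvx-ro}\le F_0(\bm x)$. This dispenses with Remark~\ref{rem:strict-weak-inequalities}, with Proposition~\ref{prop:bounded-slater}, and even with the continuity of the $F_i$ (pure convexity plus finiteness of $F_i(\bm x^{\rm S})$, which follows from compactness of~$\supp$, suffices for your estimates). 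What the paper's heavier construction buys is reuse: the same combination of $\bm x^{\rm S}$ with the Slater points from Proposition~\ref{prop:bounded-slater} reappears in the proof of Theorem~\ref{thm:p-w=d-b-ro}~\emph{(i)}, where an actual Slater point of~\eqref{eq:pw-cvx-ro}---not merely equality of optimal values---is needed to invoke Theorem~\ref{thm:p-w-cvx=d-b-cvx-ro}~\emph{(ii)}. Your closing solvability transfers in~\emph{(ii)} and~\emph{(iii)} (an optimizer of~\eqref{eq:pw-cvx-ro} projects, by the part~\emph{(i)} weak-duality argument, to a feasible $\bm x^\star$ of~\eqref{eq:pw-ro} with $F_0(\bm x^\star)$ at most the common optimal value) match the paper's and are complete.
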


      Note that in Proposition~\ref{prop:p-w=p-w-cvx-ro}~\emph{(iii)}, the solvability of~\eqref{eq:pw-ro} does not imply the solvability of~\eqref{eq:pw-cvx-ro}. Nevertheless, one can construct a sequence of feasible solutions to~\eqref{eq:pw-cvx-ro} from a solution to~\eqref{eq:pw-ro} that asymptotically attain the same optimal value.

      Next, we argue that under mild conditions the non-convex dual best problem~\eqref{eq:db-ro} is equivalent to the following finite convex optimization problem.

      \begin{samepage}
      \begin{empheq}[box=\fbox]{equation*} \label{eq:db-cvx-ro} \tag{D-B$'$}
      \begin{array}{l@{\quad}l@{\qquad}l}
      \displaystyle \sup & \multicolumn{2}{l}{\displaystyle  \mspace{-8mu} - f_0^{*1}   \left( \bm w_0,\bm z_0 \right) - \sum_{i\in\mathcal I}  \lambda_i f_i^{*1}   (\bm w_i /\lambda_i  ,\bm \upsilon_i/\lambda_i  )} \\
      \subj &  \displaystyle \sum_{i\in \set{I}_0} \bm w_i = \bm 0 \\
      &  c_\ell  ( \bm z_0 ) \le 0 & \displaystyle \forall \ell \in\mathcal L \\
      &  \lambda_i c_\ell  ( \bm \upsilon_i/\lambda_i  ) \le 0  & \displaystyle \forall i \in \set{I}, \; \forall \ell \in\mathcal L  \\
      &  \multicolumn{2}{l}{\mspace{-10mu} \bm w_i \textnormal{ free}, \; i \in \set{I}_0, \;\; \bm z_0 \textnormal{ free}, \;\; \bm \lambda \ge \bm 0, \;\; \bm \upsilon_i \textnormal{ free}, \; i\in \set{I}}
      \end{array}
      \end{empheq}
      $\mspace{225mu}$ \textbf{Reformulated Dual Best Problem}
      \end{samepage}

      Intuitively, one can think of problem~\eqref{eq:db-cvx-ro} as being obtained from~\eqref{eq:db-ro} by multiplying the inequalities involving $\bm z_i$ by $\lambda_i$, $i\in \set I$ and via the variable substitution $\bm \upsilon_i \leftarrow \lambda_i \bm z_i$.  Since the perspective of a convex function is convex, the resulting model~\eqref{eq:db-cvx-ro} is indeed a convex problem.

      The equivalence of~\eqref{eq:db-ro} and~\eqref{eq:db-cvx-ro} was first established for
      robust linear programs with compact uncertainty sets \citep[Lemma~1]{gbbd14} and then generalized to robust nonlinear programs with compact uncertainty sets \citep[Theorem~1]{gd15}. 
      In the following, we relax the compactness condition and demonstrate that the nonlinear programs~\eqref{eq:db-ro} and~\eqref{eq:db-cvx-ro} remain equivalent if the uncertainty set admits a Slater point. This alternative result is useful for the analysis of distributionally robust optimization problems, which can often be reformulated as robust optimization problems with unbounded uncertainty sets (see Section~\ref{sec:drco}).



      \begin{prop}[Convex Reformulation of~\eqref{eq:db-ro}] \label{prop:d-b=d-b-cvx-ro}
      The following statements hold.
      \begin{enumerate}[label=(\roman*)]
      \item The supremum of~\eqref{eq:db-ro} is smaller or equal to that of~\eqref{eq:db-cvx-ro}.
      \item If~\eqref{eq:db-ro} admits a strict Slater point, then the suprema of~\eqref{eq:db-ro} and~\eqref{eq:db-cvx-ro} coincide, and~\eqref{eq:db-cvx-ro} is solvable whenever~\eqref{eq:db-ro} is solvable.
      \item If $\supp$ is bounded, then the suprema of~\eqref{eq:db-ro} and~\eqref{eq:db-cvx-ro} coincide, and~\eqref{eq:db-cvx-ro} is solvable if and only if~\eqref{eq:db-ro} is solvable.
      \end{enumerate}
      \end{prop}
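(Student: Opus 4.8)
The plan is to exploit that, for $\lambda_i>0$, the substitution $\bm\upsilon_i=\lambda_i\bm z_i$ ($i\in\set I$) is a bijection under which the $i$-th terms of~\eqref{eq:db-ro} and~\eqref{eq:db-cvx-ro} become literally identical, so that all the difficulty is concentrated on the boundary $\lambda_i=0$. Throughout I would use that assumption~\textbf{(RF)} makes each partial conjugate $f_i^{*1}$ a proper, closed and \emph{jointly} convex function of $(\bm w_i,\bm z_i)$: for fixed $\bm x$ the map $(\bm w,\bm z)\mapsto\bm w^\top\bm x-f_i(\bm x,\bm z)$ is jointly convex and lower semicontinuous because $-f_i(\bm x,\cdot)$ is closed convex by~\textbf{(RF)}, and the pointwise supremum over $\bm x$ preserves both properties. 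Hence the joint perspective $\lambda_i f_i^{*1}(\bm w_i/\lambda_i,\bm\upsilon_i/\lambda_i)$ in~\eqref{eq:db-cvx-ro} is well defined and convex in the sense of Definition~\ref{def:convex-perspective}, and the objective of~\eqref{eq:db-cvx-ro} is concave.

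For assertion~\emph{(i)} I would map any feasible point of~\eqref{eq:db-ro} to the point of~\eqref{eq:db-cvx-ro} obtained by keeping $\bm w_i,\bm z_0,\bm\lambda$ and setting $\bm\upsilon_i=\lambda_i\bm z_i$. For $\lambda_i>0$ the constraint $\lambda_i c_\ell(\bm\upsilon_i/\lambda_i)=\lambda_i c_\ell(\bm z_i)\le 0$ and the equality of the $i$-th objective terms are immediate. For $\lambda_i=0$ one has $\bm\upsilon_i=\bm 0$, the constraint reduces to $\delta^*_{\dom(c_\ell^*)}(\bm 0)=0$, and the two objective conventions (Definition~\ref{def:partial-perspective} on the $\eqref{eq:db-ro}$-side, Definition~\ref{def:convex-perspective} on the $\eqref{eq:db-cvx-ro}$-side) both reduce to the same support function $\delta^*_{\R^{d_{\bm x}}}(\bm w_i)$, using that $f_i(\cdot,\bm z_i)$ is real-valued (so $(f_i^{*1})^{*1}(\cdot,\bm z_i)=f_i(\cdot,\bm z_i)$ has full domain) and that $-f_i(\bm x,\cdot)$ is proper for every $\bm x$ by~\textbf{(RF)}. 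Thus the map preserves feasibility and the objective exactly, giving $\sup\eqref{eq:db-ro}\le\sup\eqref{eq:db-cvx-ro}$.

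The reverse inequality is where the work lies. For assertion~\emph{(ii)} I would push the strict Slater point of~\eqref{eq:db-ro} through the above map to obtain a strict Slater point $s^{\mathrm S}$ of~\eqref{eq:db-cvx-ro} with all components $\lambda_i^{\mathrm S}>0$. Given any feasible $s'$ of~\eqref{eq:db-cvx-ro}, the convex combinations $s'_n=\tfrac1n s^{\mathrm S}+(1-\tfrac1n)s'$ are feasible in the convex problem~\eqref{eq:db-cvx-ro} and satisfy $\lambda_{i,n}\ge\tfrac1n\lambda_i^{\mathrm S}>0$, so each $s'_n$ maps back to a feasible point of~\eqref{eq:db-ro} of equal objective. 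Concavity of the~\eqref{eq:db-cvx-ro} objective then gives $\mathrm{obj}(s'_n)\ge\tfrac1n\mathrm{obj}(s^{\mathrm S})+(1-\tfrac1n)\mathrm{obj}(s')\to\mathrm{obj}(s')$, whence $\sup\eqref{eq:db-ro}\ge\mathrm{obj}(s')$; taking the supremum over $s'$ and combining with~\emph{(i)} yields equality. This is precisely the convex-combination device of Remark~\ref{rem:strict-weak-inequalities}. The solvability claim follows by mapping a solution of~\eqref{eq:db-ro} through part~\emph{(i)} to a feasible, hence optimal, point of~\eqref{eq:db-cvx-ro}.

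For assertion~\emph{(iii)} I would instead use boundedness to trivialize the $\lambda_i=0$ slice. The set $\{(\bm\upsilon_i,\lambda_i):\lambda_i\ge0,\ \lambda_i c_\ell(\bm\upsilon_i/\lambda_i)\le 0\ \forall\ell\}$ equals $\mathrm{cl}\,\mathrm{cone}(\supp\times\{1\})$: it is closed, contains $\mathrm{cone}(\supp\times\{1\})$, and its $\lambda_i=0$ slice is $\{\bm\upsilon_i:\delta^*_{\dom(c_\ell^*)}(\bm\upsilon_i)\le0\ \forall\ell\}=\mathrm{rec}(\supp)$, since $\delta^*_{\dom(c_\ell^*)}$ is the recession function of $c_\ell$ \citep[Theorem~13.3]{Rockafellar1970}. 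When $\supp$ is bounded, $\mathrm{rec}(\supp)=\{\bm 0\}$, so $\lambda_i=0$ forces $\bm\upsilon_i=\bm 0$; as the $\lambda_i=0$ objective contribution then equals $-\delta^*_{\R^{d_{\bm x}}}(\bm w_i)$, every feasible point of~\eqref{eq:db-cvx-ro} with finite objective must satisfy $\bm w_i=\bm 0$ whenever $\lambda_i=0$. On such points the reverse map (set $\bm z_i=\bm\upsilon_i/\lambda_i$ if $\lambda_i>0$ and pick any $\bm z_i\in\supp$ if $\lambda_i=0$) is feasible in~\eqref{eq:db-ro} and objective-preserving, which delivers $\sup\eqref{eq:db-ro}\ge\sup\eqref{eq:db-cvx-ro}$ and, together with the part~\emph{(i)} map, the solvability equivalence. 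The main obstacle throughout is exactly this boundary $\lambda_i=0$: for unbounded $\supp$ the recession directions $\bm\upsilon_i\in\mathrm{rec}(\supp)$ endow~\eqref{eq:db-cvx-ro} with strictly more freedom than~\eqref{eq:db-ro}, and the whole content of~\emph{(ii)} and~\emph{(iii)} is to show that a Slater point (enabling a perturbation into $\lambda_i>0$) or boundedness (killing $\mathrm{rec}(\supp)$) closes this gap, the delicate steps being the correct evaluation of the perspectives at $\lambda_i=0$ via Definitions~\ref{def:convex-perspective}--\ref{def:partial-perspective} and~\eqref{eq:convex-perspective} and the identification of the zero-$\lambda_i$ constraint slice with $\mathrm{rec}(\supp)$.
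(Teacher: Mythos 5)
Your proposal is correct and takes essentially the same route as the paper's proof: the substitution $\bm \upsilon_i = \lambda_i \bm z_i$ with careful evaluation of both perspective conventions at $\lambda_i = 0$ (via properness of $-f_i(\bm x,\cdot)$ and real-valuedness of $f_i(\cdot,\bm z_i)$) for part \emph{(i)}, convex combinations with the mapped strict Slater point (which has $\bm \lambda > \bm 0$) for part \emph{(ii)}, and the recession-direction argument forcing $\bm \upsilon_i = \bm 0$ when $\lambda_i = 0$ under boundedness of $\supp$ for part \emph{(iii)}. The only cosmetic difference is that you invoke Rockafellar's Theorem~13.3 directly to identify the $\lambda_i=0$ slice with $\mathrm{rec}(\supp)$, where the paper packages the same fact as its Lemma~\ref{lemma:recession_directions}.
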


      Note that in Proposition~\ref{prop:d-b=d-b-cvx-ro}~\emph{(ii)}, the solvability of~\eqref{eq:db-cvx-ro} does not imply the solvability of~\eqref{eq:db-ro}. Nevertheless, one can construct a sequence of feasible solutions to~\eqref{eq:db-ro} from a solution to~\eqref{eq:db-cvx-ro} that asymptotically attain the same optimal value.

      Having identified easily verifiable conditions under which~\eqref{eq:pw-ro} and~\eqref{eq:db-ro} are equivalent to their respective convex reformulations~\eqref{eq:pw-cvx-ro} and~\eqref{eq:db-cvx-ro}, we are now ready to prove that~\eqref{eq:pw-cvx-ro} and~\eqref{eq:db-cvx-ro} are dual to each other and thus enjoy various weak and strong duality relationships.

      \begin{thm}[Duality Results for~\eqref{eq:pw-cvx-ro} and~\eqref{eq:db-cvx-ro}]
      \label{thm:p-w-cvx=d-b-cvx-ro}
      The following statements hold.
      \begin{enumerate}[label=(\roman*)]
      \item The infimum of~\eqref{eq:pw-cvx-ro} is larger or equal to the supremum of~\eqref{eq:db-cvx-ro}.
      \item If~\eqref{eq:pw-cvx-ro} or \eqref{eq:db-cvx-ro} admits a Slater point, then the infimum of~\eqref{eq:pw-cvx-ro} coincides with the supremum of~\eqref{eq:db-cvx-ro}, and~\eqref{eq:db-cvx-ro} or~\eqref{eq:pw-cvx-ro} is solvable, respectively.
      \item If the feasible region of~\eqref{eq:pw-cvx-ro} or \eqref{eq:db-cvx-ro} is nonempty and bounded, then the infimum of~\eqref{eq:pw-cvx-ro} coincides with the supremum of~\eqref{eq:db-cvx-ro}, and \eqref{eq:pw-cvx-ro} or~\eqref{eq:db-cvx-ro} is solvable, respectively.
      \end{enumerate}
      \end{thm}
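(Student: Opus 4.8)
The plan is to recognise the reformulated primal worst problem~\eqref{eq:pw-cvx-ro} as an instance of the primal convex program~\eqref{eq:p-co} and the reformulated dual best problem~\eqref{eq:db-cvx-ro} as the associated dual~\eqref{eq:d-co}, so that all three assertions follow at once from the weak and strong duality theorems of Section~\ref{sec:co}. To this end I would stack the variables of~\eqref{eq:pw-cvx-ro} into a single vector $\hat{\bm x}=(\bm x,\{\bm y_{i\ell}\}_{i\in\set{I}_0,\ell\in\set{L}_0},\{\nu_{i\ell}\}_{i\in\set{I}_0,\ell\in\set{L}})$ and define, for every $i\in\set{I}_0$, the function $\hat f_i(\hat{\bm x})=(-f_i)^{*2}(\bm x,\bm y_{i0})+\sum_{\ell\in\set{L}}\nu_{i\ell}c_\ell^*(\bm y_{i\ell}/\nu_{i\ell})$ on the affine set $\{\sum_{\ell\in\set{L}_0}\bm y_{i\ell}=\bm 0\}$ and $+\infty$ elsewhere. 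The sign restrictions $\nu_{i\ell}\ge 0$ are already encoded in the domains of the convex perspectives, while the linear balance equations are folded into $\dom(\hat f_i)$; with this notation~\eqref{eq:pw-cvx-ro} reads exactly as $\inf\{\hat f_0(\hat{\bm x})\mid \hat f_i(\hat{\bm x})\le 0\ \forall i\in\set{I}\}$, i.e.\ as~\eqref{eq:p-co}. Before proceeding I would verify condition~\textbf{(F)} for each $\hat f_i$: the partial conjugate $(-f_i)^{*2}$ is proper, closed and convex because $-f_i$ is jointly so under~\textbf{(RF)}; each perspective $\nu_{i\ell}c_\ell^*(\cdot/\nu_{i\ell})$ is proper, closed and convex by Proposition~\ref{prop:perspective-convex}, as $c_\ell^*$ inherits these properties from $c_\ell$ under~\textbf{(C)}; and the sum of these functions of essentially disjoint variable blocks, restricted to an affine set, remains proper, closed and convex.

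The computational heart of the argument is to show that the dual~\eqref{eq:d-co} of this instance coincides with~\eqref{eq:db-cvx-ro}. I would compute $\hat f_i^*$ by standard conjugate calculus. The conjugate of the balance-equation indicator introduces, for each block $i$, a single multiplier vector shared across the $\bm y_{i\ell}$, which I identify with $\bm z_0$ for $i=0$ and with $\bm\upsilon_i/\lambda_i$ for $i\in\set{I}$; this identification is consistent with the origin of~\eqref{eq:pw-cvx-ro} in Proposition~\ref{prop:p-w=p-w-cvx-ro}, where~\eqref{eq:pw-cvx-ro} was obtained by dualising the inner maximisations over $\bm z_i$, in which $\bm z_i$ is precisely the multiplier of that balance equation. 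The conjugate of each perspective $\nu_{i\ell}c_\ell^*(\cdot/\nu_{i\ell})$ is the indicator of $\{(\bm a,b)\mid c_\ell(\bm a)+b\le 0\}$ because $c_\ell^{**}=c_\ell$, and after the outer perspective operation of~\eqref{eq:d-co} this collapses to the constraints $c_\ell(\bm z_0)\le 0$ and $\lambda_i c_\ell(\bm\upsilon_i/\lambda_i)\le 0$. The conjugate of $(-f_i)^{*2}$ equals $f_i^{*1}$ by the biconjugacy relation $(f_i^{*1})^{*}=(-f_i)^{*2}$ of Proposition~\ref{prop:partial-conjugates}, reproducing the objective terms $-f_0^{*1}(\bm w_0,\bm z_0)$ and $-\lambda_i f_i^{*1}(\bm w_i/\lambda_i,\bm\upsilon_i/\lambda_i)$. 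Finally, the coupling through the shared variable $\bm x$ forces the $\bm x$-components $\bm w_i$ of the block conjugate variables to sum to zero, yielding $\sum_{i\in\set{I}_0}\bm w_i=\bm 0$. The boundary cases $\nu_{i\ell}=0$ and $\lambda_i=0$ are handled via the closed-form values of the perspectives at the origin, exactly as in the proof of Proposition~\ref{prop:d-b=d-b-cvx-ro}. Matching all terms and variables then identifies~\eqref{eq:d-co} with~\eqref{eq:db-cvx-ro}.

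With the identification in place, assertion~\emph{(i)} is immediate from Theorem~\ref{prop:weak-duality}, while assertions~\emph{(ii)} and~\emph{(iii)} follow from parts~\emph{(i)} and~\emph{(ii)} of Theorem~\ref{prop:strong-duality}, respectively, applied to the pair $(\hat f_0,\{\hat f_i\}_{i\in\set{I}})$. Here I would take care that a Slater point of~\eqref{eq:pw-cvx-ro} (resp.~\eqref{eq:db-cvx-ro}) in its natural representation is a Slater point of the abstract instance~\eqref{eq:p-co} (resp.~\eqref{eq:d-co}): since the folded balance equations and sign constraints are linear, Definition~\ref{def:Slater_sets} only requires the point to lie in the relative interiors of the relevant domains and to satisfy the genuinely nonlinear constraints $\hat f_i\le 0$ strictly, which is exactly the content of a Slater point of~\eqref{eq:pw-cvx-ro}. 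Boundedness of the feasible region, needed for~\emph{(iii)}, is representation independent and therefore transfers directly.

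I expect the main obstacle to be the conjugate computation of the block functions $\hat f_i$ in the second step, in particular tracking how the indicator of $\sum_{\ell\in\set{L}_0}\bm y_{i\ell}=\bm 0$ produces the shared multiplier that becomes $\bm\upsilon_i/\lambda_i$, how the duals of the variables $\nu_{i\ell}$ are absorbed into the constraints $\lambda_i c_\ell(\bm\upsilon_i/\lambda_i)\le 0$, and how the $\lambda_i=0$ and $\nu_{i\ell}=0$ boundary cases are reconciled through the perspective conventions. A secondary but genuine subtlety, flagged by Example~\ref{ex:duality_gap}, is that the Slater and strong duality conclusions are representation dependent, so the correspondence between Slater points of~\eqref{eq:pw-cvx-ro}/\eqref{eq:db-cvx-ro} and of the chosen~\eqref{eq:p-co}/\eqref{eq:d-co} instance must be argued explicitly rather than assumed.
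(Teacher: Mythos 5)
Your high-level strategy is the same as the paper's---recast \eqref{eq:pw-cvx-ro} as an instance of \eqref{eq:p-co}, compute its dual, identify it with \eqref{eq:db-cvx-ro}, and invoke Theorems~\ref{prop:weak-duality} and~\ref{prop:strong-duality}---but your encoding of the balance equations $\sum_{\ell\in\set{L}_0}\bm y_{i\ell}=\bm 0$ opens a genuine gap. You fold these equalities into the constraint functions, setting in effect $\hat f_i=\varphi_i+\delta_{A_i}$, where $A_i$ is the affine set of balanced solutions. Two things then break. First, properness of $\hat f_i$ can fail: if $A_i\cap\dom(\varphi_i)=\emptyset$, which happens exactly when the dual subproblem~\eqref{eq:subproblem-sup} is infeasible (take, \eg, $d_{\bm z}=L=1$, $f_i(\bm x,z)=z$ and $c_1(z)=-z$, so that $\dom((-f_i)^{*2}(\bm x,\cdot))=\{-1\}$ while the perspective of $c_1^*$ forces $y_{i1}=-\nu_{i1}\le 0$), then $\hat f_i\equiv+\infty$, assumption \textbf{(F)} is violated, and Theorems~\ref{prop:weak-duality} and~\ref{prop:strong-duality} are inapplicable---yet assertion~\emph{(i)} must hold with no regularity conditions at all. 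Your claim that a sum of proper, closed and convex functions in disjoint blocks ``restricted to an affine set'' stays proper is false; this is precisely the subtlety flagged in the paper's footnote (the sum $\delta_{[0,1]}+\delta_{[2,3]}$ is not proper), because $\delta_{A_i}$ shares its arguments with every block. Second, even when $\hat f_i$ is proper, computing $\hat f_i^*$ requires the conjugate-of-sums rule (Proposition~\ref{prop:inf-conv}) for the \emph{coupled} sum $\varphi_i+\delta_{A_i}$, whose tightness and attainment need ${\rm ri}(\dom(\varphi_i))\cap A_i\neq\emptyset$. Without that condition one only has that $\hat f_i^*$ is bounded above by the infimal convolution of $\varphi_i^*$ and $\delta^*_{A_i}$, so the true dual \eqref{eq:d-co} of your instance may have a strictly larger supremum than \eqref{eq:db-cvx-ro}; the identification on which assertions~\emph{(ii)} and~\emph{(iii)} rest then collapses, in particular in the cases where the hypothesis (Slater point or bounded nonempty feasible region) concerns \eqref{eq:db-cvx-ro} rather than \eqref{eq:pw-cvx-ro}. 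You correctly flag representation-dependence at the end, but your argument does not resolve it.

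The paper's device sidesteps both problems: each equality is split into the two linear inequalities $\psi^+_{ik}\le 0$ and $\psi^-_{ik}\le 0$ and kept as an \emph{explicit} constraint of the instance~\eqref{eq:pw-cvx-compact}. Linear functions are proper, closed and convex unconditionally, so \textbf{(F)} holds with no caveats; every conjugate $\varphi_i^*$, $(\psi^+_{ik})^*$, $(\psi^-_{ik})^*$ is then a sum of suprema over genuinely disjoint variable blocks and can be evaluated exactly, with no appeal to Proposition~\ref{prop:inf-conv}; and the multipliers of the split constraints deliver your ``shared multiplier'' via $\bm z_0=\bm v_0^+-\bm v_0^-$ and $\bm v_i=\bm v_i^+-\bm v_i^-$. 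With this encoding your remaining conjugate calculus---which is otherwise sound, \eg\ the conjugate of the perspective of $c_\ell^*$ being the indicator of $\{(\bm a,b)\mid c_\ell(\bm a)+b\le 0\}$, and $( (-f_i)^{*2})^*=f_i^{*1}$ via Proposition~\ref{prop:partial-conjugates}---goes through verbatim, and all three assertions follow from Theorems~\ref{prop:weak-duality} and~\ref{prop:strong-duality} exactly as you intend.
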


      We now demonstrate that the duality gap between~\eqref{eq:pw-ro} and~\eqref{eq:db-ro} vanishes provided that one out of two complementary regularity conditions holds. 
      
      \begin{thm}[Strong Duality for~\eqref{eq:pw-ro} and~\eqref{eq:db-ro}]
      The following statements hold. \label{thm:p-w=d-b-ro}
      \begin{enumerate}[label=(\roman*)]
      \item If~\eqref{eq:pw-ro} admits a strict Slater point and $\supp$ is bounded, then the infimum of~\eqref{eq:pw-ro} coincides with the supremum of~\eqref{eq:db-ro}, and~\eqref{eq:db-ro} is solvable.
      \item If the feasible region of~\eqref{eq:pw-ro} is nonempty and bounded and $\supp$ is bounded, then the infimum of~\eqref{eq:pw-ro} coincides with the supremum of~\eqref{eq:db-ro}, and~\eqref{eq:pw-ro} is solvable.
      \item If~\eqref{eq:db-ro} admits a strict Slater point, then its supremum coincides with the infimum of~\eqref{eq:pw-ro}, and~\eqref{eq:pw-ro} is solvable.
      \end{enumerate}
      \end{thm}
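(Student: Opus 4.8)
The plan is to prove all three assertions by closing the chain
$\inf\eqref{eq:pw-ro}=\inf\eqref{eq:pw-cvx-ro}=\sup\eqref{eq:db-cvx-ro}=\sup\eqref{eq:db-ro}$
using the relationships already summarized in Figure~\ref{fig:weak-relations}: Proposition~\ref{prop:p-w=p-w-cvx-ro} links~\eqref{eq:pw-ro} to its convex reformulation, Theorem~\ref{thm:p-w-cvx=d-b-cvx-ro} supplies strong duality between the two convex reformulations, and Proposition~\ref{prop:d-b=d-b-cvx-ro} links~\eqref{eq:db-ro} to its convex reformulation. Assertions~\emph{(i)} and~\emph{(iii)} will follow by pure chaining once the hypotheses are matched to the appropriate regularity conditions, whereas assertion~\emph{(ii)} resists this route and is the main obstacle, since neither~\eqref{eq:pw-ro} nor~$\supp$ is assumed to admit a Slater point.

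For assertion~\emph{(i)}, I would first note that assumption~\textbf{(C)} makes $\supp$ closed, so its boundedness is equivalent to compactness. Because~\eqref{eq:pw-ro} has a strict Slater point, Proposition~\ref{prop:p-w=p-w-cvx-ro}~\emph{(iii)} gives $\inf\eqref{eq:pw-ro}=\inf\eqref{eq:pw-cvx-ro}$, and its proof simultaneously constructs a strict Slater point for~\eqref{eq:pw-cvx-ro} (combining the strict Slater point of~\eqref{eq:pw-ro} with the dual-subproblem Slater points furnished by Proposition~\ref{prop:bounded-slater}). Theorem~\ref{thm:p-w-cvx=d-b-cvx-ro}~\emph{(ii)} then yields $\inf\eqref{eq:pw-cvx-ro}=\sup\eqref{eq:db-cvx-ro}$ with~\eqref{eq:db-cvx-ro} solvable, and Proposition~\ref{prop:d-b=d-b-cvx-ro}~\emph{(iii)} (valid because $\supp$ is bounded) gives $\sup\eqref{eq:db-cvx-ro}=\sup\eqref{eq:db-ro}$ together with the equivalence of solvability, so that~\eqref{eq:db-ro} is solvable. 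Stringing the equalities proves~\emph{(i)}.

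For assertion~\emph{(iii)}, suppose~\eqref{eq:db-ro} admits a strict Slater point $(\{\bm w_i^{\rm S},\bm z_i^{\rm S}\}_i,\bm\lambda^{\rm S})$. As in the proof of Proposition~\ref{prop:d-b=d-b-cvx-ro}~\emph{(ii)}, the point $(\{\bm w_i^{\rm S}\}_i,\bm z_0^{\rm S},\bm\lambda^{\rm S},\{\lambda_i^{\rm S}\bm z_i^{\rm S}\}_i)$ is a strict Slater point for~\eqref{eq:db-cvx-ro}, and that proposition delivers $\sup\eqref{eq:db-ro}=\sup\eqref{eq:db-cvx-ro}$. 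Theorem~\ref{thm:p-w-cvx=d-b-cvx-ro}~\emph{(ii)} then gives $\inf\eqref{eq:pw-cvx-ro}=\sup\eqref{eq:db-cvx-ro}$ with~\eqref{eq:pw-cvx-ro} solvable. It remains to certify $\inf\eqref{eq:pw-ro}=\inf\eqref{eq:pw-cvx-ro}$ through Proposition~\ref{prop:p-w=p-w-cvx-ro}~\emph{(ii)}: this is legitimate because a strict Slater point of~\eqref{eq:db-ro} satisfies $\bm z_i^{\rm S}\in{\rm ri}(\dom(c_\ell))$ and $c_\ell(\bm z_i^{\rm S})<0$ for all $\ell\in\set L$, so $\bm z_0^{\rm S}$ is a Slater point of $\supp$. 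The same proposition transfers solvability from~\eqref{eq:pw-cvx-ro} to~\eqref{eq:pw-ro}, completing~\emph{(iii)}.

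Assertion~\emph{(ii)} is the hard part, because without a strict Slater point for~\eqref{eq:pw-ro} the embedded dual subproblems of~\eqref{eq:pw-cvx-ro} may fail to be solvable, so $\inf\eqref{eq:pw-ro}$ and $\inf\eqref{eq:pw-cvx-ro}$ can differ and the chain cannot be closed through the convex reformulations. Instead I would exploit the exact reduction $F_i(\bm x)=\sup_{\bm z_i\in\supp}f_i(\bm x,\bm z_i)$, $i\in\set I_0$, which is real-valued, continuous and convex in $\bm x$ (established in the proof of Proposition~\ref{prop:p-w=p-w-cvx-ro}~\emph{(iii)} using compactness of $\supp$ and closedness of $f_i$ in $\bm z_i$). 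Then~\eqref{eq:pw-ro} is literally the instance $\inf\{F_0(\bm x)\mid F_i(\bm x)\le 0\ \forall i\in\set I\}$ of~\eqref{eq:p-co} satisfying~\textbf{(F)}, and since its feasible region is nonempty and bounded, Theorem~\ref{prop:strong-duality}~\emph{(ii)} makes it solvable and equates its infimum with the supremum of its~\eqref{eq:d-co}-dual, namely $\sup\{-F_0^*(\bm w_0)-\sum_{i\in\set I}\lambda_i F_i^*(\bm w_i/\lambda_i)\mid \sum_{i\in\set I_0}\bm w_i=\bm 0,\ \bm\lambda\ge\bm 0\}$. To identify this dual with~\eqref{eq:db-ro}, I would compute the conjugate of the worst-case function by a minimax interchange: for $\lambda_i>0$,
\begin{equation*}
\lambda_i F_i^*(\bm w_i/\lambda_i)=\sup_{\bm x}\inf_{\bm z_i\in\supp}\left\{\bm w_i^\top\bm x-\lambda_i f_i(\bm x,\bm z_i)\right\}=\inf_{\bm z_i\in\supp}\lambda_i f_i^{*1}(\bm w_i/\lambda_i,\bm z_i),
\end{equation*}
where the second equality is Sion's minimax theorem, applicable because the integrand is concave and upper semicontinuous in $\bm x$, convex and lower semicontinuous in $\bm z_i$, and $\supp$ is compact and convex. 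Substituting this identity and its $i=0$, $\lambda_0=1$ analogue converts the infima over $\supp$ into suprema through the leading minus signs, reproducing exactly the objective and the constraints $\bm z_i\in\supp$ of~\eqref{eq:db-ro}; hence the two suprema coincide and~\emph{(ii)} follows. The points that will require the most care are the verification of the minimax hypotheses and the boundary cases $\lambda_i=0$, where $\lambda_i F_i^*(\bm w_i/\lambda_i)$ must be read as the convex-perspective value $\delta^*_{\dom(F_i^*)}(\bm w_i)$ and matched to the corresponding perspective term of~\eqref{eq:db-ro} exactly as in the proof of Proposition~\ref{prop:d-b=d-b-cvx-ro}~\emph{(i)}.
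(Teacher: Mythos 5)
Your proposal is correct and takes essentially the same approach as the paper's own proof: assertions \emph{(i)} and \emph{(iii)} use the identical chains through Propositions~\ref{prop:p-w=p-w-cvx-ro} and~\ref{prop:d-b=d-b-cvx-ro} and Theorem~\ref{thm:p-w-cvx=d-b-cvx-ro}, and assertion \emph{(ii)} uses the same direct argument via the compact representation $F_i(\bm x)=\sup_{\bm z_i\in\supp}f_i(\bm x,\bm z_i)$, Theorem~\ref{prop:strong-duality}~\emph{(ii)}, Sion's minimax theorem, and the perspective treatment of $\lambda_i=0$. The only (inconsequential) slip is notational: the perspective of $F_i^*$ at $\lambda_i=0$ is $\delta^*_{\dom(F_i)}(\bm w_i)$, the support function of $\dom((F_i^*)^*)=\dom(F_i)$, not $\delta^*_{\dom(F_i^*)}(\bm w_i)$ as you wrote.
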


\rev{
\begin{ex}[Unbounded Uncertainty Sets]
    In contrast to earlier findings from the literature, our results in this section (such as Theorem~\ref{thm:p-w=d-b-ro}~\emph{(iii)}) do not require the uncertainty set $\mathcal{Z}$ to be bounded. Unbounded uncertainty sets commonly arise when a robust optimization problem involves nonlinear functions of the primitive uncertainties, such as demands that are modeled as functions of prices, the returns of derivative assets \citep{ZRK11:EJOR} or nonlinear decision rules \citep{GWK15:rules, BSZ19:adaptive}, and when these functions are linearized through liftings. While it is tempting to restrict an unbounded uncertainty set $\mathcal{Z}$ to a bounded subset $\mathcal{Z}' = \mathcal{Z} \cap [-B, B]^{d_{\bm{z}}}$ and subsequently apply existing results, this approach is plagued with practical challenges. Indeed, even in the benign case where $\mathcal{Z}$ is a polyhedron, verifying whether all vertices of $\mathcal{Z}$ are contained in $\mathcal{Z}'$---arguably a necessary but not sufficient condition for the validity of the revised uncertainty set $\mathcal{Z}'$---is not possible in polynomial time unless P = NP, see \citet[Corollary~1]{KLPS20:no_free_lunch}.
\end{ex}}

      \section{Uncertainty Quantification and Distributionally Robust Optimization} \label{sec:drco}

      An uncertainty quantification problem seeks a distribution that maximizes the expected value of a Borel measurable disutility function $g(\bmt z)$ over all probability distributions of the random vector~$\bmt z$ within a given set $\amb{P}$. As  uncertainty about the distribution of a random object is usually termed ambiguity, we henceforth refer to $\amb P$ as the {\em ambiguity set}.
      \begin{samepage}
      \begin{empheq}[box=\fbox]{align*}
      \tag{P-UQ}
      \label{eq:puq}
      \begin{array}{l}
      \displaystyle \sup_{\bP \in \amb{P}} \EP{g(\bmt{z})} \left[ g(\bmt{z}) \right]
      \end{array}
      \end{empheq}
      $\mspace{200mu}$   \textbf{Primal Uncertainty Quantification Problem}
      \end{samepage}

      \noindent To ensure that the expectation in~\eqref{eq:puq} is defined for all measurable disutility functions, we set $\EP{g(\bmt{z})} \left[ g(\bmt{z}) \right] = - \infty$ whenever the expectation of the positive and negative parts of $g(\bmt{z})$ are both infinite. This convention means that infeasibility dominates unboundedness. In the first part of this section, we assume that the disutility function $g:\R^{d_{\bm z}} \rightarrow \overline{\R}$ is decision-independent and representable as a pointwise maximum of $I \in \mathbb{N}$ component functions $g_i: \R^{d_{\bm z}} \rightarrow \overline{\R}$ for $i \in \set I$, that is, $g(\bm z) = \max_{i \in \set I } g_i( \bm z)$, which satisfy the following regularity condition.

      \begin{itemize}
      \item[\textbf{(G)}]  The function $-g_i$ is proper, closed and convex for each $i \in \mathcal I$.
      \end{itemize}

      Intuitively, the disutility function $g$ is thus a pointwise maximum of finitely many concave functions. Note that every piecewise affine continuous function can be represented in this way, and every continuous function on a compact set can be approximated arbitrarily closely by a piecewise affine continuous function. In addition, we assume that the ambiguity set $\amb{P}$ is nonempty and contains all distributions that satisfy $J \in \mathbb{N}$ moment conditions. Specifically, we assume that
      \begin{equation*}
      \amb{P}=\left\{ \bP \in \amb{P}_0(\set S) \; \left\vert \; \EP{\bm h(\bmt z)} \left[\bm h(\bmt z)\right] \leq \bm{\mu} \right. \right\},
      \end{equation*}
      where $\set S = \{ \bm z \in \R^{d_{\bm z}} \mid c_\ell (\bm z) \le 0 \;\; \forall \ell \in \set L \}$ is a nonempty support set of the same type as the uncertainty set $\set Z$ studied in Section~\ref{sec:rco} that satisfies assumption $\textbf{(C)}$, $\amb{P}_0(\set S)$ represents the family of all distributions supported on $\set S$, $\bm \mu \in \R^{J}$ is a vector of moment bounds, and the Borel measurable moment functions $h_j: \R^{d_{\bm z}} \rightarrow \overline{\R}$, $j \in \set J$, satisfy the following regularity condition.
      \begin{itemize}
      \item[\textbf{(H)}] The function $h_j$ is proper, closed and convex for each $j \in \set J$.
      \end{itemize}
      We set $\EP{h(\bmt{z})} \left[ h_j(\bmt{z}) \right] = \infty$ whenever the expectation of the positive and negative parts of $h_j(\bmt{z})$ are both infinite. This follows our convention that infeasibility dominates unboundedness, and it ensures that distributions under which the positive part of $h_j$ has an infinite expectation for some $j\in \set J$ are excluded from the ambiguity set and therefore infeasible in~\eqref{eq:puq}.

      Next, define
      \begin{align*}
      \bar{\set S}_i  = \left\{ \bm z \in \set S \mid  \bm z \in \dom (h_j) \;\; \forall j\in \set J, \; \bm z \in \dom(-g_i) \right\}
      \end{align*}
      for every $i\in\set I$. Note that $\bar{\set S}_i$ is convex by virtue of assumptions~\textbf{(G)} and~\textbf{(H)} but may fail to be closed. \rev{For example, if $\mathcal S=\mathbb R$,  $h_j(z) = 0$ for all $j\in\mathcal J$ and $-g_i(z) = 1/z$ for~$z>0$; $=+\infty$ for $z\leq 0$, then $\bar{\mathcal S}_i = (0, + \infty )$ is open.} Throughout this section, we impose the following regularity condition.
      \begin{itemize}
      \item[\textbf{(S)}] The set~$\bar{\set S}_i$ is nonempty for every $i \in \set I$.
      \end{itemize}
      Note that any distribution~$\bP$ feasible in~\eqref{eq:puq} must be supported on $\bar{\set S}=\cup_{i\in\set I}\bar{\set S}_i$. Indeed, $\bP$ must be supported on~${\rm dom} (h_j)$ for every $j\in\set J$ for otherwise~$\bP$ cannot satisfy the moment constraint~$\EP{\bm h(\bmt z)} \left[\bm h(\bmt z)\right] \leq \bm{\mu}$. Similarly, $\bP$ must be supported on~${\rm dom} (-g)=\cup_{i\in\set I} {\rm dom} (-g_i)$ for otherwise $\EP{g(\bmt{z})} [ g(\bmt{z})]=-\infty$. Thus, we will from now on refer to~$\bar{\set S}$ as the effective support set. Note that $\bar{\set S}$ is generically non-convex as it constitutes a finite union of convex sets, and it may fail to be closed.

      Assumption~\textbf{(S)} may be imposed without much loss of generality. To see this, note first that if $\bar{\set S}_i$ is empty for every $i\in\set I$, then the effective support set $\bar{\set S}$ is empty, and the uncertainty quantification problem~\eqref{eq:puq} is infeasible. We may thus assume that $\bar{\set S}$ is nonempty. In this case, if $\bar{\set S}_i=\emptyset$ for some $i\in\set I$, then $g_i(\bm z)=-\infty$ for all $\bm z\in \bar{\set S}$.
      This observation implies that the optimal value of~\eqref{eq:puq} does not change if we remove those components $g_i$ from $g$ for which $\bar{\set S}_i$ is empty.

      Note that~\eqref{eq:puq} constitutes a semi-infinite program with finitely many (moment) constraints and infinitely many decision variables because it optimizes over all probability distributions supported on the typically uncountable set $\set S$. The semi-infinite maximization problem~\eqref{eq:puq} admits a dual semi-infinite minimization problem~\eqref{eq:duq}, which involves only finitely many decision variables but infinitely many constraints parameterized by the elements of the effective support set~$\bar{\set S}$.

      \begin{samepage}
      \begin{empheq}[box=\fbox]{equation*}
      \tag{D-UQ}
      \label{eq:duq}
      \begin{array}{l@{\quad}l}
      \displaystyle \inf  &  \displaystyle \alpha  + \bm \mu^\top\bm \beta \\
      \subj & \displaystyle  \sup_{\bm z \in \bar{\set S}}  \left\{  g(\bm z) - \alpha - \bm h(\bm z)^\top \bm \beta  \right\}   \le 0 \\
      &  \alpha \text{ free}, \;\; \bm \beta\ge \bm 0
      \end{array}
      \end{empheq}
      $\mspace{190mu}$  \textbf{Dual Uncertainty Quantification Problem}
      \end{samepage}


      We first show that~\eqref{eq:puq} and~\eqref{eq:duq} satisfy a weak duality relationship.

      \begin{thm}[Weak Duality for~\eqref{eq:puq} and~\eqref{eq:duq}] \label{coro:cvxdro-ro-wd}
      The infimum of~\eqref{eq:duq} is larger or equal to the supremum of~\eqref{eq:puq}.
      \end{thm}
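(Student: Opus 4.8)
The plan is to establish weak duality by a direct Lagrangian argument: fix an arbitrary distribution $\bP$ feasible in~\eqref{eq:puq} and an arbitrary pair $(\alpha,\bm\beta)$ feasible in~\eqref{eq:duq}, and show that $\EP{}\left[g(\bmt z)\right]\le \alpha+\bm\mu^\top\bm\beta$. Once this inequality is in place for every feasible pair, taking the supremum over all feasible $\bP$ on the left and the infimum over all feasible $(\alpha,\bm\beta)$ on the right yields the claim. The two degenerate cases are disposed of first: if~\eqref{eq:puq} is infeasible its supremum equals $-\infty$, and if~\eqref{eq:duq} is infeasible its infimum equals $+\infty$, so in either case the asserted inequality holds trivially; I may thus assume both problems feasible.

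First I would extract the pointwise inequality encoded in dual feasibility. By definition of~\eqref{eq:duq}, feasibility of $(\alpha,\bm\beta)$ means $\sup_{\bm z\in\bar{\set S}}\{ g(\bm z)-\alpha-\bm h(\bm z)^\top\bm\beta\}\le 0$, so that $g(\bm z)\le \alpha+\bm h(\bm z)^\top\bm\beta$ for every $\bm z\in\bar{\set S}$. Since every $\bP$ feasible in~\eqref{eq:puq} is supported on the effective support set $\bar{\set S}$ (as already argued above), this inequality holds $\bP$-almost surely. Moreover, on $\bar{\set S}$ both $g$ and each $h_j$ are real-valued: assumption~\textbf{(G)} forces $g_i<+\infty$ everywhere so that $g<+\infty$, while for $\bm z\in\bar{\set S}_i$ the membership $\bm z\in\dom(-g_i)$ gives $g_i(\bm z)>-\infty$ and hence $g(\bm z)\ge g_i(\bm z)>-\infty$; likewise assumption~\textbf{(H)} together with $\bar{\set S}_i\subseteq\dom(h_j)$ renders each $h_j$ finite on $\bar{\set S}_i$, hence on $\bar{\set S}$. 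Thus the almost-sure inequality involves only finite integrands, and it remains to integrate it carefully.

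The delicate part is this integration, because of the conventions that set $\EP{}\left[h_j(\bmt z)\right]=+\infty$ (respectively $\EP{}\left[g(\bmt z)\right]=-\infty$) whenever both the positive and negative parts have infinite expectation. I would first use feasibility of $\bP$, namely $\EP{}\left[h_j(\bmt z)\right]\le\mu_j<+\infty$, to conclude that $\EP{}\left[h_j^+(\bmt z)\right]<+\infty$ for each $j\in\set J$: otherwise the convention would force $\EP{}\left[h_j(\bmt z)\right]=+\infty$ and violate the moment constraint. Bounding the positive part of the right-hand side via $g^+\le\alpha^++\sum_{j\in\set J}\beta_j h_j^+$, which is valid because $\bm\beta\ge\bm 0$, I would then obtain $\EP{}\left[g^+(\bmt z)\right]<+\infty$; this rules out the degenerate case in the convention for $g$ and guarantees that $\EP{}\left[g(\bmt z)\right]$ is a well-defined element of $[-\infty,+\infty)$. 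With all positive parts integrable and $\bm\beta\ge\bm 0$, monotonicity and linearity of the extended-real expectation now apply without any $\infty-\infty$ ambiguity, giving $\EP{}\left[g(\bmt z)\right]\le \alpha+\sum_{j\in\set J}\beta_j\,\EP{}\left[h_j(\bmt z)\right]\le \alpha+\bm\mu^\top\bm\beta$, where the last step uses $\bm\beta\ge\bm 0$ together with $\EP{}\left[h_j(\bmt z)\right]\le\mu_j$.

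The main obstacle is precisely this bookkeeping with extended real values and the two non-standard conventions: the whole subtlety is to confirm that integrating the pointwise inequality never produces an ill-defined $\infty-\infty$ expression and that the primal objective cannot secretly equal $+\infty$. The clean resolution is the observation that dual feasibility, combined with the moment bound, forces $\EP{}\left[g^+(\bmt z)\right]$ to be finite; everything else is routine monotone and linear manipulation of expectations followed by taking the supremum and infimum over the respective feasible sets.
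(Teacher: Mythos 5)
Your proof is correct and takes essentially the same route as the paper's: both dispose of the infeasible cases, fix arbitrary feasible $\bP$ and $(\alpha,\bm\beta)$, use the fact that any feasible distribution is supported on the effective support set $\bar{\set S}$, and integrate the pointwise dual constraint $g(\bm z)\le\alpha+\bm h(\bm z)^\top\bm\beta$ against $\bP$ together with the moment bounds and $\bm\beta\ge\bm 0$ to conclude $\EP{}\left[g(\bmt z)\right]\le\alpha+\bm\mu^\top\bm\beta$. Your explicit verification that the positive parts of $g$ and the $h_j$ are $\bP$-integrable simply spells out what the paper compresses into ``thanks to our conventions for infinite integrals,'' so it is the same argument with more careful bookkeeping.
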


      Next, we consider the following restriction of problem~\eqref{eq:puq}.
      \begin{samepage}
      \begin{empheq}[box=\fbox]{equation*}  \tag{FR}
      \label{eq:fr}
      \begin{array}{l@{\quad}l@{\qquad}l}
      \sup  & \displaystyle \sum_{i \in \set{I}} \lambda_i g  (\bm z_i)  \\
      \subj & \displaystyle\sum_{i \in \set{I}} \lambda_i = 1, \qquad
      \sum_{i \in \set{I}} \lambda_i \bm{h} (\bm z_i) \le \bm \mu \\
      & \bm z_i \in \bar{\set S}, \; i \in \set{I}, \;\; \bm \lambda \ge \bm 0
      \end{array}
      \end{empheq}
      $\mspace{261mu}$  \textbf{Finite Reduction Problem}
      \end{samepage}

      \noindent Assumption \textbf{(G)} implies that $g(\bm z_i) < +\infty$ for all $\bm z_i \in \R^{d_{\bm z}}$, and assumption \textbf{(H)} implies that $h_j(\bm z_i) > -\infty$ for all $\bm z_i \in \R^{d_{\bm z}}$ and $j \in \set J$. The restriction $\bm z_i \in\bar{\set S}$, which further imposes that~$h_j (\bm{z}_i) < + \infty$ and $g (\bm{z}_i) > - \infty$, thus ensures that the products $\lambda_i g(\bm z_i)$ and $\lambda_i \bm{h} (\bm z_i)$ are well-defined even if $\lambda_i = 0$. Problem~\eqref{eq:fr} has intuitive appeal because it evaluates the worst-case expected disutility across all discrete $I$-point distributions $\bP \in \amb{P}$ with discretization points~$\bm{z}_i$ restricted to $\bar{\set S}$ and corresponding probabilities $\lambda_i$ for $i \in \set{I}$. Therefore, we henceforth refer to~\eqref{eq:fr} as a {\it finite reduction} of~\eqref{eq:puq}. As~\eqref{eq:fr} constitutes a restriction of~\eqref{eq:puq}, it provides a lower bound on~\eqref{eq:puq} and, by virtue of Theorem~\ref{coro:cvxdro-ro-wd}, on~\eqref{eq:duq}.


      We now show that \eqref{eq:duq} and \eqref{eq:fr} are instances of the primal worst and dual best robust optimization problems \eqref{eq:pw-ro} and~\eqref{eq:db-ro} studied in Section~\ref{sec:rco}, respectively, which we will call \emph{ambiguous} primal worst,  \eqref{eq:apw}, and \emph{ambiguous} dual best, \eqref{eq:adb}. While~\eqref{eq:apw} and~\eqref{eq:adb} appear to be difficult to solve, they again admit finite convex reformulations \eqref{eq:apw-cvx} and \eqref{eq:adb-cvx} that are instances of the problems \eqref{eq:pw-cvx-ro} and \eqref{eq:db-cvx-ro} in Section~\ref{sec:rco}, respectively. We can then use the results of Section~\ref{sec:rco} to derive conditions of strong duality between \eqref{eq:apw} and~\eqref{eq:adb}, which immediately imply equivalence between the uncertainty quantification problems \eqref{eq:puq} and \eqref{eq:duq} as well as the finite reduction \eqref{eq:fr}. These relationships are summarized in Figure~\ref{fig:weak-relations-dro}.
	
	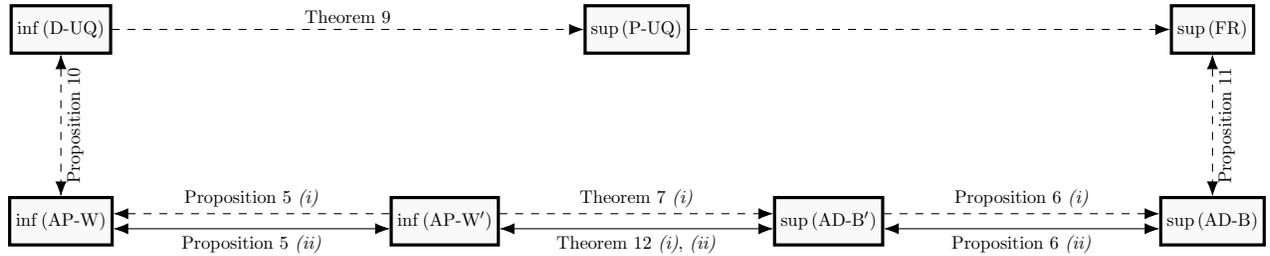
\begin{figure}[ht]
		\begin{center}
			\begin{tikzpicture}[scale=0.9, every node/.style={scale=0.7}, squarednode/.style={rectangle, draw=black, fill=gray!5, very thick, minimum width=15mm, minimum height=10mm}, node distance=3.05cm]

      \node[squarednode] (I1) {$\inf\eqref{eq:duq}$};
      \node (I2) [right of=I1] {};
      \node (I3) [right of=I2] {};
      \node[squarednode] (I4) [right of=I3] {$\sup\eqref{eq:puq}$};
      \node (I5) [right of=I4] {};
      \node (I6) [right of=I5] {};
      \node[squarednode] (I7) [right of=I6] {$\sup\eqref{eq:fr}$};

      \node[squarednode] (C1) [below of=I1] {$\inf\eqref{eq:apw}$ };
      \node[squarednode] (C2) [below of=I3] {$\inf\eqref{eq:apw-cvx}$};
      \node[squarednode] (C3) [below of=I5] {$\sup\eqref{eq:adb-cvx}$};
      \node[squarednode] (C4) [below of=I7] {$\sup\eqref{eq:adb}$};


      \draw[-{Latex[length=2mm]},dashed] ($(I1.east)+(0mm,0mm)$) --  node[above]{\text{Theorem~\ref{coro:cvxdro-ro-wd}}} ($(I4.west)+(0mm,0mm)$);

      \draw[-{Latex[length=2mm]},dashed] ($(I4.east)+(0mm,0mm)$) --  node[above]{\text{}} ($(I7.west)+(0mm,0mm)$);

      \draw[{Latex[length=2mm]}-{Latex[length=2mm]},dashed] ($(I1.south)+(0mm,0mm)$) -- node[right]{ \rotatebox{0}{Proposition~\ref{prop:d-uq=ap-w}}} ($(C1.north)+(0mm,0mm)$);

      \draw[{Latex[length=2mm]}-{Latex[length=2mm]},dashed] ($(I7.south)+(0mm,0mm)$) -- node[left]{ \rotatebox{0}{Proposition~\ref{prop:fr=ad-b}}} ($(C4.north)+(0mm,0mm)$);

      \draw[{Latex[length=2mm]}-{Latex[length=2mm]}] ($(C1.east)+(0mm,-1mm)$) --  node[below]{\text{Proposition~\ref{prop:p-w=p-w-cvx-ro}~\emph{(ii)}}} ($(C2.west)+(0mm,-1mm)$);
      \draw[{Latex[length=2mm]}-,dashed] ($(C1.east)+(0mm,1mm)$) --   node[above]{\text{Proposition~\ref{prop:p-w=p-w-cvx-ro}~\emph{(i)}}} ($(C2.west)+(0mm,1mm)$);

      \draw[{Latex[length=2mm]}-{Latex[length=2mm]}] ($(C2.east)+(0mm,-1mm)$) --  node[below]{\text{Theorem~\ref{thm:strong-duality-all-problems}~\emph{(i)},~\emph{(ii)}}} ($(C3.west)+(0mm,-1mm)$);
      \draw[-{Latex[length=2mm]}, dashed] ($(C2.east)+(0mm,+1mm)$) --  node[above]{\text{Theorem~\ref{thm:p-w-cvx=d-b-cvx-ro}~\emph{(i)}}} ($(C3.west)+(0mm,+1mm)$);

      \draw[{Latex[length=2mm]}-{Latex[length=2mm]}] ($(C3.east)+(0mm,-1mm)$) --  node[below]{\text{Proposition~\ref{prop:d-b=d-b-cvx-ro}~\emph{(ii)}}} ($(C4.west)+(0mm,-1mm)$);
      \draw[-{Latex[length=2mm]}, dashed] ($(C3.east)+(0mm,+1mm)$) --  node[above]{\text{Proposition~\ref{prop:d-b=d-b-cvx-ro}~\emph{(i)}}} ($(C4.west)+(0mm,+1mm)$);
      \end{tikzpicture}
      \end{center}
      \vspace{-0.5cm}
      \caption{Illustration of the relationships among~\eqref{eq:puq},~\eqref{eq:duq},~\eqref{eq:fr},~\eqref{eq:apw}, \eqref{eq:adb},~\eqref{eq:apw-cvx} and~\eqref{eq:adb-cvx}. The optimal values of these problems are non-increasing in the directions of the arcs. Dashed arcs represent universal inequalities, while the solid arcs represent inequalities that hold under regularity conditions.
      }
      \label{fig:weak-relations-dro}

      \end{figure}

      Although the dual uncertainty quantification problem \eqref{eq:duq} appears to be an instance of the primal worst problem \eqref{eq:pw-ro}, the `uncertainty set' $\bar{\set S}$ fails to satisfy the assumption~\textbf{(C)} from Section~\ref{sec:rco} for two reasons. Firstly, $\bar{\set S}$ is generally non-convex as it constitutes a union of~$I$ convex sets corresponding to the $I$ concave pieces of the disutility function $g$. Secondly, the domains of $h_j$ and $-g$ may not be closed, in which case $\bar{\set S}$ cannot be represented as an instance of $\mathcal{Z}$ that satisfies assumption \textbf{(C)}, which is closed by construction. In addition, the constraint function in~\eqref{eq:duq} violates the assumption \textbf{(RF)} since $g$ is not concave in $\bm{z}$. \rev{Hence, \eqref{eq:duq} fails to be an instance of~\eqref{eq:pw-ro}. Nevertheless, \eqref{eq:duq} admits an equivalent reformulation that {\em is} an instance of~\eqref{eq:pw-ro}. To see this, we introduce} separate augmented support sets $\set U_0=  \R^{d_{\bm z}} \times \R^{J} \times \R$ and
      \begin{align*}
      \set U_i  &= \{ (\bm z, \bm u, t) \in \R^{d_{\bm z}} \times \R^{J} \times \R  \mid c_\ell (\bm z) \le 0 \;\; \forall \ell \in \set L, \; \bm h(\bm z) \le  \bm u, \; g_i(\bm z) \ge t \} \quad \forall i \in \set I.
      \end{align*}
      We can then define the ambiguous primal worst problem as follows.
      \begin{samepage}
      \begin{empheq}[box=\fbox]{equation*}   \tag{AP-W}
      \label{eq:apw}
      \begin{array}{l@{\quad}l@{\qquad}l}
      \inf & \displaystyle \alpha  + \bm \mu^\top\bm \beta \\
      \subj & \displaystyle \sup_{ (\bm z_i, \bm u_i, t_i) \in  \set U_i}  \left\{  t_i - \alpha - \bm u_i^\top \bm \beta  \right\} \le 0   &  \forall i \in \set{I} \\
      & \alpha \text{ free},  \;\; \bm \beta\ge \bm 0
      \end{array}
      \end{empheq}
      $\mspace{220mu}$  \textbf{Ambiguous Primal Worst Problem}
      \end{samepage}

      The ambiguous primal worst problem~\eqref{eq:apw} \rev{can indeed be interpreted} as an instance of the primal worst problem~\eqref{eq:pw-ro} with the objective function
      \begin{subequations}  \label{eq:obj-con}
      \begin{equation}
      f_0( (\alpha, \bm \beta),(\bm z_0, \bm u_0, t_0)) \; = \; \alpha + \bm \mu^\top \bm \beta + \delta_{\R_+^J}(\bm \beta)
      \label{eq:obj}
      \end{equation}
      and the constraint functions
      \begin{equation}
      f_i( (\alpha, \bm \beta),(\bm z_i, \bm u_i, t_i)) \; =
      \; t_i - \alpha - \bm u_i^\top \bm \beta \label{eq:con}
      \end{equation}
      \end{subequations}
      for $i \in \set I$, where the optimization variables~$(\alpha, \bm \beta)$ correspond to~$\bm x$, and the uncertain parameters~$(\bm z_i, \bm u_i, t_i) \in \set U_i$ correspond to $\bm z_i \in \set Z$ for every~$i \in \set I_0$. Note that the objective function~\eqref{eq:obj} of~\eqref{eq:apw} is proper, convex and closed in the decision variables and constant in the uncertain parameters, while the constraint functions~\eqref{eq:con} of~\eqref{eq:apw} are bi-affine in the decision variables and the uncertain parameters. Thus, problem~\eqref{eq:apw} satisfies assumption\footnote{Strictly speaking, $f_0$ does not satisfy assumption~\textbf{(RF)} because $-f_0$ is not proper in the uncertain parameters for~$\bm \beta\not\geq \bm 0$. Since~$f_0$ is deterministic, however, maximizing~$f_0$ over the uncertain parameters will not yield a pathological worst-case objective function. The lack of properness of~$-f_0$ in this special case is therefore unproblematic. Details are omitted for brevity.} \textbf{(RF)}. In addition, as the support set $\set S$ satisfies assumption \textbf{(C)} and as the disutility function $g_i$ and the moment functions $h_j$, $j\in \set J$, satisfy the assumptions \textbf{(G)} and \textbf{(H)}, respectively, the functions defining the augmented support set $\set U_i$ all satisfy assumption~\textbf{(C)} for every $i \in \set I$. Although the effective support set $\bar{\set S}$ in~\eqref{eq:duq} and~\eqref{eq:puq} fails to satisfy assumption~\textbf{(C)}, its constituent sets $\bar{\set S}_i$, which are nonempty by assumption~\textbf{(S)}, are the projections of $\set U_i$ onto~$\R^{d_{\bm z}}$ for all $i \in \set I$:
      \begin{equation} \label{eq:Z and U_i}
      \bm z_i \in \bar{\set S}_i
      \quad \iff \quad
      \exists \bm{u}_i \in \mathbb{R}^J \text{ and } t_i \in \mathbb{R} \text{ such that } (\bm{z}_i, \bm{u}_i, t_i) \in \mathcal{U}_i.
      \end{equation}
      Indeed, $\bm{z}_i \in \bar{\set S}_i$ implies that $(\bm{z}_i, \bm{h} (\bm{z}_i), g (\bm{z}_i)) \in \mathcal{U}_i$, and the reverse implication holds since the condition on the right-hand side implies that $h_j (\bm{z}_i) < + \infty$ for all $j \in \mathcal{J}$ as well as $g_i (\bm{z}_i) > - \infty$.

      Using the equivalence~\eqref{eq:Z and U_i}, we can now show that problem~\eqref{eq:duq} is equivalent to~\eqref{eq:apw}.

      \begin{prop}[Equivalence of~\eqref{eq:duq} and~\eqref{eq:apw}] \label{prop:d-uq=ap-w}
      The infima of~\eqref{eq:duq} and~\eqref{eq:apw} coincide, and~\eqref{eq:duq} is solvable if and only if~\eqref{eq:apw} is solvable.
      \end{prop}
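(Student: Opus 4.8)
The plan is to exploit that \eqref{eq:duq} and \eqref{eq:apw} are \emph{literally the same optimization problem in the variables} $(\alpha,\bm\beta)$: both minimize $\alpha+\bm\mu^\top\bm\beta$ over $\alpha$ free and $\bm\beta\ge\bm 0$, and they differ only in how their single, respectively $I$-fold, worst-case constraints are written. It therefore suffices to show that the feasible regions of the two problems coincide as subsets of $\{(\alpha,\bm\beta)\mid\bm\beta\ge\bm 0\}$; once this is established, the equality of the infima and the equivalence of solvability follow at once, because a pair $(\alpha,\bm\beta)$ then attains the objective value in one problem exactly when it does so in the other.

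First I would carry out the inner maximization in the $i$-th constraint of \eqref{eq:apw} analytically. For a fixed $\bm z$ admitting $(\bm z,\bm u_i,t_i)\in\set U_i$, the constraint $\bm h(\bm z)\le\bm u_i$ together with $\bm\beta\ge\bm 0$ shows that $-\bm u_i^\top\bm\beta$ is maximized at $\bm u_i=\bm h(\bm z)$, while $t_i\le g_i(\bm z)$ shows that $t_i$ is maximized at $t_i=g_i(\bm z)$. Invoking the equivalence~\eqref{eq:Z and U_i}, which identifies the projection of $\set U_i$ onto $\R^{d_{\bm z}}$ with $\bar{\set S}_i$, and noting that $g_i$ and $\bm h$ are finite-valued on $\bar{\set S}_i$ by assumptions \textbf{(G)} and \textbf{(H)}, this reduces the $i$-th constraint of \eqref{eq:apw} to $\sup_{\bm z\in\bar{\set S}_i}\{g_i(\bm z)-\alpha-\bm h(\bm z)^\top\bm\beta\}\le 0$.

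The main step is then to prove that this family of $I$ constraints is equivalent to the single constraint $\sup_{\bm z\in\bar{\set S}}\{g(\bm z)-\alpha-\bm h(\bm z)^\top\bm\beta\}\le 0$ of \eqref{eq:duq}, where $g=\max_{i\in\set I}g_i$ and $\bar{\set S}=\bigcup_{i\in\set I}\bar{\set S}_i$. The implication from \eqref{eq:duq} to \eqref{eq:apw} is immediate, since for each $i$ and each $\bm z\in\bar{\set S}_i\subseteq\bar{\set S}$ one has $g_i(\bm z)\le g(\bm z)$, so the $i$-th constraint is dominated by the \eqref{eq:duq} constraint. The converse is the crux: given a point $\bm z\in\bar{\set S}$, it lies in some $\bar{\set S}_{i_0}$, whence $g(\bm z)\ge g_{i_0}(\bm z)>-\infty$, so $g$ is real-valued on $\bar{\set S}$; choosing an index $i^\star$ with $g(\bm z)=g_{i^\star}(\bm z)$, the finiteness of $g_{i^\star}(\bm z)$ gives $\bm z\in\dom(-g_{i^\star})$, and since $\bm z\in\set S\cap\bigcap_{j\in\set J}\dom(h_j)$ independently of the index, we conclude $\bm z\in\bar{\set S}_{i^\star}$. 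Applying the $i^\star$-th constraint of \eqref{eq:apw} at $\bm z$ yields $g(\bm z)-\alpha-\bm h(\bm z)^\top\bm\beta=g_{i^\star}(\bm z)-\alpha-\bm h(\bm z)^\top\bm\beta\le 0$, and taking the supremum over $\bm z\in\bar{\set S}$ recovers the \eqref{eq:duq} constraint.

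I expect the bookkeeping in this converse direction to be the only delicate point: one must argue that the index attaining $g(\bm z)=\max_i g_i(\bm z)$ indexes a set $\bar{\set S}_{i^\star}$ that actually contains $\bm z$, which relies on the fact that membership of $\bm z$ in $\set S$ and in every $\dom(h_j)$ does not depend on the index, so that the only index-dependent condition $\bm z\in\dom(-g_{i^\star})$ holds automatically as soon as $g_{i^\star}(\bm z)=g(\bm z)>-\infty$. Finiteness of $\bm h(\bm z)$ on $\bar{\set S}_i$, guaranteed by \textbf{(H)}, must likewise be checked so that every inner product is well defined and no $-\infty$ case arises in the supremum. With the feasible regions shown to coincide, the equality of infima and the equivalence of solvability are immediate.
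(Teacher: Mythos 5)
Your proposal is correct and follows essentially the same route as the paper's proof: both reduce the $I$ constraints of~\eqref{eq:apw} to $\sup_{\bm z\in\bar{\set S}_i}\{g_i(\bm z)-\alpha-\bm h(\bm z)^\top\bm\beta\}\le 0$ by optimally setting $\bm u_i=\bm h(\bm z_i)$ and $t_i=g_i(\bm z_i)$ via the projection identity~\eqref{eq:Z and U_i}, and then identify this family with the single constraint of~\eqref{eq:duq} using the relation between $g=\max_i g_i$ and the sets $\bar{\set S}_i\subseteq\bar{\set S}$. Your explicit attaining-index argument for the converse direction is just a more detailed unrolling of the paper's observation that $g_i(\bm z)=-\infty$ for $\bm z\in\bar{\set S}\setminus\bar{\set S}_i$, so the two proofs are equivalent in substance.
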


      %
      %
      \noindent	The partial conjugates of the objective and constraint functions~\eqref{eq:obj-con} with respect to the decision variables ($\alpha, \bm \beta$) are given by
      \begin{align*}
      f_0^{*1}( (v_{0}, \bm w_0),(\bm z_0, \bm u_0, t_0))   = \sup_{\alpha, \bm \beta \in \R_+^J} \left\{ v_{0} \alpha + \bm w_0^\top \bm \beta - \alpha - \bm \mu^\top \bm \beta  \right\} =
      \begin{cases}
      0 & \text{if } v_{0} = 1, \bm w_0 \le \bm \mu \\
      +\infty & \text{otherwise}
      \end{cases}
      \end{align*}
      and
      \begin{align*}
      f_i^{*1} \left( (v_{i}, \bm w_i),(\bm z_i, \bm u_i, t_i)\right)  = \sup_{\alpha, \bm \beta} \left\{ v_{i} \alpha + \bm w_i^\top \bm \beta - t_i + \alpha + \bm u_i^\top \bm \beta  \right\}  = \begin{cases}
      - t_i & \text{if } v_{i} = - 1, \bm w_i = - \bm u_i \\
      +\infty & \text{otherwise}
      \end{cases}
      \end{align*}
      for every $i \in \set I$. Substituting these expressions into~\eqref{eq:db-ro} and eliminating the redundant decision variables $v_i$ and $\bm w_i$, $i \in \mathcal{I}_0$, yields the following optimization problem, which we will henceforth refer to as the ambiguous dual best problem.
      \begin{samepage}
      \begin{empheq}[box=\fbox]{equation*}  \tag{AD-B}
      \label{eq:adb}
      \begin{array}{l@{\quad}l@{\qquad}l}
      \sup  & \displaystyle \sum_{i \in \set{I}} \lambda_i t_i     \\
      \subj &  \displaystyle\sum_{ i \in \set{I}} \lambda_i = 1, \qquad
      \sum_{i \in \set{I}} \lambda_i  \bm u_i \le \bm \mu  \\
      & (\bm z_i, \bm u_i, t_i) \in \set U_i, \; i \in \set{I}, \;\; \bm \lambda \ge \bm 0  
      \end{array}
      \end{empheq}
      $\mspace{240mu}$  \textbf{Ambiguous Dual Best Problem}
      \end{samepage}





      In the following, we show that~\eqref{eq:adb} is equivalent to~\eqref{eq:fr}.
      \begin{prop}[Equivalence of~\eqref{eq:fr} and~\eqref{eq:adb}] \label{prop:fr=ad-b}
      The suprema of~\eqref{eq:fr} and~\eqref{eq:adb} coincide, and~\eqref{eq:fr} is solvable if and only if~\eqref{eq:adb} is solvable.
      \end{prop}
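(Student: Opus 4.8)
The plan is to prove the equality of optimal values by constructing, in each direction, a map that sends a feasible point of one problem to a feasible point of the other with a non-inferior objective, and then to deduce the solvability equivalence by observing that these maps preserve the common optimal value. The structural fact I would isolate at the outset is that each augmented support set~$\set U_i$ is \emph{convex}: it is the intersection of the sublevel sets $\{c_\ell(\bm z)\le 0\}$, $\{h_j(\bm z)\le u_j\}$ and $\{t-g_i(\bm z)\le 0\}$, each of which is convex because $c_\ell$ and $h_j$ are convex and $-g_i$ is convex by assumptions~\textbf{(C)},~\textbf{(H)} and~\textbf{(G)}. This convexity is what makes the hard direction work.

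The easy inequality $\sup\eqref{eq:fr}\ge\sup\eqref{eq:adb}$ I would obtain by the identity map on the weights and points. Given $(\bm\lambda,\{\bm z_i,\bm u_i,t_i\}_i)$ feasible in~\eqref{eq:adb}, I keep $(\bm\lambda,\{\bm z_i\}_i)$: each $\bm z_i$ lies in $\bar{\set S}_i\subseteq\bar{\set S}$ by the projection identity~\eqref{eq:Z and U_i}; the moment constraint of~\eqref{eq:fr} holds since $\sum_i\lambda_i\bm h(\bm z_i)\le\sum_i\lambda_i\bm u_i\le\bm\mu$ using $\bm h(\bm z_i)\le\bm u_i$ and $\bm\lambda\ge\bm 0$; and the objective only improves because $g(\bm z_i)\ge g_i(\bm z_i)\ge t_i$ for each $i$.

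The substantive direction is $\sup\eqref{eq:fr}\le\sup\eqref{eq:adb}$, and the obstacle I expect to be the crux is that in~\eqref{eq:fr} a point $\bm z_i$ may attain $g(\bm z_i)=\max_k g_k(\bm z_i)$ through a component $g_k$ with $k\neq i$, so the naive assignment $(\bm z_i,\bm h(\bm z_i),g(\bm z_i))$ need not lie in $\set U_i$, which insists on $g_i(\bm z_i)\ge t_i$. To get around this, I would regroup the points by their maximizing component. For each $i$, pick $k^\star(i)$ with $g(\bm z_i)=g_{k^\star(i)}(\bm z_i)$; since $\bm z_i\in\bar{\set S}$ forces $g(\bm z_i)>-\infty$, this gives $\bm z_i\in\bar{\set S}_{k^\star(i)}$ and hence $(\bm z_i,\bm h(\bm z_i),g(\bm z_i))\in\set U_{k^\star(i)}$ (the only nontrivial requirement $g_{k^\star(i)}(\bm z_i)\ge g(\bm z_i)$ holds with equality). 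Setting $G_k=\{i:k^\star(i)=k\}$ and $\Lambda_k=\sum_{i\in G_k}\lambda_i$, I would define, whenever $\Lambda_k>0$, the weighted averages $\bm z_k'=\Lambda_k^{-1}\sum_{i\in G_k}\lambda_i\bm z_i$, $\bm u_k'=\Lambda_k^{-1}\sum_{i\in G_k}\lambda_i\bm h(\bm z_i)$ and $t_k'=\Lambda_k^{-1}\sum_{i\in G_k}\lambda_i g(\bm z_i)$, so that convexity of $\set U_k$ yields $(\bm z_k',\bm u_k',t_k')\in\set U_k$; for the empty slots $\Lambda_k=0$ I set $\lambda_k'=0$ and take any point of $\set U_k$, which is nonempty because $\bar{\set S}_k\neq\emptyset$ by assumption~\textbf{(S)}. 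A direct check then gives $\sum_k\lambda_k'=\sum_i\lambda_i=1$, $\sum_k\lambda_k'\bm u_k'=\sum_i\lambda_i\bm h(\bm z_i)\le\bm\mu$, and $\sum_k\lambda_k' t_k'=\sum_i\lambda_i g(\bm z_i)$, so the constructed point is feasible in~\eqref{eq:adb} and attains exactly the original~\eqref{eq:fr} objective.

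Finally, for solvability I would exploit that both maps preserve the common optimum. The~\eqref{eq:fr}$\to$\eqref{eq:adb} construction carries any optimizer of~\eqref{eq:fr} to a feasible point of~\eqref{eq:adb} of equal value, which is therefore optimal, so solvability of~\eqref{eq:fr} implies solvability of~\eqref{eq:adb}; conversely, the~\eqref{eq:adb}$\to$\eqref{eq:fr} map carries an optimizer of~\eqref{eq:adb} to a feasible point of~\eqref{eq:fr} of value at least the common optimum, hence equal to it, giving the reverse implication. Together with the value equality established above, this yields the claimed equivalence of~\eqref{eq:fr} and~\eqref{eq:adb}.
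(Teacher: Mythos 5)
Your proof is correct and follows essentially the same route as the paper: the crux in both arguments is regrouping the points of~\eqref{eq:fr} according to which component $g_k$ attains the maximum, convexifying within each group by a $\lambda$-weighted average, and using assumption~\textbf{(S)} to fill empty groups. The only cosmetic difference is that you invoke convexity of the lifted sets $\set U_k$ directly on the triples $(\bm z_i, \bm h(\bm z_i), g(\bm z_i))$, whereas the paper passes through an intermediate reformulation and applies Jensen's inequality to $-g_i$ and $\bm h$ --- these are the same argument in different packaging.
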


      In Section~\ref{sec:rco} we have seen that the seemingly intractable primal worst and dual best optimization problems~\eqref{eq:pw-ro} and~\eqref{eq:db-ro} admit exact reformulations as the finite convex programs~\eqref{eq:pw-cvx-ro} and~\eqref{eq:db-cvx-ro}, respectively. If we interpret~\eqref{eq:apw} as an instance of~\eqref{eq:pw-ro} as explained above, then the corresponding instance of~\eqref{eq:pw-cvx-ro} can be constructed as follows. First, we evaluate the partial conjugates of the objective and constraint functions~\eqref{eq:obj-con} with respect to the uncertain parameters and evaluate the conjugates of the constraint functions defining the uncertain sets. Substituting these (partial and global) conjugates into~\eqref{eq:pw-cvx-ro} and eliminating all the redundant variables with fixed values  then yields the following convex program.
      \begin{samepage}
      \begin{empheq}[box=\fbox]{equation*} \tag{AP-W$'$}
      \label{eq:apw-cvx}
      \begin{array}{l@{\quad}l}
      \inf  & \displaystyle \alpha  + \bm \mu^\top\bm \beta \\
      \subj &  \displaystyle (-g_i)^{*}\left(\bm y^{(0)}_{i}  \right) + \sum_{j\in \set{J}}  \beta_j  h_j^* \left( \frac{\bm y^{(1)}_{ij}} { \beta_j} \right) + \sum_{\ell \in \set{L}}  \nu_{i\ell} c_\ell^*    \left( \frac{\bm y^{(2)}_{i\ell} }{ \nu_{i\ell}} \right) \le \alpha   \quad \forall i \in \set{I}  \\
      & \displaystyle \bm y^{(0)}_{i} + \sum_{j\in \set{J} }  \bm y^{(1)}_{ij}   + \sum_{\ell \in \set{L}} \bm   y^{(2)}_{i\ell} = \bm 0  \qquad \qquad \qquad \qquad \qquad \quad \quad \ \ \ \forall i \in \set{I}   \\
      & \alpha \textnormal{ free}, \;\; \bm \beta \ge \bm 0, \;\; \bm y^{(0)}_{i}, \bm y^{(1)}_{ij}, \bm y^{(2)}_{i\ell} \textnormal{ free}, \;\; \nu_{i\ell} \ge 0, \; i \in \set{I}, \, j\in \set{J}, \, \ell \in \set{L}
      \end{array}
      \end{empheq}
      $\mspace{125mu}$  \textbf{Reformulated Ambiguous Primal Worst Problem}
      \end{samepage}

      The derivation of~\eqref{eq:apw-cvx} is tedious but completely mechanical and requires no new ideas. Details are omitted for brevity. Similarly, substituting the objective and constraint functions~\eqref{eq:obj-con} as well as the constraint functions defining $\set U_i$ into \eqref{eq:db-cvx-ro} yields the following convex program.
      \begin{empheq}[box=\fbox]{equation*} \tag{AD-B$'$} \label{eq:adb-cvx}
      \begin{array}{l@{\quad}l@{\qquad}l}
      \sup  & \displaystyle \sum_{ i \in \set{I}}  \tau_i  \\
      \subj & \displaystyle\sum_{i \in \set{I}} \lambda_i = 1, \qquad
      \sum_{i \in \set{I}}  \bm \omega_i \le \bm \mu\\
      & \lambda_i c_\ell (\bm \upsilon_i / \lambda_i) \le 0 &  \forall i\in\set{I}, \ \forall \ell \in \set{L}\\
      & \lambda_i \bm{h}  (\bm \upsilon_i/ \lambda_i) \le \bm \omega_i,  \quad \lambda_i g_i (\bm \upsilon_i/\lambda_i ) \ge \tau_i & \forall i \in \set{I}\\
      &  \bm \tau \text{ free}, \;\; \bm \lambda \ge \bm 0, \;\; \bm \omega_i, \bm \upsilon_i \text{ free}, \; i \in \set{I}\\
      \end{array}
      \end{empheq}
      $\mspace{175mu}$  \textbf{Reformulated Ambiguous Dual Best Problem}

      \noindent As  \eqref{eq:apw}, \eqref{eq:adb}, \eqref{eq:apw-cvx} and~\eqref{eq:adb-cvx} represent instances of ~\eqref{eq:pw-ro},~\eqref{eq:db-ro},~\eqref{eq:pw-cvx-ro} and~\eqref{eq:db-cvx-ro}, respectively, all results of Section~\ref{sec:rco} are applicable and offer conditions under which these problems share the same optimal values or solvability characteristics.

      We now establish minimal conditions for strong duality between the finite convex programs \eqref{eq:apw-cvx} and~\eqref{eq:adb-cvx}. These conditions will also be sufficient for strong duality between the semi-infinite programs~\eqref{eq:puq} and~\eqref{eq:duq}. We emphasize that our duality results follow from first principles of convex analysis and do not rely on the elaborate machinery of abstract semi-infinite duality theory such as \cite{Isii62}, \cite{an87} and \cite{s01}.

      \begin{thm}[Strong Duality for \eqref{eq:puq} and \eqref{eq:duq}]
      \label{thm:strong-duality-all-problems}
      The following statements hold.
      \begin{enumerate}[label=(\roman*)]
      \item If~\eqref{eq:adb-cvx} admits a Slater point with $\bm \lambda > \bm 0$,
      then~\eqref{eq:apw-cvx},~\eqref{eq:apw}, \eqref{eq:duq}, \eqref{eq:puq}, \eqref{eq:fr}, \eqref{eq:adb} and~\eqref{eq:adb-cvx} all have the same optimal value and \eqref{eq:apw-cvx} is solvable. Also, if $(\alpha^\star, \bm \beta^\star, \{\bm y_i^{(0)\star}\}_i, \{\bm y_{ij}^{(1)\star}\}_{ij},  \{ \bm y_{i\ell}^{(2)\star}, \nu_{i\ell}^\star \}_{i\ell})$ solves~\eqref{eq:apw-cvx}, then $(\alpha^\star, \bm \beta^\star)$ solves~\eqref{eq:duq}.
      \item If~\eqref{eq:apw-cvx}
      admits a Slater point and $\set S$ is bounded, then \eqref{eq:apw-cvx}, \eqref{eq:apw}, \eqref{eq:duq}, \eqref{eq:puq}, \eqref{eq:fr}, \eqref{eq:adb} and \eqref{eq:adb-cvx} all have the same optimal value and \eqref{eq:adb-cvx} is solvable. Also, if $(\bm \tau^\star, \bm \lambda^\star, \{ \bm \omega^\star_i, \bm v^\star_i\}_i)$ solves~\eqref{eq:adb-cvx}, then the discrete distribution that assigns probability $\lambda^\star_i$ to the point $\bm v_i^\star/\lambda_i^\star$ for every $i\in\set I$ with $\lambda_i^\star > 0$ solves~\eqref{eq:puq}.
      \end{enumerate}
      \end{thm}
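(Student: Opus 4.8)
The backbone of both parts is the cyclic chain of weak inequalities summarized in Figure~\ref{fig:weak-relations-dro}, which holds unconditionally. Reading off the dashed (universal) arcs together with the equivalences of Propositions~\ref{prop:d-uq=ap-w} and~\ref{prop:fr=ad-b}, I get
\[
\sup\eqref{eq:adb}=\sup\eqref{eq:fr}\;\le\;\sup\eqref{eq:puq}\;\le\;\inf\eqref{eq:duq}=\inf\eqref{eq:apw}\;\le\;\inf\eqref{eq:apw-cvx},
\]
where the middle inequalities are the restriction property of~\eqref{eq:fr} and the weak duality of Theorem~\ref{coro:cvxdro-ro-wd}, and the last inequality is Proposition~\ref{prop:p-w=p-w-cvx-ro}~\emph{(i)}. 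In addition, Proposition~\ref{prop:d-b=d-b-cvx-ro}~\emph{(i)} and Theorem~\ref{thm:p-w-cvx=d-b-cvx-ro}~\emph{(i)} give $\sup\eqref{eq:adb}\le\sup\eqref{eq:adb-cvx}\le\inf\eqref{eq:apw-cvx}$. To prove that all seven values coincide, the plan is to establish a single reverse inequality that closes this loop; the solvability and recovery claims then follow by tracing how optimizers map across the bridging results.

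For part \emph{(i)}, I would first note that~\eqref{eq:adb-cvx} is an instance of~\eqref{eq:db-cvx-ro} admitting a Slater point, so Theorem~\ref{thm:p-w-cvx=d-b-cvx-ro}~\emph{(ii)} yields $\inf\eqref{eq:apw-cvx}=\sup\eqref{eq:adb-cvx}$ and the solvability of~\eqref{eq:apw-cvx}. To transfer this equality to~\eqref{eq:adb}, I use that the assumed Slater point has $\bm\lambda>\bm 0$; via $\bm z_i=\bm\upsilon_i/\lambda_i$ it induces a feasible point of~\eqref{eq:adb} with $\bm\lambda>\bm 0$, so Remark~\ref{rem:positive-lambda} lets me invoke Proposition~\ref{prop:d-b=d-b-cvx-ro}~\emph{(ii)} to obtain $\sup\eqref{eq:adb}=\sup\eqref{eq:adb-cvx}$. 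Chaining $\sup\eqref{eq:adb}=\sup\eqref{eq:adb-cvx}=\inf\eqref{eq:apw-cvx}$ with the backbone forces every inequality above to be an equality. Finally, if $(\alpha^\star,\bm\beta^\star,\ldots)$ solves~\eqref{eq:apw-cvx}, then the weak-duality argument underlying Proposition~\ref{prop:p-w=p-w-cvx-ro}~\emph{(i)} shows that the reformulated constraint at the chosen dual variables upper bounds $\sup_{(\bm z_i,\bm u_i,t_i)\in\set U_i}\{t_i-\bm u_i^\top\bm\beta^\star\}$, so $(\alpha^\star,\bm\beta^\star)$ is feasible in~\eqref{eq:apw} and, by Proposition~\ref{prop:d-uq=ap-w}, in~\eqref{eq:duq}; since its objective equals the common value, it solves~\eqref{eq:duq}.

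For part \emph{(ii)}, now~\eqref{eq:apw-cvx} (an instance of~\eqref{eq:pw-cvx-ro}) admits a Slater point, so Theorem~\ref{thm:p-w-cvx=d-b-cvx-ro}~\emph{(ii)} again gives $\inf\eqref{eq:apw-cvx}=\sup\eqref{eq:adb-cvx}=:V$, and this time the solvability of~\eqref{eq:adb-cvx}. The hard part is closing the loop on the~\eqref{eq:adb} side: because $\set U_i$ is unbounded in its $\bm u$- and $t$-directions even when $\set S$ is bounded, Proposition~\ref{prop:d-b=d-b-cvx-ro}~\emph{(iii)} does not apply directly. I would therefore construct the worst-case distribution by hand. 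Taking a maximizer $(\bm\tau^\star,\bm\lambda^\star,\{\bm\omega_i^\star,\bm\upsilon_i^\star\}_i)$ of~\eqref{eq:adb-cvx}, I set $\bm z_i^\star=\bm\upsilon_i^\star/\lambda_i^\star$ whenever $\lambda_i^\star>0$; the perspective constraints then force $c_\ell(\bm z_i^\star)\le 0$, $h_j(\bm z_i^\star)<+\infty$ and $g_i(\bm z_i^\star)>-\infty$, hence $\bm z_i^\star\in\bar{\set S}_i$. For the indices with $\lambda_i^\star=0$, Lemma~\ref{lemma:recession_directions}~\emph{(i)} shows that $(\bm\upsilon_i^\star,\bm\omega_i^\star,\tau_i^\star)$ lies in the recession cone of $\set U_i$; since $\set S$ is bounded, every recession direction of $\set U_i$ has zero $\bm z$-component, which pins down $\bm\upsilon_i^\star=\bm 0$, $\bm\omega_i^\star\ge\bm 0$ and $\tau_i^\star\le 0$.

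Consequently the discrete distribution $\bP^\star=\sum_{i:\lambda_i^\star>0}\lambda_i^\star\,\delta_{\bm z_i^\star}$ has total mass $\sum_i\lambda_i^\star=1$, satisfies the moment constraints because $\sum_{i:\lambda_i^\star>0}\lambda_i^\star\bm h(\bm z_i^\star)\le\sum_{i:\lambda_i^\star>0}\bm\omega_i^\star\le\sum_{i\in\set I}\bm\omega_i^\star\le\bm\mu$ (the middle step using $\bm\omega_i^\star\ge\bm 0$ for $\lambda_i^\star=0$), and attains $\mathbb{E}_{\bP^\star}[g(\bmt z)]\ge\sum_{i:\lambda_i^\star>0}\lambda_i^\star g_i(\bm z_i^\star)\ge\sum_{i\in\set I}\tau_i^\star=V$ (using $g\ge g_i$, the constraint $\lambda_i g_i(\bm\upsilon_i/\lambda_i)\ge\tau_i$, and $\tau_i^\star\le 0$ for $\lambda_i^\star=0$). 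Thus $\bP^\star$ is feasible in~\eqref{eq:puq}, whence $\sup\eqref{eq:puq}\ge V=\inf\eqref{eq:apw-cvx}$; combined with the backbone inequality $\sup\eqref{eq:puq}\le\inf\eqref{eq:apw-cvx}$ this collapses all seven values to $V$. Since $\bP^\star$ is an $I$-point distribution it is also feasible in~\eqref{eq:fr}, and the sandwich shows $\mathbb{E}_{\bP^\star}[g(\bmt z)]=V$, so $\bP^\star$ solves~\eqref{eq:puq}, completing the argument.
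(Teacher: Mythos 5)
Your proof is correct and follows essentially the same route as the paper's: the unconditional backbone chain of Figure~\ref{fig:weak-relations-dro}, Theorem~\ref{thm:p-w-cvx=d-b-cvx-ro}~\emph{(ii)} to equate the two convex reformulations, Remark~\ref{rem:positive-lambda} combined with Proposition~\ref{prop:d-b=d-b-cvx-ro}~\emph{(ii)} in part~\emph{(i)}, and in part~\emph{(ii)} the hand-crafted recession-direction argument (Lemma~\ref{lemma:recession_directions}, boundedness of $\set S$, and the perspective conventions forcing $\bm \upsilon_i^\star = \bm 0$, $\bm \omega_i^\star \ge \bm 0$, $\tau_i^\star \le 0$ when $\lambda_i^\star = 0$), which is exactly why the paper, like you, does not invoke Proposition~\ref{prop:d-b=d-b-cvx-ro}~\emph{(iii)}. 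Your only deviations are organizational and harmless: you recover $(\alpha^\star, \bm \beta^\star)$ via a direct weak-duality projection onto~\eqref{eq:apw} rather than citing Proposition~\ref{prop:p-w=p-w-cvx-ro}~\emph{(ii)}, and in part~\emph{(ii)} you pass from the maximizer of~\eqref{eq:adb-cvx} straight to a distribution feasible in~\eqref{eq:puq} and close the loop by a sandwich, whereas the paper routes explicitly through optimizers of~\eqref{eq:adb} and~\eqref{eq:fr}.
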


      The equivalence of~\eqref{eq:apw-cvx},~\eqref{eq:apw}, \eqref{eq:duq}, \eqref{eq:puq}, \eqref{eq:fr}, \eqref{eq:adb} and~\eqref{eq:adb-cvx} can also be shown if~\eqref{eq:adb-cvx} admits a Slater point and $\set S$ is bounded or if~\eqref{eq:apw-cvx} admits a Slater point and~\eqref{eq:adb-cvx} has a feasible solution with $\bm \lambda > \bm 0$. However, these cases are less relevant in practice. 

      \begin{rem}[Relation to Semi-Infinite Duality Theory]
      Strong duality between the primal and dual uncertainty quantification problems~\eqref{eq:puq} and~\eqref{eq:duq} can also be established by appealing to the classical duality theory for generalized moment problems. In order to describe the sufficient condition that is most frequently used, we denote by $\mathcal M_+(\bar{\set S})$ the cone of all non-negative Borel measures supported on~$\bar{\set S}$ under which the functions $g$ and $\bm h$ are integrable, and we define
      \begin{equation*}
      \set C=\left\{ \int_{\bar{\set S}} (1, \bm h(\bm z)) \,\lambda({\rm d}\bm z) \; \Big| \; \lambda \in \mathcal M_+(\bar{\set S}) \right\} + \left( \{0\}\times \R^J_+ \right),
      \end{equation*}
      which constitutes a Minkowski sum of two convex cones and is thus itself a convex cone. By virtue of Proposition~3.4 by~\citet{s01}, the supremum of~\eqref{eq:puq} coincides with the infimum of~\eqref{eq:duq} if the vector $(1,\bm\mu)$ resides in the interior of~$\set C$. This condition is more general because it extends to arbitrary measurable functions $g$ and $\bm h$, but it is not always easy to check. Theorem~\ref{thm:strong-duality-all-problems} holds under more restrictive conditions as it relies on the convexity properties of the functions~$g_i$, $\bm h$ and $c_\ell$, but the existence of a Slater point is usually straightforward to verify by inspection. In addition, Slater points for the finite convex programs~\eqref{eq:apw-cvx} and~\eqref{eq:adb-cvx} can also be found numerically by solving suitable auxiliary convex optimization problems. Finally and most importantly, the conditions of Theorem~\ref{thm:strong-duality-all-problems} not only ensure strong duality between the semi-infinite optimization problems~\eqref{eq:puq} and~\eqref{eq:duq} but also guarantee that these semi-infinite optimization problems are equivalent to finite convex programs. In contrast, the standard approach to distributionally robust optimization imposes separate regularity conditions to ensure strong duality between the semi-infinite optimization problems and to ensure that these problems admit finite convex reformulations.
      \end{rem}

      The following two propositions provide sufficient conditions for the assumptions of Theorem~\ref{thm:strong-duality-all-problems} that may be easier to interpret. The first such condition relies on the notion of a Slater distribution.

      \begin{defi}[Slater Distribution]
      \label{def:Slater-distribution}
      The distribution~$\bP^{\rm S}\in \amb{P}_0(\bar{\set S})$ is a Slater distribution for~the uncertainty quantification problem~\eqref{eq:puq} if (i)~$\bP^{\rm S}$ is absolutely continuous on~$\R^{d_{\bm z}}$, (ii) $\bP^{\rm S}\left[ \bmt z\in\bar{\set S}_i\right]>0$ for all~$i\in\set I$, (iii)~$\mathbb{E}_{\bP^{\rm S}}[h_j(\bmt z)]\leq\mu_j$ for all~$j\in\set J$, with the inequality being strict if $h_j$ is nonlinear, and (iv)~$\mathbb{E}_{\bP^{\rm S}}[c_\ell(\bmt z)] < 0$ for all~$\ell\in\set L$ where $c_\ell$ is nonlinear.
      \end{defi}

      \begin{prop}[Slater Points for~\eqref{eq:adb-cvx}]
      \label{prop:slater-distribution}
      If the uncertainty quantification problem~\eqref{eq:puq} admits a Slater distribution, then the convex program~\eqref{eq:adb-cvx} admits a Slater point with $\bm \lambda > \bm 0$.
      \end{prop}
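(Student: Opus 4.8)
The plan is to convert the Slater distribution $\bP^{\rm S}$ into a feasible solution of~\eqref{eq:adb-cvx} with strictly positive weights by discretizing $\bP^{\rm S}$ through conditional expectations, and then to verify the Slater condition on the resulting point. First I would record which constraints of~\eqref{eq:adb-cvx} are genuinely nonlinear: the perspective constraints $\lambda_i c_\ell(\bm\upsilon_i/\lambda_i)\le 0$, $\lambda_i h_j(\bm\upsilon_i/\lambda_i)\le\omega_{ij}$ and $\lambda_i g_i(\bm\upsilon_i/\lambda_i)\ge\tau_i$ inherit nonlinearity exactly from $c_\ell$, $h_j$ and $g_i$, whereas the budget constraints $\sum_i\lambda_i=1$ and $\sum_i\bm\omega_i\le\bm\mu$ are affine. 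Since $\tau_i$ and $\omega_{ij}$ are free variables, the constraints carrying $g_i$ and $h_j$ can be made strict at essentially no cost by inserting a small positive slack. The substantive requirements are therefore only $\bm\lambda>\bm0$, the aggregate moment bound $\sum_i\lambda_i\bm h(\bm z_i)\le\bm\mu$ (strict in the nonlinear components), and the strict support inequalities $c_\ell(\bm z_i)<0$ for every nonlinear $c_\ell$, where $\bm z_i=\bm\upsilon_i/\lambda_i$.

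For the discretization I would partition the effective support set $\bar{\set S}$ into measurable pieces $\hat{\set S}_i\subseteq\bar{\set S}_i$ as in the proof of Proposition~\ref{prop:fr=ad-b}, but refine the partition so that every piece carries positive mass. This is possible because $\bP^{\rm S}$ is atomless by part~(i) of Definition~\ref{def:Slater-distribution} and because $\bP^{\rm S}[\bmt z\in\bar{\set S}_i]>0$ for every $i$ by part~(ii): whenever a piece $\hat{\set S}_i$ is $\bP^{\rm S}$-null, I would carve a sufficiently small positive-mass sliver of $\bar{\set S}_i$ out of an overlapping positive-mass piece, which is admissible since the sliver still lies in $\bar{\set S}_i$. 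Setting $\lambda_i=\bP^{\rm S}[\hat{\set S}_i]>0$, $\bm z_i=\mathbb{E}_{\bP^{\rm S}}[\bmt z\mid\hat{\set S}_i]$ and $\bm\upsilon_i=\lambda_i\bm z_i$, I would then choose $\omega_{ij}=\lambda_i h_j(\bm z_i)$ (plus slack when $h_j$ is nonlinear) and $\tau_i=\lambda_i g_i(\bm z_i)$ (minus slack when $g_i$ is nonlinear). Jensen's inequality, applicable because $h_j$ is proper, closed and convex by assumption~\textbf{(H)}, then yields $\sum_i\lambda_i h_j(\bm z_i)\le\mathbb{E}_{\bP^{\rm S}}[h_j(\bmt z)]\le\mu_j$, with the bound strict in the nonlinear components thanks to part~(iii) of Definition~\ref{def:Slater-distribution}; this leaves room to absorb the slacks into $\bm\omega_i$ while preserving $\sum_i\bm\omega_i\le\bm\mu$.

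The crux, and the step I expect to be most delicate, is showing that each atom $\bm z_i$ lies in ${\rm ri}(\bar{\set S}_i)$ and strictly inside the support set, i.e.\ that $c_\ell(\bm z_i)<0$ for every nonlinear $c_\ell$. Here I would exploit the absolute continuity of $\bP^{\rm S}$: since $\hat{\set S}_i$ has positive Lebesgue measure it is full-dimensional, so the barycenter $\bm z_i$ of the absolutely continuous conditional distribution is concentrated on no face and hence lands in the relative interior of the convex set $\bar{\set S}_i$, which itself sits in ${\rm ri}(\dom(c_\ell))\cap{\rm ri}(\dom(h_j))\cap{\rm ri}(\dom(-g_i))$. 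The strict inequality $c_\ell(\bm z_i)<0$ then follows from the convex-analytic fact that a convex function attaining its maximum over a convex set at a relative interior point must be constant on that set: were $c_\ell(\bm z_i)=0$, then $c_\ell$ would vanish identically on $\bar{\set S}_i$, the degenerate configuration that is precluded by combining the absolute continuity of $\bP^{\rm S}$ with the strict moment condition $\mathbb{E}_{\bP^{\rm S}}[c_\ell(\bmt z)]<0$ in part~(iv) of Definition~\ref{def:Slater-distribution}. Once these relative-interior and strict-feasibility properties are in hand, assembling the pieces above shows that $(\bm\tau,\bm\lambda,\{\bm\omega_i,\bm\upsilon_i\}_i)$ is feasible in~\eqref{eq:adb-cvx}, lies in the relative interior of the domains of all constraint functions, satisfies every nonlinear constraint strictly, and has $\bm\lambda>\bm0$, which is exactly a Slater point of the required form.
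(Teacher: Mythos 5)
Your construction follows the same blueprint as the paper's proof: split the Slater distribution $\bP^{\rm S}$ into positive-weight pieces supported on the individual sets $\bar{\set S}_i$, take the barycenter $\bm z_i$ of each piece, verify the moment constraints via Jensen's inequality, and use absolute continuity to place each $\bm z_i$ in the interior of $\bar{\set S}_i$ (the paper does this with the separating-hyperplane argument you allude to). Your carving-based repair of the partition from Proposition~\ref{prop:fr=ad-b} is a legitimate alternative to the paper's fractional splitting of $\bP^{\rm S}$ into the measures $\hat\rho_i$; both deliver weights $\lambda_i>0$ together with absolutely continuous conditional distributions on $\bar{\set S}_i$, and your handling of the slacks in $\tau_i$ and $\bm\omega_i$ is correct.

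The step that does not hold up as written is the strict support inequality $c_\ell(\bm z_i)<0$. Your maximum-principle argument correctly shows that $c_\ell(\bm z_i)=0$ would force $c_\ell\equiv 0$ on $\bar{\set S}_i$, but the assertion that this configuration is ``precluded by combining the absolute continuity of $\bP^{\rm S}$ with the strict moment condition $\mathbb{E}_{\bP^{\rm S}}[c_\ell(\bmt z)]<0$'' is not a proof when $I>1$: the expectation in part (iv) of Definition~\ref{def:Slater-distribution} runs over all of $\bar{\set S}$, so $c_\ell$ could a priori vanish identically on $\bar{\set S}_i$ while being strictly negative on other pieces $\bar{\set S}_{i'}$, and no contradiction would arise. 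To close the gap you need one further convex-analytic fact: a convex function that vanishes on a convex set of positive Lebesgue measure (hence with nonempty interior) is nonnegative on all of $\R^{d_{\bm z}}$; since $c_\ell\le 0$ on $\set S$, this forces $c_\ell\equiv 0$ on the entire support of $\bP^{\rm S}$ and therefore $\mathbb{E}_{\bP^{\rm S}}[c_\ell(\bmt z)]=0$, which is the desired contradiction---and this is precisely where the full-dimensionality supplied by absolute continuity enters. The paper sidesteps this issue entirely: it applies Jensen's inequality to obtain strictness only at the aggregate point $\sum_{i}\lambda_i\bm z_i$, concludes that this point is a Slater point of $\set S$, and then invokes Lemma~\ref{lem:slater-points} to transfer the Slater property to each barycenter $\bm z_i\in{\rm int}(\bar{\set S}_i)\subseteq{\rm ri}(\set S)$. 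Either repair works, but as submitted your justification of this step is a genuine gap.
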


      \begin{prop}[Slater Points for~\eqref{eq:apw-cvx}]
      \label{prop:slater-distribution2}
      If the semi-infinite program~\eqref{eq:duq} admits a strict Slater point and $\set S$ is bounded, then the convex program~\eqref{eq:apw-cvx} admits a strict Slater point.
      \end{prop}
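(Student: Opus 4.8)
The plan is to exhibit an explicit strict Slater point of~\eqref{eq:apw-cvx} by combining the given strict Slater point of~\eqref{eq:duq} with strict Slater points of the dualized embedded subproblems, following the blueprint of the proof of Proposition~\ref{prop:p-w=p-w-cvx-ro}~\emph{(iii)}. First I would translate the hypothesis: by Proposition~\ref{prop:d-uq=ap-w} and the identity $g=\max_{i\in\set I}g_i$, a strict Slater point $(\alpha^{\rm S},\bm\beta^{\rm S})$ of~\eqref{eq:duq} is precisely a point with $\bm\beta^{\rm S}>\bm 0$ and $F_i(\alpha^{\rm S},\bm\beta^{\rm S}):=\sup_{\bm z\in\bar{\set S}_i}\{g_i(\bm z)-\alpha^{\rm S}-\bm h(\bm z)^\top\bm\beta^{\rm S}\}<0$ for every $i\in\set I$. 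The strict positivity $\bm\beta^{\rm S}>\bm 0$ is doubly useful: it places $\bm\beta^{\rm S}$ in ${\rm ri}(\R^J_+)$, so that $(\alpha^{\rm S},\bm\beta^{\rm S})$ lies in the relative interior of the domain of the objective of~\eqref{eq:apw-cvx}, and it ensures that the perspectives $\beta^{\rm S}_j h_j^*(\,\cdot\,/\beta^{\rm S}_j)$ appearing in the constraints are evaluated away from their degenerate regime.

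The central observation that resolves the main difficulty is that, although~\eqref{eq:apw} is an instance of~\eqref{eq:pw-ro} whose uncertainty sets $\set U_i$ are unbounded (in the $\bm u_i$ and $t_i$ directions) even when $\set S$ is bounded, the positivity $\bm\beta^{\rm S}>\bm 0$ lets me eliminate $\bm u_i$ and $t_i$. Indeed, the embedded maximization in the $i$-th constraint of~\eqref{eq:apw}, evaluated at $(\alpha^{\rm S},\bm\beta^{\rm S})$, is attained at $\bm u_i=\bm h(\bm z_i)$, $t_i=g_i(\bm z_i)$ and therefore collapses to $\sup_{\bm z_i\in\bar{\set S}_i}\{g_i(\bm z_i)-\bm h(\bm z_i)^\top\bm\beta^{\rm S}\}$, a maximization over the \emph{bounded} effective support set $\bar{\set S}_i$. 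Writing this as the \eqref{eq:p-co}-type program $\inf_{\bm z_i}\{-g_i(\bm z_i)+\bm h(\bm z_i)^\top\bm\beta^{\rm S}\mid c_\ell(\bm z_i)\le 0\ \forall\ell\}$, whose feasible region is precisely $\bar{\set S}_i$, one verifies that its \eqref{eq:d-co}-dual is exactly the maximization of $-\Phi_i$ over the feasibility constraints of the $i$-th line of~\eqref{eq:apw-cvx} with $\bm\beta$ fixed to $\bm\beta^{\rm S}$, where $\Phi_i$ abbreviates the left-hand side $(-g_i)^*(\bm y^{(0)}_i)+\sum_{j}\beta^{\rm S}_jh_j^*(\bm y^{(1)}_{ij}/\beta^{\rm S}_j)+\sum_{\ell}\nu_{i\ell}c^*_\ell(\bm y^{(2)}_{i\ell}/\nu_{i\ell})$.

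Since $\bar{\set S}_i$ is nonempty by assumption~\textbf{(S)} and bounded because $\set S$ is bounded, I can now invoke the two key tools on this reduced program. Theorem~\ref{prop:strong-duality}~\emph{(ii)} yields strong duality, so the infimum of $\Phi_i$ over the feasible $(\{\bm y\},\{\nu\})$ equals $F_i(\alpha^{\rm S},\bm\beta^{\rm S})+\alpha^{\rm S}<\alpha^{\rm S}$; hence there exists a feasible $(\bm y_i',\nu_i')$ with $\Phi_i(\bm\beta^{\rm S},\bm y_i',\nu_i')<\alpha^{\rm S}$, i.e.\ the $i$-th constraint holds strictly. Independently, Proposition~\ref{prop:bounded-slater} supplies a strict Slater point $(\bm y_i^{\rm S},\nu_i^{\rm S})$ of the same reduced dual, with $\nu_i^{\rm S}>\bm 0$ and lying in the relative interior of the domains of all conjugate terms. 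Taking the convex combination $\theta(\bm y_i',\nu_i')+(1-\theta)(\bm y_i^{\rm S},\nu_i^{\rm S})$ with $\theta\in(0,1)$ close to $1$, the convexity of $\Phi_i$ (its summands are perspectives of convex functions) preserves the strict inequality $\Phi_i<\alpha^{\rm S}$, the $\nu$-components remain strictly positive, the point stays in the relative interior of the domains, and the affine equality $\bm y^{(0)}_i+\sum_{j}\bm y^{(1)}_{ij}+\sum_{\ell}\bm y^{(2)}_{i\ell}=\bm 0$ is maintained. Assembling these combinations over $i\in\set I$ together with $(\alpha^{\rm S},\bm\beta^{\rm S})$ produces a strict Slater point of~\eqref{eq:apw-cvx}.

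The main obstacle is exactly the reduction described above: because the formal uncertainty sets $\set U_i$ of~\eqref{eq:apw} are unbounded, neither Theorem~\ref{prop:strong-duality}~\emph{(ii)} nor Proposition~\ref{prop:bounded-slater} applies to the subproblem as originally posed, and a direct appeal to Proposition~\ref{prop:p-w=p-w-cvx-ro}~\emph{(iii)} (which presupposes a compact uncertainty set) is unavailable. The crux is thus to justify the elimination of $\bm u_i$ and $t_i$ so that the effective feasible region becomes the bounded set $\bar{\set S}_i$, and to verify that the resulting reduced dual coincides term-by-term with the $i$-th constraint of~\eqref{eq:apw-cvx}. Here some care is needed because $\bar{\set S}_i$ may fail to be closed, but this is harmless: the feasibility notion underlying Theorem~\ref{prop:strong-duality}~\emph{(ii)} and Proposition~\ref{prop:bounded-slater} requires nonemptiness and boundedness only, not closedness. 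The remaining convex-combination argument is routine.
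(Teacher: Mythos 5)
Your proof is correct and takes essentially the same route as the paper's: you convert the strict Slater point of~\eqref{eq:duq} into one for~\eqref{eq:apw}, exploit $\bm\beta^{\rm S}>\bm 0$ to collapse the embedded maximization over the unbounded set $\set U_i$ to one over the nonempty bounded set $\bar{\set S}_i$, and then apply Theorem~\ref{prop:strong-duality}~\emph{(ii)} and Proposition~\ref{prop:bounded-slater} to the reduced primal--dual pair before assembling the strict Slater point for~\eqref{eq:apw-cvx}. Your explicit convex-combination step (mixing a feasible dual point with strictly negative slack into the strict Slater point of the subproblem) is exactly the content of Remark~\ref{rem:strict-weak-inequalities} that the paper invokes, so the two arguments coincide step by step.
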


      Among all sufficient conditions for strong duality between the uncertainty quantification problems~\eqref{eq:puq} and~\eqref{eq:duq}, the assumptions of Theorem~\ref{thm:strong-duality-all-problems}~\emph{(i)} are---in our experience---most frequently satisfied, but they do not guarantee the solvability of~\eqref{eq:puq}. The following corollary shows, however, that~\eqref{eq:puq} can still be solved asymptotically under these assumptions.

      \begin{coro}[Approximate Numerical Solution of~\eqref{eq:puq}] \label{coro:puq-asy}
      If~\eqref{eq:adb-cvx} admits a Slater point with $\bm \lambda >\bm 0$, then the suprema of~\eqref{eq:puq} and~\eqref{eq:adb-cvx} coincide, and any $\epsilon$-optimal solution of~\eqref{eq:adb-cvx} can be used to construct a discrete distribution with $I$ atoms that is $2\epsilon$-optimal in~\eqref{eq:puq}.
      \end{coro}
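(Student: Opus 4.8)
Since the hypothesis of this corollary is exactly that of Theorem~\ref{thm:strong-duality-all-problems}~\emph{(i)}, that theorem already yields the first claim: \eqref{eq:puq} and~\eqref{eq:adb-cvx} share a common (finite) optimal value, which I denote by~$v^\star$. It therefore remains to convert an $\epsilon$-optimal solution of~\eqref{eq:adb-cvx} into a $2\epsilon$-optimal discrete distribution for~\eqref{eq:puq}. The obstacle is that an $\epsilon$-optimal solution $(\bm \tau, \bm \lambda, \{\bm \omega_i, \bm \upsilon_i\}_i)$ of~\eqref{eq:adb-cvx} may have $\lambda_i=0$ for some $i\in\set I$, in which case the candidate atom $\bm \upsilon_i/\lambda_i$ is undefined. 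In the solvable regime of Theorem~\ref{thm:strong-duality-all-problems}~\emph{(ii)} this was resolved by a recession-direction argument forcing $\bm \upsilon_i=\bm 0$, but that argument relied on the compactness of~$\set S$, which is \emph{not} assumed here.

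The plan is to first perturb the $\epsilon$-optimal solution so that every component of $\bm\lambda$ becomes strictly positive, at the cost of at most one additional $\epsilon$ in the objective. Let $(\bm \tau^{\rm S}, \bm \lambda^{\rm S}, \{\bm \omega_i^{\rm S}, \bm \upsilon_i^{\rm S}\}_i)$ be the Slater point of~\eqref{eq:adb-cvx} with $\bm \lambda^{\rm S} > \bm 0$ guaranteed by assumption. Exactly as in the construction~\eqref{eq:sequence_of_solutions} within the proof of Proposition~\ref{prop:d-b=d-b-cvx-ro}~\emph{(ii)}, I would form, for a suitable $\theta\in(0,1)$, the convex combination
\[
(\bm\tau^\theta, \bm\lambda^\theta, \{\bm\omega_i^\theta, \bm\upsilon_i^\theta\}_i) = \theta\,(\bm\tau, \bm\lambda, \{\bm\omega_i, \bm\upsilon_i\}_i) + (1-\theta)\,(\bm\tau^{\rm S}, \bm\lambda^{\rm S}, \{\bm\omega_i^{\rm S}, \bm\upsilon_i^{\rm S}\}_i).
\]
Since the feasible region of~\eqref{eq:adb-cvx} is convex, this combination is feasible; since $\bm\lambda^{\rm S}>\bm 0$ and $\bm\lambda\ge\bm 0$, we obtain $\bm\lambda^\theta>\bm 0$; and since the objective $\sum_i\tau_i$ is linear, $\sum_i\tau_i^\theta = \theta\sum_i\tau_i + (1-\theta)\sum_i\tau_i^{\rm S}$, so taking $\theta$ close enough to $1$ guarantees $\sum_i\tau_i^\theta \ge \sum_i\tau_i - \epsilon \ge v^\star - 2\epsilon$.

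With $\bm\lambda^\theta>\bm 0$ in hand, I would set $\bm z_i = \bm\upsilon_i^\theta/\lambda_i^\theta$ for every $i\in\set I$ and let $\bP$ be the discrete distribution that assigns probability $\lambda_i^\theta$ to the atom $\bm z_i$. The perspective constraints of~\eqref{eq:adb-cvx}, evaluated at $\lambda_i^\theta>0$, reduce to $c_\ell(\bm z_i)\le 0$, $h_j(\bm z_i)\le \omega_{ij}^\theta/\lambda_i^\theta<+\infty$ and $g_i(\bm z_i)\ge \tau_i^\theta/\lambda_i^\theta>-\infty$, which certify $\bm z_i\in\bar{\set S}_i\subseteq\bar{\set S}$, so $\bP$ is supported on the effective support set. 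The constraints $\sum_i\lambda_i^\theta=1$ and $\sum_i\bm\omega_i^\theta\le\bm\mu$ together with $\lambda_i^\theta\,\bm h(\bm z_i)\le\bm\omega_i^\theta$ show that $\bP$ is a probability distribution with $\mathbb{E}_{\bP}[\bm h(\bmt z)]=\sum_i\lambda_i^\theta\,\bm h(\bm z_i)\le\bm\mu$, hence $\bP\in\amb P$ is feasible in~\eqref{eq:puq}. Finally, using $g(\bm z_i)\ge g_i(\bm z_i)$ and the constraint $\lambda_i^\theta g_i(\bm z_i)\ge\tau_i^\theta$, I would bound $\mathbb{E}_{\bP}[g(\bmt z)]=\sum_i\lambda_i^\theta g(\bm z_i)\ge\sum_i\tau_i^\theta\ge v^\star-2\epsilon$, which is exactly the assertion that $\bP$ is $2\epsilon$-optimal in~\eqref{eq:puq}.

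The main obstacle is precisely the possible vanishing of some $\lambda_i$ under an unbounded support set $\set S$; the convex-combination-with-Slater-point device neutralizes it while inflating the optimality gap from $\epsilon$ to $2\epsilon$, which is the price recorded in the statement. The remaining verifications are the routine perspective manipulations already carried out in the proofs of Propositions~\ref{prop:d-b=d-b-cvx-ro} and~\ref{prop:fr=ad-b}.
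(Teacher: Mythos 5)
Your proposal is correct and follows essentially the same route as the paper's own proof: invoke Theorem~\ref{thm:strong-duality-all-problems}~\emph{(i)} for the equality of the suprema, form a convex combination of the $\epsilon$-optimal solution with the Slater point of~\eqref{eq:adb-cvx} to force $\bm\lambda^\theta > \bm 0$ while losing at most another $\epsilon$ in the linear objective, and then read off the discrete distribution with atoms $\bm\upsilon_i^\theta/\lambda_i^\theta$ and verify its feasibility and $2\epsilon$-optimality in~\eqref{eq:puq} exactly as you do. The only (minor) point the paper treats that you gloss over is that~\eqref{eq:adb-cvx} may be unbounded under the stated hypothesis, so the common optimal value need not be finite; the paper handles this by the convention that an $\epsilon$-optimal solution of an unbounded problem is a feasible solution with objective value at least $1/\epsilon$, under which your construction goes through unchanged.
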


      The main results of this section can be directly applied to distributionally robust optimization problems, in which one seeks a decision~$\bm x$ from within a closed feasible region $\set X\subseteq \R^{d_{\bm x}}$ that minimizes the worst-case expectation of a decision-dependent disutility function $g(\bm x,\bm z)$ with respect to all distributions $\bP\in\amb P$. In this case, the results of this section readily imply that~\eqref{eq:apw-cvx} remains a finite convex program when~$\bm x$ is appended to the list of optimization variables, provided that~$\set X$ is convex and that the disutility function satisfies~$g(\bm x,\bm z) = \max_{i \in \set I} g_i (\bm x, \bm z)$, where $g_i (\bm x, \bm z)$ is proper, convex and closed in~$\bm x$ and~$-g_i (\bm x, \bm z)$ is proper, convex and closed in~$\bm z$ for every fixed~$i\in\set I$. 

      \section{Extensions}
      \label{sec:extensions}
      The results of Section~\ref{sec:drco} remain valid if the support set $\set S$ is representable as a finite union of convex component sets and if each component set as well as the moment constraints in the ambiguity set~$\amb P$ are defined in terms of conic inequalities. In order to formally describe these generalizations, we first recall some further concepts and terminology from convex optimization.

      Any proper convex cone $\set C\subseteq \R^{d_{\set C}}$ \rev{ (\emph{cf.}~Defnition~\ref{def:proper-convex-cone} in~Appendix~\ref{app_before_A} in the Electric Companion)} induces weak as well as strict generalized inequalities on~$\R^{d_{\set C}}$. Specifically, for any $\bm y,\bm y'\in\R^{d_{\set C}}$, the relation~$\bm y\preceq_\set C \bm y'$ means that $\bm y'-\bm y\in\set C$, while the relation~$\bm y\prec_\set C \bm y'$ means that $\bm y'-\bm y \in{\rm int}(\set C)$. The reverse inequalities $\succeq_\set C$ and $\succ_\set C$ are defined analogously. In the following we attach to~$\R^{d_{\set C}}$ a largest element~$+\bm \infty_{\set C}$ and a smallest element~$-\bm \infty_{\set C}$ with respect to the partial ordering~$\preceq_{\set C}$, that is, we assume that~$-\bm \infty_{\set C}\preceq_{\set C} \bm y\preceq_{\set C} +\bm \infty_{\set C}$ for all~$\bm y\in \R^{d_{\set C}}$. All $d_{\set C}$-dimensional functions considered in the remainder are valued in~$\overline \R{}^{d_{\set C}}=\R^{d_{\set C}}\cup\{-\bm\infty_{\set C}, +\bm\infty_{\set C}\}$.
      The {\em domain} of a function~$\bm f: \R^{d_{\bm x}}\rightarrow \overline\R{}^{d_{\set C}}$
      is defined as~$\dom(\bm f)=\{\bm x\in \R^{d_{\bm x}} \, | \, \bm f(\bm x) \prec_{\set C} +\bm\infty_{\set C}\}$, and~$\bm f$ is {\em proper} if~$\bm f(\bm x)\succ_{\set C}-\bm\infty_{\set C}$ for all~$\bm x\in\R^{d_{\bm x}}$ and~$\bm f(\bm x)\prec+\bm\infty_{\set C}$ for at least one~$\bm x\in\R^{d_{\bm x}}$.

  \rev{ The definitions of Slater points for optimization problems involving only classical constraints can now be generalized to optimization problems involving conic constraints by replacing the weak and strict inequalities of Definition~\ref{def:Slater_sets} with $\preceq_{\set C_i}$ and $\prec_{\set C_i}$, respectively, where $\set C_i$ is a proper convex cone for all $i\in \set I$ (\emph{cf.}~Definition~\ref{def:slater-condition-for-sets} in Appendix~\ref{app_before_A} in the Electric Companion). In analogy to Definition~\ref{def:Slater_problems}, a vector $\bm x^{\rm S}$ is a (strict) Slater point of a minimization problem if it is a (strict) Slater point of the problem's feasible region and resides in the relative interior of the domain of the problem's objective function.} 


      \begin{defi}[$\set C$-Convex Function]
      If $\set C\subseteq \R^{d_{\set C}}$ is a proper convex cone, then $\bm f:\mathbb R^{d_{\bm x}} \rightarrow \overline\R{}^{d_{\set C}}$ is called $\set C$-convex if ${\rm dom}(\bm f)$ is a convex set and
      $\bm f(\theta \bm x + (1 - \theta) \bm x') \preceq_{\set C} \theta \bm f( \bm x ) +  (1 - \theta)  \bm f( \bm x')$ for all~$\bm x,\bm x'\in {\rm dom(}\bm f)$ and $\theta \in [0,1]$.
      \end{defi}

      Note that $\bm f$ is $\set C$-convex if and only if its $\set C$-epigraph $\textnormal{epi}_{\set C} (\bm f) = \{ (\bm x,\bm y) \in \R^{d_{\bm x}} \times \R{}^{d_{\set C}} \ | \ \bm f(\bm x ) \preceq_{\set C} \bm y \}$ is convex; see \citet[Exercise~3.20]{bn13}. The cone dual to a proper convex cone $\set C\subseteq \R^{d_{\set C}}$ is defined as $\set C^*=\{\bm \lambda \in\R^{d_{\set C}} \ | \ \bm \lambda^\top \bm y\ge 0\;\; \forall \bm y \in \set C\}$. As it constitutes an intersection of closed half-spaces whose boundaries contain the origin, $\set C^*$ is a closed convex cone. It is further known that $\set C^*$ is proper if and only if $\set C$ is proper \citep[Corollary~1.4.1]{bn13}. We also adopt the convention that~$\bm \lambda^\top (+\bm\infty_{\set C})=+\infty$ and~$\bm \lambda^\top (-\bm\infty_{\set C})=-\infty$ for all~$\bm \lambda\in \set C^*\backslash\{\bm 0\}$.

      \begin{lem}[Scalarization of $\set C$-Convex Functions]
      \label{lem:scalarization}
      If $\set C\subseteq \R^{d_{\set C}}$
      is a proper convex cone, then~$\bm f:\mathbb R^{d_{\bm x}} \rightarrow \overline\R{}^{d_{\set C}}$ is proper and $\set C$-convex if and only if~$\bm \lambda^\top \bm f$ is proper and convex for every $\bm \lambda \in \set C^*\backslash\{\bm 0\}$. 
      \end{lem}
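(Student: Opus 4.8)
The plan is to route everything through the dual-cone characterisation of the generalized order, i.e.\ the bipolar identity $\mathcal{C}=\mathcal{C}^{**}$, which is available because $\mathcal{C}$ is a proper (hence closed) convex cone. Concretely, for any $\bm y\in\R^{d_{\mathcal C}}$ one has $\bm y\succeq_{\mathcal C}\bm 0$ if and only if $\bm\lambda^\top\bm y\ge 0$ for every $\bm\lambda\in\mathcal C^*$; since $\bm\lambda=\bm 0$ carries no information, this is the same as requiring $\bm\lambda^\top\bm y\ge 0$ for all $\bm\lambda\in\mathcal C^*\backslash\{\bm 0\}$. I would use this equivalence in both directions, together with the conventions $\bm\lambda^\top(\pm\bm\infty_{\mathcal C})=\pm\infty$ for $\bm\lambda\in\mathcal C^*\backslash\{\bm 0\}$ and the fact that $\mathcal C^*$, being proper, is solid, so that $\mathcal C^*\backslash\{\bm 0\}\neq\emptyset$. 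A recurring bookkeeping observation is that for each fixed $\bm\lambda\in\mathcal C^*\backslash\{\bm 0\}$ the scalarised function is finite exactly where $\bm f$ is finite, so that $\dom(\bm\lambda^\top\bm f)=\dom(\bm f)$.

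For the ``only if'' direction I would fix $\bm\lambda\in\mathcal C^*\backslash\{\bm 0\}$ and argue in two steps. Properness of $\bm\lambda^\top\bm f$ is immediate from the conventions: since $\bm f(\bm x)\neq-\bm\infty_{\mathcal C}$ for all $\bm x$ we get $(\bm\lambda^\top\bm f)(\bm x)>-\infty$ everywhere, and since $\dom(\bm f)\neq\emptyset$ there is a point at which $\bm f$, and hence $\bm\lambda^\top\bm f$, is finite. For convexity, I would scalarise the defining $\mathcal C$-convexity inequality by $\bm\lambda$: because $\bm\lambda\in\mathcal C^*$ is order preserving (if $\bm a\preceq_{\mathcal C}\bm b$ then $\bm\lambda^\top\bm a\le\bm\lambda^\top\bm b$), this yields ordinary convexity of $\bm\lambda^\top\bm f$ on $\dom(\bm f)$, while the inequality holds trivially once an endpoint leaves $\dom(\bm f)$, since then the right-hand side is $+\infty$. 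As $\dom(\bm\lambda^\top\bm f)=\dom(\bm f)$ is convex, this direction is complete.

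The substance of the lemma lies in the ``if'' direction, where I assume $\bm\lambda^\top\bm f$ is proper and convex for every $\bm\lambda\in\mathcal C^*\backslash\{\bm 0\}$. I would first recover properness of $\bm f$: if $\bm f(\bm x)=-\bm\infty_{\mathcal C}$ for some $\bm x$, then $(\bm\lambda^\top\bm f)(\bm x)=-\infty$, contradicting properness of $\bm\lambda^\top\bm f$, so $\bm f\succ_{\mathcal C}-\bm\infty_{\mathcal C}$ everywhere; fixing any $\bm\lambda_0\in\mathcal C^*\backslash\{\bm 0\}$, a point where $\bm\lambda_0^\top\bm f$ is finite is a point where $\bm f$ is finite, so $\dom(\bm f)\neq\emptyset$. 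Convexity of $\bm\lambda_0^\top\bm f$ then shows $\dom(\bm f)=\dom(\bm\lambda_0^\top\bm f)$ is convex. Finally, for $\bm x,\bm x'\in\dom(\bm f)$ and $\theta\in[0,1]$, I would set $\bm x_\theta=\theta\bm x+(1-\theta)\bm x'\in\dom(\bm f)$ and consider the finite vector $\bm d=\theta\bm f(\bm x)+(1-\theta)\bm f(\bm x')-\bm f(\bm x_\theta)$. Convexity of $\bm\lambda^\top\bm f$ gives $\bm\lambda^\top\bm d\ge 0$ for every $\bm\lambda\in\mathcal C^*\backslash\{\bm 0\}$ (and trivially for $\bm\lambda=\bm 0$), so the bipolar identity yields $\bm d\in\mathcal C^{**}=\mathcal C$, which is precisely the inequality $\bm f(\bm x_\theta)\preceq_{\mathcal C}\theta\bm f(\bm x)+(1-\theta)\bm f(\bm x')$.

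I expect the main obstacle to be bookkeeping rather than depth. The genuinely delicate points are handling the extended values $\pm\bm\infty_{\mathcal C}$ consistently so that the equivalence ``$\bm f$ finite at $\bm x$ iff $\bm\lambda^\top\bm f$ finite at $\bm x$'' (hence $\dom(\bm\lambda^\top\bm f)=\dom(\bm f)$) holds for every nonzero $\bm\lambda\in\mathcal C^*$, and making sure the bipolar identity is invoked only on the finite difference vector $\bm d$, since the scalarisation convention is tailored to separate finite from infinite values. Everything else reduces to the order preservation of $\mathcal C^*$ and the closedness of $\mathcal C$.
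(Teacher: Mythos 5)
Your proposal is correct and follows essentially the same route as the paper's proof: both rest on the observation that $\dom(\bm \lambda^\top\bm f)=\dom(\bm f)$ with finiteness corresponding across the scalarization, and both establish the convexity equivalence by scalarizing the defining inequality and invoking $\set C^{**}=\set C$ for the reverse direction. Your version merely separates the two implications and is slightly more explicit about the bookkeeping (endpoints outside the domain, finiteness of the difference vector), which the paper compresses into a single chain of equivalences.
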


      \rev{Example~\ref{ex:C-convex-functions} in Appendix~\ref{app_before_A} in the Electronic Companion describes vector- and matrix-valued functions that are convex with respect to some proper convex cones but have components that fail to be convex in the usual sense.} 

      Next, we introduce a generalized notion of lower semicontinuity due to~\citet{jeyakumar05}.

      \begin{defi}[Star $\set C$-Lower Semicontinuity]
      If $\set C\subseteq \R^{d_{\set C}}$ is a proper convex cone, then $\bm f:\mathbb R^{d_{\bm x}} \rightarrow \overline\R{}^{d_{\set C}}$ is called star $\set C$-lower semicontinuous if~$\bm \lambda^\top\bm f$ is lower semicontinuous for every~$\bm \lambda \in \set C^*\backslash\{\bm 0\}$.
      \end{defi}

      One can prove that if~$\bm f$ is star $\set C$-lower semicontinuous, then its $\set C$-epigraph~$\textnormal{epi}_{\set C} (\bm f)$ is closed \citep[Proposition~2.2.19]{bot2009duality}. Contrary to standard intuition, however, the converse implication is false in general. Indeed, there exist proper $\set C$-convex functions that have a closed $\set C$-epigraph but fail to be star $\set C$-lower semicontinuous; see, {\em e.g.}, \citet[Example~2.2.6]{bot2009duality}.

      The following proposition shows that the convex perspectives of proper, closed and convex functions naturally extend to proper, star $\set C$-lower semicontinuous and $\set C$-convex functions.
      \begin{prop}[$\set C$-Convex Perspective]
      \label{prop:C-convex-perspective}
      If $\set C\subseteq \R^{d_{\set C}}$ is a proper convex cone and $\bm f:\mathbb R^{d_{\bm x}} \rightarrow \overline\R{}^{d_{\set C}}$ is a proper, star $\set C$-lower semicontinuous and $\set C$-convex function, then there exists a unique function $\underline{\bm f}: \R^{d_{\bm x}}\times\R_+ \rightarrow \overline \R{}^{d_{\set C}}$, which we will call the $\set C$-convex perspective of~$\bm f$, with the following properties.
      \begin{itemize}
      \item[(i)] $\underline{\bm f}$ is proper, star $\set C$-lower semicontinuous and $\set C$-convex.
      \item[(ii)] $ \underline{\bm f}(\bm x, t) = t \bm f(\bm x/ t)$ for all~$t>0$ and $\bm x\in \mathbb R^{d_{\bm x}}$.
      \item[(iii)] $\bm \lambda^\top \underline{\bm f}(\bm x, 0)= \delta^*_{\dom((\bm \lambda^\top\bm f)^*)}(\bm x)$ for all $\bm x\in \mathbb R^{d_{\bm x}}$ and~$\bm \lambda \in \set C^*\backslash\{\bm 0\}$.
      \end{itemize}
      \end{prop}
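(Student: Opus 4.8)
The plan is to reduce the statement to the scalar theory of Section~\ref{sec:co} via the scalarization Lemma~\ref{lem:scalarization}. For every $\bm\lambda\in\set C^*\backslash\{\bm 0\}$ write $g_{\bm\lambda}=\bm\lambda^\top\bm f$. By Lemma~\ref{lem:scalarization} the function $g_{\bm\lambda}$ is proper and convex, and by the definition of star $\set C$-lower semicontinuity it is lower semicontinuous; hence $g_{\bm\lambda}$ is proper, closed and convex, so its scalar convex perspective $\underline{g_{\bm\lambda}}$ is well defined and itself proper, closed and convex by Definition~\ref{def:convex-perspective} and Proposition~\ref{prop:perspective-convex}. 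I would then \emph{define} $\underline{\bm f}$ to be the function whose scalarizations are precisely these scalar perspectives, that is, with $\bm\lambda^\top\underline{\bm f}=\underline{g_{\bm\lambda}}$ for all $\bm\lambda\in\set C^*\backslash\{\bm 0\}$. For $t>0$ this forces $\underline{\bm f}(\bm x,t)=t\bm f(\bm x/t)$, since scalarizing the right-hand side yields $t\,g_{\bm\lambda}(\bm x/t)=\underline{g_{\bm\lambda}}(\bm x,t)$; this gives property~(ii) and shows that the prescribed scalarizations are mutually consistent for $t>0$.

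The entire difficulty is concentrated at $t=0$. Fixing $\bm x$, I would analyze the map $p(\bm\lambda)=\underline{g_{\bm\lambda}}(\bm x,0)=\delta^*_{\dom(g_{\bm\lambda}^*)}(\bm x)$ on $\set C^*\backslash\{\bm 0\}$ and seek a single $\bm y^\star\in\overline\R{}^{d_{\set C}}$ with $p(\bm\lambda)=\bm\lambda^\top\bm y^\star$ for all such $\bm\lambda$; taking $\underline{\bm f}(\bm x,0)=\bm y^\star$ then establishes property~(iii). Two structural features support this. First, $p$ is positively homogeneous, because $(s\,g_{\bm\lambda})^*(\bm w)=s\,g_{\bm\lambda}^*(\bm w/s)$ gives $\dom(g_{s\bm\lambda}^*)=s\,\dom(g_{\bm\lambda}^*)$ for $s>0$. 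Second, $p$ is additive: since all the $g_{\bm\lambda}$ share the common nonempty domain $\dom(\bm f)$, the relative interiors of their domains intersect, so the standard qualification for infimal convolution yields $g_{\bm\lambda_1+\bm\lambda_2}^*=g_{\bm\lambda_1}^*\,\square\,g_{\bm\lambda_2}^*$ and hence $\dom(g_{\bm\lambda_1+\bm\lambda_2}^*)=\dom(g_{\bm\lambda_1}^*)+\dom(g_{\bm\lambda_2}^*)$; applying $\delta^*_{A+B}=\delta^*_A+\delta^*_B$ gives $p(\bm\lambda_1+\bm\lambda_2)=p(\bm\lambda_1)+p(\bm\lambda_2)$. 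Here each summand is a recession value and therefore never equals $-\infty$, so no $\infty-\infty$ ambiguity can occur.

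The crux is to convert this additive, positively homogeneous map into a bona fide element $\bm y^\star$ of $\overline\R{}^{d_{\set C}}$. If $p$ is finite on all of $\set C^*\backslash\{\bm 0\}$, then additivity and positive homogeneity extend it to a linear functional on $\set C^*-\set C^*=\R^{d_{\set C}}$ (the difference set is the whole space because $\set C$ is proper, hence solid, so $\set C^*$ has nonempty interior and spans $\R^{d_{\set C}}$), yielding a unique finite $\bm y^\star$. If $p\equiv+\infty$, I would take $\bm y^\star=+\bm\infty_{\set C}$, which represents $p$ through the convention $\bm\lambda^\top(+\bm\infty_{\set C})=+\infty$. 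The genuinely delicate case is when $p$ is finite for some directions $\bm\lambda$ and $+\infty$ for others; resolving it requires exploiting the proper-cone structure of $\set C$ together with the fact that $\bm f$ takes values in $\overline\R{}^{d_{\set C}}$ with only two infinite elements, and this is where star $\set C$-lower semicontinuity---rather than mere closedness of the $\set C$-epigraph---does the essential work. I expect this representability step to be the main hurdle of the proof.

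The remaining claims are then straightforward. Uniqueness holds because the scalarizations pin $\underline{\bm f}$ down pointwise: for $t>0$ by property~(ii), and at $t=0$ because $\set C^*$ spans $\R^{d_{\set C}}$ (settling the finite case) while the conventions $\bm\lambda^\top(\pm\bm\infty_{\set C})=\pm\infty$ settle the two infinite values. Property~(i) follows from Lemma~\ref{lem:scalarization} and the definition of star $\set C$-lower semicontinuity, since every scalarization $\bm\lambda^\top\underline{\bm f}=\underline{g_{\bm\lambda}}$ is proper, closed and convex; hence $\underline{\bm f}$ is proper, $\set C$-convex and star $\set C$-lower semicontinuous. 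Properties~(ii) and~(iii) then hold by construction.
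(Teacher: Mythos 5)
Your overall strategy---scalarizing via Lemma~\ref{lem:scalarization} and then assembling the scalar perspectives into one vector-valued function---is the same as the paper's. The paper, after translating so that $\bm 0\in\dom(\bm f)$, invokes the scalar limit formula $\lim_{t\downarrow 0}t\bm\lambda^\top\bm f(\bm x/t)=\delta^*_{\dom((\bm\lambda^\top\bm f)^*)}(\bm x)$ and assembles these limits using a basis $\bm\lambda_1,\dots,\bm\lambda_{d_{\set C}}$ of $\R^{d_{\set C}}$ drawn from the solid cone $\set C^*$, setting $\underline{\bm f}(\bm x,0)=\bm\Lambda^{-1}\bm b(\bm x)$. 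Your additivity and positive-homogeneity argument for $p(\bm\lambda)=\delta^*_{\dom((\bm\lambda^\top\bm f)^*)}(\bm x)$---both of which you justify correctly, via Proposition~\ref{prop:inf-conv} and the identity $(s g)^*(\bm w)=s g^*(\bm w/s)$---is a clean substitute for that basis computation, and it has the side benefit of needing no translation step. In the regime where $p$ is finite on all of $\set C^*\backslash\{\bm 0\}$ your construction and the paper's coincide, and your treatment of the case $p\equiv+\infty$ is also fine.

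The mixed case you defer is, however, a genuine gap, and it cannot be closed in the way you hope. Take $d_{\set C}=2$, $\set C=\set C^*=\R^2_+$ and $\bm f(x)=(0,x^2)$, which is proper, star $\R^2_+$-lower semicontinuous (every scalarization $\lambda_2 x^2$ is continuous) and $\R^2_+$-convex. At $x=1$ one computes $p(\bm\lambda)=0$ whenever $\lambda_2=0$ and $p(\bm\lambda)=+\infty$ whenever $\lambda_2>0$, so the mixed case occurs even for real-valued continuous $\bm f$; star $\set C$-lower semicontinuity does not exclude it. Once it occurs, no representer exists: a finite vector $\bm y^\star$ makes $\bm\lambda^\top\bm y^\star$ finite for every $\bm\lambda$, while the only other admissible values $\pm\bm\infty_{\set C}$ make it identically $\pm\infty$ on $\set C^*\backslash\{\bm 0\}$ by the paper's convention, so none of the three can match a $p$ that is finite in some directions and $+\infty$ in others. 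Your ``main hurdle'' is thus not a hurdle but an obstruction, and your proposal is incomplete at exactly its acknowledged crux. It is worth knowing that the paper's own proof slides over the same point: it asserts $\bm b(\bm x)\in\R^{d_{\set C}}$, i.e., that all scalar limits are finite, without justification, whereas in the example above $\lim_{t\downarrow 0}t\bm f(1/t)=\lim_{t\downarrow 0}(0,1/t)$ exists neither in $\R^2$ nor as $\pm\bm\infty_{\set C}$. Repairing the statement requires either enlarging the codomain at $t=0$ (admitting values that are infinite only in some directions of $\set C^*$) or restricting property \emph{(iii)} to those $\bm x$ at which every scalar recession value $\delta^*_{\dom((\bm\lambda^\top\bm f)^*)}(\bm x)$ is finite; under either repair, your linear-functional argument does complete the proof.
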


      In the following we use $t\bm f(\bm x / t )$ to denote the $\set C$-convex perspective $\underline{\bm f}(\bm x, t)$ of any proper, star $\set C$-lower semicontinuous and $\set C$-convex function~$\bm f$ for all~$t\ge 0$.

      We now study the following generalization of the uncertainty quantification problem~\eqref{eq:puq}.
      \begin{samepage}
      \begin{empheq}[box=\fbox]{align*}
      \tag{P-UQ$_{\rm g}$}
      \label{eq:puq-conic}
      \begin{array}{l}
      \displaystyle \sup_{\bP \in \amb{P}_{\rm g}} \EP{g(\bmt{z})} \left[ g(\bmt{z}) \right]
      \end{array}
      \end{empheq}
      $\mspace{200mu}$   \textbf{Primal Uncertainty Quantification Problem}
      \end{samepage}

      \noindent In contrast to Section~\ref{sec:drco}, however, we now consider a generalized ambiguity set representable as
      \begin{equation*}
      \amb{P}_{\rm g}=\left\{ \bP \in \amb{P}_0(\{\set S_k,p_k\}_k) \; \left \vert \; \EP{\bm h(\bmt z)} \left[\bm h_j(\bmt z)\right] \preceq_{\set H_j} \bm{\mu}_j \ \forall j \in \set J \right. \right\},
      \end{equation*}
      where~$\amb{P}_0(\{\set S_k,p_k\}_k)$ denotes the set of all probability distributions $\mathbb{P}$ supported on~$\set S=\cup_{k\in\set K}\set S_k$ such that~$\bP[{\bmt z}\in\set S_k]=p_k$ for all~$k\in\set K$. We assume that the probabilities~$p_k$ are strictly positive for all~$k\in\set K$ and that they sum up to~$1$. Note that this assumption makes only sense if the different components $\set S_k = \{ \bm z \in \R^{d_{\bm z}} \mid \bm c_{\ell k} (\bm z) \preceq_{\set C_{\ell k}} \bm 0 \; \forall \ell \in \set L_k \}$, $k\in\set K$, of the support set are mutually disjoint. Here, the sets~$\set C_{\ell k}\in\mathbb R^{d_{\set C_{\ell k}}}$ represent proper convex cones, and the functions~$\bm c_{\ell k}:\R^{d_{\bm z}} \rightarrow \overline{\R}{}^{d_{\set C_{\ell k}}}$ obey the following regularity condition that generalizes condition~\textbf{\textbf{(C)}} from Section~\ref{sec:rco}.
      \begin{itemize}
      \item[\textbf{(C$_{\rm g}$)}] The function $\bm c_{\ell k}$ is proper, star $\set C_{\ell k}$-lower semicontinuous and $\set C_{\ell k}$-convex for every~$\ell \in \set L_k$ and~$k \in \set K$.
      \end{itemize}
      We further assume that the disutility function $g:\R^{d_{\bm z}}\rightarrow \overline \R$ satisfies $g(\bm z) = \max_{i \in \set I_k} g_{ik}( \bm z)$ whenever~$\bm z \in \set S_k$ for some~$k \in \set K$, where the component functions $g_{ik}:\R^{d_{\bm z}} \rightarrow \overline \R$ obey the following regularity condition that is the natural analogue of condition~\textbf{\textbf{(G)}} from Section~\ref{sec:drco}.
      \begin{itemize}
      \item[\textbf{(G$_{\rm g}$)}] The function $-g_{ik}$ is proper, closed and convex for every $i \in \set I_k$ and $k \in \set K$.
      \end{itemize}
      As in Section~\ref{sec:drco}, we set $\EP{g(\bmt{z})} \left[ g(\bmt{z}) \right] = - \infty$ if the expectation of the positive and negative parts of~$g(\bmt{z})$ are both infinite. Finally, we assume that the sets~$\set H_j\subseteq \R^{d_{\set H_j}}$ are proper convex cones, the vectors $\bm \mu_j\in\R^{d_{\set H_j}}$ represent moment bounds, and the moment functions $\bm h_j:\R^{d_{\bm z}} \rightarrow \overline{\R}{}^{d_{\set H_j}}$ satisfy $\bm  h_j(\bm z) = \bm h_{jk}( \bm z)$ whenever $\bm z \in \set S_k$ for some~$k \in \set K$, where the component functions $\bm h_{jk}:\R^{d_{\bm z}} \rightarrow \overline \R^{d_{\set H_j}}$ obey the following regularity condition that generalizes condition~\textbf{\textbf{(H)}} from Section~\ref{sec:rco}.
      \begin{itemize}
      \item[\textbf{(H$_{\rm g}$)}] The function $\bm h_{jk}$ is proper, star $\set H_j$-lower semicontinuous and $\set H_j$-convex for every $j \in \set J$ and~$k\in\set K$.
      \end{itemize}
      Some comments about the interpretation of the expectation~$\EP{\bm h(\bmt z)} [\bm h_j(\bmt z)]$ are in order. In analogy to Section~\ref{sec:drco}, for any fixed~$\bm \lambda\in\set H_j^*\backslash\{\bm 0\}$ we set $\EP{\bm h(\bmt z)} [\bm \lambda^\top\bm h_j(\bmt z)]=+\infty$ whenever the expectation of the positive and negative parts of $\bm\lambda^\top \bm h_j(\bmt{z})$ are both infinite. We then define~$\EP{\bm h(\bmt z)} [\bm h_j(\bmt z)]$ as~$+\bm \infty_{\set H_j}$ if there exists~$\bm \lambda\in\set H_j^*\backslash\{\bm 0\}$ with $\EP{\bm h(\bmt z)} [\bm \lambda^\top\bm h_j(\bmt z)]=+\infty$. Similarly, we define~$\EP{\bm h(\bmt z)} [\bm h_j(\bmt z)]$ as~$-\bm \infty_{\set H_j}$ if there exists~$\bm \lambda\in\set H_j^*\backslash\{\bm 0\}$ with $\EP{\bm h(\bmt z)} [\bm \lambda^\top\bm h_j(\bmt z)]=-\infty$ and if~$\EP{\bm h(\bmt z)} [\bm \lambda^\top\bm h_j(\bmt z)]<+\infty$ for every~$\bm \lambda\in\set H_j^*\backslash\{\bm 0\}$.
      Note that we have specified the disutility function~$g$ and the moment functions~$\bm h_j$, $j\in\set J$, only on the set~$\set S=\cup_{k\in\set K} \set S_k$. Specifying these functions beyond~$\set S$ is not necessary, however, because all distributions in the ambiguity set~$\amb{P}_{\rm g}$ are supported on~$\set S$. Next, we define $\bar{\set S}_{k}  = \cup_{i\in \set I_k}\bar{\set S}_{ik}$, where
      \begin{align*}
      \bar{\set S}_{ik}  = \{  \bm z \in \set S_k \ \mid \  \bm z \in \dom (\bm h_{jk}) \;\; \forall j\in \set J, \;\; \bm z \in \dom(-g_{ik}) \}
      \end{align*}
      is convex but not necessarily closed for every $i\in\set I_k$ and $k\in \set K$, and impose the following condition.
      \begin{itemize}
      \item[\textbf{(S$_{\rm g}$)}] The set $\bar{\set S}_{ik}$ is nonempty for every $i \in \set I_k$ and $k \in \set K$.
      \end{itemize}
      Assumption~\textbf{(S$_{\rm g}$)} may be imposed without much loss of generality because any distribution~$\bP\in\amb P_{\rm g}$ assigns a strictly positive probability~$p_k$ to the event~$\bmt z\in \bar{\set S_k}$ and because the conditional distribution~$\bP(\cdot|\bmt z\in \bar{\set S_k})$ must be supported on~$\bar{\set S}_k$. This observation implies that the optimal value of~\eqref{eq:puq-conic} does not change if we remove those components $g_{ik}$ from $g$ for which~$\bar{\set S}_{ik}$ is empty.


      As in Section~\ref{sec:drco}, the uncertainty quantification problem~\eqref{eq:puq-conic} admits a dual akin to~\eqref{eq:duq}.
      \begin{samepage}
      \begin{empheq}[box=\fbox]{equation*}
      \tag{D-UQ$_{\rm g}$}
      \label{eq:duq-conic}
      \begin{array}{l@{\quad}l@{\quad}l}
      \inf  & \displaystyle \sum_{k\in\set K} p_k \alpha_k  +  \sum_{j \in \set J } \bm \mu_j^\top \bm \beta_j \\
      \subj &  \displaystyle  \sup_{ \bm z_k \in \bar{\set S}_{k}} \left\{ g(\bm z_k)- \alpha_k -  \sum_{j \in \set J }\bm h_{jk} (\bm z_k)^\top \bm \beta_j \right\}  \le 0 & \forall k\in\set K\\
      & \bm \alpha \text{ free}, \;\; \bm \beta_j \in \set H^*_j, \; j\in\set J
      \end{array}
      \end{empheq}
      $\mspace{190mu}$  \textbf{Dual Uncertainty Quantification Problem}
      \end{samepage}

      \noindent By using a similar reasoning as in the proof of Theorem~\ref{coro:cvxdro-ro-wd}, it is easy to show that the infimum of~\eqref{eq:puq-conic} is always larger or equal to the supremum of~\eqref{eq:duq-conic}. In addition, the uncertainty quantification problem~\eqref{eq:puq-conic} admits the following finite reduction akin to~\eqref{eq:fr}.
      \begin{samepage}
      \begin{empheq}[box=\fbox]{equation*}  \tag{FR$_{\rm g}$}
      \label{eq:fr-conic}
      \begin{array}{l@{\quad}l@{\qquad}l}
      \sup  & \displaystyle  \sum_{k \in \set{K}} \sum_{i \in \set{I}_k} \lambda_{ik} g  (\bm z_{ik})  \\
      \subj & \displaystyle\sum_{i \in \set{I}_k} \lambda_{ik} =  p_k & \forall k \in \set K \\
      & \displaystyle \sum_{k \in \set{K}} \sum_{i \in \set{I}_k}  \lambda_{ik} \bm{h}_j (\bm z_{ik}) \preceq_{\set H_j} \bm \mu_j & \forall j \in \set J  \\
      & \multicolumn{2}{l}{\mspace{-10mu} \bm z_{ik} \in \bar{\set S}_{k}, \; i \in \set{I}_k, \, k \in \set K, \;\; \bm \lambda_k \ge\bm 0, \; k\in\set K}
      \end{array}
      \end{empheq}
      $\mspace{255mu}$  \textbf{Finite Reduction Problem}
      \end{samepage}

      \noindent Note that problem~\eqref{eq:fr-conic} provides a lower bound on~\eqref{eq:puq-conic} because it evaluates the worst-case expected disutility across all discrete distributions $\bP \in \amb{P}$ with discretization points~$\bm{z}_{ik}\in\bar{\set S}_k$ and corresponding probabilities $\lambda_{ik}$ for $i \in \set{I}_k$ and $k\in\set K$. Introducing the augmented support sets
      \[
      \set U_{ik} = \left\{\left. (\bm z, \{\bm u_j \}_j, t) \in \R^{d_{\bm z}}\times \left(\bigtimes_{j\in\set J} \R^{d_{\set H_j}}\right) \times \R \;\right|\!\! \begin{array}{l} \bm c_{\ell k} (\bm z) \preceq_{\set C_{\ell k}} \bm 0 \enskip \ \forall \ell \in \set L_k,\\  \bm h_{jk}(\bm z) \preceq_{\set H_j} \bm u_j \ \forall j\in \set J, \ g_{ik}(\bm z) \ge t \end{array}\right\}
      \]
      for $i \in \set I_k$ and $k \in \set K$, we can then construct two auxiliary optimization problems~\mbox{(AP-W${}_{\rm g}$)} and~\mbox{(AD-B${}_{\rm g}$)} equivalent to~\eqref{eq:duq-conic} and~\eqref{eq:fr-conic}, respectively, as well as two finite convex programs~\eqref{eq:apw-conic-cvx} and~\eqref{eq:adb-conic-cvx}. These problems are constructed in the same way as their natural counterparts from Section~\ref{sec:drco} with obvious minor modifications. { For the sake of brevity, we do not display the problems~\mbox{(AP-W${}_{\rm g}$)} and~\mbox{(AD-B${}_{\rm g}$)}.} \mbox{An explicit representation of~\eqref{eq:apw-conic-cvx} is shown below.}

      \begin{samepage}
      \begin{empheq}[box=\fbox]{equation*}  \tag{AP-W$'_{\rm g}$}
      \label{eq:apw-conic-cvx}
      \begin{array}{l@{\quad}l}
      \inf  &  \displaystyle \sum_{k\in \set{K}}   p_k \alpha_k  +  \bm \mu^\top  \bm \beta \\
      \subj &  \displaystyle   (-g_{ik})^{*}(\bm y^{(0)}_{ik} )  +  \sum_{j \in \set J } (\bm \beta_j^\top \bm h_{jk})^{*} \left( \bm y^{(1)}_{ijk} \right) \\
      &  \displaystyle \qquad \qquad \quad \quad  + \sum_{\ell \in \set L }  (\bm \nu_{i\ell k}^\top \bm c_{\ell k})^*    \left( \bm y^{(2)}_{i \ell k} \right)  \le \alpha_k  \qquad  \quad \qquad \enskip \forall i \in \set I_k,  \ \forall k\in \set{K}  \\
      & \displaystyle  \bm y^{(0)}_{ik} +   \sum_{j \in \set J } \bm y^{(1)}_{ijk} + \sum_{\ell \in \set L }  \bm y^{(2)}_{i\ell k} = \bm 0  \qquad \qquad \qquad \qquad  \qquad  \ \ \ \forall i \in \set I_k, \ \forall k\in \set{K}  \\
      &   \bm \alpha  \textnormal{ free} , \;\; \bm \beta_j \in \set H^*_j, 
      \; j \in \set J, \;\; \bm y^{(0)}_{ik}, \bm y^{(1)}_{ijk}, \bm y^{(2)}_{i\ell k} \textnormal{ free}, \\
      & \bm \nu_{i\ell k} \in \set C^*_{\ell k }, \; i\in \set{I}_k, \, j\in \set{J}, \, \ell \in \set{L}_k, \, k\in \set{K}
      \end{array}
      \end{empheq}
      $\mspace{125mu}$ \textbf{Reformulated Ambiguous Primal Worst Problem \quad \qquad}
      \end{samepage}

      \noindent Similarly, the finite convex program~\eqref{eq:adb-conic-cvx} can be represented as follows.

      \begin{samepage}
      \begin{empheq}[box=\fbox]{equation*}  \tag{AD-B$'_{\rm g}$}
      \label{eq:adb-conic-cvx}
      \begin{array}{l@{\quad}l@{\qquad}l}
      \sup  & \displaystyle  \sum_{k\in \set{K}} \sum_{i\in \set{I}_k} \tau_{ik}  \\
      \subj & \multicolumn{2}{l}{\displaystyle \mspace{-10mu} \sum_{i\in \set{I}_k} \lambda_{ik} = p_k \;\; \forall k \in \set{K}, \qquad
      \sum_{k\in \set{K}} \sum_{i\in \set{I}_k} \bm \omega_{ijk} \preceq_{\set H_j} \bm \mu_j \;\; \forall j \in \set J} \\
      & \lambda_{ik} \bm c_{\ell k} (\bm v_{ik}/\lambda_{ik} ) \preceq_{\set C_{\ell k}} \bm 0 & \forall i \in \set I_k, \ \forall \ell \in \set{L}_k,   \forall k\in \set{K}\\
      & \lambda_{ik} \bm h_{jk} (\bm v_{ik}/\lambda_{ik} ) \preceq_{\set H_j} \bm \omega_{ijk} & \forall i \in \set I_k, \ \forall j \in \set J, \ \forall k\in \set{K}\\
      & \lambda_{ik} g_{ik} ( \bm v_{ik}/\lambda_{ik} ) \ge \tau_{ik} & \forall i \in \set I_k, \ \forall k\in \set{K}\\
      & \multicolumn{2}{l}{\mspace{-10mu} \bm \tau_{k} \text{ free}, \;\; \bm \lambda_k \ge \bm 0, \;\; \bm \omega_{ijk}, \bm v_{ik} \text{ free}, \; i \in \set I_k, \, j \in \set J, \, k\in \set{K}}
      \end{array}
      \end{empheq}
      $\mspace{145mu}$\textbf{Reformulated Ambiguous Dual Best Problem}
      \end{samepage}

      We are now ready to state a strong duality result akin to Theorem~\ref{thm:strong-duality-all-problems}.


      \begin{thm}[Strong Duality for~\eqref{eq:puq-conic} and~\eqref{eq:duq-conic}]
      \label{thm:strong-duality-all-problems-conic}
      The following statements hold.
      \begin{enumerate}[label=(\roman*)]
      \item If~\eqref{eq:adb-conic-cvx} admits a Slater point with $\bm \lambda_k > \bm 0$, $k\in\set K$,
      then~\eqref{eq:apw-conic-cvx}, \eqref{eq:duq-conic}, \eqref{eq:puq-conic}, \eqref{eq:fr-conic} and~\eqref{eq:adb-conic-cvx} all have the same optimal value and~\eqref{eq:apw-conic-cvx} is solvable. Also, if $(\bm \alpha^\star,$ $\{ \bm \beta^\star_j\}_j, \{\bm y_{ik}^{(0)\star}\}_{ik}, \{\bm y_{ijk}^{(1)\star}\}_{ijk},  \{ \bm y_{i\ell k}^{(2)\star}, \nu_{i\ell k}^\star \}_{i\ell k})$ solves~\eqref{eq:apw-conic-cvx}, then $(\bm \alpha^\star, \{ \bm \beta^\star_j\}_j)$ solves~\eqref{eq:duq-conic}.
      \item If~\eqref{eq:apw-conic-cvx}
      admits a Slater point and $\set S_k$ is bounded for every~$k\in\set K$, then \eqref{eq:apw-conic-cvx}, \eqref{eq:duq-conic}, \eqref{eq:puq-conic}, \eqref{eq:fr-conic} and \eqref{eq:adb-conic-cvx} all have the same optimal value and \eqref{eq:adb-conic-cvx} is solvable. Also, if $(\{\bm \tau^\star_{k}\}_k, \{ \bm \lambda^\star_{k}\}_{k},  \{\bm \omega^\star_{ijk}\}_{ijk}, \{\bm v^\star_{ik}\}_{ik})$ solves~\eqref{eq:adb-conic-cvx}, then the discrete distribution that assigns probability $\lambda^\star_{ik}$ to the point $\bm v_{ik}^\star/\lambda_{ik}^\star$ for every $i\in\set I_k$ and $k \in \set K$ with $\lambda_{ik}^\star > 0$ solves~\eqref{eq:puq-conic}.
      \end{enumerate}
      \end{thm}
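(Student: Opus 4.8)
The plan is to mirror the proof of Theorem~\ref{thm:strong-duality-all-problems} line by line, replacing each scalar convex inequality by its conic counterpart and invoking the conic analogues of the auxiliary results of Sections~\ref{sec:co} and~\ref{sec:rco}. Those conic analogues (conic weak and strong duality, and the conic versions of Propositions~\ref{prop:p-w=p-w-cvx-ro} and~\ref{prop:d-b=d-b-cvx-ro} and of Theorem~\ref{thm:p-w-cvx=d-b-cvx-ro}) are exactly the ``obvious minor modifications'' alluded to when constructing \mbox{(AP-W${}_{\rm g}$)}, \mbox{(AD-B${}_{\rm g}$)}, \eqref{eq:apw-conic-cvx} and~\eqref{eq:adb-conic-cvx}: each $\set C_{\ell k}$-convex constraint is scalarized through Lemma~\ref{lem:scalarization}, so the conic problems become instances of the scalar robust framework once $\set C^*_{\ell k}$- and $\set H^*_j$-valued multipliers are introduced, and Proposition~\ref{prop:C-convex-perspective} guarantees that the $\set C$-convex perspectives entering~\eqref{eq:adb-conic-cvx} behave at the boundary $\bm\lambda=\bm 0$ exactly as their scalar siblings do. Granting these, I would first assemble, without any regularity assumption, the conic analogue of the chain~\eqref{eq:overview},
\[
\inf\eqref{eq:apw-conic-cvx} \;\ge\; \inf\,\mbox{(AP-W${}_{\rm g}$)} \;=\; \inf\eqref{eq:duq-conic} \;\ge\; \sup\eqref{eq:puq-conic} \;\ge\; \sup\eqref{eq:fr-conic} \;=\; \sup\,\mbox{(AD-B${}_{\rm g}$)} \;\le\; \sup\eqref{eq:adb-conic-cvx},
\]
where the two equalities are the equivalences of \mbox{(AP-W${}_{\rm g}$)} with~\eqref{eq:duq-conic} and of \mbox{(AD-B${}_{\rm g}$)} with~\eqref{eq:fr-conic} established during the construction of these problems, the first inequality is the conic form of Proposition~\ref{prop:p-w=p-w-cvx-ro}~\emph{(i)}, the second is the conic weak duality relation between~\eqref{eq:puq-conic} and~\eqref{eq:duq-conic} recorded after the statement of~\eqref{eq:duq-conic}, the third holds because~\eqref{eq:fr-conic} restricts~\eqref{eq:puq-conic}, and the last is the conic form of Proposition~\ref{prop:d-b=d-b-cvx-ro}~\emph{(i)}. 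It then suffices to show, under either hypothesis, that $\sup\,\mbox{(AD-B${}_{\rm g}$)}=\sup\eqref{eq:adb-conic-cvx}$ together with strong duality between~\eqref{eq:apw-conic-cvx} and~\eqref{eq:adb-conic-cvx}, since this collapses the whole chain to one common value.

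For assertion~\emph{(i)}, I would convert the assumed Slater point of~\eqref{eq:adb-conic-cvx} with $\bm\lambda_k>\bm 0$, via $\bm z_{ik}=\bm v_{ik}/\lambda_{ik}$, into a feasible point of \mbox{(AD-B${}_{\rm g}$)} with $\bm\lambda_k>\bm 0$; the conic version of Proposition~\ref{prop:d-b=d-b-cvx-ro}~\emph{(ii)} together with the conic form of Remark~\ref{rem:positive-lambda} then yields $\sup\,\mbox{(AD-B${}_{\rm g}$)}=\sup\eqref{eq:adb-conic-cvx}$. Because~\eqref{eq:adb-conic-cvx} admits a Slater point, the conic version of Theorem~\ref{thm:p-w-cvx=d-b-cvx-ro}~\emph{(ii)} gives $\inf\eqref{eq:apw-conic-cvx}=\sup\eqref{eq:adb-conic-cvx}$ and solvability of~\eqref{eq:apw-conic-cvx}, and the chain now forces all problems to share this value. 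For the recovery statement, the Slater point of \mbox{(AD-B${}_{\rm g}$)} exhibits a Slater point of every augmented support set~$\set U_{ik}$, so the conic form of Proposition~\ref{prop:p-w=p-w-cvx-ro}~\emph{(ii)} applies and, combined with the equivalence of \mbox{(AP-W${}_{\rm g}$)} and~\eqref{eq:duq-conic}, turns any optimizer of~\eqref{eq:apw-conic-cvx} into an optimizer of~\eqref{eq:duq-conic}.

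For assertion~\emph{(ii)}, the Slater point of~\eqref{eq:apw-conic-cvx} and the conic Theorem~\ref{thm:p-w-cvx=d-b-cvx-ro}~\emph{(ii)} immediately give $\inf\eqref{eq:apw-conic-cvx}=\sup\eqref{eq:adb-conic-cvx}$ and solvability of~\eqref{eq:adb-conic-cvx}; what remains is to promote an optimal $(\{\bm\tau^\star_k\}_k,\{\bm\lambda^\star_k\}_k,\{\bm\omega^\star_{ijk}\}_{ijk},\{\bm v^\star_{ik}\}_{ik})$ of~\eqref{eq:adb-conic-cvx} to an equal-valued feasible solution of \mbox{(AD-B${}_{\rm g}$)}, whence $\sup\,\mbox{(AD-B${}_{\rm g}$)}=\sup\eqref{eq:adb-conic-cvx}$, and then read off the worst-case distribution through the equivalence of \mbox{(AD-B${}_{\rm g}$)} with~\eqref{eq:fr-conic}. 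The hard part will be the indices $(i,k)$ with $\lambda^\star_{ik}=0$: setting $\bm z^\star_{ik}=\bm v^\star_{ik}/\lambda^\star_{ik}$ when $\lambda^\star_{ik}>0$ and an arbitrary point of $\bar{\set S}_{ik}$ otherwise, I must prove $\bm v^\star_{ik}=\bm 0$ whenever $\lambda^\star_{ik}=0$, and this is precisely where the new conic machinery is indispensable. By Proposition~\ref{prop:C-convex-perspective} the constraint $\lambda^\star_{ik}\bm c_{\ell k}(\bm v^\star_{ik}/\lambda^\star_{ik})\preceq_{\set C_{\ell k}}\bm 0$ at $\lambda^\star_{ik}=0$ reads $\underline{\bm c}_{\ell k}(\bm v^\star_{ik},0)\preceq_{\set C_{\ell k}}\bm 0$, and scalarizing via property~\emph{(iii)} of Proposition~\ref{prop:C-convex-perspective} shows $\delta^*_{\dom((\bm\eta^\top\bm c_{\ell k})^*)}(\bm v^\star_{ik})\le 0$ for every $\bm\eta\in\set C^*_{\ell k}\backslash\{\bm 0\}$; this is the conic analogue of Lemma~\ref{lemma:recession_directions}~\emph{(i)}, forcing $\bm v^\star_{ik}$ to be a recession direction of $\set S_k$, which vanishes because $\set S_k$ is bounded. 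Once $\bm v^\star_{ik}=\bm 0$ is secured, Proposition~\ref{prop:C-convex-perspective} lets me evaluate the $\set C$-convex perspectives $\lambda^\star_{ik}\bm c_{\ell k}(\bm v^\star_{ik}/\lambda^\star_{ik})$, $\lambda^\star_{ik}\bm h_{jk}(\bm v^\star_{ik}/\lambda^\star_{ik})$ and $\lambda^\star_{ik} g_{ik}(\bm v^\star_{ik}/\lambda^\star_{ik})$ at the boundary and match them with the corresponding quantities evaluated at $\bm z^\star_{ik}$, exactly as in the proof of Proposition~\ref{prop:d-b=d-b-cvx-ro}~\emph{(i)}, so that the reconstructed point is feasible in \mbox{(AD-B${}_{\rm g}$)} and attains the same value $\sum_{k}\sum_{i}\tau^\star_{ik}$. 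Since the chain is then an equality throughout, the discrete distribution placing mass $\lambda^\star_{ik}$ at $\bm v^\star_{ik}/\lambda^\star_{ik}$ for all $(i,k)$ with $\lambda^\star_{ik}>0$ solves~\eqref{eq:puq-conic}.
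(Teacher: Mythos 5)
Your proposal is correct and is exactly the argument the paper has in mind: the paper omits the proof of Theorem~\ref{thm:strong-duality-all-problems-conic}, stating only that it parallels Theorem~\ref{thm:strong-duality-all-problems}, and your write-up is precisely that parallelization, reproducing the chain~\eqref{eq:overview}, the Slater-point transfer in part~\emph{(i)}, and the boundary analysis at $\lambda^\star_{ik}=0$ in part~\emph{(ii)}. Your handling of the one genuinely new ingredient---replacing Lemma~\ref{lemma:recession_directions} by scalarization of the conic perspective constraints through Proposition~\ref{prop:C-convex-perspective}~\emph{(iii)} and Lemma~\ref{lem:scalarization} to force $\bm v^\star_{ik}=\bm 0$ when $\set S_k$ is bounded---is sound and fills in the detail the paper leaves implicit.
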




      The proof of Theorem~\ref{thm:strong-duality-all-problems-conic} parallels that of Theorem~\ref{thm:strong-duality-all-problems} and is omitted for the sake of brevity. The assumptions of Theorem~\ref{thm:strong-duality-all-problems-conic}~\emph{(i)} do not guarantee the solvability of~\eqref{eq:puq-conic}. The following corollary shows, however, that~\eqref{eq:puq-conic} can still be solved asymptotically under these assumptions.

      \begin{coro}[Approximate Numerical Solution of~\eqref{eq:puq-conic}] \label{coro:puq-asy-conic}
      If~\eqref{eq:adb-conic-cvx} admits a Slater point with $\bm \lambda_k >\bm 0$ for all~$k\in\set K$, then the suprema of~\eqref{eq:puq-conic} and~\eqref{eq:adb-conic-cvx} coincide, and any $\epsilon$-optimal solution of~\eqref{eq:adb-conic-cvx} can be used to construct a discrete distribution with $\sum_{k\in\set K}I_k$ atoms that is $2\epsilon$-optimal in~\eqref{eq:puq-conic}.
      \end{coro}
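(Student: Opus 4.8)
The plan is to mirror the proof of Corollary~\ref{coro:puq-asy}, adapting every step to the conic, multi-component setting. First I would invoke Theorem~\ref{thm:strong-duality-all-problems-conic}~\emph{(i)}, whose hypothesis is exactly the assumed existence of a Slater point $(\{\bm \tau_k^{\rm S}\}_k, \{\bm \lambda_k^{\rm S}\}_k, \{\bm \omega_{ijk}^{\rm S}\}_{ijk}, \{\bm v_{ik}^{\rm S}\}_{ik})$ of~\eqref{eq:adb-conic-cvx} with $\bm \lambda_k^{\rm S} > \bm 0$ for all $k\in\set K$, in order to conclude that the suprema of~\eqref{eq:puq-conic} and~\eqref{eq:adb-conic-cvx} coincide. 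With strong duality in hand, it then suffices to construct, for an arbitrary tolerance $\epsilon>0$, a discrete distribution with at most $\sum_{k\in\set K} I_k$ atoms whose objective value in~\eqref{eq:puq-conic} is non-inferior to $\sup\eqref{eq:adb-conic-cvx}-2\epsilon$.

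Next I would fix any $\epsilon$-optimal solution $(\{\bm\tau_k^\epsilon\}_k, \{\bm\lambda_k^\epsilon\}_k, \{\bm\omega_{ijk}^\epsilon\}_{ijk}, \{\bm v_{ik}^\epsilon\}_{ik})$ of~\eqref{eq:adb-conic-cvx}, adopting the usual convention that in the unbounded case this solution is feasible with objective value at least $1/\epsilon$. Forming the convex combination $\theta\cdot(\text{Slater point}) + (1-\theta)\cdot(\epsilon\text{-optimal solution})$ for $\theta\in[0,1]$ yields a feasible solution of the convex program~\eqref{eq:adb-conic-cvx}, feasibility being preserved because its feasible region is convex, and the resulting $\bm\lambda_k$-blocks are strictly positive whenever $\theta>0$. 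Since the objective of~\eqref{eq:adb-conic-cvx} is linear and hence continuous, a small enough $\theta\in(0,1]$ produces a feasible solution $(\{\bm\tau_k^\theta\}_k, \{\bm\lambda_k^\theta\}_k, \{\bm\omega_{ijk}^\theta\}_{ijk}, \{\bm v_{ik}^\theta\}_{ik})$ that is $2\epsilon$-optimal and satisfies $\bm\lambda_k^\theta>\bm 0$ for all $k$.

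I would then define $\bP$ to be the discrete distribution that assigns probability $\lambda_{ik}^\theta$ to the point $\bm v_{ik}^\theta/\lambda_{ik}^\theta$ for every $i\in\set I_k$ and $k\in\set K$; this is legitimate because $\bm\lambda_k^\theta>\bm 0$, and it involves at most $\sum_{k\in\set K} I_k$ atoms. The constraints of~\eqref{eq:adb-conic-cvx} give $\bm c_{\ell k}(\bm v_{ik}^\theta/\lambda_{ik}^\theta)\preceq_{\set C_{\ell k}}\bm 0$ for all $\ell$, so every atom indexed by $k$ lies in $\bar{\set S}_{ik}\subseteq\set S_k$; since the component sets $\set S_k$ are mutually disjoint, the block-wise equality constraint $\sum_{i\in\set I_k}\lambda_{ik}^\theta=p_k$ certifies that $\bP[\bmt z\in\set S_k]=p_k$. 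The conic moment constraints, together with the fact that a convex cone is closed under addition, give $\mathbb E_\bP[\bm h_j(\bmt z)]=\sum_{k}\sum_{i}\lambda_{ik}^\theta\bm h_{jk}(\bm v_{ik}^\theta/\lambda_{ik}^\theta)\preceq_{\set H_j}\sum_{k}\sum_{i}\bm\omega_{ijk}^\theta\preceq_{\set H_j}\bm\mu_j$, so that $\bP\in\amb{P}_{\rm g}$. Finally, using $g(\bm z)\ge g_{ik}(\bm z)$ on $\set S_k$, I would bound $\mathbb E_\bP[g(\bmt z)]\ge\sum_k\sum_i\lambda_{ik}^\theta g_{ik}(\bm v_{ik}^\theta/\lambda_{ik}^\theta)\ge\sum_k\sum_i\tau_{ik}^\theta$, which is non-inferior to $\sup\eqref{eq:adb-conic-cvx}-2\epsilon$ in the finite case and to $1/(2\epsilon)$ otherwise, and hence, by the matching suprema, $\bP$ is $2\epsilon$-optimal in~\eqref{eq:puq-conic}.

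The main obstacle is the careful bookkeeping of the generalized ($\set C$-convex) perspectives and conic inequalities. One must verify that on the strictly positive blocks $\bm\lambda_k^\theta>\bm 0$ the perspective terms $\lambda_{ik}^\theta\bm h_{jk}(\bm v_{ik}^\theta/\lambda_{ik}^\theta)$ and $\lambda_{ik}^\theta g_{ik}(\bm v_{ik}^\theta/\lambda_{ik}^\theta)$ reduce to ordinary scalings of function evaluations by property~\emph{(ii)} of Proposition~\ref{prop:C-convex-perspective}, so that no boundary case at $\lambda_{ik}=0$ ever arises, and that summing the generalized inequalities $\preceq_{\set H_j}$ and $\preceq_{\set C_{\ell k}}$ across the index sets is valid precisely because each $\set H_j$ and $\set C_{\ell k}$ is a convex cone. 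Everything else is a routine transcription of the scalar argument of Corollary~\ref{coro:puq-asy}; the only genuinely new feature is the presence of several disjoint support components $\set S_k$ with prescribed probabilities $p_k$, which is accommodated through the block-wise equality constraints and the disjointness of the $\set S_k$.
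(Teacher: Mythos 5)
Your proposal is correct and follows exactly the route the paper intends: the paper omits this proof, stating only that it is similar to that of Corollary~\ref{coro:puq-asy}, and your argument is precisely that proof transcribed to the conic, multi-component setting (strong duality via Theorem~\ref{thm:strong-duality-all-problems-conic}~\emph{(i)}, convex combination with the Slater point to force $\bm\lambda_k^\theta>\bm 0$, then reading off a feasible discrete distribution block by block). Your added care about the $\set C$-convex perspectives reducing to ordinary scalings on strictly positive blocks and about summing conic inequalities is exactly the bookkeeping the generalization requires.
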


      The proof of Corollary~\ref{coro:puq-asy-conic} is similar to that of Corollary~\ref{coro:puq-asy} and therefore also omitted. Note also that Propositions~\ref{prop:slater-distribution} and~\ref{prop:slater-distribution2} generalize to the setting of this section in a natural way.

      { Example~\ref{example:random_matrix_theory} in Appendix~\ref{app:om} in the Electronic Companion employs the techniques developed in this section to analyze the spectral properties of random matrices governed by an ambiguous distribution.}
      
      \section{Application: Optimal Transport-Based Uncertainty Quantification and Distributionally Robust Optimization}  \label{sec:dot}

      We now apply the theory of Section~\ref{sec:extensions} to derive tractable reformulations for uncertainty quantification problems whose ambiguity sets are defined in terms of an optimal transport distance.

      \begin{defi}[Optimal Transport Distance] \label{def:opt}
      The optimal transport distance between two probability distributions $\bP, \bP' \in\amb{P}_0(\R^{d_{\bm z}})$ induced by the transportation cost~$d:\R^{d_{\bm z}} \times \R^{d_{\bm z}} \rightarrow [0, +\infty]$ is given by $D (\bP, \bP') =  \inf_{\mathbb{Q} \in \mathcal{Q}(\bP, \bP')} \mathbb E_{\mathbb Q} \left[ d(\bmt z, \bmt z') \right]$, where $\mathcal{Q}(\bP, \bP')$ denotes the set of all joint probability distributions or `couplings' $\mathbb{Q}$ of $\bmt z \in \R^{d_{\bm z}}$  and $\bmt z' \in \R^{d_{\bm z}}$ with marginals $\bP$ and~$\bP'$, respectively.
      \end{defi}

      Below we assume that the transportation cost satisfies the following regularity condition.

      \begin{itemize}
      \item[\textbf{(D)}]  The transportation cost $d(\bm z, \hat{\bm z})$ is proper, closed and convex in~$\bm z$ for every fixed $\hat{\bm z} \in \R^{d_{\bm z}}$.
      \end{itemize}

      The optimal transport distance~$D (\bP, \bP')$ can be interpreted as the minimum cost of turning one pile of dirt represented by $\bP$ into another pile of dirt represented by $\bP'$, where the cost of transporting a unit mass from $\bm z$ to $\bm z'$ amounts to $d( \bm z ,\bm z')$. Any coupling~$\mathbb Q$ of the distributions~$\mathbb P$ and~$\bP'$ can therefore be interpreted as a transportation plan. In the remainder of this section we study an optimal transport-based uncertainty quantification problem of the form
      \begin{equation} \label{eq:uq-ot}
      \sup_{\bP \in \mathbb B_\epsilon(\hat \bP) } \mathbb E_{\mathbb P}[ g(\bmt z)]
      \tag{OT}
      \end{equation}
      with ambiguity set
      \[
      \mathbb B_\epsilon(\hat \bP) =\left\{ \bP \in \amb P_0(\set S) \mid  D (\bP, \hat \bP ) \le  \epsilon   \right\},
      \]
      which can be viewed as a ball of radius~$\epsilon \ge 0$ around a nominal probability distribution~$\hat{\bP} \in \amb P_0(\set S)$ with respect to the optimal transport distance. We assume that the disutility function is representable as~$g(\bm z) = \max_{i \in \set I } g_i( \bm z)$ for some component functions that satisfy condition~\textbf{(G)} from Section~\ref{sec:drco} and that the support set is representable as~$\set S = \{ \bm z \in \R^{d_{\bm z}} \mid c_\ell (\bm z) \le 0 \;\; \forall \ell \in \set L \}$ for some constraint functions that satisfy condition~\textbf{(C)} from Section~\ref{sec:rco}.  We further assume that the nominal distribution is discrete and thus representable as~$\hat \bP = \sum_{ k \in \set{K} }  \hat{p}_k \, \delta_{ \hat{\bm z}_k}$, where~$\delta_{\hat{\bm z}_k}$ denotes the Dirac point mass at $\hat{\bm z}_k\in\R^{d_{\bm z}}$. Note that the Dirac measure~$\delta_{\hat{\bm z}_k}$ should not be confused with the indicator function~$\delta_{\{\hat{\bm z}_k\}}$ of the singleton set~$\{\hat{\bm z}_k\}$. Without loss of generality, we may finally assume that the probabilities~$\hat p_k$, $k\in\set K$, are strictly positive and that the atoms~$\hat{\bm z}_k\in\set S$,~$k\in\set K$, are mutually different for otherwise some atoms could be omitted or combined. The nominal distribution is often given by the empirical distribution on a set of training samples~$\hat{\bm z}_k$, $k\in\set K$, drawn independently from the unknown true distribution of~$\bmt z$. In this case, we simply set~$\hat p_k=1/K$ for every~$k\in\set K$.

      We now demonstrate that the optimal transport-based uncertainty quantification problem~\eqref{eq:uq-ot} can be addressed with the tools developed in Section~\ref{sec:extensions}. To see this, note that
      \begin{align*}
      \sup_{\bP \in \mathbb B_\epsilon(\hat \bP)} \mathbb E_{\mathbb P} \left[ g(\bmt z) \right]
      \; &= \; \sup_{\mathbb P \in \amb{P}_0(\R^{d_{\bm z}})} \Big\{ \mathbb E_{\mathbb P} \left[ g(\bmt z) \right] \,\Big|\, \inf_{\mathbb{Q} \in \mathcal{Q}(\bP, \hat \bP)} \mathbb E_{\mathbb Q} \left[ d(\bmt z, \bmt z') \right]\leq \epsilon
      \Big\}\\
      &= \; \sup_{\mathbb P \in \amb{P}_0(\R^{d_{\bm z}}),\,\mathbb Q \in \mathcal{Q}(\bP, \hat \bP)} \Big\{\mathbb E_{\mathbb P} \left[ g(\bmt z) \right] \,\Big| \, \mathbb E_{\mathbb Q} \left[ d(\bmt z, \bmt z') \right]\leq \epsilon
      \Big\},
      \end{align*}
      where the first equality exploits Definition~\ref{def:opt}, and the second equality follows from Theorem~1.7 by~\citet{s15}, which applies thanks to condition~\textbf{(D)}. Indeed, this theorem ensures that the infimum over~$\mathbb Q \in \mathcal{Q}(\bP, \hat \bP)$ is attained, which allows us to remove the infimum operator on the left hand side of the inequality constraint and to treat the transportation plan~$\mathbb Q$ as a decision variable of the overall maximization problem. Next, we define conditional support sets~$\set S_k=\set S\times\{\hat{\bm z}_k\}$, $k\in\set K$, corresponding to the atoms of the discrete nominal distribution, and in the remainder we use the following representation of these sets in terms of inequality constraints.
      \begin{equation}
      \label{eq:S_k-representation}
      \set S_k = \left\{\left. (\bm z,\bm z') \in \R^{d_{\bm z}}\times\R^{d_{\bm z}} \, \right| \, c_\ell (\bm z) \le 0 \;\; \forall \ell \in \set L, \;\; \bm z'\leq \hat{\bm z}_k, \;\; -\bm z'\leq -\hat{\bm z}_k \right\}\quad \forall k\in\set K
      \end{equation}
      By the construction of~$\set S_k$ we have~$\mathbb Q[(\bmt z,\bmt z')\in\set S_k] = \hat{\mathbb P}[\bmt z'=\hat{\bm z}_k]=\hat p_k$ for all~$k\in\set K$ and~$\mathbb Q \in \mathcal{Q}(\bP, \hat \bP)$. As~$\mathbb P$ is the marginal distribution of~$\bmt z$ under any transportation plan~$\mathbb Q \in \mathcal{Q}(\bP, \hat \bP)$, we can thus reformulate the problem~\eqref{eq:uq-ot} without using~$\mathbb P$~as
      \begin{align*}
      \sup_{\mathbb Q \in \amb{P}_0(\{\set S_k,\hat p_k\}_k)} \Big\{\mathbb E_{\mathbb Q} \left[ g(\bmt z) \right] \,\Big| \, \mathbb E_{\mathbb Q} \left[ d(\bmt z, \bmt z') \right]\leq \epsilon
      \Big\},
      \end{align*}
      where~$\amb{P}_0(\{\set S_k,\hat p_k\}_k)$ is defined as in Section~\ref{sec:extensions}. To show that this reformulation can be solved with the tools of Section~\ref{sec:extensions}, we then set~$\set I_k=\{i\in\set I \, | \, \dom (-g_i)\cap \dom(d(\cdot,\hat{\bm z}_k))\neq\emptyset \}$ for any~$k\in\set K$ and, by slight abuse of notation, we reinterpret the disutility function as a function~$g:\R^{d_{\bm z}} \times \R^{d_{\bm z}} \rightarrow \overline\R$ that depends on two copies~$\bm z$ and~$\bm z'$ of the uncertain problem parameters. Specifically, we assume that this augmented disutility function satisfies~$g(\bm z,\bm z')=\max_{i\in\set I_k} g_i(\bm z)$ whenever~$\bm z'=\hat{\bm z}_k$ for some~$k\in\set K$. Note that it is not necessary to specify~$g(\bm z,\bm z')$ for other values of~$\bm z'$. We also introduce auxiliary functions~$h_k:\R^{d_{\bm z}} \times \R^{d_{\bm z}} \rightarrow \overline{\mathbb R}$ defined through~$h_k(\bm z,\bm z')= d(\bm z,\bm z')$ if~$\bm z'=\hat{\bm z}_k$ and~$h_k(\bm z,\bm z')= +\infty$ otherwise, $k\in\set K$. In addition we introduce~$h:\R^{d_{\bm z}} \times \R^{d_{\bm z}} \rightarrow \overline{\mathbb R}$ and assume that~$h(\bm z,\bm z')=h_k(\bm z,\bm z')$ whenever~$\bm z'=\hat{\bm z}_k$ for some~$k\in\set K$. It is again not necessary to specify~$h(\bm z,\bm z')$ for other values of~$\bm z'$. Using these conventions, the problem~\eqref{eq:uq-ot} can thus be reformulated equivalently as
      \begin{align}
      \label{eq:pot}
      \sup_{\mathbb Q \in \amb{P}_0(\{\set S_k,\hat p_k\}_k)} \Big\{\mathbb E_{\mathbb Q} \left[ g(\bmt z, \bmt z') \right] \,\Big| \, \mathbb E_{\mathbb Q} \left[ h(\bmt z, \bmt z') \right]\leq \epsilon
      \Big\}.
      \tag{P-UG$_\text{OT}$}
      \end{align}
      One readily verifies that~\eqref{eq:pot} represents an instance of~\eqref{eq:puq-conic} that satisfies all pertinent regularity conditions. Indeed, condition~\textbf{(C$_{\rm g}$)} from Section~\ref{sec:extensions} holds because the functions $c_\ell$, $\ell \in \set L$, appearing in~\eqref{eq:S_k-representation} are assumed to satisfy condition~\textbf{(C)} from Section~\ref{sec:rco}. Similarly, condition~\textbf{(G$_{\rm g}$)} from Section~\ref{sec:extensions} holds because the components~$g_i$, $i \in \set I$, of the disutility function are assumed to satisfy condition~\textbf{(G)} from Section~\ref{sec:drco}. In addition, condition~\textbf{(H$_{\rm g}$)} from Section~\ref{sec:extensions} holds because the transportation cost~$d$ is assumed to satisfy condition~\textbf{(D)}, which ensures that~$h_k$ is proper, convex and closed for every~$k\in\set K$. In order to validate condition~\textbf{(S$_{\rm g}$)}, we define $\bar{\set S}_{k}  = \cup_{i\in \set I_k}\bar{\set S}_{ik}$, where
      \begin{align*}
      \bar{\set S}_{ik}  = \left\{ \left. ( \bm z, \bm z') \in \set S_k \,\right|\,  ( \bm z, \bm z') \in \dom (h_{k}), \;\; \bm z  \in \dom(-g_i) \right\}
      \end{align*}
      for every~$i\in\set I_k$ and~$k\in \set K$ as in Section~\ref{sec:extensions}. By the construction of~$\set I_k$, the set~$\bar{\set S}_{ik}$ is nonempty  for every~$i\in\set I_k$ and~$k\in \set K$, and thus problem~\eqref{eq:pot} indeed satisfies condition~\textbf{(S$_{\rm g}$)}.

      If we interpret~\eqref{eq:pot} as an instance of~\eqref{eq:puq-conic}, then one can show that the corresponding instance of the dual uncertainty quantification problem~\eqref{eq:duq-conic} is equivalent to
      \begin{equation} \label{eq:dot}
      \begin{array}{l@{\quad}l}
      \inf  & \displaystyle \sum_{k\in\set K} \hat p_k \alpha_k  +  \epsilon  \beta \\
      \subj &  \displaystyle  \sup_{ (\bm z_k, \bm z_k') \in \bar{\set S}_k} \left\{ g(\bm z_k) -  d(\bm z_k, \hat{\bm z}_k) \beta \right\}  \le \alpha_k  \quad \forall k\in\set K\\
      & \bm \alpha \text{ free}, \quad \beta \ge  0.
      \end{array}
      \tag{D-UQ$_\text{OT}$}
      \end{equation}
      This is an immediate consequence of the observation that~$(\bm z_k, \bm z_k') \in \bar{\set S}_{k}$ implies~$\bm z_k' = \hat{\bm z}_k$ and that~$h(\bm z, \hat{\bm z}_k) = d(\bm z_k, \hat{\bm z}_k)$ for every $k \in \set K$.
      An elementary calculation further shows that the corresponding instance of the finite convex program~\eqref{eq:apw-conic-cvx} is equivalent to
      \begin{equation}
      \label{eq:pot-cvx}
      \begin{array}{l@{\quad}l}
      \inf \hspace{0.5mm} & \displaystyle \sum_{k\in \set{K}}   \hat p_k \alpha_k  +  \epsilon  \beta \\
      \subj &  \displaystyle   (-g_i)^{*}(\bm y^{(0)}_{ik} ) +  \beta d^{*1}\left( \frac{\bm y^{(1)}_{ik}}{\beta}, \hat{\bm z}_k \right)   + \sum_{\ell \in \set{L}}  \nu_{i\ell k} c_\ell^*    \left( \frac{\bm y^{(2)}_{i\ell k} }{\nu_{i\ell k}} \right) \le \alpha_k \quad \forall i \in \set I_k, \ \forall k\in \set{K}  \\
      & \displaystyle  \bm y^{(0)}_{ik} +  \bm y^{(1)}_{ik}   + \sum_{\ell \in \set{L}} \bm   y^{(2)}_{i\ell k} = \bm 0  \qquad \qquad \qquad \qquad \qquad \qquad \qquad  \enskip \ \forall i \in \set I_k, \ \forall k\in \set{K}  \\
      &   \alpha_k, \bm y^{(0)}_{ik}, \bm y^{(1)}_{ik}, \bm y^{(2)}_{i\ell k} \textnormal{ free} , \quad \beta \ge  0  , \quad   \nu_{i\ell k} \ge 0 \qquad \quad \ \ \ \forall i\in \set{I}_k, \ \forall \ell \in \set{L}, \ \forall k\in \set{K},
      \end{array}
      \tag{AP-W$^\prime_\text{OT}$}
      \end{equation}
      \noindent
      while the corresponding instance of~\eqref{eq:adb-conic-cvx} is equivalent to
      \begin{equation}
      \label{eq:dot-cvx}
      \begin{array}{l@{\quad}l@{\quad}l}
      \sup  & \displaystyle  \sum_{k\in \set{K}} \sum_{i\in \set{I}_k} \tau_{ik}  \\
      \subj & \displaystyle\sum_{i\in \set{I}_k} \lambda_{ik} = \hat p_k & \forall k \in \set{K}\\
      & \displaystyle \sum_{k\in \set{K}}  \sum_{i\in \set{I}_k} \omega_{ik} \le \epsilon \\
      & \lambda_{ik} c_\ell (\bm v_{ik}/\lambda_{ik} ) \le 0 & \forall i \in \set I_k, \ \forall \ell \in \set L, \ \forall k\in \set{K}\\
      & \lambda_{ik} d(\bm v_{ik}/\lambda_{ik}, \hat{\bm z}_k) \le \omega_{ik} & \forall i \in \set I_k, \ \forall k\in \set{K}\\
      & \lambda_{ik} g_i ( \bm v_{ik}/\lambda_{ik} ) \ge \tau_{ik} & \forall i \in \set I_k, \ \forall k\in \set{K}\\
      & \tau_{ik} \text{ free}, \quad \lambda_{ik} \ge 0, \quad \omega_{ik}, \bm v_{ik} \text{ free} & \forall i \in \set I_k, \ \forall k\in \set{K}.
      \end{array}
      \tag{AD-B$^\prime_\text{OT}$}
      \end{equation}
      Note that in~\eqref{eq:dot-cvx} we have eliminated the inequality constraints~$\bm z'\leq \hat{\bm z}_k$ and $-\bm z'\leq -\hat{\bm z}_k$, which emerge in the representation~\eqref{eq:S_k-representation} of the conditional support set~$\set S_k$, $k \in \set K$, and in~\eqref{eq:pot-cvx} we have eliminated the corresponding dual variables. Theorem~\ref{thm:strong-duality-all-problems-conic}~\emph{(i)} guarantees that if~\eqref{eq:dot-cvx} admits a Slater point with $\lambda_{ik} > 0$ for all~$i\in\set I_k$ and~$k\in\set K$, then~\eqref{eq:pot}, \eqref{eq:dot}, \eqref{eq:pot-cvx} and~\eqref{eq:dot-cvx} all have the same optimal value as~\eqref{eq:uq-ot} and~\eqref{eq:pot-cvx} is solvable. Also, if $(\bm \alpha^\star, \{\bm y_{ik}^{(0)\star}, \bm y_{ik}^{(1)\star}\}_{ik}, \{ \bm y_{i\ell k}^{(2)\star}\}_{i\ell k}, \beta^\star, \{\nu_{i\ell k}^\star \}_{i\ell k})$ solves~\eqref{eq:pot-cvx}, then $(\bm \alpha^\star, \beta^\star)$ solves~\eqref{eq:dot}. Similarly, Theorem~\ref{thm:strong-duality-all-problems-conic}~\emph{(ii)} guarantees that if~\eqref{eq:pot-cvx} admits a Slater point and~$\set S$ is bounded, then~\eqref{eq:pot}, \eqref{eq:dot}, \eqref{eq:pot-cvx} and~\eqref{eq:dot-cvx} all have the same optimal value as~\eqref{eq:uq-ot} and~\eqref{eq:dot-cvx} is solvable. Also, if $(\{\bm \tau^\star_{k}\}_k, \{ \bm \lambda^\star_{k}\}_{k},  \{ \omega^\star_{ik}\}_{ik}, \{\bm v^\star_{ik}\}_{ik})$ solves~\eqref{eq:dot-cvx}, then the discrete distribution that assigns probability $\lambda^\star_{ik}$ to the point $\bm v_{ik}^\star/\lambda_{ik}^\star$ for every $i\in\set I_k$ and $k \in \set K$ with $\lambda_{ik}^\star > 0$ solves~\eqref{eq:uq-ot}. Theorem~\ref{thm:strong-duality-all-problems-conic} thus establishes, among other things, different conditions for strong duality between the semi-infinite programs~\eqref{eq:uq-ot} and~\eqref{eq:dot}. Such strong duality results are at the heart of modern Wasserstein distributionally robust optimization; see Theorem~4.2 of \citet{med17} and Theorem~1 of \citet{ref:zhao2018data} for finite dimensional and Theorem~1 of \citet{Gao_Kleywegt_2016} and Theorem~1 of \citet{ref:blanchet2019quantifying} for infinite dimensional uncertainty sets. Theorem~\ref{thm:strong-duality-all-problems-conic} provides a new and elementary proof for strong semi-infinite duality, and it relies only on explicit conditions that are easy to check. Note that the conditions of Theorem~\ref{thm:strong-duality-all-problems-conic}~\emph{(i)} are indeed very weak and are essentially always satisfied if $\epsilon>0$. While the uncertainty quantification problem~\eqref{eq:uq-ot} and its convex reformulation~\eqref{eq:dot-cvx} may fail to be solvable under these conditions, Corollary~\ref{coro:puq-asy-conic} shows that near-optimal solutions to~\eqref{eq:dot-cvx} can systematically be converted to near-optimal solutions to~\eqref{eq:uq-ot}.

      { We will argue next} that problem~\eqref{eq:dot-cvx} can be further simplified and that it is guaranteed to be solvable under mild additional conditions that are usually met in practice. To this end, note first that by eliminating the auxiliary decision variables~$\{ \tau_{ik}, \omega_{ik}\}_{ik}$ and applying the linear variable substitution $\bm v_{ik}\leftarrow \bm v_{ik} + \hat{\bm z}_k\lambda_{ik}$ for all~$i\in\set I_k$ and~$k\in\set K$, problem~\eqref{eq:dot-cvx} simplifies to
      \begin{equation}
      \label{eq:dot-cvx-explicit}
      \begin{array}{l@{\quad}l@{\qquad}l}
      \sup  &  \displaystyle \sum_{k\in \set{K}} \sum_{i\in \set{I}_k} \lambda_{ik} g_i   \left(  \hat{\bm z}_k + \frac{\bm v_{ik}}{\lambda_{ik}} \right)  \\
      \subj &  \displaystyle\sum_{i \in \set{I}_k} \lambda_{ik} =  \hat p_k & \forall k\in \set{K}\\
      & \displaystyle \lambda_{ik} c_\ell \left(\hat{\bm z}_k + \frac{\bm v_{ik}}{\lambda_{ik}} \right) \le 0 & \forall i \in \set I_k, \ \forall \ell \in \set L, \ \forall k\in \set{K}\\
      & \displaystyle \sum_{k\in \set{K}}  \sum_{i\in \set{I}_k} \lambda_{ik} d\left(  \hat{\bm z}_k + \frac{\bm v_{ik}}{\lambda_{ik}},  \hat{\bm z}_k \right) \le \epsilon \\
      & \lambda_{ik} \ge 0,  \quad \bm v_{ik} \text{ free} & \forall i \in \set I_k, \ \forall k\in \set{K}.
      \end{array}
      \end{equation}
      This reformulation is always solvable under mild assumptions on the transportation costs.

      \begin{prop}[Solvability of~\eqref{eq:dot-cvx-explicit}]
      \label{prop:solvability-of-POT-convex}
      Assume that the transportation cost~$d(\bm z,\bm z')$ satisfies the identity of indiscernibles, that is, $d(\bm z, \bm z')=0$ if and only if~$\bm z=\bm z'$. Then, problem~\eqref{eq:dot-cvx-explicit} is solvable.
      \end{prop}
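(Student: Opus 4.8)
The plan is to show that the constraint set of~\eqref{eq:dot-cvx-explicit} is nonempty and compact and that its objective is upper semicontinuous; the supremum of an upper semicontinuous function over a nonempty compact set is then attained, and the paper's conventions dispose of the degenerate case in which the optimal value equals~$-\infty$.

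First I would fix notation and record the structural properties. Writing the decision vector as $\bm x = (\{\lambda_{ik}\}, \{\bm v_{ik}\})$ and setting $d_k(\bm w) = d(\hat{\bm z}_k + \bm w, \hat{\bm z}_k)$, each summand $\lambda_{ik} d(\hat{\bm z}_k + \bm v_{ik}/\lambda_{ik}, \hat{\bm z}_k)$ of the transportation budget constraint is the convex perspective $\tilde d_k(\bm v_{ik}, \lambda_{ik})$ of $d_k$, each support constraint is a convex perspective of $c_\ell(\hat{\bm z}_k + \cdot\,)$, and each objective summand is a concave perspective of $g_i(\hat{\bm z}_k + \cdot\,)$. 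By Proposition~\ref{prop:perspective-convex} and its concave counterpart these perspectives inherit properness, closedness and convexity (resp.\ concavity) from $d_k$, $c_\ell$ and $g_i$, which are proper, closed and convex (resp.\ concave) thanks to~\textbf{(D)}, \textbf{(C)} and~\textbf{(G)}. Consequently the constraint set $C$ of~\eqref{eq:dot-cvx-explicit} is closed and convex and the objective $F$ is concave and upper semicontinuous. Moreover $C$ is nonempty: choosing $\bm v_{ik} = \bm 0$ for all $i,k$ yields $\hat{\bm z}_k + \bm v_{ik}/\lambda_{ik} = \hat{\bm z}_k \in \set S$, so the support constraints hold, and $\sum_{i,k} \lambda_{ik} d_k(\bm 0) = 0 \le \epsilon$, so every point of the $\bm\lambda$-polytope extends to a constraint-feasible solution.

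Second, and this is the heart of the argument, I would prove that $C$ is bounded by showing that its recession cone is trivial. Since $\bm\lambda$ is confined to the compact polytope $\{\bm\lambda \ge \bm 0 \mid \sum_{i \in \set I_k} \lambda_{ik} = \hat p_k \ \forall k\}$, any recession direction $\delta = (\delta\bm\lambda, \{\delta\bm v_{ik}\})$ of $C$ must satisfy $\delta\bm\lambda = \bm 0$. Because $C$ is nonempty and is an intersection of closed sublevel sets, its recession cone is the intersection of the corresponding recession cones \citep[Theorem~8.7]{Rockafellar1970}; in particular the transportation constraint forces $T_\infty(\delta) \le 0$, where $T(\bm x) = \sum_{i,k} \tilde d_k(\bm v_{ik}, \lambda_{ik})$ and $T_\infty$ is its recession function. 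As each $\tilde d_k$ is positively homogeneous, closed, proper and convex, it coincides with its own recession function, and $\tilde d_k(\delta\bm v_{ik}, 0)$ equals the perspective value at the origin, namely $\delta^*_{\dom(d_k^*)}(\delta\bm v_{ik})$, which is precisely the recession function $(d_k)_\infty(\delta\bm v_{ik})$ \citep[Theorem~13.3]{Rockafellar1970}. Hence $\delta\bm\lambda = \bm 0$ and $\sum_{i,k}(d_k)_\infty(\delta\bm v_{ik}) \le 0$, and since each term is nonnegative we obtain $(d_k)_\infty(\delta\bm v_{ik}) = 0$ for all $i,k$. The identity of indiscernibles enters here: for any $\bm w \neq \bm 0$ the map $t \mapsto d_k(t\bm w)$ is convex, vanishes at $t = 0$ and is strictly positive for $t > 0$, so its chord slopes $d_k(t\bm w)/t$ are nondecreasing and strictly positive, whence $(d_k)_\infty(\bm w) = \sup_{t > 0} d_k(t\bm w)/t \ge d_k(\bm w) > 0$. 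Therefore $(d_k)_\infty(\delta\bm v_{ik}) = 0$ forces $\delta\bm v_{ik} = \bm 0$, the recession cone of $C$ is $\{\bm 0\}$, and $C$ is compact.

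Finally, an upper semicontinuous function attains its maximum on the nonempty compact set $C$ at some $\bm x^\star$. If that maximum is finite, $\bm x^\star$ is a feasible solution attaining the optimal value, so~\eqref{eq:dot-cvx-explicit} is solvable. If the maximum equals $-\infty$, then $F \equiv -\infty$ on $C$, the problem has no feasible solution, and it is vacuously solved by the paper's convention that infeasible problems are solved by any solution. The main obstacle is the compactness step, and specifically the positivity of the recession function $(d_k)_\infty$ away from the origin; the above chord-slope argument shows this is exactly what the identity of indiscernibles buys us, whereas nonnegativity and convexity of $d$ alone would not rule out nontrivial recession directions along which probability mass could escape to infinity at zero marginal transportation cost.
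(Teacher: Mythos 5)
Your proof is correct, and its overall skeleton (upper semicontinuous objective, compact feasible set, Weierstrass) matches the paper's; however, the crucial boundedness step is argued by a genuinely different mechanism. The paper invokes its Lemma~\ref{lem:growth} to extract a uniform linear growth bound $d(\hat{\bm z}_k+\bm z,\hat{\bm z}_k)\geq\delta\|\bm z\|_2-1$ (proved by a contradiction argument on the compact unit sphere), which, combined with $\sum_{k}\sum_{i}\lambda_{ik}=1$, yields the explicit estimate $\sum_{k}\sum_{i}\|\bm v_{ik}\|_2\leq(1+\epsilon)/\delta$. You instead show that the recession cone of the constraint set is trivial: the simplex constraints kill the $\bm\lambda$-components of any recession direction, positive homogeneity of the perspectives identifies the relevant recession function with $(d_k)_\infty$ via Theorems~8.7 and~13.3 of \citet{Rockafellar1970}, and the chord-slope inequality $(d_k)_\infty(\bm w)\geq d_k(\bm w)>0$ for $\bm w\neq\bm 0$ is exactly where the identity of indiscernibles enters. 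The paper's route buys a quantitative a priori bound on the feasible region (potentially useful computationally), while your route is pointwise, avoids the need for a uniform growth constant, and stays entirely within standard recession calculus; both correctly isolate why mere non-negativity and convexity of $d$ would not suffice. You are also more careful than the paper on two minor points: you verify nonemptiness of the constraint set explicitly (via $\bm v_{ik}=\bm 0$), and you handle the degenerate case where the objective is identically $-\infty$ on the constraint set by appealing to the paper's convention that infeasible problems are vacuously solvable.
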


      In the following we explain how any maximizer $( \{ \lambda_{ik}^\star, \bm v_{ik}^\star\}_{ik})$ of the finite convex program~\eqref{eq:dot-cvx-explicit} can be used to construct a distribution~$\mathbb P^\star$ that is optimal in~\eqref{eq:uq-ot} (if such a~$\mathbb P^\star$ exists) or a sequence of asymptotically optimal distributions $\{\mathbb P_n\}_{n\in\mathbb N}$ (if such a $\mathbb P^\star$ does not exist). 

      \rev{ 
      \begin{prop}\label{prop:show_me_the_distributions}
      Assume that the conditions of Proposition~\ref{prop:solvability-of-POT-convex} hold and that~\eqref{eq:dot-cvx} admits a Slater point with $\lambda_{ik} > 0$ for all~$i\in\set I_k$ and~$k\in\set K$. If $\set I^{\infty}_k = \{ i \in \set I_k \ | \ \lambda_{ik}^\star = 0, \  \bm v_{ik}^\star \ne \bm 0 \} = \emptyset$ for every $k \in \set K$, then the discrete distribution
      \begin{equation}\label{eq:OT:discrete_distr_optimal}
      \mathbb P^\star \; = \; \sum_{k \in \set K} \sum_{i \in \set I_k^{+}} \lambda_{ik}^\star \delta_{\hat{\bm z}_{k} + \bm v_{ik}^\star/\lambda_{ik}^\star}
      \end{equation}
      is optimal in~\eqref{eq:uq-ot}, where $\set I^{+}_k = \{ i \in \set I_k \ | \ \lambda_{ik}^\star > 0 \}$. Otherwise, the discrete distributions
      \begin{equation}\label{eq:OT:sequence_optimal}
      \mathbb P_n \; = \; \sum_{k \in \set K} \sum_{i \in\set I^{+}_k \cup \set I^{\infty}_k} \lambda_{ik} (n) \delta_{\bm z_{ik}(n)}
      \end{equation}
      for~$n\in\mathbb N$ are asymptotically optimal in~\eqref{eq:uq-ot}, where
      \[
      \lambda_{ik} (n) =  \begin{cases}\lambda_{ik}^\star \left(1 - \frac{ |\set I^{\infty}_k| }{n} \right) & \text{if } i \in \set I^{+}_k    \\
      \frac{\hat p_k}{n} & \text{if } i \in \set I^{\infty}_k
      \end{cases}\qquad \text{and}\qquad
      \bm z_{ik} (n) =  \begin{cases}\hat{\bm z}_k + \frac{\bm v^\star_{ik}}{\lambda_{ik}^\star} & \text{if } i \in \set I^{+}_k    \\
      \hat{\bm z}_k + n \frac{\bm v^\star_{ik}}{\hat p_k} & \text{if } i \in \set I^{\infty}_k.
      \end{cases}
      \]
      \end{prop}

      The distributions $\mathbb P_n$ in~\eqref{eq:OT:sequence_optimal} send some atoms with decaying probabilities~$\hat p_k/n$ to infinity along the vectors $\bm v_{ik}^\star$, $i\in \set I^{\infty}_k$, $k\in \set K$, which are recession directions of the support set. It is perhaps surprising that all distributions~$\bP_n$ can be constructed from one single optimal solution of problem~\eqref{eq:dot-cvx-explicit}. Conversely, in order to construct asymptotically optimal probability distributions for generic non-degenerate uncertainty quantification problems of the form~\eqref{eq:puq} and~\eqref{eq:puq-conic}, one has to compute sequences of asymptotically optimal solutions for the finite convex programs~\eqref{eq:adb-cvx} and~\eqref{eq:adb-conic-cvx}, respectively, which may not be solvable in general; see Corollaries~\ref{coro:puq-asy} and~\ref{coro:puq-asy-conic}.} { Remark~\ref{rem:superlinear} in Appendix~\ref{app:om} in the Electronic Companion shows that~\eqref{eq:uq-ot} is guaranteed to be solvable whenever the transportation cost~$d(\bm z,\hat{\bm z}_k)$ grows superlinearly in~$\bm z$ for every~$k\in\set K$.}



      If the transportation cost is set to~$d(\bm z,\bm z')=\|\bm z-\bm z'\|^p$ for an arbitrary norm $\|\cdot\|$ on~$\R^{d_{\bm z}}$ and constant~$p\in[1,+\infty)$, then $D (\bP, \bP')^{1/p}$ reduces to the $p$-th Wasserstein distance between~$\bP$ and~$\bP'$ \citep[Definition~6.1]{villani}. In this case, the ambiguity set~$\mathbb B_{\epsilon^p}(\hat \bP)$ coincides with the $p$-th Wasserstein ball of radius~$\epsilon$ around the nominal distribution~$\hat \bP$. Note also that~$d(\bm z,\bm z')=\|\bm z-\bm z'\|^p$ obeys assumption~\textbf{(D)} and satisfies the identity of indiscernibles.  Theorem~\ref{thm:strong-duality-all-problems-conic}~\emph{(i)} thus ensures that if the finite convex program~\eqref{eq:dot-cvx} admits a Slater point with $\lambda_{ik} > 0$ for all~$i\in\set I_k$ and~$k\in\set K$, then the supremum of~\eqref{eq:uq-ot} coincides with the minimum of the finite convex program~\eqref{eq:pot-cvx}. The transportation cost~$d(\bm z,\bm z')=\|\bm z-\bm z'\|^p$ impacts~\eqref{eq:pot-cvx} only through the partial conjugate
      \[
      \beta d^{*1}\left( \frac{\bm y_{ik}^{(1)}}{\beta}, \hat{\bm z}_k \right) = \hat{\bm z}_k^\top \bm y_{ik}^{(1)} +   \varphi(q) \beta \left\| \frac{\bm y_{ik}^{(1)}}{\beta} \right\|^q_*,
      \]
      where~$\|\cdot\|_*$ denotes the norm dual to~$\|\cdot\|$ on~$\R^{d_{\bm z}}$, $q\in[1,+\infty]$ is the unique constant with~$\frac{1}{p}+ \frac{1}{q}=1$ and~$\varphi(q)=(q-1)^{(q-1)}/q^q$; see~Lemma~\ref{lem:p-norm}~\emph{(ii)}. If $p=1$ and~$q=+\infty$, then $\varphi(q) \left\| \cdot \right\|^q_*$ must be interpreted as the indicator function of the closed unit ball around~$\bm 0$ with respect to~\mbox{$\|\cdot\|_*$}. The minimization problem~\eqref{eq:pot-cvx} thus significantly generalizes known convex reformulations of uncertainty quantification problems over $1$-Wasserstein balls developed by~\cite{med17}. By letting~$p$ tend to~$\infty$ in the finite convex programs~\eqref{eq:pot-cvx} and~\eqref{eq:dot-cvx-explicit} with transportation cost~$d(\bm z,\bm z')=\|\bm z-\bm z'\|^p$, one further recovers convex reformulations of uncertainty quantification problems over $\infty$-Wasserstein balls akin to those studied by~\citet{bss19}.


    { Example~\ref{ex:shaping_the_costs} in Appendix~\ref{app:om} in the Electronic Companion showcases how our general class of transportation costs allows to incorporate prior structural information into the uncertainty quantification problem~\eqref{eq:uq-ot}.}

      Similar to Section~\ref{sec:drco}, the results of this section can be directly applied to distributionally robust optimization problems over transport-based ambiguity sets, in which one seeks a decision~$\bm x$ from within a closed feasible region $\set X\subseteq \R^{d_{\bm x}}$ that minimizes the worst-case expectation of a decision-dependent disutility function $g(\bm x,\bm z)$ with respect to all distributions $\bP\in \mathbb{B}_\epsilon (\hat{\mathbb{P}})$. Indeed, \eqref{eq:pot-cvx} remains a finite convex program when~$\bm x$ is appended to the list of decision variables, provided that~$\set X$ is convex and that the disutility function satisfies~$g(\bm x,\bm z) = \max_{i \in \set I} g_i (\bm x, \bm z)$, where $g_i (\bm x, \bm z)$ is proper, convex and closed in~$\bm x$ and~$-g_i (\bm x, \bm z)$ is proper, convex and closed in~$\bm z$ for every~$i\in\set I$. 

      \textbf{Acknowledgements.} We are grateful to Melvyn Sim and Anthony Man-Cho So for inspiring discussions that motivated this paper, and we acknowledge the constructive comments of the anonymous review team that helped us improve the exposition. This research was supported by the Swiss National Science Foundation under the NCCR Automation, grant agreement~51NF40\_180545, as well as the Engineering and Physical Sciences Research Council under the grant EP/R045518/1. For the purpose of open access, the authors have applied a ‘Creative Commons Attribution (CC BY) licence to any Author Accepted Manuscript (AAM) version arising.

      \linespread{1}
      \small

      \bibliographystyle{plainnat}

      \linespread{1.5}
      \normalsize

      \newpage

      \appendix
    
      \section{Basic Concepts of Convex Analysis}\label{app_before_A}
      
            Throughout the paper we use the following key concepts of convex analysis. The {\it domain} of a function $f:\R^{d_{\bm x}}\rightarrow  \overline{\R}$ is defined as $\dom (f) = \{ \bm x \in \R^{d_{\bm x}} \ | \ f(\bm x ) < +\infty \}$. The {\it epigraph} of  $f$ is defined as~$\textnormal{epi} (f) = \{ (\bm x,\tau) \in \R^{d_{\bm x}} \times \R \ | \ f(\bm x ) \le \tau \}$. The function $f$ is {\it proper} if $f(\bm x) > -\infty$ for all $\bm x \in \R^{d_{\bm x}} $ and~$f(\bm x) < +\infty$ for at least one $\bm x\in \R^{d_{\bm x}}$, implying that $\dom(f) \ne \emptyset$. In addition, $f$ is {\it closed} if  $f$ is lower semicontinuous and \mbox{}either $f(\bm x) > -\infty$ for all $\bm x \in \R^{d_{\bm x}} $  or $f(\bm x) = -\infty$ for all $\bm x \in \R^{d_{\bm x}} $.
            
      We now define the notions of conjugate functions and perspective~\mbox{functions}.

      \begin{app_defi}[Conjugate Function] \label{def:conjugate}
      The conjugate of a function $f:\R^{d_{\bm x}}\rightarrow  \overline{\R}$ is the function $f^*:\R^{d_{\bm x}}\rightarrow  \overline{\R}$ defined through	$f^*({{\bm w}}) = \sup_{\bm x}\left\{{{\bm x}}^\top{{\bm w}} - f({{\bm x}})\right\}$. The conjugate $(f^* )^*$ of $f^*$ is called the biconjugate of $f$ and is abbreviated as $f^{**}$.
      \end{app_defi}

      The {\it indicator function} $\delta_{\set{X}}: \R^{d_{\bm x}} \rightarrow  \overline{\R}$ of a set $\set{X}\subseteq \mathbb{R}^{d_{\bm x}}$ is defined through $\delta_{\set{X}} ({\bm x}  ) = 0$ if~$\bm x\in \set{X}$ and $\delta_{\set{X}} ({\bm x}) = +\infty$ if $\bm x\notin \set{X}$. The {\it support function} $\delta^*_{\set{X}}: \R^{d_{\bm x}} \rightarrow \overline{\R}$ of a set $\set{X}\subseteq \mathbb{R}^{d_{\bm x}}$ is defined through $\delta_{\set{X}}^*({\bm w} )  = \sup_{\bm x \in \set{X}} \ \{\bm x^\top {\bm w} \}$. Note that the support function of $\set{X}$ coincides with the conjugate of the indicator function of $\set{X}$, which justifies our notation.

      \begin{app_defi}[Perspective Functions] \label{def:convex-perspective}
      The convex perspective of a proper, closed and convex function $f:\R^{d_{\bm x}}\rightarrow  \overline{\R}$  is the function $\underline{f}:\R^{d_{\bm x}} \times \R_+ \rightarrow \overline{\R}$ defined through $\underline{f}(\bm x, t) = tf(\bm x/t)$ if $t > 0$ and~$\underline{f}(\bm x, 0) = \delta_{\dom(f^*)}^*({\bm x} )$. Similarly, the concave perspective of a function $f$ for which $-f$ is proper, closed and convex is the function $\overline{f}:\R^{d_{\bm x}} \times \R_+ \rightarrow \overline{\R}$ defined through $\overline{f}(\bm x, t) = tf(\bm x/t)$ if~$t > 0$ and~$\overline{f}(\bm x, 0) = -\delta_{\dom((-f)^*)}^*({\bm x} )$.
      \end{app_defi}

      One can show that for $t > 0$, the epigraph of $\underline{f}(\cdot,t)$ coincides with the epigraph of $f$ multiplied by $t$. Moreover, the epigraph of $\underline{f}$ coincides with the closure of the cone generated by $\epi(f) \times \{1 \} \subseteq \R^{d_{\bm x}} \times \R$. Finally, our definitions of the convex and concave perspectives satisfy
      \begin{equation} \label{eq:convex-perspective}
      \underline{f}(\bm x, 0) = \displaystyle \underset{(\bm x', t')\rightarrow (\bm x, 0)}{\lim \inf} t'f\left( \bm x'/ t' \right)
      \qquad \text{and} \qquad
      \overline{f}(\bm x, 0) = \displaystyle \underset{(\bm x', t')\rightarrow (\bm x, 0)}{\lim \sup} t'f\left( \bm x'/ t' \right)
      \end{equation}
      for convex and concave $f$, respectively, and $\bm x \in \R^{d_{\bm x}}$ \citep[p.~67 and Theorem 13.3]{Rockafellar1970}. For ease of notation, we henceforth use $tf(\bm x / t )$ to denote both $\underline{f}(\bm x, t)$ and $\overline{f}(\bm x, t)$. The correct interpretation of $0 f(\bm x / 0 )$ will be clear from the context. Specifically, $0 f (\bm x / 0)$ should be interpreted as $\underline{f}(\bm x, 0)$ if $f$ is convex and as $\overline{f}(\bm x, 0)$ if $f$ is concave. This convention is justified in view of~\eqref{eq:convex-perspective}.

      By construction, the convex perspective of a proper, closed and convex function is guaranteed to be proper, closed and convex; see Proposition~\ref{prop:perspective-convex}. \rev{The next example shows that alternative constructions of the convex perspective that are sometimes adopted in the literature fail to be~closed.}

      \begin{app_ex}[Perspective Functions]
      Define $f:\mathbb R\rightarrow\overline{\mathbb R}$ through $f(x)=\delta_{\{x_0\}}(x)$ for some~$x_0 \neq 0$, and note that~$f$ is proper, closed and convex. An elementary calculation shows that the convex perspective of~$f$ is given by~$\underline f(x,t)=\delta_{\{tx_0\}}(x)$ for every~$x\in\mathbb R$ and~$t\ge 0$, which is also proper, closed and convex. Note, however, that~$\underline f(x,0)=\delta_{\{0\}}(x) \neq +\infty =\lim_{t\downarrow 0} tf(x/t)$, where the last equality holds because~$x_0\neq 0$. This example shows that if one were to define~$\underline f(x,0)=\lim_{t\downarrow 0} tf(x/t)$, as is sometimes done in the literature, then the resulting perspective would fail to be closed. As a second example, define $f:\mathbb R\rightarrow\overline{\mathbb R}$ through $f(x)=|x|$, and note that $f$ is again proper, closed and convex. The convex perspective of $f$ is given by~$\underline f(x,t)=|x|$ for every~$x\in\mathbb R$ and~$t\ge 0$, which is also proper, closed and convex. This example shows that if one were to define~$\underline f(x,0)=\delta_{\{0\}}(x)$, as is sometimes done in the literature, then the resulting perspective would fail to be closed.
      \end{app_ex}

      Throughout the paper we use the following terminology for optimization problems, which is in line with \citet{Rockafellar1970}. Any assignment of real values to the decision variables of an optimization problem is a solution. A solution is feasible in an optimization problem if it satisfies all the constraints and attains an objective value other than $+\infty$ ($-\infty$) in a minimization (maximization) problem; otherwise, it is infeasible. An optimization problem is feasible if it has at least one feasible solution. We refer to the feasible region of an optimization problem as the set containing all of its feasible solutions. A feasible optimization problem is solvable if its optimal value is attained by a feasible solution, whereas an infeasible optimization problem is solved by any (necessarily infeasible) solution. Whenever the domain of a variable in an optimization problem is omitted, it is understood to be the entire space (whose definition will be clear from the context).

	In Section~\ref{sec:rco} we study functions $f(\bm x, \bm z)$ with two arguments, where the first argument $\bm x$ represents a decision variable, while the second argument~$\bm z$ represents an exogenous uncertain parameter. Below we show how the notions of conjugates and perspectives are extended to such functions.

      \begin{app_defi}[Partial Conjugates] \label{def:partial-conjugate}
      The partial conjugate of a function $f:\R^{d_{\bm x}} \times \R^{d_{\bm z}} \rightarrow  \overline{\R}$ with respect to its first argument is the function $f^{*1}:\R^{d_{\bm x}} \times \R^{d_{\bm z}} \rightarrow  \overline{\R}$ defined through $f^{*1} (\bm{w}, \bm z)=\sup_{\bm x\in  \R^{d_{\bm x}}} \left\{\bm{w}^\top \bm x - f(\bm x, \bm z) \right\}$. Likewise, the partial conjugate of $f$ with respect to its second argument is the function $f^{*2}:\R^{d_{\bm x}} \times \R^{d_{\bm z}} \rightarrow  \overline{\R}$ defined  through $f^{*2} (\bm x, \bm y)=\sup_{\bm z\in  \R^{d_{\bm z }}} \left\{ \bm y^\top \bm z - f(\bm x, \bm z) \right\}.$
      \end{app_defi}

      \begin{app_defi}[Partial Perspectives] \label{def:partial-perspective}
      If $f:\R^{d_{\bm x}} \times \R^{d_{\bm z}} \rightarrow  \overline{\R}$ is proper, closed and convex in its first argument, then we define its convex partial perspective $\underline{f}:\R^{d_{\bm x}} \times \R_+ \times \R^{d_{\bm z}} \rightarrow  \overline{\R}$ through $\underline{f}(\bm x, t, \bm z) = t f(\bm x/t, \bm z)$ if $t > 0$ and $\underline{f}(\bm x, 0, \bm z) = \delta^*_{\dom(f^{*1}(\cdot, \bm z))}(\bm x)$. If $-f$ is proper, closed and convex in its second argument, then we define its concave partial perspective $\overline{f}:\R^{d_{\bm x}} \times \R^{d_{\bm z}} \times \R_+  \rightarrow  \overline{\R}$ through~$\overline{f}(\bm x, \bm z, t) = t f(\bm x, \bm z/t)$ if $t > 0$ and $\overline{f}(\bm x,  \bm z, 0) = -\delta^*_{\dom( (-f)^{*2}(\bm x, \cdot))}(\bm z)$.
      \end{app_defi}

      For ease of notation, throughout the paper we use $tf(\bm x / t , \bm z)$ and $tf(\bm x  , \bm z / t)$ to denote $\underline{f}(\bm x, t, \bm z)$ and $\overline{f}(\bm x, \bm z, t)$, respectively. The correct interpretation will always be clear from the context.

      Section~\ref{sec:extensions} makes extensive use of proper convex cones, which we define next.
      \begin{app_defi}[Proper Convex Cone] \label{def:proper-convex-cone} 
      A convex cone $\set C\subseteq \R^{d_{\set C}}$ is called proper if it is closed, solid (i.e., it has nonempty interior) and pointed (i.e., it contains no line).
      \end{app_defi}

      Section~\ref{sec:extensions} also uses a generalized Slater condition for sets represented by conic inequalities.
      
      \begin{app_defi}[Slater Condition for Sets] \label{def:slater-condition-for-sets}
      The vector~$\bm x^{\rm S}$ is a Slater point of the set $\set X$ represented by $\set X = \{ \bm x \in \R^{d_{\bm x}} \mid  \bm f_i(\bm x) \preceq_{\set C_i} \bm 0\;\;\forall i\in\set I, \; h_j(\bm x)=0\;\;\forall j\in\set J \}$, where $\set C_i$ is a proper convex cone for all $i\in \set I$, if {\emph{(i)}} 
      $\bm{x}^{\textnormal{S}} \in {\rm ri} (\dom ( \bm f_{i}))$ and $\bm{x}^{\textnormal{S}} \in {\rm ri} (\dom (h_j))$ for all $i$ and $j$; \emph{(ii)}~$\bm{x}^{\textnormal{S}} \in \mathcal{X}$; and \emph{(iii)}~$\bm f_i (\bm{x}^{\textnormal{S}}) \prec_{\set C_i} \bm 0$ for all $i \in \mathcal{I}$ with the exception of those for which $\bm f_i$ is affine and $\set C_i$ is the non-negative orthant. The Slater point $\bm{x}^{\textnormal{S}}$ is strict if $\bm f_i (\bm{x}^{\textnormal{S}}) \prec_{\set C_i} \bm 0$ for all~$i \in \mathcal{I}$.
      \end{app_defi}

      We close with an example that describes vector- and matrix-valued functions that are convex with respect to proper convex cones but have components that fail to be convex in the usual sense.

      \begin{app_ex}[$\set C$-Convex Functions]
      \label{ex:C-convex-functions}
      The set~$\mathbb S^n_+$ of all positive semidefinite matrices represents a proper convex cone in the space~$\mathbb S^n$ of symmetric $n\times n$-matrices, and its interior is given by the set~$\mathbb S^n_{++}$ of positive definite matrices. The matrix inversion~$\bm F:\mathbb S^n\rightarrow \mathbb S^n\cup\{+\bm \infty_{\mathbb S^n_+}\}$ defined through
      \[
      \bm F(\bm X)=\left\{ \begin{array}{ll}
      \bm X^{-1} & \text{if } \bm X\in \mathbb S^n_{++}  \\
      +\bm \infty_{\mathbb S^n_+} & \text{otherwise}
      \end{array} \right.
      \]
      is an example of an $\mathbb S^n_+$-convex function. To see this, note that~$\dom(\bm F)=\mathbb S^n_{++}$ is convex and that the $\mathbb S^n_+$-epigraph of~$\bm F$ can be represented as{\em
      \begin{align*}
      \text{epi}_{\mathbb S^n_+}(\bm F)&=\left\{ (\bm X,\bm Y)\in\mathbb S_{++}^n\times\mathbb S^n\left|\bm X^{-1}\preceq_{\mathbb S^n_+} \bm Y\right. \right\}=\left\{ (\bm X,\bm Y)\in\mathbb S_{++}^n\times\mathbb S^n\left|\begin{pmatrix}
      \bm X &\bm I_n \\ \bm I_n & \bm Y\end{pmatrix} \succeq_{\mathbb S^{2n}_+} \bm 0\right. \right\},
      \end{align*}}where~$\bm I_n$ stands for the identity matrix in~$\mathbb S^n$. The second equality in the above expression follows from a standard Schur complement argument. Thus, {\em $\text{epi}_{\mathbb S^n_+}(\bm F)$} is manifestly convex. Other~$\set C$-convex functions can be constructed as follows. If~$\set C\subseteq \mathbb R^{d_{\set C}}$ is a proper convex cone, $g(\bm x,\bm y)$ is a Borel-measurable function that is convex in~$\bm x$ for every fixed~$\bm y\in\set C$ and $\mu$ is a Borel measure on~$\set C$, then $\bm f(\bm x)=\int_{\set C} \bm y\cdot g(\bm x,\bm y) \,\mu({\rm d}\bm y)$ is $\set C$-convex (provided the integral exists) because $\bm \lambda^\top \bm f$ is convex for every~$\bm \lambda\in\set C^*\backslash\{\bm 0\}$. To see this, recall that~$\bm \lambda^\top\bm y\geq 0$ for all~$\bm y\in\set C$ and that convexity is preserved by integration against a non-negative weighting function \citep[\S~3.2.1]{bv04}.
      \end{app_ex}
      
      \section{Proofs}\label{app_A}

      \noindent \textbf{Proof of Theorem~\ref{prop:weak-duality}.} $\;$ If~\eqref{eq:p-co} or~\eqref{eq:d-co} is infeasible, the statement trivially holds. In the remainder of the proof, we thus assume that both~\eqref{eq:p-co} and~\eqref{eq:d-co} are feasible. Set $\mathcal C= \cap_{i\in\mathcal I_0} \dom(f_i)$, which is nonempty and convex by the feasibility of~\eqref{eq:p-co} and assumption~\textbf{(F)}, respectively. However, $\mathcal C$ is not necessarily closed because not every proper, closed and convex function has a closed domain. Next, define the Lagrangian $\mathscr{L}:\R^{d_{\bm x}}\times\R^I \rightarrow \overline{\R}$ associated with problem~\eqref{eq:p-co} through
      \begin{equation*}
      \mathscr{L}(\bm x, \bm \lambda)= \left\{ \begin{array}{cl}
      f_0(\bm x)+\sum_{i\in\mathcal I} \lambda_i f_i(\bm x) & \text{if }\bm x\in\mathcal C,~\bm \lambda\ge\bm 0,\\
      -\infty & \text{if }\bm x\in\mathcal C,~\bm \lambda\not\ge\bm 0,\\
      +\infty & \text{otherwise.}\end{array}\right.
      \end{equation*}
      As the objective and constraint functions of problem~\eqref{eq:p-co} are proper and convex by assumption~\textbf{(F)}, the Lagrangian $\mathscr{L}(\bm x, \bm \lambda)$ is proper and convex in $\bm x$ for every fixed $\bm \lambda\ge \bm 0$. As $\mathcal C$ may fail to be closed, however, $\mathscr{L}(\bm x, \bm \lambda)$ is not necessarily closed in $\bm x$ even if $\bm \lambda\ge \bm 0$. One also easily verifies that $-\mathscr{L}(\bm x, \bm \lambda)$ is proper, closed and convex in $\bm \lambda$ for every fixed $\bm x\in\mathcal C$.

      Using the Lagrangian, the primal problem~\eqref{eq:p-co} can be expressed as the min-max problem
      \begin{equation} \label{eq:primal-l}
      \inf_{\bm x\in\mathcal C} f(\bm x), \quad \text{where}\quad f(\bm x)=\sup_{\bm \lambda \geq \bm 0}  \mathscr{L}(\bm x, \bm \lambda)=\left\{\begin{array}{cl}
      f_0(\bm x) & \text{if } f_i(\bm x)\le 0 \;\; \forall i\in\mathcal I, \\ +\infty & \text{otherwise.}
      \end{array}\right.
      \end{equation}
      Below we will show that the dual problem~\eqref{eq:d-co} can be bounded above by the max-min problem
      \begin{equation} \label{eq:dual}
      \sup_{\bm \lambda \geq \bm 0} g(\bm \lambda), \quad \text{where}\quad  g(\bm \lambda)=\inf_{\bm x\in\mathcal C} \mathscr{L}(\bm x, \bm \lambda).
      \end{equation}
      The statement of the theorem then follows because
      \begin{equation}
      \label{eq:min-max}
      \inf\eqref{eq:p-co} \; = \; \inf_{\bm x\in\mathcal C} f(\bm x)
      \; = \;
      \inf_{\bm x\in\mathcal C} \sup_{\bm \lambda \geq \bm 0}  \mathscr{L}(\bm x, \bm \lambda)
      \; \ge \;
      \sup_{\bm \lambda \geq \bm 0}  \inf_{\bm x\in\mathcal C} \mathscr{L}(\bm x, \bm \lambda)
      \; = \;
      \sup_{\bm \lambda \geq \bm 0}  g(\bm \lambda)
      \; \ge\;
      \sup\eqref{eq:d-co},
      \end{equation}
      where the first inequality is  a direct consequence of the classical min-max inequality.
      %
      To see that the second inequality holds, fix any $\bm \lambda \geq \bm 0$ and note that
      \begin{align}
      g( \bm \lambda)	\; = \; & \inf_{\bm x} \left\{ f_0  (\bm x) +  \sum_{i\in \set{I}: \atop \lambda_i > 0} \lambda_i f_i  (\bm x) + \sum_{i\in \set{I}: \atop \lambda_i = 0}   \delta_{\dom (f_i)}  (\bm x)   \right\} \nonumber\\
      \; = \; &  - \sup_{\bm x}  \left\{ \bm 0 ^\top \bm x -  f_0  (\bm x) - \sum_{i\in \set{I}: \atop \lambda_i > 0} \lambda_i f_i  (\bm x) - \sum_{i\in \set{I}: \atop \lambda_i = 0}   \delta_{\dom (f_i)}  (\bm x)  \right\} \nonumber\\
      \; \ge \; &- \inf_{\{\bm w_i\}_{i\in \set{I}_0}}    \left\{ f_0^*  (\bm w_0)  + \sum_{i\in \set{I}:\atop   \lambda_i > 0}  (\lambda_i f_i)^* \left(   \bm w_i \right) +  \sum_{i\in \set{I}:\atop  \lambda_i = 0} \delta^*_{\dom (f_i)}  (\bm w_i)  \ \Big|  \ \sum_{i\in \set{I}_0} \bm w_i = \bm 0\ \right\} \nonumber\\
      \; = \;  &- \inf_{\{\bm w_i\}_{i\in \set{I}_0}}    \left\{ f_0^*  (\bm w_0)  + \sum_{i\in \set{I}:\atop   \lambda_i > 0} \lambda_i f_i^* \left(   \bm w_i/ \lambda_i \right) +  \sum_{i\in \set{I}:\atop  \lambda_i = 0} \delta^*_{\dom (f_i)}  (\bm w_i)  \ \Big|  \ \sum_{i\in \set{I}_0} \bm w_i = \bm 0\ \right\} \nonumber\\
      \; = \; & \sup_{\{\bm w_i\}_{i\in \set{I}_0}}   \left\{ - f_0^*  (\bm w_0) - \sum_{i\in \set{I}} \lambda_i f_i^* \left(  \bm w_i / \lambda_i \right) \ \Big|  \ \sum_{i\in \set{I}_0} \bm w_i = \bm 0\ \right\}, \label{eq:dual-explicit-proof}
      \end{align}
      where the first equality expresses $g(\bm \lambda)$ as the optimal value of an unconstrained minimization problem, in which any solution $\bm x\notin\mathcal C$ adopts an infinite objective value. The objective function of this minimization problem is proper because $\mathcal C\neq \emptyset$, and it is convex because the functions $f_i$, $i\in \set I_0$, are all convex. The inequality in the above expression follows from Proposition~\ref{prop:inf-conv}, which asserts that the conjugate of a sum of proper convex functions provides a lower bound on the infimal convolution of the conjugates of these functions. The third equality follows from Theorem 16.1 by \citet{Rockafellar1970}, which asserts that the conjugate of a positive multiple of a proper convex function equals the perspective of the conjugate of this function. The fourth equality holds due to our definition of the convex perspective and because $\delta^*_{\dom(f_i)} = \delta^*_{\dom(f_i^{**})}$ by virtue of Theorem~12.2 by \citet{Rockafellar1970}. Substituting the lower bound~\eqref{eq:dual-explicit-proof} on $g(\bm \lambda)$ into~\eqref{eq:dual} then shows that the dual problem~\eqref{eq:d-co} is indeed bounded above by~\eqref{eq:dual}.
      This observation completes the proof. \hfill \qed

      ~\\[-8mm]

      The proof of Theorem~\ref{prop:strong-duality} relies on the following auxiliary result.

      \begin{app_lem}\label{lem:dual_of_d_is_p}
      The Lagrangian dual of problem~\eqref{eq:d-co} is equivalent to problem~\eqref{eq:p-co}.
      \end{app_lem}

      \noindent \textbf{Proof of Lemma~\ref{lem:dual_of_d_is_p}.} $\;$
      We express~\eqref{eq:d-co} as the max-min problem
      \begin{equation} \label{eq:dual-explicit-l}
      \sup_{  \bm \lambda \geq \bm 0} \ \sup_{\{\bm w_i\}_{i \in \mathcal{I}_0} } \ \inf_{\bm x} \ -f_0^* \left( \bm w_0 \right)- \sum_{i\in \set{I}}  \lambda_i f_i^* \left( \bm w_i /  \lambda_i \right) + \sum_{i \in \set I_0} \bm w_i^\top \bm x,
      \end{equation}
      where the equality constraint involving $\{\bm w_i\}_{i \in \mathcal{I}_0}$ is enforced implicitly through the embedded minimization over the Lagrange multiplier $\bm{x}$. Interchanging the order of the maximization and minimization operators in~\eqref{eq:dual-explicit-l}, we obtain the standard Lagrangian dual of the dual problem~\eqref{eq:d-co}. 
      \begin{equation} \label{eq:dual-dual}
      \inf_{\bm x} \ \sup_{  \bm \lambda \geq \bm 0} \ \sup_{\{\bm w_i\}_{i \in \mathcal{I}_0} } \ -f_0^* \left( \bm w_0 \right)- \sum_{i\in \set{I}}  \lambda_i f_i^* \left( \bm w_i /  \lambda_i \right) + \sum_{i \in \set I_0} \bm w_i^\top \bm x
      \end{equation}
      For any fixed $\bm x$, the embedded maximization problems in~\eqref{eq:dual-dual} evaluate to~$f(\bm x)$ as defined in~\eqref{eq:primal-l}. Indeed, a direct calculation reveals that
      \begin{align*}
      & \sup_{ \bm \lambda \geq \bm 0} \ \sup_{ \{\bm w_i\}_{i \in \mathcal{I}_0}} \left\{ -f_0^* \left( \bm w_0 \right)- \sum_{i\in \set{I}}  \lambda_i f_i^* \left( \bm w_i /  \lambda_i \right) +  \sum_{i \in \set I_0} \bm w_i^\top \bm x  \right\} \nonumber\\
      = \ &  \sup_{ \bm \lambda \geq \bm 0} \  \Bigg\{\sup_{\bm w_0} \Bigg\{ \bm w_0^\top \bm x   -  f^{*}_0(\bm w_0)\Bigg\} + \sum_{i\in \set I: \atop \lambda_i > 0} \sup_{\bm w_i} \Bigg\{ \bm w_i^\top \bm x - \lambda_i f^{*}_i\left(\bm w_i / \lambda_i\right)\Bigg\} \nonumber \\
      & \mspace{243mu} + \sum_{i\in \set{I}: \atop \lambda_i = 0} \sup_{\bm w_i} \Bigg\{\bm w_i^\top \bm x - \delta^*_{\dom (f_i)}  (\{\bm w_i\}_i)  \Bigg\} \Bigg\} \nonumber\\
      = \ &   \sup_{\bm \lambda \geq \bm 0}  \left\{  f_0( \bm x) + \sum_{i\in \set I: \atop \lambda_i > 0}  \lambda_i f_i(\bm x ) + \sum_{i\in \set{I}: \atop \lambda_i = 0}   \delta_{{\rm cl} (\dom (f_i))}  (\bm x)  \right\}
      = \ f(\bm x). 
      \end{align*}
      Here, the first equality follows from a regrouping of terms, the definition of the convex perspective and Theorem~12.2 by \citet{Rockafellar1970}, which applies because $f_i$, $i\in \set I_0$, is proper, closed and convex by assumption \textbf{(F)}. The second equality exploits the fact that each inner maximization evaluates a conjugate. The explicit expressions for these conjugates follow from Theorems~12.2 and~13.2 and the remarks before Theorem~16.1 in the monograph by \citet{Rockafellar1970}, where we again use the fact that $f_i$, $i\in \set I_0$, is proper, closed and convex. The third equality, finally, follows from a case distinction. Thus, the Lagrangian dual of~\eqref{eq:d-co} is equivalent to~\eqref{eq:p-co}. 
      \hfill \qed

      ~\\[-8mm]

      \noindent \textbf{Proof of Theorem~\ref{prop:strong-duality}.} $\;$
      In view of assertion \emph{(i)}, assume first that~\eqref{eq:p-co} admits a Slater point. In this case~\eqref{eq:p-co} is feasible, and its infimum is strictly smaller than $+ \infty$. If the infimum of~\eqref{eq:p-co} evaluates to $-\infty$, then the supremum of~\eqref{eq:d-co} also amounts to $-\infty$ by weak duality (see Theorem~\ref{prop:weak-duality}), and~\eqref{eq:d-co} is solvable because any solution of an infeasible problem is optimal according to our convention. In the remainder we may thus assume that the infimum of~\eqref{eq:p-co} is finite. In this case, we will first show that both inequalities in~\eqref{eq:min-max} collapse to equalities, which implies that the duality gap between~\eqref{eq:p-co} and~\eqref{eq:d-co} vanishes. Indeed, the first inequality in~\eqref{eq:min-max} becomes tight due to Proposition~5.3.6 by~\citet{b09}, which applies because~\eqref{eq:p-co} has a finite infimum and admits a Slater point 
      \[
      \bm x^{\rm S} \in \bigcap_{i\in \set{I}_0} {\rm ri} (\dom (f_i)) =  {\rm ri} \Big( \bigcap_{i\in \set{I}_0} \dom (f_i) \Big),
      \]
      where the above equality holds due to Proposition~2.42 by \citet{rw09}. The second inequality in~\eqref{eq:min-max} becomes tight due to Proposition~\ref{prop:inf-conv} and because the existence of a Slater point guarantees that $\cap_{i\in \set{I}_0} {\rm ri} (\dom (f_i)) \ne \emptyset$. To establish the solvability of~\eqref{eq:d-co}, note that~\eqref{eq:dual} is solved by some $\bm \lambda^\star$ due to Proposition~5.3.6 by~\citet{b09} and that the parametric problem~\eqref{eq:dual-explicit-proof} for $\bm \lambda =\bm \lambda^*$ is solved by some $\{\bm w^\star_i\}_{i\in \set{I}_0}$ due to Proposition~\ref{prop:inf-conv} and because $\cap_{i\in \set{I}_0} {\rm ri} (\dom (f_i)) \ne \emptyset$. By construction, $(\bm \lambda^\star, \{\bm w^\star_i\}_{i\in \set{I}_0})$ thus constitutes an optimal solution for~\eqref{eq:d-co}.

      Assume now that~\eqref{eq:d-co} admits a Slater point. Similar arguments as in the previous paragraph show that strong duality and solvability of~\eqref{eq:p-co} trivially hold if the supremum of~\eqref{eq:d-co} evaluates to $+\infty$, and we may thus assume that the supremum of~\eqref{eq:d-co} is finite. Strong duality between~\eqref{eq:p-co} and~\eqref{eq:d-co} as well as the solvability of~\eqref{eq:p-co} then follow from Lemma~\ref{lem:dual_of_d_is_p} and Proposition~5.3.6 by~\citet{b09}, which applies because~\eqref{eq:d-co} has a finite supremum and admits a Slater point 
      that satisfies all explicit (linear) constraints and resides in the relative interior of the objective function.

      As for assertion \emph{(ii)}, assume first that the feasible region of~\eqref{eq:p-co} is nonempty and bounded. This ensures via assumption~\textbf{(F)} that the function~$f$ in~\eqref{eq:primal-l} is proper and has compact sublevel sets. Strong duality between~\eqref{eq:p-co} and~\eqref{eq:d-co} as well as solvability of~\eqref{eq:p-co} then follow from Lemma~\ref{lem:dual_of_d_is_p} as well as Proposition~5.5.4 by \citet{b09}.

      Finally, assume that the feasible region of~\eqref{eq:d-co} is nonempty and bounded, which ensures via our definition of the convex perspective and assumption~\textbf{(F)} that the (negative) optimal value function of the inner minimization problem of~\eqref{eq:dual-explicit-l} in the proof of Lemma~\ref{lem:dual_of_d_is_p}, which is given by
      \[
      -\inf_{\bm x} \ -f_0^* \left( \bm w_0 \right)- \sum_{i\in \set{I}}  \lambda_i f_i^* \left( \bm w_i /  \lambda_i \right) + \sum_{i \in \set I_0} \bm w_i^\top \bm x=
      \left\{ \begin{array}{cl} f_0^* \left( \bm w_0 \right)+ \sum_{i\in \set{I}}  \lambda_i f_i^* \left( \bm w_i /  \lambda_i \right) & \text{if } \sum_{i \in \set I_0} \bm w_i=\bm 0,\\
      +\infty & \text{otherwise,}
      \end{array}\right.
      \]
      is proper and has compact sublevel sets. Strong duality and solvability of~\eqref{eq:d-co} thus follow from Lemma~\ref{lem:dual_of_d_is_p} and Proposition~5.5.4 by \citet{b09}. 
      \hfill \qed

      ~\\[-8mm]

      \noindent \textbf{Proof of Proposition~\ref{prop:bounded-slater}} $\;$
      We construct a strict Slater point for~\eqref{eq:d-co} from \emph{(i)} a (possibly infeasible) solution $(\{\bm w^+_i\}_i, \bm \lambda^+)$ to~\eqref{eq:d-co} that resides in the relative interior of the domain of the objective function of~\eqref{eq:d-co} and \emph{(ii)} a point $(\{\bm w^-_i\}_i, \bm \lambda^-)$ that resides in (but not necessarily in the relative interior of) the domain of the objective function of~\eqref{eq:d-co} and that offsets any infeasibility of $(\{\bm w^+_i\}_i, \bm \lambda^+)$.

      By assumption~\textbf{(F)}, the function~$f_i$ is proper, and by Theorem~12.2 of \cite{Rockafellar1970}, its conjugate~$f_i^*$ inherits properness from $f_i$ for each $i \in \set I_0$. Thus, there exists $\bm w^+_i \in {\rm ri} ( \dom (f_i^*) )$ for every $i \in \set I_0$. Setting $\bm \lambda^+ = \bm 1>\bm 0$, it is then easy to verify that $( \{\bm w^+_i\}_i, \bm \lambda^+)$ resides within the relative interior of the domain of the objective function of~\eqref{eq:d-co}. However, because $\bm w=\sum_{i \in \set I} \bm w^+_i$ may differ from~$\bm 0$, the solution $( \{\bm w^+_i\}_i, \bm \lambda^+)$ may nevertheless be infeasible in~\eqref{eq:d-co}.

      To construct the point $(\{\bm w^-_i\}_i, \bm \lambda^-)$, we consider the following variant of~\eqref{eq:p-co}, where we add the linear term $\bm w^\top \bm x$ to the objective function with the fixed gradient $\bm w\in \R^{d_{\bm x}}$.
      \begin{equation} \label{eq:variant-primal}
      \begin{array}{c@{\quad}l@{\qquad}l}
      \displaystyle \inf & \multicolumn{2}{l}{\displaystyle \mspace{-8mu} f_0(\bm x) + \bm w^\top \bm x} \\
      \displaystyle \subj & \displaystyle f_i(\bm x) \le 0 & \forall i \in \set{I} \\
      & \displaystyle \bm x \text{ free}
      \end{array}
      \tag{P${}_{\bm w}$}
      \end{equation}
      As the conjugate of the new objective function $f_0(\bm x)+\bm w^\top \bm x$ evaluated at $\bm w_0$ amounts to $f_0^*(\bm w_0-\bm w)$, the variable substitution $\bm w_0\leftarrow \bm w_0- \bm w$ allows us to express the problem dual to~\eqref{eq:variant-primal} as
      \begin{equation} \label{eq:variant-dual}
      \begin{array}{c@{\quad}l@{\qquad}l}
      \displaystyle \sup & \multicolumn{2}{l}{\displaystyle \mspace{-8mu} -f_0^* \left( \bm w_0 \right)- \sum_{i\in \set{I}}  \lambda_i f_i^* \left( \bm w_i /  \lambda_i \right)} \\
      \displaystyle \subj & \displaystyle \sum_{i\in \set{I}_0} \bm w_i = - \bm w\\
      & \displaystyle \bm w_i \textnormal{ free} & \forall i \in \set{I}_0 \\
      & \displaystyle \bm \lambda \ge \bm 0. \\
      \end{array}
      \tag{D${}_{\bm w}$}
      \end{equation}
      By construction, \eqref{eq:variant-primal} and~\eqref{eq:p-co} share the same feasible region, which is nonempty and bounded by assumption, whereas~\eqref{eq:variant-dual} and~\eqref{eq:d-co} share the same objective function. Similar arguments as in the proof of Theorem~\ref{prop:strong-duality}~\emph{(ii)} thus imply that \eqref{eq:variant-primal} and~\eqref{eq:variant-dual} share the same (finite) optimal value, which in turn ensures that problem~\eqref{eq:variant-dual} admits a feasible solution $(\{\bm w^-_i\}_i, \bm \lambda^-)$. By construction, this solution resides within the domain of the common objective function of~\eqref{eq:variant-dual} and~\eqref{eq:d-co} but not necessarily within its relative interior.

      Next, define $(\bm w^{\rm S}_i,  \lambda^{\rm S}_i) = \frac{1}{2} (\bm w^+_i, \lambda^+_i) + \frac{1}{2} (\bm w^-_i, \lambda^-_i)$ for every $i\in \set I_0$. By the line segment principle of \citet[Proposition~1.3.1]{b09}, the constructed solution $(\{\bm w^{\rm S}_i\}_i,  \bm \lambda^{\rm S})$ belongs to the relative interior of the domain of the objective function of~\eqref{eq:d-co}. In addition, we have
      \[
      \sum_{i\in \set I} \bm w^{\rm S}_i
      \; = \;
      \frac{1}{2} \sum_{i\in \set I} \bm w^+_i + \frac{1}{2} \sum_{i\in \set I} \bm w^-_i
      \; = \;
      \frac{1}{2}\bm w-\frac{1}{2}\bm w
      \; = \;
      \bm 0
      \]
      and $\bm \lambda^{\rm S} > \bm 0$. Therefore, the solution $(\{\bm w^{\rm S}_i\}_i, \bm \lambda^{\rm S})$ constitutes a strict Slater point for~\eqref{eq:d-co}.
      \hfill \qed

      ~\\[-8mm]

      \noindent \textbf{Proof of Theorem~\ref{thm:primal-worst>dual-best-ro}.} $\;$
      For any fixed ${\bm z}_i\in \set Z$, $i\in\mathcal{I}_0$, the problems~\eqref{eq:pw-ro} and~\eqref{eq:db-ro} collapse to instances of~\eqref{eq:ps-ro} and~\eqref{eq:ds-ro}, respectively, and the following inequalities are due to Theorem~\ref{prop:weak-duality}.
      \begin{align*}
      \inf_{\bm x \in \set X({\bm z}_1, \ldots, {\bm z}_I)} f_0 (\bm x, {\bm z}_0)  & \ge \sup_{\sum_{i\in \set{I}_0} \bm w_i = \bm 0 \atop \bm \lambda \ge \bm 0} -   f_0^{*1}   \left(\bm w_0, {\bm z}_0 \right) - \sum_{i\in \set{I}}   \lambda_i f_i^{*1}   (\bm w_i/\lambda_i,{\bm z}_i) \\
      \implies \quad \sup_{ \{\bm z_i\}_{i \in \mathcal{I}_0} \subseteq \set Z } \inf_{\bm x \in \set X( \bm z_1, \ldots, \bm z_I)} f_0 (\bm x, \bm z_0)  & \ge  \sup_{ \{\bm z_i\}_{i \in \mathcal{I}_0} \subseteq \set Z }  \sup_{\sum_{i\in \set{I}_0} \bm w_i = \bm 0 \atop \bm \lambda \ge \bm 0} -   f_0^{*1}   \left(\bm w_0, \bm z_0 \right) - \sum_{i\in \set{I}}   \lambda_i f_i^{*1}   (\bm w_i/\lambda_i, \bm z_i),
      \end{align*}
      where $\set X(\bm z_1, \ldots, \bm z_I) = \left\{ \bm x \in \R^{d_{\bm x}} \ | \ f_i(\bm x, \bm z_i)\le 0 \;\; \forall i \in \set I  \right\}$. Note that the right-hand side of the second inequality is equivalent to~\eqref{eq:db-ro}, while left-hand side is upper bounded by~\eqref{eq:pw-ro} because
      \[
      \inf_{\bm x \in \set X( \bm z_1, \ldots, \bm z_I)} \sup_{ \{\bm z_i\}_{i \in \mathcal{I}_0} \subseteq \set Z } f_0 (\bm x, \bm z_0)
      \;\; \ge \;\;
      \sup_{ \{\bm z_i\}_{i \in \mathcal{I}_0} \subseteq \set Z } \inf_{\bm x \in \set X( \bm z_1, \ldots, \bm z_I)} f_0 (\bm x, \bm z_0)
      \]
      due to the min-max inequality.
      \hfill \qed

      ~\\[-8mm]

      \noindent \textbf{Proof of Proposition~\ref{prop:p-w=p-w-cvx-ro}.} $\;$
      To show that~\eqref{eq:pw-cvx-ro} upper bounds~\eqref{eq:pw-ro}, we dualize the embedded maximization problems in~\eqref{eq:pw-ro} that evaluate the worst-case uncertainty realizations in the objective and the constraint functions. Specifically, for any fixed $i\in \set{I}_0$ and $\bm x \in \R^{d_{\bm x}}$, we have
      \begin{subequations}
      \begin{align}  \label{eq:subproblem-inf}
      \sup_{\bm z_i \in \supp}  f_i(\bm x, \bm z_i) & = - \begin{cases}  \inf \hspace{0.5mm}&  - f_i(\bm x, \bm z_i)\\
      \subj & c_\ell(\bm z_i) \le 0  \qquad \forall \ell \in \set{L}\\
      & \bm z_{i} \textnormal{ free}
      \end{cases}   \\
      & \le - \begin{cases}
      \label{eq:subproblem-sup}
      \sup \  & \displaystyle	- (-   f_i)^{*2}\left(\bm x,  \bm y_{i0} \right)  - \sum_{\ell \in \set{L}} \nu_{i\ell} c_\ell^*  \left( \bm y_{i\ell}/\nu_{i\ell} \right)  \\
      \subj  & \displaystyle \sum_{\ell \in \set{L}_0} \bm   y_{i\ell} = \bm 0 \\
      &  \bm y_{i\ell} \textnormal{ free} \qquad \qquad \forall \ell \in\mathcal L_0\\
      &  \nu_{i \ell} \ge 0 \mspace{38mu} \qquad \forall \ell \in\mathcal L,
      \end{cases}
      \end{align}
      \end{subequations}
      where the inequality follows from Theorem~\ref{prop:weak-duality}, which applies because the assumptions \textbf{(RF)} and~\textbf{(C)} imply that~\eqref{eq:subproblem-inf} satisfies assumption \textbf{(F)} from Section \ref{sec:co}. Interchanging the minus sign and the supremum operator in~\eqref{eq:subproblem-sup} results in a minimization problem. Substituting the resulting minimization problem into~\eqref{eq:pw-ro} for every $i\in \set{I}_0$ and then merging the infimum operators in the objective and removing the infimum operators in the constraints yields~\eqref{eq:pw-cvx-ro}, and thus the infimum of~\eqref{eq:pw-ro} is indeed smaller or equal to that of~\eqref{eq:pw-cvx-ro}. Note that if the optimal solution of~\eqref{eq:subproblem-sup} is not attained for some $i\in \set{I}$, then removing the infimum operators in the constraints may lead to a further restriction of the problem and therefore result in a higher optimal value.

      As for assertion~\emph{(ii)}, assume that $\mathcal Z$ admits a Slater point $\bm z^{\rm S}$. As $\dom(-f_i(\bm x, \cdot))=\R^{d_{\bm x}}$ due to assumption~\textbf{(RF)}, $\bm z^{\rm S}$ is also a Slater point for the minimization problem in~\eqref{eq:subproblem-inf}. By Theorem~\ref{prop:strong-duality}~\emph{(i)}, the duality gap between~\eqref{eq:subproblem-inf} and~\eqref{eq:subproblem-sup} thus vanishes, and~\eqref{eq:subproblem-sup} is solvable. This implies that the infima of~\eqref{eq:pw-ro} and~\eqref{eq:pw-cvx-ro} coincide and that any optimizer of~\eqref{eq:pw-ro} can be combined with optimizers of the dual subproblems~\eqref{eq:subproblem-sup} for $i\in\mathcal{I}_0$ to construct an optimizer for~\eqref{eq:pw-cvx-ro}.

      As for assertion~\emph{(iii)}, assume finally that $\set Z$ is compact and that problem~\eqref{eq:pw-ro} admits a strict Slater point $\bm x^{\rm S}$. In this case, the functions $F_i(\bm x)=\sup_{\bm z_i \in \supp}  f_i(\bm x, \bm z_i)$, $i\in\mathcal I_0$, are convex and continuous in $\bm x$ by virtue of assumption~\textbf{(RF)}. Indeed, $F_i(\bm x)$ is convex and closed because~$f_i(\bm x, \bm z_i)$ is convex and closed in $\bm x$ for every fixed $\bm z_i$. Moreover, $F_i(\bm x)$ is finite for every fixed $\bm x$ due to Weierstrass' extreme value theorem, which applies because $\set Z$ is compact and $-f_i(\bm x, \bm z_i)$ is closed (and thus lower semicontinuous) in $\bm z_i$. As any convex function is continuous on the relative interior of its domain, we may thus conclude that each $F_i$ is continuous on $\mathbb{R}^{d_{\bm x}}$. By forming convex combinations with the strict Slater point~$\bm x^{\rm S}$, one can now use the continuity and convexity of the functions $F_i$, $i\in\mathcal I_0$, to prove that any $\bm x$ feasible in~\eqref{eq:pw-ro} can be represented as a limit of strict Slater points for~\eqref{eq:pw-ro}. Therefore,~\eqref{eq:pw-ro} is equivalent to
      \begin{equation}
      \label{eq:cvxo-general-strict}
      \begin{array}{l@{\quad}l@{\qquad}l}
      \inf  &  \displaystyle \sup_{\bm z_0 \in \supp}f_0(\bm x, \bm z_0) \\
      \subj &  \displaystyle  \sup_{\bm z_i \in \supp}  f_i  (\bm x, \bm z_i) < 0  & \forall i\in \set{I}\\
      &  \bm{x} \text{ free.}
      \end{array}
      \end{equation}
      We can now dualize the embedded maximization problems in~\eqref{eq:cvxo-general-strict} as in the proof of assertion~\emph{(i)}. The compactness of $\set Z$ implies via Theorem~\ref{prop:strong-duality}~\emph{(ii)} that the duality gap between~\eqref{eq:subproblem-inf} and~\eqref{eq:subproblem-sup} vanishes. The minimization problems resulting from interchanging the minus sign and the supremum operator in~\eqref{eq:subproblem-sup} can then be substituted back into~\eqref{eq:cvxo-general-strict}, the infimum operators in the objective can be merged, and the infimum operators in the constraints can be removed to obtain a variant of~\eqref{eq:pw-cvx-ro} with strict inequalities. Note that because the constraints in~\eqref{eq:cvxo-general-strict} are strict, the infimum operators in the constraints may indeed be removed without restricting the problem even if the corresponding subproblems are not solvable. Next, we argue that the strict inequalities in the resulting problem can again be relaxed to weak inequalities without changing the problem’s optimal value. By Remark~\ref{rem:strict-weak-inequalities}, this is the case if problem~\eqref{eq:pw-cvx-ro} admits a strict Slater point. Such a strict Slater point can be constructed by combining the strict Slater point $\bm x^{\rm S}$ of~\eqref{eq:pw-ro} with strict Slater points $(\{ \bm{y}_{i_\ell}^{\rm S}, \nu_{i_\ell}^{\rm S}\}_\ell )$ for the dual subproblems, $i\in I_0$, which exist thanks to Proposition~\ref{prop:bounded-slater}.
      Thus, the infima of~\eqref{eq:pw-ro} and~\eqref{eq:pw-cvx-ro} are indeed equal.

      Finally, to see that the solvability of~\eqref{eq:pw-cvx-ro} implies the solvability of~\eqref{eq:pw-ro}, assume that $(\bm x^\star, \{\bm y^\star_{i\ell}, \nu^\star_{i\ell} \}_{i, \ell})$ solves~\eqref{eq:pw-cvx-ro}. The above reasoning then implies that the optimal value of~\eqref{eq:pw-cvx-ro} amounts to~$F_0(\bm x^\star)$, which in turn shows that~$\bm x^\star$ solves~\eqref{eq:pw-ro}.
      \hfill \qed

      ~\\[-8mm]

      \noindent \textbf{Proof of Proposition~\ref{prop:d-b=d-b-cvx-ro}.} $\;$
      As for~\emph{(i)}, we prove that any feasible solution to~\eqref{eq:db-ro} corresponds to a feasible solution to~\eqref{eq:db-cvx-ro} with the same objective value. To this end, select any $(\{\bm w_i, \bm z_i\}_i, \bm \lambda)$ feasible in~\eqref{eq:db-ro} and define $\bm \upsilon_i=\lambda_i \bm z_i$ for $i\in\mathcal I$. We show that $( \{ \bm w_i\}_i,  {\bm z}_0, \bm \lambda, \left\{ {\bm \upsilon}_i \right\}_{i} )$ is feasible in~\eqref{eq:db-cvx-ro} and attains the same objective value. Indeed, it is clear that $\lambda_i c_\ell(\bm \upsilon_i/\lambda_i)\le 0$ for all $\ell \in\mathcal L$ and $i\in\mathcal I$ with $\lambda_i>0$. If $\lambda_i=0$ for some $i\in\mathcal I$, on the other hand, we have $\bm \upsilon_i=\bm 0$ and
      \[
      0 c_\ell(\bm 0 / 0)
      \; = \;
      \delta^*_{\dom(c^*_\ell)}(\bm 0)
      \; = \;
      0 \qquad \forall \ell \in \mathcal L,
      \]
      where the first equality follows from the definition of the convex perspective, while the second equality holds because $c^*_\ell$ inherits properness from $c_\ell$ \citep[Theorem~12.2]{Rockafellar1970} and because the support function of $\dom(c^*_\ell) \ne \emptyset$ vanishes at the origin. All other constraints of~\eqref{eq:db-cvx-ro} are trivially satisfied. Next, we show that the objective value of  $(\{\bm w_i, \bm z_i\}_i, \bm \lambda )$ in~\eqref{eq:db-ro} equals that of  $( \{ \bm w_i\}_i,  {\bm z}_0, \bm \lambda, \left\{ {\bm \upsilon}_i \right\}_{i} )$ in~\eqref{eq:db-cvx-ro}. Indeed, it is clear  that $ \lambda_i f_i^{*1}   (\bm w_i/\lambda_i,\bm \upsilon_i/ \lambda_i)= \lambda_i  f_i^{*1} (\bm w_i/ \lambda_i,\bm z_i)$ for all $i\in\mathcal I$ with $\lambda_i>0$. If $\lambda_i=0$ for some $i\in\mathcal I$, on the other hand, then $\bm \upsilon_i = \bm 0$, and the convex perspective function $ 0 f_i^{*1}  (\bm w_i/ 0,\bm 0/ 0)$ is defined as the support function of the domain of $(f_i^{*1})^*$. As~$(f_i^{*1})^*=(-f_i)^{*2}$ due to Proposition~\ref{prop:partial-conjugates}, we may thus conclude that
      \begin{align*}
      0 f_i^{*1}   (\bm w_i/0,\bm 0 /0)
      \; & = \;
      \delta^*_{\dom ( (-f_i)^{*2} )}(\bm w_i, \bm 0) \\
      \; & = \;
      \sup_{\bm x} \left\{ \bm w_i^\top \bm x \ | \ \exists \bm y \in \R^{d_{\bm z}} : (-f_i)^{*2} (\bm x, \bm y) < +\infty \right\}\\
      \; & = \;
      \delta_{\{\bm 0 \} }(\bm w_i) \\
      \; & = \;
      \delta^*_{\dom(f_i(\cdot, \bm z_i))}(\bm w_i)
      \; = \; 0 f_i^{*1} (\bm w_i/0,\bm z_i),
      \end{align*}
      where the third equality follows from \citet[Theorem~12.2]{Rockafellar1970}, which ensures that for any $\bm x \in \R^{d_{\bm x}}$ the partial conjugate $(-f_i)^{*2}(\bm x, \cdot )$ of the proper function $-f_i(\bm x, \cdot)$ is also proper. Thus, for any $\bm x\in \R^{d_{\bm x}}$, there exists $\bm y \in \R^{d_{\bm z}}$ with $(-f_i)^{*2}(\bm x, \bm y) < +\infty$, which implies that $\bm x$ is actually free, and the supremum evaluates to $+\infty$ unless $\bm w_i=\bm 0$. The fourth equality holds because $\dom(f_i(\cdot, \bm z_i)) = \R^{d_{\bm x}}$ for every $\bm z_i \in \R^{d_{\bm z}}$, and the last equality follows from the definition of the partial convex perspective and from Theorem~12.2 of \citet{Rockafellar1970}, which implies that $(f_i^{*1})^{*1}(\cdot, \bm z_i)=f_i(\cdot, \bm z_i)$. In summary, we have shown that the optimal value of~\eqref{eq:db-ro} does not exceed that of~\eqref{eq:db-cvx-ro}, and thus assertion \emph{(i)} follows.

      Assume now that $( \{ \bm w_i^{\textnormal{S}},   {\bm z}_i^{\textnormal{S}} \}_i,  \bm \lambda^{\textnormal{S}} )$ is a strict Slater point for problem~\eqref{eq:db-ro}, which implies that $\bm \lambda^{\textnormal{S}} > \bm 0$. In that case, $(  \{ \bm w_i^{\textnormal{S}}\}_i,  {\bm z}_0^{\textnormal{S}},  \bm \lambda^{\textnormal{S}},  \left\{ {\bm \upsilon}_i^{\textnormal{S}} \right\}_{i} )$ with  ${\bm \upsilon}_i^{\textnormal{S}} = \lambda_i^{\textnormal{S}} \cdot {\bm z}_i^{\textnormal{S}}$, $i \in \set I$, is a strict Slater point for problem~\eqref{eq:db-cvx-ro}. To prove assertion \emph{(ii)}, we show that any feasible solution to~\eqref{eq:db-cvx-ro} corresponds to a sequence of feasible solutions to~\eqref{eq:db-ro} that asymptotically attain a non-inferior objective value. This implies that the optimal value of~\eqref{eq:db-cvx-ro} is smaller or equal to that of~\eqref{eq:db-ro}, and together with assertion \emph{(i)} we can then conclude that the optimal values of~\eqref{eq:db-ro} and~\eqref{eq:db-cvx-ro} coincide.
      To this end, select any solution $( \{ \bm w_i\}_i,  {\bm z}_0, \bm \lambda,  \left\{ {\bm \upsilon}_i \right\}_{i} )$ feasible in~\eqref{eq:db-cvx-ro} and any $\epsilon>0$. As the feasible region of~\eqref{eq:db-cvx-ro} is convex and the objective function of~\eqref{eq:db-cvx-ro} is concave, there exists $\theta\in (0,1)$ such that the solution $(\{\bm w_i^\epsilon\}_i,  {\bm z}_0^\epsilon, \bm \lambda^\epsilon , \left\{ {\bm \upsilon}_i^\epsilon \right\}_{i})$ defined through
      \begin{equation}\label{eq:sequence_of_solutions}
      (\{\bm w_i^\epsilon\}_i,  {\bm z}_0^\epsilon, \bm \lambda^\epsilon , \left\{ {\bm \upsilon}_i^\epsilon \right\}_{i}) = \theta \cdot (\{\bm w_i\}_i,  {\bm z}_0, \bm \lambda, \left\{ {\bm \upsilon}_i \right\}_{i}) \; + \; (1 - \theta) \cdot (\{\bm w_i^{\textnormal{S}}\}_i,  {\bm z}_0^{\textnormal{S}}, \bm \lambda^{\textnormal{S}}, \left\{ {\bm \upsilon}_i^{\textnormal{S}} \right\}_{i})
      \end{equation}
      is feasible in~\eqref{eq:db-cvx-ro} and attains an objective function value that is at least as large as that of $(  \{ \bm w_i\}_i,  {\bm z}_0, \bm \lambda, \left\{ {\bm \upsilon}_i\right\}_{i} )$ minus $\epsilon$. Setting $\bm z^\epsilon_i = \bm \upsilon^\epsilon_i / \lambda^\epsilon_i$ for all $i\in \set{I}$, which is possible because $\bm \lambda^\epsilon>\bm 0$, it is clear that $( \{\bm w_i^\epsilon, \bm z_i^\epsilon\}_i, \bm \lambda^\epsilon)$ is feasible in~\eqref{eq:db-ro} and attains the same objective value as $( \{ \bm w_i^\epsilon\}_i,  {\bm z}_0^\epsilon,   \bm \lambda^\epsilon,  \left\{ {\bm \upsilon}_i^\epsilon \right\}_{i} )$ in~\eqref{eq:db-cvx-ro}.  As $( \{ \bm w_i\}_i,  {\bm z}_0, \bm \lambda,  \left\{ {\bm \upsilon}_i \right\}_{i} )$ and $\epsilon > 0 $ were chosen arbitrarily, the supremum of~\eqref{eq:db-ro} is thus at least as large as that of~\eqref{eq:db-cvx-ro}. Together with assertion \emph{(i)}, we thus conclude that the suprema of~\eqref{eq:db-ro} and~\eqref{eq:db-cvx-ro} coincide. Moreover, since our proof of assertion \emph{(i)} has shown that any feasible solution to~\eqref{eq:db-ro} corresponds to a feasible solution to~\eqref{eq:db-cvx-ro} with the same objective value, \eqref{eq:db-cvx-ro} is solvable whenever~\eqref{eq:db-ro} is solvable.

      Assume now that $\supp$ is bounded. To prove assertion \emph{(iii)}, we show that any feasible solution to~\eqref{eq:db-cvx-ro} corresponds to a feasible solution to~\eqref{eq:db-ro} with the same objective value. Together with assertion \emph{(i)}, this implies that the optimal values of~\eqref{eq:db-ro} and~\eqref{eq:db-cvx-ro} coincide. To this end, select any solution $( \{ \bm w_i\}_i,  {\bm z}_0, \bm \lambda,  \left\{ {\bm \upsilon}_i \right\}_{i} )$ feasible in~\eqref{eq:db-cvx-ro}, and define $\bm z_i = \bm \upsilon_i / \lambda_i$ if $\lambda_i > 0$ and  $\bm z_i = \bm z_0$ if $\lambda_i=0$, $i\in \set{I}$. Lemma~\ref{lemma:recession_directions}~\emph{(i)} implies that if $\lambda_i = 0$, then $\bm v_i$ must be a recession direction for the uncertainty set $\supp$. As $\supp$ is nonempty and bounded, this in turn implies that $\bm v_i = \bm 0$. Using the same reasoning as in the proof of assertion \emph{(i)}, one can thus show that $\lambda_i c_\ell(\bm \upsilon_i/\lambda_i) = \lambda_i c_\ell(\bm z_i)$ for all $\ell \in\mathcal L$ and $i\in\mathcal I$ and $ \lambda_i f_i^{*1}   (\bm w_i/\lambda_i,\bm \upsilon_i/ \lambda_i)= \lambda_i  f_i^{*1} (\bm w_i/ \lambda_i,\bm z_i)$ for all $i\in\mathcal I$. This implies that $(  \{\bm w_i, \bm z_i\}_i ,  \bm \lambda )$ is feasible in~\eqref{eq:db-ro} and attains the same objective value as $( \{ \bm w_i\}_i,  {\bm z}_0, \bm \lambda,  \left\{ {\bm \upsilon}_i \right\}_{i} )$ in~\eqref{eq:db-cvx-ro}. Note that our proof of assertion \emph{(i)} has shown that any feasible solution to~\eqref{eq:db-ro} corresponds to a feasible solution to~\eqref{eq:db-cvx-ro} with the same objective value, and our proof of assertion \emph{(iii)} has shown that any feasible solution to~\eqref{eq:db-cvx-ro} corresponds to a feasible solution to~\eqref{eq:db-ro} with the same objective value. Since the optimal values of both problems coincide, we can conclude that~\eqref{eq:db-ro} is solvable if and only if~\eqref{eq:db-cvx-ro} is solvable.
      \hfill \qed

      ~\\[-8mm]

      \noindent \textbf{Proof of Theorem~\ref{thm:p-w-cvx=d-b-cvx-ro}.} $\;$
      We show that~\eqref{eq:pw-cvx-ro} and~\eqref{eq:db-cvx-ro} can be viewed as instances of~\eqref{eq:p-co} and~\eqref{eq:d-co}, respectively. Assertions~\emph{(i)}, \emph{(ii)} and \emph{(iii)} can then be derived from Theorems~\ref{prop:weak-duality} and~\ref{prop:strong-duality}.
      For ease of exposition, we first rewrite the convex optimization problem~\eqref{eq:pw-cvx-ro} more concisely as
      \begin{equation} \label{eq:pw-cvx-compact}
      \begin{array}{l@{\quad}l@{\qquad}l}
      \inf  &   \displaystyle  \varphi_0 ( \bm x, \{ \bm y_{0 \ell}, \nu_{0 \ell} \}_\ell )  \\
      \subj &   \displaystyle  \varphi_i ( \bm x, \{ \bm y_{i \ell}, \nu_{i \ell} \}_\ell ) \le 0   & \forall i \in \set{I}\\
      &  \psi^+_{ik} (\{  y_{i \ell k} \}_\ell ) \le 0 & \forall i \in \set{I}_0,  \ \forall k\in \set K \\
      &  \psi^-_{ik} (\{ y_{i \ell k} \}_\ell ) \le  0 & \forall i \in \set{I}_0,  \ \forall k\in \set K \\
      &  \bm x, \, \bm y_{i\ell} \textnormal{ free}   & \forall i \in \set{I}_0, \; \forall \ell \in \mathcal L_0 \\
      &   \nu_{i\ell} \textnormal{ free}    & \forall i \in \set{I}_0, \; \forall \ell \in \mathcal L,
      \end{array}
      \end{equation}
      where the extended real-valued functions $\varphi_i$ for $i \in \set I_0$ are defined through
      \begin{align*}
      \varphi_i (\bm x, \{\bm y_{i\ell}, \nu_{i\ell}\}_\ell) & =  \left\{\begin{array}{c@{\quad}l}
      \displaystyle (- f_i)^{*2}\left(\bm x, \bm y_{i0}  \right)  +   \sum_{\ell \in \set{L}}   \nu_{i\ell}c_\ell^*   \left(\bm y_{i\ell} /  \nu_{i\ell}\right) & \text{if $\nu_{i\ell} \ge 0$,} \ \ell \in\mathcal L, \\
      +\infty & \text{otherwise,}
      \end{array} \right.
      \end{align*}
      the linear equalities in~\eqref{eq:pw-cvx-ro} are split into two sets of linear inequalities defined through
      \begin{align*}
      \psi^+_{ik} ( \{ y_{i\ell k}\}_\ell )  =  \sum_{\ell \in \set L_0}   y_{i\ell k} \quad \text{ and } \quad
      \psi^-_{ik} ( \{ y_{i\ell k}\}_\ell )  = - \sum_{\ell \in \set L_0}  y_{i\ell k}, \qquad \forall i\in \set I_0, \ \forall k\in \set K,
      \end{align*}
      and $y_{i\ell k}$ is the $k$-th element of the vector  $\bm y_{i\ell}$ for every $k\in \set K = \{1, \ldots, K\}$, where $K= d_{\bm z}$.

      Note that~\eqref{eq:pw-cvx-compact} can be viewed as an instance of~\eqref{eq:p-co}. Moreover, one can show that its objective and constraint functions satisfy assumption~\textbf{(F)}, that is, one can show that $\varphi_i$, $\bm \psi^+_i$ and $\bm \psi^-_i$ are proper, closed and convex for every $i\in\set I_0$. To see this, note first that the partial conjugate $(- f_i)^{*2}$ is proper, closed and convex by Proposition~\ref{prop:partial-conjugate-dual} and by Theorem~12.2 of \citet{Rockafellar1970}, which apply because $f_i$ obeys assumption~\textbf{(RF)}. Similarly, the convex perspective $\nu_{i\ell} c_\ell^*(\bm y_{i\ell} /  \nu_{i\ell})$ defined for~$\nu_{i\ell} \ge 0$ is proper, closed and convex by Proposition~\ref{prop:perspective-convex} and by Theorem~12.2 of \citet{Rockafellar1970}, which apply because $c_\ell$ obeys assumption \textbf{(C)}. Thus, $\varphi_i$ constitutes a sum of proper, closed and convex functions with different arguments and is therefore also proper, closed and convex.\footnote{The fact that the summands do not share common arguments is crucial here. The sum of the two proper, closed and convex functions $\delta_{[0, 1]} (x)$ and $\delta_{[2, 3]} (x)$ in the common argument $x \in \mathbb{R}$, for example, is not proper.} Finally, $\psi^+_{ik}$ and $\psi^-_{ik}$ are linear functions and therefore proper, closed and convex.

      If we interpret~\eqref{eq:pw-cvx-compact} as an instance of~\eqref{eq:p-co}, denote the variables conjugate to $\bm x$ and $\nu_{i\ell}$ by $\bm w_i$ and $u_{i\ell}$, respectively, and denote the variables conjugate to $\bm y_{i\ell}$ by $\bm r_{i\ell}, \bm r^+_{i\ell}$ and $\bm r^-_{i\ell}$, then the corresponding instance of~\eqref{eq:d-co} can be represented as
      \begin{equation} \label{eq:db-cvx-compact}
      \begin{array}{l@{\quad}l@{\qquad}l}
      \sup & \multicolumn{2}{l}{\displaystyle  \mspace{-8mu} - \varphi_0^* ( \bm w_0, \{ \bm r_{ 0 \ell}, u_{0 \ell} \}_\ell )  - \sum_{i\in \set I} \Bigg[ \lambda_i \varphi_i^* \left( \frac{ \bm w_i}{\lambda_i}, \frac{ \{ \bm r_{i \ell} \}_\ell}{\lambda_i}  ,  \frac{\{ u_{i \ell}\}_\ell}{\lambda_i}  \right) +} \\
      & \multicolumn{2}{l}{\displaystyle \mspace{245mu} \sum_{k\in \set K}  v_{ik}^+ (\psi^+_{ik})^* \left( \frac{\{  r^+_{i\ell k} \}_\ell}{v_{ik}^+} \right) + \sum_{k\in \set K}  v_{ik}^- (\psi^-_{ik})^* \left( \frac{\{ r^-_{i\ell k} \}_\ell}{v_{ik}^-} \right) \Bigg]} \\
      \subj &   \displaystyle  \sum_{i\in \set I_0} \bm w_i = \bm 0 & \forall \ell \in \set L_0 \\
      & \bm r_{i\ell} + \bm r^+_{i\ell} + \bm r^-_{i\ell}= \bm 0 & \forall i \in \set{I}_0, \; \forall \ell \in \set L_0 \\
      & u_{i\ell} = 0 & \forall i \in \set{I}_0, \; \forall \ell \in \set L \\
      & \bm w_i, \, r_{i\ell}, \, \bm r^+_{i\ell}, \, \bm r^-_{i\ell} \textnormal{ free} & \forall i \in \set{I}_0, \; \forall \ell \in \set L_0 \\
      & \bm \lambda, \, \bm v_{i}^+, \, \bm v_{i}^- \ge \bm 0 & \forall i \in \set{I}_0,
      \end{array}
      \end{equation}
      where $ r^+_{i\ell k}$ and $ r^-_{i\ell k}$ are the $k$-th elements of the respective  vectors $\bm  r^+_{i\ell}$ and $\bm  r^-_{i\ell}$ for every $k\in \set K$, and~$ (\bm \lambda, \{\bm v_{i}^+, \bm v_{i}^- \}_i)$ are the dual variables associated with the three sets of inequalities in~\eqref{eq:pw-cvx-compact}. Note that $u_{i\ell}$ is forced to $0$ in~\eqref{eq:db-cvx-compact} because its conjugate variable $\nu_{i\ell}$ only appears in the objective (if $i=0$) or in the $i$-th constraint (if $i\in\set I$) of the primal problem~\eqref{eq:pw-cvx-compact}.
      %
      %
      The conjugate of $\varphi_i$, $i \in \mathcal{I}_0$, can be calculated explicitly as
      \begin{align*}
      \varphi_i^* ( \bm w_i , \{ \bm r_{ i \ell}, u_{i \ell}  \}_\ell )
      \; & = \;
      \sup_{\bm x, \bm y_{i0}} \left\{  \bm w_i^\top \bm x + \bm r_{i0}^\top \bm y_{i0}   - (-f_i)^{*2}(\bm x, \bm y_{i0}) \right\}  \\
      & \qquad \qquad +  \sum_{\ell \in \set L } \sup_{\bm y_{i\ell} \atop \nu_{i\ell} > 0} \left\{  u_{i\ell}  \nu_{i\ell}  +   \bm r_{i\ell}^\top \bm y_{i\ell}  - \nu_{i\ell}c_\ell^*   \left( \frac{\bm y_{i\ell}}{\nu_{i\ell}}\right) \right\} \\
      \; & = \;
      f_i^{*1}(\bm w_i , \bm r_{i0} )  + \sum_{\ell \in \set L } \sup_{ \nu_{i\ell} > 0} \left\{  u_{i\ell}  \nu_{i\ell}  +   \nu_{i\ell} c_\ell \left(\bm r_{i\ell}  \right) \right\} \\
      \; & = \;
      \left\{\begin{array}{c@{\quad}l}
      f_i^{*1}(\bm w_i , \bm r_{i0} )  & \text{if }   u_{i\ell}  + c_\ell \left(\bm r_{i\ell}  \right) \le 0 \;\; \forall \ell \in \set L \\
      +\infty & \text{otherwise.}
      \end{array} \right.
      \end{align*}
      Note that we may restrict $\nu_{i\ell}$ to be strictly positive because the convex perspective of $c^*_\ell$ at $\nu_{i\ell}=0$ is defined as the lower semicontinuous extension of the perspective for $\nu_{i\ell}>0$; see~\eqref{eq:convex-perspective}. The second equality then follows from Proposition~\ref{prop:partial-conjugates}, which applies because $f_i$ satisfies assumption~\textbf{(RF)}, and from Theorem~16.1 of \citet{Rockafellar1970}, which applies because $c_\ell$ satisfies assumption~\textbf{(C)}.

      Similarly, the conjugates of $\psi^+_{ik}$ and $\psi^-_{ik}$ can be expressed as follows.
      \begin{align*}
      (\psi^+_{ik})^* ( \{ r^+_{i\ell k} \}_\ell )  & =  \left\{\begin{array}{c@{\quad}l}
      0 & \text{if }   r^+_{i\ell k} =  1 \;\; \forall \ell \in \set L_0 \\
      +\infty & \text{otherwise}
      \end{array} \right.\\
      (\psi^-_{ik})^* ( \{  r^-_{i\ell k } \}_\ell )  & =  \left\{\begin{array}{c@{\quad}l}
      0 & \text{if }   r^-_{i\ell k} =  -1 \;\; \forall \ell \in \set L_0 \\
      +\infty & \text{otherwise.}
      \end{array} \right.
      \end{align*}
      Substituting the formulas for $\varphi_i^*$, $(\psi^+_{ik})^*$ and $(\psi^-_{ik})^*$ into~\eqref{eq:db-cvx-compact} with $\bm z_0 = \bm v_{0}^+ - \bm v_{0}^-$ and $\bm v_{i} = \bm v_{i}^+ - \bm v_{i}^-$, $i\in \set I$, and eliminating the variables $\bm r_{i\ell}$, $\bm r^+_{i\ell}$, $\bm r^-_{i\ell}$ and $u_{i\ell}$,  $i\in \set I_0$ and $\ell \in \set L_0$, finally yields~\eqref{eq:db-cvx-ro}.
      \hfill \qed

      ~\\[-8mm]

      \textbf{Proof of Theorem~\ref{thm:p-w=d-b-ro}.} $\;$
      As for assertion \emph{(i)}, assume that~\eqref{eq:pw-ro} admits a strict Slater point $\bm x^{\rm S}$ and that the uncertainty set~$\supp$ is nonempty (by assumption) and compact (by assumption \textbf{(C)} and the assertion). Then the infima of~\eqref{eq:pw-ro} and~\eqref{eq:pw-cvx-ro} coincide due to Proposition~\ref{prop:p-w=p-w-cvx-ro}~\emph{(iii)}. Moreover, since $f_i$ is real-valued, the problem~\eqref{eq:subproblem-sup} admits a strict Slater point $( \{\bm y^{\rm S}_{i\ell}, \nu^{\rm S}_{i\ell}\}_\ell )$ for fixed $\bm x^{\rm S}$ and for every~$i\in \set I_0$ due to Proposition~\ref{prop:bounded-slater}. We can combine these strict Slater points to a Slater point for problem~\eqref{eq:pw-cvx-ro}, and  Theorem~\ref{thm:p-w-cvx=d-b-cvx-ro}~\emph{(ii)} implies that~\eqref{eq:pw-cvx-ro} and~\eqref{eq:db-cvx-ro} satisfy strong duality, and~\eqref{eq:db-cvx-ro} is solvable. The claim then follows from Proposition~\ref{prop:d-b=d-b-cvx-ro}~\emph{(iii)}, which ensures that the suprema of~\eqref{eq:db-ro} and~\eqref{eq:db-cvx-ro} coincide, and that~\eqref{eq:db-ro} is solvable because~\eqref{eq:db-cvx-ro} is solvable.

      As for assertion \emph{(ii)}, assume that the feasible region of~\eqref{eq:pw-ro} is nonempty and bounded and that~$\supp$ is bounded. Note that problem~\eqref{eq:pw-ro} can be represented more concisely as \begin{equation} \label{eq:primal-worst-compact}
      \inf_{\bm x} \left\{ F_0(\bm x) \ |\ F_i(\bm x) \le 0 \;\; \forall i \in \set{I}\right\},
      \end{equation}
      where $F_i(\bm x)=\sup_{\bm z_i \in \supp}  f_i  (\bm x, \bm z_i)$ constitutes a pointwise maximum of convex functions and is therefore convex. Moreover, as $\set{Z}$ is compact and $f_i(\bm x, \bm z_i)$ is continuous in~$\bm z_i$ for every~$\bm x$, $F_i(\bm x)$ is indeed finite for every~$\bm x$, \ie,  $\dom(F_i) = \R^{d_{\bm x}}$. As any finite-valued convex function is continuous,  we may thus conclude that $F_i$ is proper, closed and convex. The problem dual to~\eqref{eq:primal-worst-compact} can be expressed as
      \begin{equation} \label{eq:primal-worst-compact-dual}
      \max_{\bm \lambda\ge \bm 0, \bm w} \left\{  -F^*_0 \left( \bm w_0 \right) - \sum_{i \in \set{I}} \lambda_i F^*_i(\bm w_i/ \lambda_i) \ \bigg| \ \sum_{ i\in \set{I}_0}  \bm w_i = \bm 0 \right\}.
      \end{equation}
      As the feasible region of~\eqref{eq:pw-ro} is nonempty and bounded, Theorem~\ref{prop:strong-duality}~\emph{(ii)} ensures that strong duality holds and~\eqref{eq:pw-ro} is solvable, while Proposition~\ref{prop:bounded-slater} implies that~\eqref{eq:primal-worst-compact-dual} admits a strict Slater point $(\bm \lambda^{\rm S}, \{\bm w^{\rm S}_i\}_i)$. It remains to be shown that~\eqref{eq:primal-worst-compact-dual} is equivalent to the dual best problem~\eqref{eq:db-ro}. To this end, we will show that
      \begin{equation} \label{eq:primal-worst-perspective}
      \lambda_i F_i^*(\bm w_i/ \lambda_i) = \inf_{\bm z_i \in \set{Z}}   \lambda_i f_i^{*1}(\bm w_i/\lambda_i, \bm z_i)
      \end{equation}
      for any fixed $ \lambda_i \ge  0$ and $\bm w_i \in \R^{d_{\bm x}}$,  $i\in \set{I}$. Problem~\eqref{eq:db-ro} is then obtained by substituting~\eqref{eq:primal-worst-perspective} into~\eqref{eq:primal-worst-compact-dual}, and the assertion follows. To show~\eqref{eq:primal-worst-perspective}, assume first that $\lambda_i > 0$. We then have that
      \begin{align*}
      \lambda_i F_i^*(\bm w_i/ \lambda_i) &= \lambda_i \sup_{\bm x} \left\{ \bm x^\top   \bm w_i/\lambda_i - \sup_{\bm z_i \in \set{Z}} f_i(\bm x, \bm z_i) \right\}\\ 
      &=  \inf_{\bm z_i \in \set{Z}}   \lambda_i \sup_{\bm x} \left\{ \bm x^\top   \bm w_i/\lambda_i   - f_i(\bm x, \bm z_i) \right\} \\ 
      &= \inf_{\bm z_i \in \set{Z}}   \lambda_i f_i^{*1}(\bm w_i/\lambda_i, \bm z_i), \nonumber
      \end{align*}
      where the second equality follows from Sion's min-max theorem \citep{s58}, which applies because~$\supp$ is compact and $f_i$ is a convex-concave saddle function that is continuous in each of its arguments, while the last equality follows from the definition of the partial conjugate. If $\lambda_i = 0$, on the other hand, then we have
      \begin{equation*}
      0 F_i^*(\bm w_i/ 0)
      \;\; = \;\;
      \delta^*_{\dom (F_i)} (\bm w_i)
      \;\; = \;\;
      \delta_{\{0\}} (\bm w_i)
      \;\; = \;\;
      \inf_{\bm z_i \in \set{Z}}    \delta^*_{\dom (f_i(\cdot, \bm z_i))} (\bm w_i)
      \;\; = \;\;
      \inf_{\bm z_i \in \set{Z}}   0 f_i^{*1}(\bm w_i/0, \bm z_i),
      \end{equation*}
      where the first equality follows from the definition of the convex perspective, while the second equality holds because $\dom(F_i) = \R^{d_{\bm x}}$. Similarly, the third equality holds because $\dom(f_i(\cdot, \bm z_i)) = \R^{d_{\bm x}}$ for every $\bm z_i \in \R^{d_{\bm z}}$, while the last equality follows from the definition of the partial convex perspective. Thus,~\eqref{eq:primal-worst-perspective} holds for all $\bm \lambda \ge \bm 0$ and $\bm w_i \in \R^{d_{\bm x}}$, $i\in \set{I}$.

      As for assertion \emph{(iii)}, assume that~\eqref{eq:db-ro} admits a strict Slater point $(\{\bm w^{\rm S}_i, \bm z_i^{\rm S}\}_i,  \bm \lambda^{\rm S})$. Then the suprema of~\eqref{eq:db-ro} and~\eqref{eq:db-cvx-ro} coincide due to Proposition~\ref{prop:d-b=d-b-cvx-ro}~\emph{(ii)}. Moreover, it is easy to verify that $(  \{ \bm w_i^{\rm S}\}_{i},  {\bm z}_0^{\rm S}, \bm \lambda^{\rm S}, \left\{ {\bm \upsilon}_i^{\rm S} \right\}_{i})$ is a strict Slater point for~\eqref{eq:db-cvx-ro} where ${\bm \upsilon}_i^{\rm S} = {\bm z}_i^{\rm S} \cdot { \lambda}_i^{\rm S}$ for $i\in \set{I}$. Therefore, the problems~\eqref{eq:pw-cvx-ro} and~\eqref{eq:db-cvx-ro}  satisfy strong duality, and~\eqref{eq:pw-cvx-ro} is solvable due to  Theorem~\ref{thm:p-w-cvx=d-b-cvx-ro}~\emph{(ii)}. Finally, the infima of~\eqref{eq:pw-ro} and~\eqref{eq:pw-cvx-ro} coincide, and~\eqref{eq:pw-ro} is solvable as~\eqref{eq:pw-cvx-ro} is solvable due to Proposition~\ref{prop:p-w=p-w-cvx-ro}~\emph{(ii)}, which applies since any $\bm{z}_i^{\rm S}$ is a Slater point of $\supp$. The claim then follows.
      \hfill \qed

      ~\\[-8mm]

      \noindent \textbf{Proof of Theorem~\ref{coro:cvxdro-ro-wd}.} $\;$
      The statement trivially holds if either of the problems is infeasible. In the remainder of the proof we may thus assume that both~\eqref{eq:puq} and~\eqref{eq:duq} are feasible. Choose now an arbitrary~$\bP$ feasible in~\eqref{eq:puq} and an arbitrary $(\alpha, \bm{\beta})$ feasible in~\eqref{eq:duq}. As $\bP$ is feasible in~\eqref{eq:puq}, we have $\EP{} \left[ h_j(\bmt z) \right] < +\infty$ for every $j \in \set J$ and $\mathbb{E}_{\bP} [g(\bmt z)] > -\infty$. Thanks to our conventions for infinite integrals, this ensures that $\bP[\bmt z \in \dom(h_j)]=1$ for every $j\in \set J$ and $\bP[\bmt z \in \dom(-g)] = 1$, respectively. This implies that $\bP[\bmt z \in \bar{\set S}] = 1$. We then have
      $$
      \EP{} \left[ g(\bmt z) \right] \; \le \; \mathbb E_{\mathbb P} \left[ \alpha +  \bm h(\bmt z)^\top \bm \beta \right] \; \le \; \alpha  +   \bm \mu^\top \bm \beta,
      $$
      where the first inequality follows from the constraints in~\eqref{eq:duq} and our insight that $\bP[\bmt z \in \bar{\set S}] = 1$, and the second inequality follows from the constraints in~\eqref{eq:puq} and the nonnegativity of~$\bm \beta$. Thus, the objective value of $( \alpha, \bm{\beta})$ in~\eqref{eq:duq} is non-inferior to the objective value of $\bP$ in~\eqref{eq:puq}. As the primal and dual feasible solutions  $\bP$ and $(\alpha, \bm{\beta})$ were chosen arbitrarily, we may conclude that problem~\eqref{eq:duq} indeed provides an upper bound on~\eqref{eq:puq}.
      \hfill \qed

      ~\\[-8mm]

      \noindent \textbf{Proof of Proposition~\ref{prop:d-uq=ap-w}.} $\;$
      We show that the robust constraint in~\eqref{eq:duq} has the same feasible region as the $I$ robust constraints in~\eqref{eq:apw}. As $g(\bm z) = \max_{i \in \set I} g_i(\bm z)$, the robust constraint in~\eqref{eq:duq} is equivalent to
      \begin{align*}
      \quad  \max_{i\in \set I} \sup_{\bm z_i \in \bar{\set S}}  \left\{  g_i(\bm z_i) - \alpha - \bm h(\bm z_i)^\top \bm \beta  \right\} \le 0
      \quad & \iff  \quad  \max_{i\in \set I} \sup_{\bm z_i \in \bar{\set S}_i }  \left\{  g_i(\bm z_i) - \alpha - \bm h(\bm z_i)^\top \bm \beta  \right\} \le 0  \\
      & \iff  \quad  \max_{i\in \set I} \sup_{(\bm z_i, \bm u_i, t_i) \in  \set U_i}   \left\{  t_i - \alpha - \bm u_i^\top \bm \beta  \right\}  \le 0,
      \end{align*}
      where the first equivalence holds because $\bar{\set S}_i\subseteq \bar{\set S}$ and because $g_i(\bm z_i) = -\infty$ for all $\bm z\notin \bar{\set S}_i$. The second equivalence follows from~\eqref{eq:Z and U_i} and the fact that $\bm{u}_i = \bm{h} (\bm{z}_i)$ and $t_i = g_i (\bm{z}_i)$ maximize the inner supremum for any fixed admissible $\bm{z}_i$, $i \in \mathcal{I}$. The last inequality in the above expression is manifestly equivalent to the $I$ robust constraints in~\eqref{eq:apw}.
      \hfill \qed

      ~\\[-8mm]

      \noindent \textbf{Proof of Proposition~\ref{prop:fr=ad-b}.} $\;$
      As $\bar{\set S}_i$ represents the projection of~$\set U_i$ onto $\R^{d_{\bm z}}$, problem~\eqref{eq:adb} is equivalent to
      \begin{equation}
      \begin{aligned}  \label{eq:fr_fix_z}
      \sup  & \quad \displaystyle \sum_{i \in \set{I}} \lambda_i  g_i (\bm z_i)    \\
      \subj & \quad \displaystyle\sum_{i \in \set{I}} \lambda_i = 1 \\
      & \quad  \sum_{i \in \set{I}} \lambda_i \bm h (\bm z_i)  \le \bm \mu \\
      & \quad \bm z_i \in \bar{\set S}_i && \forall i \in \set I \\
      & \quad  \bm \lambda \ge \bm 0.
      \end{aligned}
      \end{equation}
      Indeed, the equivalence between \eqref{eq:adb} and \eqref{eq:fr_fix_z} holds due to~\eqref{eq:Z and U_i}, which ensures $\bm z_i \in \bar{\set S}_i$ if and only if there exist $\bm u_i \in \R^J$ and $t_i \in \R$ with $(\bm z_i, \bm u_i, t_i) \in \set U_i$, and because for any fixed $(\{\bm z_i\}_i, \bm \lambda)$ feasible in~\eqref{eq:fr_fix_z} it is optimal to set $\bm u_i = \bm h(\bm z_i)$ and $t_i= g_i(\bm z_i)$ for all $i\in \set I$.

      In the remainder of the proof we will show that~\eqref{eq:fr_fix_z} is equivalent to~\eqref{eq:fr}. As $g(\bm z_i) \ge g_i(\bm z_i)$ for all $\bm z_i \in \bar{\set S}_i$ and as $\bar{\set S_i} \subseteq \bar{\set S}$, it is clear that the optimal value of~\eqref{eq:fr} provides an upper bound on~\eqref{eq:fr_fix_z}. To prove the converse inequality, select any $(\{\bm z_i\}_i, \bm \lambda)$ feasible in~\eqref{eq:fr}, and define
      \[
      \hat{\set S}_i = \{ \bm z \in \bar{\set S}_i \ | \ g_i(\bm z) \ge g_{i'}(\bm z) \;\; \forall i' \in \set I \, : \, i' < i, \; g_i(\bm z) > g_{i'} (\bm z) \;\; \forall i' \in \set I \, : \, i' > i \}
      \]
      for all $i \in \set I$. Note that these sets form a partition of $\bar{\set S}$. By construction, we have $g_i(\bm z) = g(\bm z) > -\infty$ for all $\bm z \in \hat{\set S}_i$. Next, define~$\set I_i = \{ i' \in \set I \mid \bm z_{i'} \in \hat{\set S_i}\}$, and construct $(\{\bm z'_i \}_i, \bm \lambda')$ by setting~$\lambda'_i= \sum_{i' \in \set I_i} \lambda_{i'}$ and $\bm z'_i = \frac{1}{\lambda'_i} \sum_{i' \in \set I_i}  \lambda_{i'} \bm z_{i'}$ if $\lambda'_i > 0$. Otherwise, if $\lambda'_i = 0$, set~$\bm z_i'$ to an arbitrary point in $\bar{\set S}_i$, which is always possible because $\bar{\set S}_i$ is nonempty due to assumption~\textbf{(S)}. As  both~$-g_i$ and~$\bm h$ are proper, closed and convex thanks to assumptions~\textbf{(G)} and \textbf{(H)}, Jensen's inequality implies that
      \[
      \sum_{i' \in \set{I}_i} \lambda_{i'} g (\bm z_{i'})
      \; \le \;
      \lambda'_{i} g_{i} (\bm z_{i}')
      \quad \text{and} \quad
      \lambda'_{i} \bm h (\bm z_{i}')
      \; \le \;
      \sum_{i' \in \set{I}_i} \lambda_{i'} \bm h (\bm z_{i'})
      \; \le \;
      \lambda'_{i} \bm \mu \qquad \forall i \in \set I,
      \]
      that is,~$(\bm \lambda', \{\bm z'_i \}_i)$ is feasible in~\eqref{eq:fr_fix_z} and its objective value in~\eqref{eq:fr_fix_z} is larger or equal to that of~$(\bm \lambda,\{\bm z_i\}_i)$ in~\eqref{eq:fr}. Therefore, the optimal value of~\eqref{eq:fr} further provides a lower bound on the optimal value of~\eqref{eq:fr_fix_z}. The above arguments also reveal that one can construct a feasible solution for~\eqref{eq:fr} from a feasible solution of~\eqref{eq:adb} and vice versa. Hence, the claim follows.
      \hfill \qed

      ~\\[-8mm]

      \noindent \textbf{Proof of Theorem~\ref{thm:strong-duality-all-problems}.} $\;$
      In the absence of any regularity conditions, we have
      \begin{equation} \label{eq:overview}
      \inf\eqref{eq:apw-cvx}
      \; \ge \;
      \inf\eqref{eq:apw}
      \; = \;
      \inf\eqref{eq:duq}
      \; \ge \;
      \sup \eqref{eq:puq}
      \; \ge \;
      \sup\eqref{eq:fr}
      \; = \;
      \sup\eqref{eq:adb}
      \end{equation}
      where the two equalities follow from Propositions~\ref{prop:d-uq=ap-w} and~\ref{prop:fr=ad-b}, respectively, while the first inequality exploits Proposition~\ref{prop:p-w=p-w-cvx-ro}~\emph{(i)}, the second equality follows from the weak duality result established in Theorem~\ref{coro:cvxdro-ro-wd}, and the second inequality holds trivially because~\eqref{eq:fr} constitutes a restriction of~\eqref{eq:puq}. Proposition~\ref{prop:d-b=d-b-cvx-ro}~\emph{(i)} further implies that $\sup\eqref{eq:adb}\leq \sup\eqref{eq:adb-cvx}$. The relationships among the different problems are also summarized in Figure~\ref{fig:weak-relations-dro}. It remains to be shown that either of the conditions in assertions~\emph{(i)} or~\emph{(ii)} imply the equivalence of~\eqref{eq:adb} and~\eqref{eq:adb-cvx} as well as strong duality between~\eqref{eq:apw-cvx} and~\eqref{eq:adb-cvx}.

      As for assertion~\emph{(i)}, note first that the Slater point for~\eqref{eq:adb} can be used to construct a Slater point for~\eqref{eq:adb-cvx} with $\bm{\lambda} > \bm{0}$. The suprema of~\eqref{eq:adb} and~\eqref{eq:adb-cvx} then coincide thanks to Proposition~\ref{prop:d-b=d-b-cvx-ro}~\emph{(ii)} and Remark~\ref{rem:positive-lambda}.  Theorem~\ref{thm:p-w-cvx=d-b-cvx-ro}~\emph{(ii)} further guarantees that the infimum of~\eqref{eq:apw-cvx} coincides with the supremum of~\eqref{eq:adb-cvx} and that~\eqref{eq:apw-cvx} is solvable. This allows us to conclude that all problems in~\eqref{eq:overview} have the same optimal value as~\eqref{eq:adb-cvx}. As~\eqref{eq:adb} admits a Slater point, finally, it is clear that the augmented support set $\set U_i$ admits a Slater point for every~$i\in\set I$, and therefore Propositions~\ref{prop:p-w=p-w-cvx-ro}~\emph{(ii)} and~\ref{prop:d-uq=ap-w} ensure that if~$(\alpha^\star, \bm \beta^\star, \{\bm y_i^{(0)\star}\}_i, \{\bm y_{ij}^{(1)\star}\}_{ij},  \{ \bm y_{i\ell}^{(2)\star}, \nu_{i\ell}^\star \}_{i\ell})$ solves~\eqref{eq:apw-cvx}, then $(\alpha^\star, \bm \beta^\star)$ solves~\eqref{eq:duq}.

      As for assertion~\emph{(ii)}, note first that the infimum of~\eqref{eq:apw-cvx} coincides with the supremum of~\eqref{eq:adb-cvx} and that~\eqref{eq:adb-cvx} is solvable. This is an immediate consequence of Theorem~\ref{thm:p-w-cvx=d-b-cvx-ro}~\emph{(ii)}, which applies because~\eqref{eq:apw-cvx} admits a Slater point. To show that all problems in~\eqref{eq:overview} have the same optimal value, it thus remains to prove that the suprema of~\eqref{eq:adb} and~\eqref{eq:adb-cvx} coincide. To this end, fix any optimal solution~$(\bm \tau^\star, \bm \lambda^\star,\{\bm \omega^\star_i, \bm v^\star_i\}_i)$ of~\eqref{eq:adb-cvx}, and assume without loss of generality that $\bm \omega^\star_i = \lambda^\star_i \bm h ( \bm v^\star_i/ \lambda^\star_i)$ and $\tau^\star_i = \lambda^\star_i g_i (\bm v^\star_i / \lambda^\star_i)$ for all $i \in \set I$. We now show that this solution gives rise to an optimal solution $( \{ \bm z^\star_i, \bm u^\star_i, t^\star_i \}_i, \bm \lambda^\star)$ of~\eqref{eq:adb} that attains the same optimal value. To this end, set $\bm z^\star_i = \bm v^\star_i / \lambda^\star_i$ if $\lambda^\star_i > 0$, and let $\bm z^\star_i$ be an arbitrary point in $\bar{\set S}_i$ otherwise, $i\in \set I$. If there is~$i \in \set I$ with~$\lambda^\star_i = 0$, then Lemma~\ref{lemma:recession_directions}~\emph{(i)} implies that $\bm v^\star_i$ is a recession direction for~$\set S$. As~$\set S$ is nonempty and bounded, this in turn implies that $\bm v^\star_i = \bm 0$. Using the same reasoning as in the proof of Proposition~\ref{prop:d-b=d-b-cvx-ro}~\emph{(i)}, one can show that $\lambda^\star_i c_\ell (\bm v^\star_i/ \lambda^\star_i) = \lambda^\star_i c_\ell (\bm z^\star_i)$, $\lambda^\star_i \bm h ( \bm v^\star_i/ \lambda^\star_i) = \lambda^\star_i \bm h ( \bm z^\star_i) $ and $\lambda^\star_i g_i (\bm v^\star_i / \lambda^\star_i) = \lambda^\star_i g_i (\bm z^\star_i)$ for all $i\in \set I$ and $\ell \in \set L$. Setting $\bm u^\star_i = \bm h ( \bm z^\star_i)$ and $t^\star_i = g_i (\bm z^\star_i)$ for all $i\in \set I$, one readily verifies that $( \{ \bm z^\star_i, \bm u^\star_i, t^\star_i \}_i, \bm \lambda^\star)$ is feasible in~\eqref{eq:adb}. Moreover, since $\sum_{i \in \set I} \tau^\star_i = \sum_{i \in \set I} \lambda^\star_i t^\star_i$, this solution attains the same objective value as $(\bm \tau^\star, \bm \lambda^\star, \{\bm \omega^\star_i, \bm v^\star_i\}_i)$ in~\eqref{eq:adb-cvx}. Since~\eqref{eq:adb} bounds~\eqref{eq:adb-cvx} from below by Proposition~\ref{prop:d-b=d-b-cvx-ro}~\emph{(i)}, $( \{ \bm z^\star_i, \bm u^\star_i, t^\star_i \}_i, \bm \lambda^\star)$ must be optimal in~\eqref{eq:adb}. The proof of Proposition~\ref{prop:fr=ad-b} further implies that $(\{ \bm z^\star_i\}_i, \bm \lambda^\star)$ solves~\eqref{eq:fr}, which is a restriction of~\eqref{eq:puq}. As all problems in~\eqref{eq:overview} share the same optimal value, the discrete distribution that assigns probability $\lambda^\star_i$ to the point $\bm z^\star_i= \bm v_i^\star/\lambda_i^\star$ for all $i\in\set I$ with $\lambda_i^\star > 0$ indeed solves~\eqref{eq:puq}.
      \hfill \qed

      ~\\[-8mm]

      \noindent \textbf{Proof of Proposition~\ref{prop:slater-distribution}.} $\;$
      Denote by~$\bP^{\rm S}$ the Slater distribution of~\eqref{eq:puq} that exists by assumption. We will first argue that for each $i\in\set I$ there exists a probability $\lambda_i>0$ and a probability measure~$\bP^{\rm S}_i$ supported on $\bar{\set S}_i$ such that $\bP^{\rm S} =\sum_{i\in \set I} \lambda_i \bP^{\rm S}_i$. To see this, we define for every index set $\set I'\subseteq \set I$ the non-negative Borel measure $\rho_{\set I'}$ obtained by restricting~$\bP^{\rm S}$ to $\bar{\set S}_{\set I'}=\{\bm z\in\bar{\set S} \mid \bm z\in\bar{\set S}_{i'} ~\forall i'\in\set I', ~ \bm z\notin\bar{\set S}_{i'} ~\forall i'\in\set I\backslash \set I'\}$, that is, we set $\rho_{\set I'}[\set B]= \bP^{\rm S}[\bmt z\in \set B\cap \bar{\set S}_{\set I'}]$ for every Borel set $\set B\subseteq \mathbb R^{d_{\bm z}}$. By construction, we thus have $\bP^{\rm S}=\sum_{\set I'\subseteq \set I} \rho_{\set I'}$. Similarly, one readily verifies that $\sum_{\set I'\subseteq \set I:i\in\set I'} \rho_{\set I'} [\bar{\set S}_i]=\bP^{\rm S}[\bmt z\in\bar{\set S}_i] >0$, which implies that for all $i\in\set I$ there exists an index set $\set I'\subseteq \set I$ with $i\in\set I'$ and $\rho_{\set I'}[\bar{\set S}_i]>0$. Next, we define another family of non-negative Borel measures $\hat\rho_i=\sum_{\set I'\subseteq\set I: i\in\set I'} \frac{1}{|\set I'|} \rho_{\set I'}$, $i\in\set I$. Note that $\hat \rho_i$ is supported on $\bar {\set S}_i$ and satisfies $\hat \rho_i[\bar{\set S}_i]>0$ for all $i\in\set I$. In addition, we have $\bP^{\rm S} =\sum_{i\in\set I} \hat\rho_i$. Therefore, we can finally define $\lambda_i=\hat \rho_i[\bar{\set S}_i]$ and $\bP^{\rm S}_i=\frac{1}{\lambda_i}\hat \rho_i$ for all $i\in\set I$. As desired, this construction ensures that $\lambda_i>0$ and that the probability measure~$\bP^{\rm S}_i$ is supported on $\bar{\set S}_i$ such that $\bP^{\rm S} =\sum_{i\in \set I} \lambda_i \bP^{\rm S}_i$. Since~$\bP^{\rm S}$ is a probability measure, the last relation implies that $\sum_{i\in\set I}\lambda_i=1$. It is also clear that~$\bP^{\rm S}_i$ is absolutely continuous on~$\R^{d_{\bm z}}$ for every~$i\in\set I$. Next, define $\bm z_i= \mathbb{E}_{\bP^{\rm S}_i}[\bmt z]$ for all $i\in\set I$. As $h_j$ is proper, closed and convex, we may then use Jensen's inequality to verify that
      \[
      \sum_{i \in \set I} \lambda_{i} h_j (\bm z_{i})
      \; \leq \;
      \sum_{i \in \set I} \lambda_{i} \mathbb{E}_{\bP^{\rm S}_i} [h_j (\bmt z)]
      \; = \;
      \mathbb{E}_{\bP^{\rm S}} [h_j (\bmt z)]
      \; \leq \;
      \mu_j
      \]
      for every $j\in\set J$, where the last inequality is strict whenever~$h_j$ is nonlinear because $\bP^{\rm S}$ is a Slater distribution. Similarly, as $c_\ell$ is proper, closed and convex, Jensen's inequality implies that
      \[
      c_\ell \left(\sum_{i \in \set I} \lambda_{i}\bm z_{i}\right)
      \; \leq \;
      \sum_{i \in \set I} \lambda_{i} c_\ell (\bm z_{i})
      \; \leq \;
      \sum_{i \in \set I} \lambda_{i} \mathbb{E}_{\bP^{\rm S}_i} [c_\ell (\bmt z)]
      \; = \;
      \mathbb{E}_{\bP^{\rm S}} [c_\ell (\bmt z)]
      \; \leq \;
      0
      \]
      for every $\ell\in\set L$, where the last inequality is strict whenever~$c_\ell$ is nonlinear. We may thus conclude that $\sum_{i \in \set I} \lambda_{i}\bm z_{i}$ is a Slater point for the support set~$\set S$, which in turn implies via Lemma~\ref{lem:slater-points} that each $\bm z_i$, $i \in \mathcal{I}$, is a Slater point for~$\set S$ provided that $\bm z_i\in {\rm ri}(\set S)$. As $\bP^{\rm S}_i$ is absolutely continuous on~$\R^{d_{\bm z}}$ and supported on the convex set~$\bar{\set S}_i$, one can indeed prove that its mean~$\bm z_i$ belongs even to the interior of~$\bar{\set S}_i\subseteq \set S$. Otherwise, by the separating hyperplane theorem \cite[Section~2.5.1]{bv04}, there exist~$\bm a\in\R^{d_{\bm z}}$, $\bm a\neq \bm 0$, and~$b\in\R$ such that~$\bm a^\top\bm z_i\ge b$ and $\bm a^\top\bm z\le b$ for all~$\bm z\in \bar{\set S}_i$. These two inequalities imply via Theorem~1.6.6~(b) by \citet{ref:Ash-00} that~$\bP^{\rm S}_i[\bm a^\top \bmt z= b]=1$, which, however, contradicts the absolute continuity of~$\bP^{\rm S}_i$ on~$\R^{d_{\bm z}}$. We have thus shown that~$\bm z_i$ belongs to the interior of~$\bar{\set S}_i$ and, as a consequence, in particular to the interior of~${\rm dom}(-g_i)$, the interior of~${\rm dom}(h_j)$ for every~$j\in\set J$ and the interior of~$\set S$. By Lemma~\ref{lem:slater-points}, $\bm z_i$ is thus a Slater point for~$\set S$. As $\bm z_i\in\bar{\set S}_i$, we may finally select any $\bm u_i> \bm h (\bm z_i)\in\R^J$ and $t_i< g_i(\bm z_i)\in\R$ for every $i\in\set I$. By construction, $(\{\bm z_i, \bm u_i, t_i\}_i, \bm \lambda)$ constitutes a Slater point for~\eqref{eq:adb} that satisfies~$\bm \lambda > \bm{0}$. This Slater point for~\eqref{eq:adb} can easily be converted to a Slater point for~\eqref{eq:adb-cvx} that satisfies~$\bm \lambda > \bm{0}$.
      \hfill \qed

      ~\\[-8mm]

      \noindent \textbf{Proof of Proposition~\ref{prop:slater-distribution2}.} $\;$
      Denote by $(\alpha^{\rm S}, \bm \beta^{\rm S})$ a strict Slater point of problem~\eqref{eq:duq}, which exists by assumption. By using similar arguments as in Proposition~\ref{prop:d-uq=ap-w}, one can show that~$(\alpha^{\rm S}, \bm \beta^{\rm S})$ also constitutes a strict Slater point for~\eqref{eq:apw}. If we fix $\alpha = \alpha^{\rm S}$ and $\bm \beta = \bm \beta^{\rm S}$, then the embedded maximization problem in the~$i$-th constraint of~\eqref{eq:apw} is equivalent to
      \begin{equation}
      \begin{aligned} \label{eq:AP-W-Z_i}
      \sup  & \quad \displaystyle  g_i(\bm z_i) - \alpha^{\rm S} - \sum_{j\in \set J} h_j (\bm z_i)  \beta^{\rm S}_j  \\
      \subj & \quad \displaystyle c_\ell(\bm z_i) \le 0 \qquad \forall \ell \in \set L\\
      & \quad \bm z_i \text{ free}
      \end{aligned}
      \end{equation}
      because the strict inequality~$\bm \beta^{\rm S} > \bm 0$ implies that for every fixed $\bm z_i$ it is optimal to set $\bm u_i = \bm h(\bm z_i)$ and $t_i = g_i(\bm z_i)$. As $(\alpha^{\rm S},\bm \beta^{\rm S})$ is a strict Slater point for~\eqref{eq:apw}, the optimal value of~\eqref{eq:AP-W-Z_i} is strictly smaller than $0$. Note also that~\eqref{eq:AP-W-Z_i} can be viewed as an instance of the minimization problem~\eqref{eq:p-co} that satisfies assumption~\textbf{(F)} because the components~$g_i$, $i \in \set I$, of the disutility function satisfy~\textbf{(G)}, the moment functions $h_j$, $j\in \set J$, satisfy \textbf{(H)} and the constraint functions~$c_\ell$,~$\ell \in \set L$, of the support set satisfy \textbf{(C)}. The corresponding dual minimization problem is given by
      \begin{equation} \label{eq:AP-W-i}
      \begin{aligned}
      \inf \hspace{0.5mm}  & \quad \displaystyle(-g_i)^{*}\left(\bm y^{(0)}_{i}  \right) + \sum_{j\in \set{J}}  \beta_j^{\rm S}  h_j^* \left( \frac{\bm y^{(1)}_{ij}} { \beta_j^{\rm S}} \right) + \sum_{\ell \in \set{L}}  \nu_{i\ell} c_\ell^*    \left( \frac{\bm y^{(2)}_{i\ell} }{ \nu_{i\ell}} \right) - \alpha^{\rm S} \\
      \subj & \quad \displaystyle  \bm y^{(0)}_{i} + \sum_{j\in \set{J} }  \bm y^{(1)}_{ij}   + \sum_{\ell \in \set{L}} \bm   y^{(2)}_{i\ell} = \bm 0   \\
      & \quad   \bm y^{(0)}_{i}, \bm y^{(1)}_{ij}, \bm y^{(2)}_{i\ell} \textnormal{ free} , \quad   \nu_{i\ell} \ge 0 \qquad  \forall j\in \set{J}, \; \forall \ell \in \set{L}.
      \end{aligned}
      \end{equation}
      Note that the feasible region of the primal problem~\eqref{eq:AP-W-Z_i} coincides with $\bar{\set S}_i$ and is thus nonempty for every $i \in \set I$ thanks to assumption~\textbf{(S)}. In addition, it constitutes
      a subset of~$\set S$ and is thus bounded by assumption. Theorem~\ref{prop:strong-duality}~\emph{(ii)} then implies that problems~\eqref{eq:AP-W-Z_i} and~\eqref{eq:AP-W-i} share the same optimal value, which is strictly smaller than $0$. In addition, Proposition~\ref{prop:bounded-slater} implies that problem~\eqref{eq:AP-W-i} admits a strict Slater point $(\bm y^{(0){\rm S}}_{i}, \{\bm y^{(1){\rm S}}_{ij}\}_j, \{ \bm y^{(2){\rm S}}_{i\ell},\nu^{\rm S}_{i\ell} \}_\ell)$ for every $i \in \set I$. As the infimum of~\eqref{eq:AP-W-i} is strictly smaller than $0$, we may assume without loss of generality that the objective function value of this strict Slater point in~\eqref{eq:AP-W-i} is strictly negative, too. This is a direct consequence of Remark~\ref{rem:strict-weak-inequalities}. By construction, $(\alpha^{\rm S}, \bm \beta^{\rm S}, \{ \bm y^{(0){\rm S}}_{i}\}_{i}, \{\bm y^{(1){\rm S}}_{ij}\}_{ij}, \{\bm y^{(2){\rm S}}_{i\ell}, \nu^{\rm S}_{i\ell} \}_{i\ell})$ is thus a strict Slater point for~\eqref{eq:apw-cvx}.
      \hfill \qed

      ~\\[-8mm]

      \noindent \textbf{Proof of Corollary~\ref{coro:puq-asy}.} $\;$
      Note first that the suprema of~\eqref{eq:puq} and~\eqref{eq:adb-cvx} coincide by virtue of Theorem~\ref{thm:strong-duality-all-problems}~\emph{(i)}, which applies because~\eqref{eq:adb-cvx} admits a Slater point $(\bm \tau^\mathrm{S}, \bm \lambda^\mathrm{S}, \{ \bm \omega_i^\mathrm{S}, \bm v_i^\mathrm{S} \}_i)$ with $\bm \lambda^\mathrm{S} > \bm 0$.
      Next, select any tolerance~$\epsilon>0$ and any $\epsilon$-optimal solution $(\bm \tau^\epsilon, \bm \lambda^\epsilon, \{ \bm \omega^\epsilon_i, \bm v^\epsilon_i \}_i)$ of problem~\eqref{eq:adb-cvx}. If~\eqref{eq:adb-cvx} is unbounded, then we adopt the standard convention that $(\bm \tau^\epsilon, \bm \lambda^\epsilon, \{ \bm \omega^\epsilon_i, \bm v^\epsilon_i \}_i)$ is feasible in~\eqref{eq:adb-cvx} and that its objective function value is larger than or equal to~$1/\epsilon$. Next, define
      \[
      (\bm \tau^\theta, \bm \lambda^\theta, \{ \bm \omega^\theta_i, \bm v^\theta_i \}_i) = \theta \cdot (\bm \tau^\mathrm{S}, \bm \lambda^\mathrm{S}, \{ \bm \omega_i^\mathrm{S}, \bm v_i^\mathrm{S} \}_i) + (1-\theta) \cdot (\bm \tau^\epsilon, \bm \lambda^\epsilon, \{ \bm \omega^\epsilon_i, \bm v^\epsilon_i \}_i)
      \]
      for any $\theta \in [0,1]$, and note that this solution is feasible in~\eqref{eq:adb-cvx} as it constitutes a convex combination of two feasible solutions. Note also that $\bm \lambda^\theta>\bm 0$ whenever $\theta>0$. As the objective function of~\eqref{eq:adb-cvx} is linear and thus continuous, there exists $\theta\in(0,1]$ such that~$(\bm \tau^\theta, \bm \lambda^\theta, \{ \bm \omega^\theta_i, \bm v^\theta_i \}_i)$ represents a $2\epsilon$-optimal solution of~\eqref{eq:adb-cvx}. Next, fix such a~$\theta$, and define $\bP$ as the discrete distribution that assigns probability $\lambda_i^\theta>0$ to $\bm \upsilon_i^{\theta}/ \lambda_i^{\theta}$ for all $i\in\set I$. As~$(\bm \tau^\theta, \bm \lambda^\theta, \{ \bm \omega^\theta_i, \bm v^\theta_i \}_i)$ is feasible in~\eqref{eq:adb-cvx}, we can readily verify that $\bP$ is supported on~$\set S$ and satisfies
      \[
      \mathbb{E}_{\bP} [\bm h (\bmt z)]
      \; = \;
      \sum_{i \in \set I} \lambda^\theta_{i}\, \bm h(\bm \upsilon_i^{\theta} / \lambda_i^{\theta})
      \; \leq \;
      \sum_{i \in \set I} \bm \omega^\theta_i
      \; \leq \;
      \bm \mu,
      \]
      which implies that $\bP\in\amb{P}$. Similarly, the objective function value of~$\bP$ in~\eqref{eq:adb-cvx} satisfies
      \[
      \mathbb{E}_{\bP} [g (\bmt z)]
      \; = \;
      \sum_{i \in \set I} \lambda^\theta_{i}\, g (\bm \upsilon_i^{\theta} / \lambda_i^{\theta})
      \; \geq \;
      \sum_{i \in \set I} \lambda^\theta_{i}\, g_i (\bm \upsilon_i^{\theta}/ \lambda_i^{\theta})
      \; \geq \;
      \sum_{i \in \set I} \tau^\theta_i. \]
      The last expression non-inferior to~$\sup\eqref{eq:adb-cvx} -2\epsilon$ if the supremum of~\eqref{eq:adb-cvx} is finite and non-inferior to~$1/2\epsilon$ otherwise. As the suprema of~\eqref{eq:puq} and~\eqref{eq:adb-cvx} match, the above reasoning implies that~$\bP$ constitutes a $2\epsilon$-optimal solution of the original uncertainty quantification problem~\eqref{eq:puq}. As~$\epsilon > 0$ was chosen arbitrarily, we can thus construct feasible discrete distributions with $I$ atoms whose objective function values are arbitrarily close to the supremum of~\eqref{eq:puq}. 
      \hfill \qed

      ~\\[-8mm]

      \noindent \textbf{Proof of Lemma~\ref{lem:scalarization}.} $\;$
      For every~$\bm \lambda \in \set C^*\backslash\{\bm 0\}$ we have that~$\dom(\bm \lambda^\top\bm f)=\dom(\bm f)$ and $\bm f(\bm x)\succ_{\set C}-\bm \infty_{\set C}$ if and only if $\bm\lambda^\top\bm f(\bm x)>-\infty$. This implies that $\bm f$ is proper if and only if~$\bm \lambda^\top \bm f$ is proper for every~$\bm \lambda \in \set C^*\backslash\{\bm 0\}$. Also, it implies that $\dom(\bm f)$ is convex if and only if $\dom(\bm \lambda^\top \bm f)$ is convex for every~$\bm \lambda \in \set C^*\backslash\{\bm 0\}$. Next, select any $\bm x,\bm x'\in {\rm dom(}\bm f)$ and $\theta \in [0,1]$. By the definition of the dual cone $\set C^*$, we then have
      \begin{align*}
      & \theta \bm f( \bm x ) +  (1 - \theta)  \bm f( \bm x') - \bm f(\theta \bm x + (1 - \theta) \bm x') \in \set C \\
      \iff \quad & \theta  \bm \lambda^\top  \bm f( \bm x ) +  (1 - \theta)  \bm \lambda^\top  \bm f( \bm x')- \bm \lambda^\top \bm f(\theta \bm x + (1 - \theta) \bm x') \ge 0 \quad \forall \bm \lambda \in \set C^*\backslash\{\bm 0\},
      \end{align*}
      where the reverse implication holds because $\mathcal{C}$ is proper and convex, which implies that $\mathcal{C}^{**} = \mathcal{C}$. Thus,~$\bm f$ is $\set C$-convex if and only if the scalarized function~$\bm \lambda^\top \bm f$ is convex for every $\bm \lambda \in \set C^*\backslash\{\bm 0\}$.
      \hfill \qed

      ~\\[-8mm]

      \noindent \textbf{Proof of Proposition~\ref{prop:C-convex-perspective}.} $\;$
      Assume first that~$\bm 0\in\dom (\bm f)$. As~$\bm f$ is proper, star $\set C$-lower semicontinuous and $\set C$-convex, Lemma~\ref{lem:scalarization} implies that~$\bm \lambda^\top \bm f$ is proper, closed and convex for all $\bm \lambda \in \set C^*\backslash\{\bm 0\}$. As~$\bm 0\in\dom (\bm \lambda^\top\bm f)$, Corollary~8.5.2 and Theorem~13.3 by~\citet{Rockafellar1970} imply
      \begin{equation}
      \label{eq:scalarized-limit}
      \lim_{t\downarrow 0}\, t \bm \lambda^\top \bm f(\bm x/t) \; = \; \delta^*_{\dom((\bm \lambda^\top\bm f)^*)}(\bm x)
      \end{equation}
      for all~$\bm \lambda\in \set C^*\backslash\{\bm 0\}$. Note then that the dual cone~$\set C^*$ inherits properness from~$\set C$ \citep[Corollary~1.4.1]{bn13}. This means that~$\set C^*$ is solid and thus contains a basis~$\bm \lambda_1,\ldots, \bm \lambda_{d_{\set C}}$ of~$\R^{d_{\set C}}$. Defining the invertible matrix~$\bm \Lambda=(\bm \lambda_1,\ldots, \bm \lambda_{d_{\set C}})^\top \in \R^{{d_{\set C}}\times {d_{\set C}}}$, we conclude from~\eqref{eq:scalarized-limit} that
      \[
      \lim_{t\downarrow 0}\, t \bm \Lambda \bm f(\bm x/t) = \bm b(\bm x),\quad \text{where}\quad \bm b(\bm x)= \left(\delta^*_{\dom((\bm \lambda_1^\top\bm f)^*)}(\bm x),\ldots, \delta^*_{\dom((\bm \lambda_{d_{\set C}}^\top\bm f)^*)}(\bm x)\right)^\top \in \R^{d_{\set C}},
      \]
      which ensures that~$\lim_{t\downarrow 0} t \bm f(\bm x/t) = \bm \Lambda^{-1} \bm b(\bm x)$ exists. We may thus define the function~$\underline{\bm f}$ through
      \[
      \underline{\bm f}(\bm x, t)=\left\{\begin{array}{ll}
      t \bm f(\bm x/ t) & \text{if }t>0,\\
      \lim_{t\downarrow 0} t \bm f(\bm x/t) & \text{if }t=0.
      \end{array}\right.
      \]
      By construction, we have~$\bm\lambda^\top\underline{\bm f}(\bm x, 0)= \lim_{t\downarrow 0} t \bm\lambda^\top \bm f(\bm x/t)=\delta^*_{\dom((\bm \lambda^\top\bm f)^*)}(\bm x)$ for every~$\bm \lambda \in \set C^*\backslash\{\bm 0\}$, and thus~$\underline{\bm f}$ satisfies property~\emph{(iii)}. This in turn implies that~$\bm\lambda^\top\underline{\bm f}$ coincides with the convex perspective of~$\bm \lambda^\top\bm f$ for every~$\bm \lambda \in \set C^*\backslash\{\bm 0\}$, which is proper, closed and convex by Proposition~\ref{prop:perspective-convex}. Hence, $\underline{\bm f}$ is star $\set C$-lower semicontinuous by definition as well as proper and $\set C$-convex by Lemma~\ref{lem:scalarization}. The function $\underline{\bm f}$ consequently satisfies property~\emph{(i)}. Property~\emph{(ii)} holds by construction.

      If~$\bm 0\notin\dom(\bm f)$, then Corollary~8.5.2 by~\citet{Rockafellar1970} is no longer applicable. As~$\bm f$ is proper by assumption, however, there exists some point~$\bm x_0\in\dom(\bm f)$. Next, define~$\bm g:\mathbb R^{d_{\bm x}} \rightarrow \overline\R{}^{d_{\set C}}$ through $\bm g(\bm x)=\bm f(\bm x-\bm x_0)$ for all~$\bm x\in\mathbb R^{d_{\bm x}}$, and note that~$\bm g$ is proper, closed and convex and that~$\bm 0\in\dom(\bm g)$. By the first part of the proof, we may thus conclude that there exists a function~$\underline{\bm g}: \R^{d_{\bm x}}\times\R_+ \rightarrow \overline \R{}^{d_{\set C}}$ that satisfies properties~\emph{(i)}--\emph{(iii)}. Next, define the function~$\underline{\bm f}$ through~$\underline{\bm f}(\bm x,t)=\underline{\bm g}(\bm x+t\bm x_0,t)$. It is clear that~$\underline{\bm f}$ inherits perperness, star $\set C$-lower semicontinuity and $\set C$-convexity from~$\underline{\bm g}$ and thus satisfies property~\emph{(i)}. By construction, we further have for every~$t>0$ that
      \[
      \underline{\bm f}(\bm x,t0) \; = \; \underline{\bm g}(\bm x+t\bm x_0,t) \; = \; t \bm g(\bm x/t+\bm x_0) \; = \; t \bm f(\bm x/t),
      \]
      where the second equality holds because~$\underline{\bm g}$ satisfies property~\emph{(ii)}, and the third equality follows from the definition of~$\bm g$. This shows that~$\underline{\bm f}$ satisfies property~\emph{(ii)}. Finally, we also have
      \[
      \bm \lambda^\top\underline{\bm f}(\bm x,0) \; = \; \bm \lambda^\top\underline{\bm g}(\bm x,0) \; = \; \delta^*_{\dom((\bm \lambda^\top\bm g)^*)}(\bm x) \; = \; \delta^*_{\dom((\bm \lambda^\top\bm f)^*)}(\bm x),
      \]
      for every~$\bm \lambda \in \set C^*\backslash\{\bm 0\}$, where the second equality holds because~$\underline{\bm g}$ satisfies property~\emph{(iii)}, and the third equality follows from the observation that~$(\bm \lambda^\top\bm g)^*(\bm w)= (\bm \lambda^\top\bm f)^*(\bm w)- \bm w^\top\bm x_0$ for all~$\bm w\in\mathbb R^{d_{\bm x}}$. This reasoning shows that~$\underline{\bm f}$ also satisfies property~\emph{(iii)}. The uniqueness of~$\underline{\bm f}$ is a direct consequence of property~\emph{(iii)} and the observation that the proper cone~$\set C^*$ is solid.
      \hfill \qed

      ~\\[-8mm]

      \noindent \textbf{Proof of Proposition~\ref{prop:solvability-of-POT-convex}.} $\;$
      We first show that the negative objective function of problem~\eqref{eq:dot-cvx-explicit} is proper, closed and convex. Indeed, the convex perspective $-\lambda_{ik} g_i(  \hat{\bm z}_k + \bm v_{ik}/\lambda_{ik})$ defined for~$\lambda_{ik} \ge 0$ is proper, closed and convex by Proposition~\ref{prop:perspective-convex} and by Theorem~12.2 of \citet{Rockafellar1970}, which apply because~$g_i$ obeys assumption \textbf{(G)}. Thus, the negative objective function of~\eqref{eq:dot-cvx-explicit} constitutes a sum of proper, closed and convex functions with different arguments and is therefore also proper, closed and convex. As~$c_\ell$ obeys assumption \textbf{(C)} and $d$ obeys assumption \textbf{(D)}, similar arguments can be used to show that
      the feasible region of~\eqref{eq:dot-cvx-explicit} is closed. To prove that the feasible region is also bounded, note that the first constraint group in~\eqref{eq:dot-cvx-explicit} forces the non-negative variables $\{\lambda_{ik}\}_i$ to reside within a bounded simplex for every~$k\in \set K$. In addition, by Lemma~\ref{lem:growth} there exists a constant~$\delta>0$ such that $d(\hat{\bm z}_k+\bm z, \hat{\bm z}_k)\geq \delta\|\bm z\|_2-1$ for all~$\bm z\in\R^{d_{\bm z}}$ and~$k\in\set K$, and thus we have
      \[
      \sum_{k\in \set{K}}  \sum_{i\in \set{I}_k} \lambda_{ik} d\left(  \hat{\bm z}_k + \frac{\bm v_{ik}}{\lambda_{ik}},  \hat{\bm z}_k \right) \le \epsilon\quad \implies \quad \sum_{k\in \set{K}} \sum_{i\in \set{I}_k} \|\bm v_{ik}\|_2 \le \frac{1+\epsilon}{\delta},
      \]
      where we used the elementary identity~$\sum_{k\in \set{K}} \sum_{i\in \set{I}_k} \lambda_{ik}=1$. Thus, the last constraint group in~\eqref{eq:dot-cvx-explicit} forces the variables~$\{\bm v_{ik}\}_{ik}$ to reside within a bounded set, as well. In conclusion, we have shown that the objective function of~\eqref{eq:dot-cvx-explicit} is upper semicontinuous and that the feasible region is both closed and bounded and therefore compact. Thus, problem~\eqref{eq:dot-cvx-explicit} is indeed solvable.
      \hfill \qed

      ~\\[-8mm]

      \rev{
      \noindent \textbf{Proof of Proposition~\ref{prop:show_me_the_distributions}.} $\;$
      Under the assumptions of the proposition, problem~\eqref{eq:dot-cvx-explicit} is solvable and has the same optimal value as~\eqref{eq:uq-ot}. Even though~\eqref{eq:dot-cvx-explicit} is reminiscent of a restriction of~\eqref{eq:uq-ot} that evaluates the worst-case expected disutility over all $I$-point distributions in~$\mathbb B_\epsilon(\hat \bP)$, the solvability of~\eqref{eq:dot-cvx-explicit} does {\em not} imply that~\eqref{eq:uq-ot} admits a maximizer, see, {\em e.g.}, \citet[Example~2]{med17} or \citet[Example~4]{kmns19}.

      By construction, $\set I^{+}_k$, $\set I^{0}_k := \{ i \in \set I_k \ | \ \lambda_{ik}^\star = 0, \  \bm v_{ik}^\star = \bm 0 \}$ and $\set I^{\infty}_k$ form a partition of $\set I_k$ for every $k \in \set K$. Note that if~$\lambda_{ik}^\star = 0$, then the constraints $\lambda_{ik} c_\ell (\hat{\bm z}_k + \bm v_{ik}/\lambda_{ik} ) \le 0$, $\ell\in\set L$, of problem~\eqref{eq:dot-cvx-explicit} imply via Lemma~\ref{lemma:recession_directions}~\emph{(ii)} that~$\bm v_{ik}^\star$ is a recession direction of the support set~$\set S$. In particular, if~$\set S$ is bounded, this implies that $\bm v_{ik}^\star= \bm 0$. We may thus conclude that~$\set I_k^\infty=\emptyset$ whenever~$\set S$ is bounded. The converse implication does not hold in general.
      
      Assume first that $\set I^{\infty}_k= \emptyset$ for every $k \in \set K$. To see that $\mathbb P^\star$ defined in~\eqref{eq:OT:discrete_distr_optimal} is optimal in~\eqref{eq:uq-ot}, observe that the constraints of~\eqref{eq:dot-cvx-explicit} imply that~$\mathbb P^\star\in\mathbb B_\epsilon(\hat \bP)$ and that the expected disutility~$\mathbb E_{\mathbb P^\star}[ g(\bmt z)]$ is at least as large as the optimal value~$\sum_{k\in \set{K}} \sum_{i\in \set{I}_k} \lambda^\star_{ik} g_i (  \hat{\bm z}_k + \bm v^\star_{ik}/\lambda^\star_{ik})$ of~\eqref{eq:dot-cvx-explicit}. However, as the suprema of~\eqref{eq:uq-ot} and~\eqref{eq:dot-cvx-explicit} match, it is clear that~$\bP^\star$ must be optimal in~\eqref{eq:uq-ot}.

      Assume now that $\set I^{\infty}_k \neq \emptyset$ for some $k \in \set K$. To see that the discrete distributions defined in~\eqref{eq:OT:sequence_optimal} are asymptotically optimal, we first show that~$\mathbb P_n\in\mathbb B_\epsilon(\hat \bP)$ whenever $n \ge |\set I^{\infty}_k|$ for every $k \in \set K$. Indeed, in this case it is easy to see that $\lambda_{ik}(n)\ge 0$ for all~$i\in \set I^{+}_k \cup \set I^{\infty}_k$ and~$k\in\set K$ and that~$\sum_{k\in \set{K}}  \sum_{i\in \set{I}^+_k\cup \set I^{\infty}_k} \lambda_{ik}(n)=1$ because~$\sum_{i\in\set I^+_k} \lambda^\star_{ik}=\hat p_k$ for every~$k\in\set K$. In addition, note that~$\bm z_{ik}(n)\in\set S$ for every~$i\in \set I^{+}_k \cup \set I^{\infty}_k$ and~$k\in\set K$ thanks to the constraints of problem~\eqref{eq:dot-cvx-explicit} and because~$\bm v^\star_{ik}$ is a recession direction of~$\set S$ whenever~$i\in\set I^\infty_k$. In summary, these insights imply that~$\bP_n \in \amb P_0(\set S)$. Finally, moving mass~$\lambda_{ik}(n)$ from~$\hat{\bm z}_{k}$ to~$\bm z_{ik} (n)$ for every~$i\in \set I^{+}_k \cup \set I^{\infty}_k$ and~$k\in\set K$ incurs a total cost of
      \begin{align*}
      & \sum_{k\in \set{K}}  \sum_{i\in \set{I}^+_k\cup \set I^{\infty}_k} \lambda_{ik}(n) d (\bm z_{ik} (n),  \hat{\bm z}_{k}) \\
      &\quad = \; \sum_{k\in \set{K}}  \sum_{i\in \set{I}^+_k} \lambda_{ik}^\star \left(1 - \frac{ |\set I^{\infty}_k| }{n} \right)  d\left(  \hat{\bm z}_k + \frac{\bm v^\star_{ik}}{\lambda_{ik}^\star},  \hat{\bm z}_k \right)
      + \sum_{k\in \set{K}}  \sum_{i\in \set{I}^\infty_k} \frac{\hat p_k}{n} d\left(  \hat{\bm z}_k + n \frac{\bm v^\star_{ik}}{\hat p_k},  \hat{\bm z}_k \right)\\
      &\quad \leq \; \sum_{k\in \set{K}}  \sum_{i\in \set{I}^+_k} \lambda_{ik}^\star d\left(  \hat{\bm z}_k + \frac{\bm v^\star_{ik}}{\lambda_{ik}^\star},  \hat{\bm z}_k \right)
      + \sum_{k\in \set{K}}  \sum_{i\in \set{I}^\infty_k} \lim_{n\rightarrow \infty} \frac{\hat p_k}{n} d\left(  \hat{\bm z}_k + n \frac{\bm v^\star_{ik}}{\hat p_k},  \hat{\bm z}_k \right)\\
      & \quad = \; \sum_{k\in \set{K}}  \sum_{i\in \set{I}_k} \lambda_{ik}^\star d\left(  \hat{\bm z}_k + \frac{\bm v^\star_{ik}}{\lambda_{ik}^\star},  \hat{\bm z}_k \right) \; \leq \; \epsilon,
      \end{align*}
      where the first equality follows from the definitions of~$\lambda_{ik}(n)$ and~$\bm z_{ik}(n)$, and the first inequality holds because the transportation cost~$d(\bm z,\bm z')$ is non-negative and convex in~$\bm z$, which implies that both terms in the second line are non-decreasing in~$n$. The second equality in the above expression exploits our definition of the convex perspective for~$\lambda^\star_{ik}=0$, and the second inequality follows from the constraints of problem~\eqref{eq:dot-cvx-explicit}. Similarly, by using our conventions for the convex perspective, one can show that the asymptotic expected disutility~$\lim_{n\rightarrow \infty}\mathbb E_{\mathbb P_n}[ g(\bmt z)]$ is at least as large as the optimal value~$\sum_{k\in \set{K}} \sum_{i\in \set{I}_k} \lambda^\star_{ik} g_i (  \hat{\bm z}_k + \bm v^\star_{ik}/\lambda^\star_{ik})$ of~\eqref{eq:dot-cvx-explicit}. However, as the suprema of~\eqref{eq:uq-ot} and~\eqref{eq:dot-cvx-explicit} match, it is clear that the distributions~$\bP_n$, $n\in\mathbb N$, must be asymptotically optimal in~\eqref{eq:uq-ot}.
      \hfill \qed}

      \section{Auxiliary Results} \label{app:om}

      \begin{app_prop}[Properties of Conjugate Functions] \label{prop:conjugate-dual}
      The conjugate of a function $f$ is closed and convex. Moreover, if $f$ is closed and convex, then $f^{**} = f$. Finally, a convex function $f$ is proper if and only if $f^*$ is proper.
      \end{app_prop}

      \begin{proof}
      The conjugate $f^*$ is a pointwise supremum of affine functions, and hence it is closed and convex. The other claims  follow from \citet[Theorem~12.2]{Rockafellar1970}.
      \end{proof}



      \begin{app_prop}[Properties of Convex Perspective Functions] \label{prop:perspective-convex}
      If $f:\R^{d_{\bm x}}\rightarrow  \overline{\R}$ is proper, closed and convex, then its convex perspective is also proper, closed and convex.
      \end{app_prop}

      \begin{proof}
      Convexity and properness follow from page~35 of \citet{Rockafellar1970}, while closedness follows from page~67 and Theorem~13.3 of \citet{Rockafellar1970}.
      \end{proof}

      \begin{app_prop}[Conjugates of Perspective Functions] \label{prop:perspective}
      If $f: \R^{d_{\bm x}} \rightarrow  \overline{\R}$  is proper and convex, then for any $t > 0$ we have
      \[
      h^*(\bm w)= \begin{cases}
      tf^*(\bm w/ t) &\textnormal{if }  h(\bm x) = tf(\bm x), \\
      tf^*(\bm w)  & \textnormal{if } h(\bm x) = tf (\bm x/t).
      \end{cases}
      \]
      \end{app_prop}

      \begin{proof}
      The claim follows from Theorem~16.1 of \citet{Rockafellar1970}.
      \end{proof}

      \begin{app_prop}[Conjugates of Sums] \label{prop:inf-conv}
      If $g_1,  \ldots, g_J:\R^{d_{\bm x}}\rightarrow \overline{\R}$ are proper convex functions, then
      \begin{equation} \label{eq:inf_cov}
      \left( \sum_{j\in \set{J}} g_j \right)^* (\bm w) \; \le \; \inf_{\{ \bm{w}_j \}_{j \in \mathcal{J}}} \left\{  \sum_{j\in \set{J}} g_j^*(\bm w_j) \ \Big| \ \sum_{j\in \set{J}} \bm w_j = \bm w \right\} \qquad \forall \bm w\in\R^{d_{\bm x}}.
      \end{equation}
      If $\cap_{j\in \set{J}} {\rm ri}(\dom (g_j)) \ne \emptyset$, then the inequality is tight, and the minimum is attained for every $\bm w$.
      \end{app_prop}

      \begin{proof} For any $j\in\mathcal J$, we denote by~${\rm cl} (g_j)$ the {\em closure} of the function~$g_j$, that is, the largest closed function that resides underneath~$g_j$. By the definition of the conjugate we have
      \begin{equation*}
      \begin{array}{l@{}l}
      \displaystyle \left( \sum_{j\in \set{J}} g_j \right)^* (\bm w)
      \;\; & = \;\;
      \displaystyle \sup_{\bm x} \left\{\bm w^\top \bm x - \sum_{j\in \set{J}} g_j (\bm x) \right\}
      \;\; \le \;\;
      \displaystyle \sup_{\bm x} \left\{\bm w^\top \bm x - \sum_{j\in \set{J}} {\rm cl} (g_j) (\bm x) \right\} \\[6mm]
      & = \;\;
      \displaystyle \left( \sum_{j\in \set{J}} {\rm cl} (g_j) \right)^* (\bm w)
      \;\; \le \;\;
      \displaystyle \inf_{\{ \bm{w}_j \}_{j \in \mathcal{J}}} \left\{ \sum_{j\in \set{J}} g_j^*(\bm w_j) \ \Big| \ \sum_{j\in \set{J}} \bm w_j = \bm w \right\},
      \end{array}
      \end{equation*}
      where the first inequality holds because $g_j(\bm x) \ge {\rm cl} (g_j)(\bm x)$ for all $\bm x \in \R^{d_{\bm x}}$, $j\in \set J$, while the second inequality follows from Theorem~16.4 of \citet{Rockafellar1970}. If $\cap_{j\in \set{J}} {\rm ri}(\dom (g_j)) \ne \emptyset$, then Theorem~16.4 of \citet{Rockafellar1970} further implies that both inequalities become equalities.
      \end{proof}

      Proposition~\ref{prop:inf-conv} asserts that the conjugate of a sum of proper convex functions (the left-hand side of \eqref{eq:inf_cov}) provides a lower bound on the infimal convolution of the conjugates of these functions (the right-hand side of~\eqref{eq:inf_cov}). This lower bound becomes tight if the relative interiors of the domains of the convex functions have a point in common. Under the same condition one can show that the epigraph of the infimal convolution coincides with the Minkowski sum of the epigraphs of the conjugate functions $g_j^*$, $j\in \set J$, see, \eg, \citet[Exercise~1.28~(a)]{rw09}.

      \begin{app_ex}[Conjugates of Sums] The inequality in~\eqref{eq:inf_cov} can be strict. To see this, assume that $\bm x\in\R^2$ and $J=2$, and define $g_1(\bm x)= \delta_{\{1\}}(x_1)$ and $g_2(\bm x)= \delta_{\{-1\}}(x_1)$. Note that both $g_1$ and $g_2$ are proper and convex. In fact, they are even closed. As the domains of $g_1$ and $g_2$ have an empty intersection, we may conclude that $(g_1+g_2)^*(\bm w)=-\infty$ for every $\bm w\in\R^2$. A direct calculation shows that $g^*_1(\bm w)=w_1+ \delta_{\{0\}}(w_2)$ and $g^*_2(\bm w)=-w_1 + \delta_{\{0\}}(w_2)$, which in turn implies that
      \[
      \inf \left\{ g_1^*(\bm w_1)+ g_2^*(\bm w_2) \ \Big| \ \bm w_1+\bm w_2= \bm w \right\}=\left\{\begin{array}{cl}
      -\infty & \text{if } w_2=0, \\ +\infty & \text{otherwise.}
      \end{array} \right.
      \]
      Thus, the gap between the left and the right-hand side in~\eqref{eq:inf_cov} amounts to $\infty$ unless $w_2=0$.
      \end{app_ex}

      \begin{app_prop}[Properties of Partial Conjugates] \label{prop:partial-conjugate-dual}
      For any function $f: \R^{d_{\bm x}} \times \R^{d_{\bm z}} \rightarrow  \overline{\R}$, the partial conjugate $f^{*1}$ is closed and convex (jointly in both arguments) if $-f$ is closed and convex in its second argument. Similarly, the partial conjugate $f^{*2}$ is closed and convex if $-f$ is closed and convex in its first argument.
      \end{app_prop}
      \begin{proof}
      For any fixed $\bm x\in  \R^{d_{\bm x}}$, the functions~$\bm{w}^\top \bm x $ and ~$- f(\bm x, \bm z)$ are closed and convex in~$\bm w$ and~$\bm z$, respectively. Thus, the partial conjugate~$f^{*1} (\bm{w}, \bm z)=\sup_{\bm x\in  \R^{d_{\bm x}}} \left\{ \bm{w}^\top \bm x - f(\bm x, \bm z) \right\}$ is closed and convex jointly in $\bm w$ and $\bm z$ as a pointwise supremum of closed and convex functions. A similar argument can be made for the partial conjugate~$f^{*2}$.
      \end{proof}

      \rev{Below we provide an example of two mutually dual convex optimization problems with a strictly positive duality gap. We also showcase that the presence of a positive duality gap critically depends on the representation of these problems, that is, simple equivalent reformulations of the primal (dual) may change the dual (primal) and eliminate the duality gap.
      
      \begin{app_ex}[Representation-Dependence of Duality Results] \label{ex:duality_gap}
      Consider an instance of problem~\eqref{eq:p-co} adapted from
      Exercise~5.21 by \citet{bv04} with two decision variables $x_1$ and~$x_2$, a convex objective function $f_0(\bm x)= e^{-x_1}$ and a single convex constraint function defined through $ f_1(\bm x) = x_1^2/x_2$ if $x_2\ge 0$ and $ f_1(\bm x) = \infty$ otherwise. The fraction $x_1^2/x_2$ should be interpreted as the convex perspective of $x_1^2$ whenever $x_2\ge 0$.
      Thus, $f_0$ and $f_1$ are both proper and closed.
      Note that this instance of~\eqref{eq:p-co} violates both conditions in Theorem~\ref{prop:strong-duality}. Moreover, any feasible solution satisfies $x_1=0$ and thus attains the optimal value~$1$ of~\eqref{eq:p-co}. A direct calculation reveals that
      \begin{equation*}
      f_0^* (\bm w_0) 
      = \left\{ \begin{array}{cl}  -w_{01} \log(-w_{01}) + w_{01} & \ { \rm if} \  w_{01}\le 0 \text{ and } w_{02}=0, \\ +\infty   & \ \rm{otherwise,}\end{array} \right.
      \end{equation*}
      where we use the standard convention that $0\log(0)=0$, and
      \begin{equation*}
      f_1^* (\bm w_1)
      = \left\{ \begin{array}{cl}  0 & \ { \rm if} \ \frac{1}{4} w_{11}^2+w_{12} \le 0,\\
      +\infty   & \ \rm{otherwise.}
      \end{array} \right.
      \end{equation*}
      The dual problem~\eqref{eq:d-co} can therefore be expressed as
      \begin{equation*}
      \begin{array}{c@{\quad}l@{\quad}l}
      \displaystyle \sup & \displaystyle  w_{01} \log(-w_{01}) - w_{01}\\
      \displaystyle \subj & \displaystyle  \bm w_0+\bm w_1 = \bm 0 \\
      & \displaystyle \frac{w_{11}^2}{4\lambda_1} +w_{12} \le 0, \;\; w_{01} \le 0, \;\; w_{02}=0 \\
      & \bm{w}_0, \bm{w}_1 \ {\rm free} \\
      & \lambda_1 \ge 0.
      \end{array}
      \end{equation*}
      Note that this instance of~\eqref{eq:d-co} violates both conditions in Theorem~\ref{prop:strong-duality}. Any feasible solution satisfies $\bm w_0 = \bm w_1=\bm 0$ and thus attains the optimal value~$0$ of~\eqref{eq:d-co}. We conclude that the duality gap amounts to~$1$. Furthermore, Lemma~\ref{lem:dual_of_d_is_p} allows to recover the primal problem~\eqref{eq:p-co} by dualizing~\eqref{eq:d-co}.
      However, all of these conclusions break down if we simplify~\eqref{eq:p-co} or~\eqref{eq:d-co} by eliminating redundant constraints and variables. For example, as any primal feasible solution satisfies $x_1=0$, problem~\eqref{eq:p-co} is equivalent to the linear program $\inf\{1\mid x_2\ge 0\}$, which admits a Slater point and therefore has a strong dual that is no longer equivalent to~\eqref{eq:d-co}. Similarly, as any dual feasible solution satisfies $\bm w_0 = \bm w_1=\bm 0$, problem~\eqref{eq:d-co} is equivalent to the linear program $\sup\{0\mid \lambda_1\ge 0\}$, which admits a Slater point and therefore has a strong dual that is no longer equivalent to~\eqref{eq:p-co}. Therefore, the dual of the dual may not be equivalent to the primal if one simplifies the dual problem. 
      \end{app_ex}

      Based on the data of the primal problem~\eqref{eq:p-co}, it is often difficult to verify whether the dual problem~\eqref{eq:d-co} admits a Slater point. However, a dual Slater point is guaranteed to exist whenever the primal feasible region is nonempty and bounded. \rev{This result plays an important role in Section~\ref{sec:rco}, where we need to verify that certain dualized embedded optimization problems admit Slater points in order to invoke strong duality for the outer optimization problems.}

      \begin{app_prop}[Sufficient Condition for a Dual Slater Point] \label{prop:bounded-slater}
      If the feasible region of~\eqref{eq:p-co} is nonempty and bounded, then~\eqref{eq:d-co} admits a strict Slater point.
      \end{app_prop}

      The next example shows that the reverse implication of Proposition~\ref{prop:bounded-slater} does not hold in general.

      \begin{app_ex}[Unbounded Dual Feasible Region for Primal with a Slater Point] \label{example:unbounded_dual}
      Consider an instance of problem~\eqref{eq:p-co} with $I=3$, $f_0(x) = x$, $f_1(x) = x-1$, $f_2(x) = 1-x$ and $f_3(x) = -x$. It can be easily verified that~\eqref{eq:p-co} admits a Slater point. We can readily verify that the corresponding dual problem~\eqref{eq:d-co} can be expressed as
      \begin{equation*}
      \begin{array}{l@{\quad}l@{\quad}l}
      \sup &  - \lambda_1 + \lambda_2   \\
      \subj  & w_0 + w_1 + w_2 + w_3 = 0, \;\; w_0  = 1 \\
      &   w_1  = \lambda_1, \;\; w_2 = - \lambda_2, \;\; w_3  \le 0 \\
      &    w_0, w_1, w_2 \textnormal{ free} , \;\; \bm \lambda  \ge \bm 0.
      \end{array}
      \end{equation*}
      Clearly, the feasible region of~\eqref{eq:d-co} is unbounded.
      \end{app_ex}

      The following remark is useful in Section~\ref{sec:rco} when we wish to replace embedded optimization problems with their duals without changing the feasible region of the outer optimization problem.

      \begin{app_rem}[Strict Inequalities] \label{rem:strict-weak-inequalities}
      If~\eqref{eq:p-co} admits a strict Slater point~$\bm x^{\rm S}$, then any feasible solution~$\bm x$ of~\eqref{eq:p-co} can be expressed as the limit of a sequence of strict Slater points $\bm x_n = \frac{1}{n} \bm x^{\rm S} + (1 - \frac{1}{n}) \bm x$,~$n\in \mathbb N$, and its objective function value satisfies $f_0(\bm x) = \liminf_{n \rightarrow \infty} f_0 (\bm x_n)$. Indeed, we have
      \[
      \displaystyle f_0(\bm x)
      \; \le \;
      \liminf_{n\rightarrow \infty} f_0(\bm x_n)
      \; \le \;
      \liminf_{n\rightarrow \infty} \left\{ \frac{1}{n} f_0(\bm x^{\rm S}) + \left( 1 - \frac{1}{n} \right) f_0(\bm x)\right\}
      \; = \;
      f_0(\bm x),
      \]
      where the two inequalities follow from the closedness and the convexity of $f_0$, respectively. Therefore, replacing weak inequalities by strict inequalities in~\eqref{eq:p-co} does not change the infimum of~\eqref{eq:p-co} if~\eqref{eq:p-co} admits a strict Slater point.
      \end{app_rem}
}

      \begin{app_ex}[Non-Convexity of Problem~\eqref{eq:db-ro}] \label{example:nonconvex}
      Consider an instance of problem~\eqref{eq:pw-ro} with $I=d_{\bm x}= d_{\bm z}=1$, $\set Z = \R$, $f_0(x, z_0) = x z_0$ and $f_1(x,z_1) = \frac{1}{2}x^2 + z_1$. Then, we can readily compute
      \[
      f_0^{*1} (w_0 ,z_0)= \begin{cases}
      0 & \text{if $w_0 = z_0$}\\
      \infty & \text{if $w_0 \neq z_0$}
      \end{cases}
      \quad  \text{ and } \quad  f_1^{*1} \left( w_1,z_1 \right)= \frac{w_1^2}{2}  - z_1,
      \]
      which results in the following instance of problem~\eqref{eq:db-ro}.
      \begin{equation*}
      \begin{array}{l@{\quad}l@{\quad}l}
      \sup & \displaystyle - \frac{w_1^2}{2\lambda_1} + \lambda_1 z_1 \\
      \subj  & w_0 + w_1 = 0, \;\; w_0  = z_0 \\
      &    w_0, w_1, z_0,z_1 \textnormal{ free}, \;\; \lambda_1 \ge 0
      \end{array}
      \end{equation*}
      Using the format $(w_0, w_1, z_0, z_1, \lambda_1)$ to denote solutions of~\eqref{eq:db-ro}, it is easy to verify that both $(1, -1, 1, 2, \frac{1}{2} )$  and $( 0,0,0, 0 , 0)$ are feasible in~\eqref{eq:db-ro} with the same objective value $0$. Even though their equally weighted convex combination $(\frac{1}{2}, -\frac{1}{2}, \frac{1}{2}, 1, \frac{1}{4})$ is also feasible, its objective value amounts to $-\frac{1}{4} < \frac{1}{2}\cdot 0 + \frac{1}{2} \cdot 0$. Therefore, the instance of~\eqref{eq:db-ro} at hand is non-convex.
      \end{app_ex}

      \begin{app_prop}[Conjugates of Partial Conjugates] \label{prop:partial-conjugates}
      If $f:\R^{d_{\bm x}} \times \R^{d_{\bm z}} \rightarrow  \overline{\R}$ is closed and convex in its first argument, and $-f$ is closed and convex in its second argument, then  $(f^{*1})^* = (-f)^{*2}$ and $((-f)^{*2})^* =f^{*1}$.
      \end{app_prop}
      \begin{proof}
      The conjugate of $f^{*1}$ with respect to both of its arguments is given by
      \begin{align*}
      (f^{*1})^*(\bm x, \bm y) \; &= \; \sup_{\bm w,\bm z}  \left\{\bm x^\top \bm w + \bm y^\top \bm z - f^{*1}(\bm w,\bm z) \right\} \\
      & = \; \sup_{\bm z} \left\{ \bm y^\top \bm z + \sup_{\bm w} \left\{ \bm x^\top \bm w - f^{*1}(\bm w,\bm z) \right\} \right\} \\
      & = \; \sup_{\bm z} \left\{  \bm y^\top \bm z +  f(\bm x,\bm z)\right\}
      \; = \; (-f)^{*2} (\bm x, \bm y),
      \end{align*}
      where the third equality holds because $f$ is closed and convex in its first argument, which implies that $(f^{*1})^{*1}=f$; see Proposition~\ref{prop:conjugate-dual}. This establishes that $(f^{*1})^*= (-f)^{*2}$.  Since $f^{*1}$ is jointly closed and convex in both of its arguments due to Proposition~\ref{prop:partial-conjugate-dual},  Proposition~\ref{prop:conjugate-dual} further implies that $((-f)^{*2})^* = (f^{*1})^{**}  =f^{*1}$.
      \end{proof}
      
      \begin{app_lem}[Recession Directions]
      \label{lemma:recession_directions}
      The following statements hold.
      \begin{enumerate}[label=(\roman*)]
      \item A vector $\bm v\in \R^{d_{\bm z}}$ is a recession direction for the function $c_\ell$ if and only if $0c_\ell(\bm v/0) \le 0$.
      \item A vector $\bm v\in \R^{d_{\bm z}}$ is a recession direction for the set $\supp$ if and only if $0c_\ell(\bm v/0) \le 0$ for all~$\ell \in \set{L}$.
      \end{enumerate}
      \end{app_lem}
      \begin{proof}
      The result follows from Theorem 8.6 by \citet{Rockafellar1970}. To keep this paper self-contained, however, we provide an alternative proof using our notation. As for assertion \emph{(i)}, assume first that $\bm v$ is a recession direction for $c_\ell$. Thus, for any $\bm z\in \R^{d_{\bm z}}$ with $c_\ell (\bm z) \le 0$ we have
      \begin{align*}
      \quad  0  \; \ge \;  \frac{1}{t} c_\ell (\bm z + t \bm v)  \qquad \forall t > 0  \quad \implies \quad   0 \; & \ge \;  s c_\ell \left( \frac{s \bm z +  \bm v}{s} \right) \qquad \forall s > 0 \\
      \implies \quad   0 \; & \ge \;  \liminf_{s\downarrow 0} s c_\ell \left( \frac{s \bm z +  \bm v}{s} \right) \; \ge \; 0 c_\ell (\bm v / 0).
      \end{align*}

      Assume next that $0c_\ell (\bm v/0) \le 0$, and fix any $\bm z\in \R^{d_{\bm z}}$ with $c_\ell(\bm z)\le 0$. Thus, we have
      \begin{equation*}
      \begin{array}{l@{}l@{\qquad}l}
      c_\ell (\bm z + t \bm v)
      \;\; & = \;\;
      \left( \frac{1}{2} \cdot 2 + \frac{1}{2}  \cdot0\right) c_\ell \left( \frac{\frac{1}{2}  \cdot 2 \bm z + \frac{1}{2} \cdot 2t \bm v}{ \frac{1}{2} \cdot 2 + \frac{1}{2} \cdot 0}\right) & \forall t > 0 \\
      & \le \;\;
      \frac{1}{2}  \left[ 2 c_\ell \left( \frac{2\bm z}{2}\right) \right] + \frac{1}{2} \left[ 0 c_\ell \left( \frac{2t \bm v}{0} \right) \right] =  c_\ell (\bm z) + 0 c_\ell (t\bm v/ 0) \le 0 & \forall t > 0,
      \end{array}
      \end{equation*}
      where the first equality is trivial because $\frac{1}{2} \cdot 2 +\frac{1}{2} \cdot 0 = 1$, and the inequality follows from the convexity of the convex perspective established in Proposition~\ref{prop:perspective-convex}. The second equality exploits the properness of~$c_l^*$  and the positive homogeneity of support functions of nonempty sets. The last inequality holds because $c_\ell(\bm z) \le 0$ by assumption and because $0c_\ell (\bm v / 0) \le 0$ implies that $0c_\ell (t\bm v/ 0) \le 0$. As the above reasoning applies for any $t > 0$, we conclude that $\bm v $ is indeed a recession direction for~$c_\ell$.

      Assertion \emph{(ii)} follows from assertion \emph{(i)} and the observation that $\bm v$ is a recession direction for $\set Z$ if and only if $\set Z$ is nonempty and $\bm v$ is a recession direction for every $c_\ell$,  $\ell \in \set{L}$.
      \end{proof}

      The following three remarks discuss various generalizations of the main theorems of Section~\ref{sec:rco}.
      
      \begin{app_rem}[Equivalence of~\eqref{eq:db-ro} and~\eqref{eq:db-cvx-ro} without a Strict Slater Point]
      \label{rem:positive-lambda}
      Proposition~\ref{prop:d-b=d-b-cvx-ro}~\emph{(ii)} remains valid if~\eqref{eq:db-ro} admits a feasible solution $(  \{ \bm w_i, {\bm z}_i\}_i,  \bm \lambda )$ with $\bm \lambda> \bm 0$ instead of a strict Slater point. Indeed, the only property of a strict Slater point needed in the proof is that $\bm \lambda>\bm 0$.
      \end{app_rem}
	
      \begin{app_rem}[Strong Duality for~\eqref{eq:pw-ro} and~\eqref{eq:db-ro} without a Strict Slater Point]
      \label{rem:positive-lambda-sd}
      Theorem~\ref{thm:p-w=d-b-ro}~(iii) remains valid if~\eqref{eq:db-ro} admits a Slater point $(  \{ \bm w_i, {\bm z}_i\}_i,  \bm \lambda )$ with $\bm \lambda> \bm 0$ instead of a strict Slater point. Similarly as for Proposition~\ref{prop:d-b=d-b-cvx-ro}~(ii), the only property of $(  \{ \bm w_i, {\bm z}_i\}_i,  \bm \lambda )$ required in the proof, beyond it being a Slater point, is that $\bm \lambda>\bm 0$.
      \end{app_rem}

      \begin{app_rem}[Heterogeneous Uncertainty Sets]
      \label{rem:uncertain Z_i}
      All results of Section~\ref{sec:rco} extend in a straightforward manner to situations in which the objective function and the
      constraints of problem~\eqref{eq:pw-ro} are equipped with individual uncertainty sets $\set Z_i$, $i\in \set I_0$, all of which satisfy assumption \textbf{(C)}.
       \end{app_rem}

    {
      \begin{app_ex}[Random Matrix Theory]\label{example:random_matrix_theory} The techniques developed in Section~\ref{sec:extensions} allow us to analyze the spectral properties of random matrices governed by an ambiguous distribution. For example, they enable us to compute the worst-case conditional value-at-risk (CVaR) at level $\varepsilon\in(0,1)$ of~the (negative) largest eigenvalue $-\lambda_{\text{\rm max}}(\tilde{\bm Z})$ of a random matrix~$\tilde{\bm Z}$ in the proper convex cone $\mathbb S^{d_{\bm Z}}_+$ of positive semidefinite matrices within $\mathbb R^{d_{\bm Z}\times d_{\bm Z}}$. We assume that the distribution of~$\tilde{\bm Z}$ belongs to
      \[
	        \mathcal P=\left\{ \mathbb P\in\mathcal P_0(\mathbb S^{d_{\bm Z}}_+) \;\left|\; \mathbb E_{\mathbb P}[\tilde {\bm Z}]\preceq_{\mathbb S^{d_{\bm Z}}_+} \overline{\bm M},~\mathbb E_{\mathbb P}[\tilde {\bm Z}^{-1}]\preceq_{\mathbb S^{d_{\bm Z}}_+} \underline{\bm M} \right.\right\}.
	  \]
      Here, $\bm Z^{-1}$ is a shorthand for the function $\bm F(\bm Z)=\bm Z^{-1}$ if $\bm Z\succ_{\mathbb S^{d_{\bm Z}}_{+}}\bm 0$ and $\bm F(\bm Z)=+\bm \infty_{\mathbb S^{d_{\bm Z}}_+}$ otherwise. This function is proper, $\mathbb S^{d_{\bm Z}}_+$-convex and star $\mathbb S^{d_{\bm Z}}_+$-lower semicontinuous; see also Example~\ref{ex:C-convex-functions} in Appendix~\ref{app_before_A}. In addition, $\lambda_{\text{\rm max}}(\bm Z)$ is proper, closed and convex in the usual sense. By using Jensen's inequality, one can verify that $\mathcal P$ is nonempty if and only if the generalized moment bounds $\overline{\bm M},\underline{\bm M}\in\mathbb S^{d_{\bm Z}}_+$ satisfy $\underline{\bm M}^{-1} \preceq_{\mathbb S^{d_{\bm Z}}_+} \overline{\bm M}$. By the definition of the CVaR due to \cite{ref:rockafellar2000optimization} and by Sion's minimax theorem \citep{s58}, the worst-case CVaR of $-\lambda_{\text{\rm max}}(\tilde{\bm Z})$ satisfies
      {\rm\[
	        \sup_{\mathbb P\in\mathcal P} \mathbb P\text{-CVaR}_\varepsilon (-\lambda_{\text{max}}(\tilde{\bm Z})) = 
	        \inf_{x\in\mathbb R} x+\frac{1}{\varepsilon} \sup_{\mathbb P\in\mathcal P} \mathbb E_{\mathbb P}\left[ \max\{-\lambda_{\text{max}}(\tilde{\bm Z})-x,0\} \right].
	   \]}The worst-case expectation in the above expression constitutes an instance of the generalized uncertainty quantification problem~\eqref{eq:puq-conic} that satisfies the conditions~\textbf{(C$_{\rm g}$)}, \textbf{(G$_{\rm g}$)}, \textbf{(H$_{\rm g}$)} and \textbf{(S$_{\rm g}$)}. By Theorem~\ref{thm:strong-duality-all-problems-conic}, it can be reformulated as a tractable convex minimization problem, and thus the worst-case CVaR can be computed efficiently. We emphasize that this instance of~\eqref{eq:puq-conic} is beyond the reach of existing methods in distributionally robust optimization.
      \end{app_ex}}
      
      \begin{app_lem}[Conjugates of Powers of Norms] \label{lem:p-norm}
      Assume that $\|\cdot\|$ and $\|\cdot\|_*$ are mutually dual norms on~$\R^{d_{\bm z}}$ and that $p,q \in [1,+\infty]$ satisfy $\frac{1}{p} + \frac{1}{q} = 1$. Then, the following statements hold.
      \begin{itemize}
      \item[(i)] The conjugate of $h(\bm z) = \frac{1}{p} \|\bm z\|^p $ is given by $h^*(\bm y) = \frac{1}{q} \| \bm y \|^q_*$. Here, we interpret $\frac{1}{p} \|\bm z\|^p$ as the indicator function of the closed unit ball around~$\bm 0$ with respect to~$\|\cdot\|$ if $p=+\infty$ and~$\frac{1}{q} \| \bm y \|^q_*$ as the indicator function of the closed unit ball around~$\bm 0$ with respect to~$\|\cdot\|_*$ if $q=+\infty$.
      \item[(ii)] The first partial conjugate of $d(\bm z, \bm z') = \| \bm z - \bm z' \|^p$ is given by $d^{*1}(\bm y, \bm z') = \bm y^\top\bm z' +   \varphi(q) \left\| \bm y \right\|^q_*$, where  $\varphi(q)=(q-1)^{(q-1)}/q^q$. Here, we interpret $\| \bm z - \bm z' \|^p$ as the indicator function of the closed unit ball around~$\bm z'$ with respect to~$\|\cdot\|$ if $p=+\infty$ and $\varphi(q) \left\| \bm y \right\|^q_*$ as the indicator function of the closed unit ball around~$\bm 0$ with respect to~$\|\cdot\|_*$ if $q=+\infty$.
      \end{itemize}
      \end{app_lem}
      \begin{proof}
      Assume first that $p\in (1,+\infty)$. For any fixed~$\bm z, \bm y \in\R^{d_{\bm z}}$ we then have
      \begin{align*}
      \bm z^\top \bm y - \frac{1}{p} \|\bm z\|^p \; \le \; \|\bm z\| \|\bm y\|_* - \frac{1}{p} \|\bm z\|^p \; \leq \; \max_{t\ge 0}~ t\|\bm y\|_*-\frac{1}{p}t^p \; = \; \frac{1}{q} \|\bm y\|^{q}_*,
      \end{align*}
      where the first inequality follows from the definition of the dual norm, and the equality holds because the maximum over~$t\ge 0$ is attained at~$t^\star=\|\bm y\|^{q-1}_*$. Both inequalities in the above expression are tight if we set $\bm z$ to~$\bm z^\star=(\|\bm y\|^{q-1}_*/\|\bm y\|) \bm y$. Indeed, the first inequality is tight because~$\bm z^\star$ is parallel to~$\bm y$, and the second one is tight because~$\|\bm z^\star\|=t^\star$. Therefore, we have
      \[ h^*(\bm y) \; = \; \sup_{\bm z} \left\{ \bm z^\top \bm y - \frac{1}{p} \|\bm z\|^p \right\} \; = \; \frac{1}{q} \|\bm y\|^{q}_*. \]
      Standard limit arguments show the claim for $p\in\{1,+\infty\}$, and thus assertion~{\em (i)} follows.

      Assume now again that~$p\in(1,+\infty)$. By the definition of partial conjugates, we then have
      \begin{align*}
      d^{*1}(\bm y, \bm z') \; & = \; \sup_{\bm z} \left\{ \bm y^\top \bm z  - \| \bm z - \bm z' \|^p \right\} \; = \; \bm y^\top \bm z' +  p \sup_{\bm z} \left\{ \left( \frac{\bm y}{p} \right)^\top \bm z  - \frac{1}{p} \| \bm z\|^p  \right\}\\
      & = \; \bm y^\top \bm z' +   \frac{p}{q} \left\| \frac{\bm y}{p}\right\|_*^q \; = \; \bm y^\top \bm z' +   \varphi(q) \left\|\bm y\right\|^q_* ,
      \end{align*}
      where the second and the third equality follow from the variable substitution $\bm z\leftarrow \bm z-\bm z'$ and from assertion~{\em (i)}, respectively, while the last equality follows from elementary algebra. Standard limit arguments can again be used to prove the claim for $p\in\{1,+\infty\}$, and thus assertion~{\em (ii)} follows.
      \end{proof}

      \begin{app_lem}[Growth of Non-Negative Convex Functions] \label{lem:growth}
      If $f:\R^{d_{\bm z}}\rightarrow[0,+\infty]$ is closed and convex with $f(\bm z)=0$ if and only if~$\bm z=\bm 0$, then there is~$\delta>0$ with $f(\bm z)\geq \delta \|\bm z\|_2 -1$ for all~$\bm z\in \R^{d_{\bm z}}$.
      \end{app_lem}
      \begin{proof}
      Assume for the sake of argument that there exists no~$\delta>0$ with the advertised properties. In this case, for every~$n\in\mathbb N$ there exists~$\bm z_n\in\R^{d_{\bm z}}$ such that $f(\bm z_n) <\frac{1}{n} \|\bm z_n\|_2-1$. 
      As the unit sphere in~$\R^{d_{\bm z}}$ is compact, there further exists a subsequence~$\bm z_{n_k}$, $k\in\mathbb N$, and a vector~$\bm v\in \R^{d_{\bm z}}$ that satisfies~$\lim_{k\rightarrow \infty} \bm z_{n_k}/\|\bm z_{n_k}\|_2=\bm v$. By construction, we thus have $\|\bm v\|_2=1$ and
      \[
      f(\bm v) \; = \; \liminf_{k\rightarrow\infty} f\left( \frac{\bm z_{n_k}}{\|\bm z_{n_k}\|_2}\right) \; \leq \; \liminf_{k\rightarrow\infty} \left(1-\frac{1}{\|\bm z_{n_k}\|_2}\right) f(\bm 0) + \frac{1}{\|\bm z_{n_k}\|_2} f(\bm z_{n_k}) \; = \; 0,
      \]
      where the first equality and the inequality follow from the lower semicontinuity and the convexity of~$f$, respectively, while the second equality holds because~$f(\bm 0)=0$ and because $f(\bm z_{n_k})/\|\bm z_{n_k}\|_2\leq 1/n_k$ by the construction of~$\bm z_{n_k}$. As $f$ is non-negative, the above reasoning implies that~$f(\bm v)=0$, which in turn implies that~$\bm v=\bm 0$. However, this conclusion contradicts our earlier observation that $\|\bm v\|_2=1$. Hence, our hypothesis was false, and the claim follows.
      \end{proof}

      \begin{app_lem}[Slater Points]
      \label{lem:slater-points}
      Assume that $\mathcal{X} = \{ \bm x \in \R^{d_{\bm x}} \ | \ f_i (\bm x) \le 0 \;\; \forall i \in \mathcal{I}, \ h_j (\bm x) = 0 \;\; \forall j \in \mathcal{J} \}$ is a convex set defined in terms of convex inequality constraint functions $f_i$, $i\in\set I$, and affine equality constraint functions $h_j$, $j\in\set J$. If $\set X$ admits a Slater point, then any $\bm x\in {\rm ri}(\set X)$ is a Slater point.
      \end{app_lem}
      \begin{proof}
      Let $\bm x^{\rm S}$ be a Slater point for $\set X$, which exists by assumption. Select any $\bm x\in {\rm ri}(\set X)$, and assume that $\bm x\neq \bm x^{\rm S}$ for otherwise the claim is trivial. As both $\bm x$ and $\bm x^{\rm S}$ are elements of the convex set $\set X$, all points of the form $\theta\bm x+(1-\theta)\bm x^{\rm S}$ for some $\theta\in \mathbb R$ belong to the affine hull of $\set X$. In addition, as $\bm x\in {\rm ri}(\set X)$, we may thus conclude that there exists $\varepsilon>0$ such that $\theta\bm x+(1-\theta)\bm x^{\rm S}\in\set X$ for all $\theta\in [0,1+\varepsilon]$. Setting $\theta=1+\varepsilon$, we thus find $f_i((1+\varepsilon)\bm x-\varepsilon \bm x^{\rm S})\leq 0$ and consequently
      \[
      f_i(\bm x) \; = \; f_i\left(\frac{1}{1+\varepsilon}\left((1+\varepsilon)\bm x-\varepsilon \bm x^{\rm S}\right) +\frac{\varepsilon}{1+\varepsilon} \bm x^{\rm S} \right) \; \leq \; \frac{1}{1+\varepsilon} f_i\left((1+\varepsilon)\bm x-\varepsilon \bm x^{\rm S} \right) +\frac{\varepsilon}{1+\varepsilon} f_i\left(\bm x^{\rm S}\right) \; < \; 0
      \]
      for all $i\in\set I$ such that $f_i$ is nonlinear, where the first inequality exploits the convexity of $f_i$, and the second inequality holds because~$f_i(\bm x^{\rm S})< 0$ by the definition of a Slater point.

      By using similar arguments as in the first part of the proof, one can show that there exists~$\varepsilon >0$ such that $(1+\varepsilon)\bm x-\varepsilon \bm x^{\rm S}\in \dom (f_i)$ for all $i\in\set I$ and $(1+\varepsilon)\bm x-\varepsilon \bm x^{\rm S}\in \dom (h_j)$ for all $j\in\set J$. As $\bm x^{\rm S}\in{\rm ri} (\dom (f_i))$ for all $i\in\set I$ and $\bm x^{\rm S}\in{\rm ri} (\dom (h_j)$ for all $j\in\set J$ by the definition of a Slater point, the line segment principle by \citet[Proposition~1.3.1]{b09} then implies that the point~$\bm x$ on the line segment between $\bm x^{\rm S}$ and $\theta\bm x+(1-\theta)\bm x^{\rm S}$ belongs to ${\rm ri} (\dom (f_i))$ for all $i\in\set I$ and to ${\rm ri} (\dom (h_j)$ for all $j\in\set J$. Thus, $\bm x$ is indeed a Slater point.
      \end{proof}

{
      \begin{app_rem}[Solvability of~\eqref{eq:uq-ot} under Superlinear Transportation Costs]
      \label{rem:superlinear}
      Assume as usual that the finite convex program~\eqref{eq:dot-cvx} admits a Slater point with $\lambda_{ik} > 0$ for all~$i\in\set I_k$ and~$k\in\set K$ and that the transportation cost~$d(\bm z,\hat{\bm z}_k)$ grows superlinearly in~$\bm z$ for every~$k\in\set K$. If~$( \{ \lambda_{ik}^\star, \bm v_{ik}^\star\}_{ik})$ is a maximizer of~\eqref{eq:dot-cvx-explicit} and $\set I^\infty_k\neq\emptyset$ for some~$k\in\set K$, then we have  
      \[
      \lambda^\star_{ik} d\left(  \hat{\bm z}_k + \frac{\bm v^\star_{ik}}{\lambda^\star_{ik}},  \hat{\bm z}_k \right)
      \; = \;
      0 d\left(  \hat{\bm z}_k + \frac{\bm v^\star_{ik}}{0},  \hat{\bm z}_k \right)
      \; = \;
      \lim_{\lambda_{ik}\downarrow 0} \lambda_{ik} d\left(  \hat{\bm z}_k + \frac{\bm v^\star_{ik}}{\lambda_{ik}},  \hat{\bm z}_k \right)
      \; = \;
      \infty
      \]
      for every $i\in\set I^\infty_k$, where the second equality follows from our conventions about perspective functions, while the third equality holds because the transportation cost grows superlinearly in the first argument. Thus, $( \{ \lambda_{ik}^\star, \bm v_{ik}^\star\}_{ik})$ violates the transportation budget constraint, which contradicts our assumption that it is a maximizer of the feasible problem~\eqref{eq:dot-cvx-explicit}. Hence, $\set I^\infty_k$ must be empty for every~$k\in\set K$, which in turn implies via the above discussion that~\eqref{eq:uq-ot} is solvable.
      \end{app_rem}}
      
{
      \begin{app_ex}[Shaping the Transportation Cost]\label{ex:shaping_the_costs}
      The transportation cost function~$d(\bm z,\bm z')$ 
      can be used to incorporate structural distributional information into the uncertainty quantification problem~\eqref{eq:uq-ot}. For example, if it is known that $\tilde{\bm z}$ is supported on the non-negative orthant $\mathbb R_+^{d_{\bm z}}$ and is unlikely to have small components, then one can set~$d(\bm z,\bm z')= \sum_{n=1}^{d_{\bm z}} (z_n-z_n')^2/z_n$ if $z_n>0$ for every $n=1,\ldots,d_{\bm z}$ and~$d(\bm z,\bm z')=+\infty$ otherwise. This transportation cost function satisfies condition~\textbf{(D)}. In addition, $d(\bm z,\bm z')$ tends to $+\infty$ as $\bm z$ approaches the boundary of~$\mathbb R_+^{d_{\bm z}}$. Thus, it is expensive to move probability mass to areas of the support set that are expected to have a low probability. One can show that the first partial conjugate of this transportation cost function is given by $d^{*1}( \bm y, \bm z') =\sum_{n=1}^{d_{\bm z}} 2z'_n(1-\sqrt{1-y_n})$ if $y_n\leq 1$ for every $n=1,\ldots,d_{\bm z}$ and by $d^{*1}( \bm y, \bm z') =+\infty$ otherwise. As another example, if it is known that the atoms of the nominal distribution represent random samples from the unknown true distribution that are corrupted by isotropic noise with variance $\gamma^2$, then one can set the transportation cost to the Huber loss function $d( \bm z, \bm z') = \frac{1}{2}\| \bm z  -  \bm z'\|_2^2$ if $\|\bm z - \bm z' \|_2 \le \gamma$ and $d( \bm z, \bm z') = \gamma \|\bm z- \bm z' \|_2 - \frac{\gamma^2}{2}$ otherwise. This transportation cost function satisfies condition~\textbf{(D)}. In addition, it ensures that the cost of moving probability mass over short distances~$\leq\gamma$ is small but increases linearly over longer transportation distances. One can show that the first partial conjugate of this transportation cost function is given by $d^{*1}( \bm y, \bm z') = \bm y^\top \bm z' + \frac{1}{2} \| \bm y \|_2^2$ if $\|\bm y\|_2\leq \gamma$ and by $d^{*1}( \bm y, \bm z') =+\infty$ otherwise. The results of this section imply that the uncertainty quantification problem~\eqref{eq:uq-ot} can be reformulated as a finite convex minimization problem of the form~\eqref{eq:pot-cvx} or~\eqref{eq:dot-cvx} under either of these transportation cost functions. These reformulations are new and beyond the scope of existing methods of distributionally robust optimization.
      \end{app_ex}}

      \end{document}